\theoremstyle{definition}
\newtheorem{theorem}{Theorem}[section]
\newtheorem{proposition}[theorem]{Proposition}
\newtheorem{lemma}[theorem]{Lemma}
\newtheorem{corollary}[theorem]{Corollary}
\newtheorem{definition}[theorem]{Definition}
\newtheorem{example}[theorem]{Example}
\newtheorem{definition/theorem}[theorem]{Definition/Theorem}
\theoremstyle{remark}
\newtheorem{remark}[theorem]{Remark}
\numberwithin{equation}{section}
\newlength\cellsize \setlength\cellsize{15\unitlength}
\newcommand\cellify[1]{\def\thearg{#1}\def\nothing{}%
\ifx\thearg\nothing
\vrule width0pt height\cellsize depth0pt\else
\hbox to 0pt{\usebox2\hss}\fi%
\vbox to 15\unitlength{
\vss
\hbox to 15\unitlength{\hss$#1$\hss}
\vss}}
\newcommand\tableau[1]{\vtop{\let\\=\cr
\setlength\baselineskip{-16000pt}
\setlength\lineskiplimit{16000pt}
\setlength\lineskip{0pt}
\halign{&\cellify{##}\cr#1\crcr}}}
\newcommand\expath[1]{%
\hbox to 0pt{\usebox3\hss}%
\vbox to 15\unitlength{
\vss
\hbox to 15\unitlength{\hss$#1$\hss}
\vss}}
\newcommand\bas[1]{\omit \vbox to \cellsize{ \vss \hbox to \cellsize{\hss$#1$\hss} \vss}}
\begin{document}

\title[A horizontal-strip LLT polynomial is determined by its weighted graph]{A horizontal-strip LLT polynomial is determined by its weighted graph}

\author{Foster Tom}

\thanks{
The author was supported in part by the Natural Sciences and Engineering Research Council of Canada.}
\subjclass[2020]{Primary 05E05; Secondary 05E10, 05C15}
%\keywords{chromatic symmetric function, LLT polynomial, Hall--Littlewood polynomial, interval graph, Schur function, weighted graph}

\begin{abstract}
We prove that two horizontal-strip LLT polynomials are equal if the associated weighted graphs defined by the author in a previous paper are isomorphic. This provides a sufficient condition for equality of horizontal-strip LLT polynomials and yields a well-defined LLT polynomial indexed by a weighted graph. We use this to prove some new relations between LLT polynomials and we explore a connection with extended chromatic symmetric functions.
\end{abstract}

\maketitle
%\tableofcontents
\section{Introduction}\label{section:intro}

LLT polynomials have been studied extensively in algebraic combinatorics and representation theory. Horizontal-strip LLT polynomials generalize the Hall--Littlewood polynomials, which are the Frobenius series of cohomology rings of certain subsets of the flag variety \cite{cdm}. The Shuffle Theorem \cite{shuffle} of Carlsson and Mellit describes an LLT expansion of the Frobenius series of the space of diagonal harmonics. The Extended Delta Theorem of Blasiak, Haiman, Morse, Pun, and Seelinger \cite{extendeddelta} generalizes this to an LLT expansion of an infinite series of $\text{GL}_m$ characters using an action of the Schiffmann algebra on the space of symmetric functions. LLT polynomials appear positively in an expansion of Macdonald polynomials \cite{combmacdonald}, which implies that Macdonald polynomials are Schur-positive. LLT polynomials also arise in the representation theory of the quantum affine algebra \cite{lltoriginal} and of regular semisimple Hessenberg varieties via their connection to chromatic quasisymmetric functions \cite{hessenberg,chromquasihessenberg}. \\

If $\bm\lambda$ is a sequence of single cells, then the unicellular LLT polynomial $G_{\bm\lambda}(\bm x;q)$ can be expressed as a sum over arbitrary colourings of a unit interval graph $\Gamma(\bm\lambda)$ associated to $\bm\lambda$. Huh, Nam, and Yoo \cite{lltunicellschur} proved a combinatorial Schur expansion of $G_{\bm\lambda}(\bm x;q)$ whenever $\Gamma(\bm\lambda)$ is a ``melting lollipop'' and Alexandersson conjectured \cite{lltunicellepos} and then proved with Sulzgruber \cite{lltcombe} a combinatorial elementary symmetric function expansion of $G_{\bm\lambda}(\bm x;q+1)$ in terms of acyclic orientations of $\Gamma(\bm\lambda)$. An equality of unicellular LLT polynomials $G_{\bm\lambda}(\bm x;q)=G_{\bm\mu}(\bm x;q)$ is equivalent \cite{shuffle} to an equality of the corresponding chromatic quasisymmetric functions $X_{\Gamma(\bm\lambda)}(\bm x;q)=X_{\Gamma(\bm\mu)}(\bm x;q)$ introduced by Shareshian and Wachs \cite{chromposquasi}. Therefore, LLT polynomials are intimately connected to longstanding conjectures about equalities of chromatic symmetric functions, which is an area of active research \cite{propcatchrom,tuttechromsymeq,chromtreesnote}.\\

If $\bm\lambda$ is a sequence of rows, then the horizontal-strip LLT polynomial $G_{\bm\lambda}(\bm x;q)$ can be expressed as a sum over certain colourings of a unit interval graph $\tilde\Gamma(\bm\lambda)$ with some decorated edges and Alexandersson and Sulzgruber's result \cite{lltcombe} generalizes to this setting. In \cite{caterpillarllt} the author defined an alternative generalization of $\Gamma(\bm\lambda)$ to a weighted interval graph $\Pi(\bm\lambda)$. That paper gives a combinatorial Schur expansion of $G_{\bm\lambda}(\bm x;q)$ whenever $\Pi(\bm\lambda)$ is triangle-free and shows that the largest power of $q$ in $G_{\bm\lambda}(\bm x;q)$ is the total edge weight of $\Pi(\bm\lambda)$. \\

The main result of this paper, Theorem \ref{thm:main}, states that the horizontal-strip LLT polynomial $G_{\bm\lambda}(\bm x;q)$ is determined by the weighted graph $\Pi(\bm\lambda)$. In other words, if $\Pi(\bm\lambda)\cong\Pi(\bm\mu)$, then $G_{\bm\lambda}(\bm x;q)=G_{\bm\mu}(\bm x;q)$. In particular, this implies that if $\Pi$ is a weighted graph arising from this construction, then there corresponds a well-defined LLT polynomial $G_\Pi(\bm x;q)$. In Section \ref{section:maintheorem}, we prove Theorem \ref{thm:main} modulo a technical result, Lemma \ref{lem:key}, whose proof we postpone to Section \ref{section:keylemma}. In Section \ref{section:chromatic}, we explore a connection between $G_\Pi(\bm x;q)$ and the extended chromatic symmetric functions $X_\Pi(\bm x)$ associated to weighted graphs that were defined by Crew and Spirkl \cite{extendedchromsym} and whose relations were considered in \cite{extendedchromsymeq}.

\section{Background}\label{section:background}

A \emph{composition} $\alpha$ is a finite sequence of positive integers $\alpha=\alpha_1\cdots\alpha_\ell$. We denote by $\ell(\alpha)$ the \emph{length} of $\alpha$ and by convention, we set $\alpha_i=0$ if $i>\ell$. A \emph{partition} $\sigma$ is a composition that is weakly decreasing, that is $\sigma_1\geq\cdots\geq\sigma_\ell$. We also define the integer $n(\sigma)=\sum_i(i-1)\sigma_i$. If $\sigma$ and $\tau$ are partitions with $\sigma_i\geq\tau_i$ for every $i$, then the corresponding \emph{skew diagram} is \begin{equation}\lambda=\sigma/\tau=\{(i,j)\in\mathbb N\times\mathbb N: \ i\geq 1, \ \tau_i+1\leq j\leq \sigma_i\}. \end{equation} 
If $\tau$ is the empty partition, we write $\sigma$ instead of $\sigma/\emptyset$. The elements of $\lambda$ are called \emph{cells} and the \emph{content} of a cell $u=(i,j)\in\lambda$ is the integer $c(u)=j-i$. We primarily work with \emph{rows} and we assume the contents are nonnegative, so that they are skew diagrams of the form \begin{equation} R=a/b=\{(1,j)\in\mathbb N\times\mathbb N: \ b+1\leq j\leq a\}\end{equation}
for some $a\geq b\geq 0$. We denote by $l(R)=b$ and $r(R)=a-1$ the smallest and largest contents of cells of $R$. Note that $l(R)$ is the content of the leftmost cell of $R$, not the length of $R$, which is $|R|=r(R)-l(R)+1$. We also denote by $R^+=(a+1)/(b+1)$ and $R^-=(a-1)/(b-1)$ the rows obtained by shifting $R$ right or left respectively by one cell. A \emph{semistandard Young tableau (SSYT)} of shape $\lambda$ is a function $T:\lambda\to\{1,2,3,\ldots\}$ that satisfies $T_{i,j}\leq T_{i,j+1}\text{ and }T_{i,j}<T_{i+1,j}$, where we write $T_{i,j}$ instead of $T((i,j))$. A \emph{multiskew partition} is a sequence of skew diagrams $\bm\lambda=(\lambda^{(1)},\ldots,\lambda^{(n)})$. We say that $\bm\lambda$ is \emph{unicellular} if each $\lambda^{(i)}$ is a single cell and in keeping with the terminology of Alexandersson and Sulzgruber \cite{lltcombe}, we say that $\bm\lambda$ is a \emph{horizontal-strip} if each $\lambda^{(i)}$ is a row. If $\bm\lambda$ is a horizontal-strip, we denote by $\lambda$ the partition determined by the row lengths of $\bm\lambda$ and we define $n(\bm\lambda)=n(\lambda)$. We denote by \begin{equation}\text{SSYT}_{\bm\lambda}=\{\bm T=(T^{(1)},\ldots,T^{(n)}): \ T^{(i)}\in\text{SSYT}_{\lambda^{(i)}}\}\end{equation}
the set of sequences of SSYTs of shape $\bm\lambda$. Two entries $T^{(i)}(u)$ and $T^{(j)}(v)$ of $\bm T$ with $i<j$ form an \emph{inversion} if either
\begin{itemize}
\item $c(u)=c(v)$ and $T^{(i)}(u)>T^{(j)}(v)$, or
\item $c(u)=c(v)+1$ and $T^{(i)}(u)<T^{(j)}(v)$.
\end{itemize}
We denote by $\text{inv}(\bm T)$ the number of inversions of $\bm T$ and we denote by $\bm x^{\bm T}$ the monomial $x_1^{\#\text{ of }1\text{'s}}x_2^{\#\text{ of }2\text{'s}}x_3^{\#\text{ of }3\text{'s}}\cdots$. Now we define the \emph{LLT polynomial}
\begin{equation}G_{\bm\lambda}(\bm x;q)=\sum_{\bm T\in\text{SSYT}_{\bm\lambda}}q^{\text{inv}(\bm T)}\bm x^{\bm T}.\end{equation}

\begin{example}
The multiskew partition $\bm\lambda=(4/0,5/4,8/5,6/1)$, two SSYTs $\bm S$ and $\bm T$ of shape $\bm\lambda$ with their inversions marked by dashed lines, and the corresponding monomials of the LLT polynomial $G_{\bm\lambda}(\bm x;q)$ are given below. Because $\bm\lambda$ is a horizontal-strip, we draw it so that cells of the same content are aligned vertically. 

\begin{align*}
\begin{tikzpicture}
\draw (-0.25,0.25) node (0) {$\bm \lambda=$};
\draw (0,-1.5) -- (2,-1.5) -- (2,-1) -- (0,-1) -- (0,-1.5) (0.5,-1.5) -- (0.5,-1) (1,-1.5) -- (1,-1) (1.5,-1.5) -- (1.5,-1) (2,-0.5) -- (2.5,-0.5) -- (2.5,0) -- (2,0) -- (2,-0.5) (2.5,0.5) -- (4,0.5) -- (4,1) -- (2.5,1) -- (2.5,0.5) (3,0.5) -- (3,1) (3.5,0.5) -- (3.5,1) (0.5,1.5) -- (3,1.5) -- (3,2) -- (0.5,2) -- (0.5,1.5) (1,1.5) -- (1,2) (1.5,1.5) -- (1.5,2) (2,1.5) -- (2,2) (2.5,1.5) -- (2.5,2);
\end{tikzpicture}\hspace{20pt}
\begin{tikzpicture}
\draw (-0.25,0.25) node (0) {$\bm T=$};
\draw (0.25,-1.25) node (1) {$1$} (0.75,-1.25) node (2) {$2$} (1.25,-1.25) node (3) {$2$} (1.75,-1.25) node (4) {$3$} (2.25,-0.25) node (5) {$5$} (2.75,0.75) node (6) {$1$} (3.25,0.75) node (7) {$1$} (3.75,0.75) node (8) {$3$} (0.75,1.75) node (9) {$1$} (1.25,1.75) node (10) {$4$} (1.75,1.75) node (11) {$4$} (2.25,1.75) node (12) {$4$} (2.75,1.75) node (13) {$5$};
\draw (0,-1.5) -- (2,-1.5) -- (2,-1) -- (0,-1) -- (0,-1.5) (0.5,-1.5) -- (0.5,-1) (1,-1.5) -- (1,-1) (1.5,-1.5) -- (1.5,-1) (2,-0.5) -- (2.5,-0.5) -- (2.5,0) -- (2,0) -- (2,-0.5) (2.5,0.5) -- (4,0.5) -- (4,1) -- (2.5,1) -- (2.5,0.5) (3,0.5) -- (3,1) (3.5,0.5) -- (3.5,1) (0.5,1.5) -- (3,1.5) -- (3,2) -- (0.5,2) -- (0.5,1.5) (1,1.5) -- (1,2) (1.5,1.5) -- (1.5,2) (2,1.5) -- (2,2) (2.5,1.5) -- (2.5,2);
\draw [color=red, dashed] (2) -- (9) (4) -- (10) (5) -- (12) (12) -- (6) (13) -- (7);
\end{tikzpicture}\hspace{20pt}
\begin{tikzpicture}
\draw (-0.25,0.25) node (0) {$\bm U=$};
\draw (0.25,-1.25) node (1) {$4$} (0.75,-1.25) node (2) {$4$} (1.25,-1.25) node (3) {$4$} (1.75,-1.25) node (4) {$4$} (2.25,-0.25) node (5) {$3$} (2.75,0.75) node (6) {$1$} (3.25,0.75) node (7) {$1$} (3.75,0.75) node (8) {$1$} (0.75,1.75) node (9) {$2$} (1.25,1.75) node (10) {$2$} (1.75,1.75) node (11) {$2$} (2.25,1.75) node (12) {$2$} (2.75,1.75) node (13) {$2$};
\draw (0,-1.5) -- (2,-1.5) -- (2,-1) -- (0,-1) -- (0,-1.5) (0.5,-1.5) -- (0.5,-1) (1,-1.5) -- (1,-1) (1.5,-1.5) -- (1.5,-1) (2,-0.5) -- (2.5,-0.5) -- (2.5,0) -- (2,0) -- (2,-0.5) (2.5,0.5) -- (4,0.5) -- (4,1) -- (2.5,1) -- (2.5,0.5) (3,0.5) -- (3,1) (3.5,0.5) -- (3.5,1) (0.5,1.5) -- (3,1.5) -- (3,2) -- (0.5,2) -- (0.5,1.5) (1,1.5) -- (1,2) (1.5,1.5) -- (1.5,2) (2,1.5) -- (2,2) (2.5,1.5) -- (2.5,2);
\draw [color=red, dashed] (2) -- (9) (3) -- (10) (4) -- (11) (5) -- (12) (12) -- (6) (13) -- (7);
\end{tikzpicture}\\ q^5x_1^4x_2^2x_3^2x_4^3x_5^2 \hspace{100pt} q^6x_1^3x_2^5x_3x_4^4\hspace{50pt}\end{align*}

The LLT polynomial $G_{(4/0,5/4,8/5,6/1)}(\bm x;q)$ can be expanded in the Schur function basis as 
\begin{align*}G_{\bm\lambda}(\bm x;q)&=q^6s_{5 4 3 1} + q^6s_{5 4 4} + q^6s_{5 5 2 1} + 2q^6s_{5 5 3} + q^6s_{6 3 3 1} + 2q^6s_{6 4 2 1} + (3q^6+q^5)s_{6 4 3} \\&+ 2q^6s_{6 5 1 1} + (4q^6+q^5)s_{6 5 2} + (2q^6+q^5)s_{6 6 1} + (q^6+q^5)s_{7 3 2 1} + (q^6+2q^5)s_{7 3 3} \\&+ (q^6+2q^5)s_{7 4 1 1} + (2q^6+5q^5)s_{7 4 2} + (2q^6+6q^5)s_{7 5 1} + 4q^5s_{7 6} + q^5s_{8 2 2 1} \\&+ (2q^5+q^4)s_{8 3 1 1} + (4q^5+2q^4)s_{8 3 2} + (5q^5+5q^4)s_{8 4 1} + (3q^5+4q^4)s_{8 5} + 2q^4s_{9 2 1 1} \\&+ 3q^4s_{9 2 2} + (7q^4+2q^3)s_{9 3 1} + (5q^4+3q^3)s_{9 4} + q^3s_{(10)1 1 1} + 6q^3s_{(10)2 1} \\&+ (6q^3+q^2)s_{(10) 3} + 3q^2s_{(11) 1 1} + 5q^2s_{(11)2} + 3qs_{(12) 1} + s_{(13)}\end{align*}

\end{example}

LLT polynomials are symmetric functions \cite[Theorem 6.1]{lltoriginal} and moreover are \emph{Schur-positive}, \cite[Theorem 3.1.3]{combdiag} \cite{lrkl} \cite[Corollary 6.9]{lltpos}, meaning that they can be expressed as a linear combination of Schur functions where the coefficients are polynomials in $q$ with positive coefficients. Finding a combinatorial formula for this Schur expansion is a major open problem of active research \cite{lltunicellepos,lltchrom,lltcombe,lltmpath,lltunicellschur,caterpillarllt}. \\

When $\bm\lambda=(\lambda^{(1)},\ldots,\lambda^{(n)})$ is unicellular, we can associate to $\bm\lambda$ a labelled graph $\Gamma(\bm\lambda)$ with vertices $v_1,\ldots,v_n$ as follows. We label the cells of $\bm\lambda$ as $1,2,3,\ldots$ in \emph{reverse content reading order}, meaning in order of decreasing content and from top to bottom along constant content lines. Then vertices $v_i$ and $v_j$ are joined by an edge if it is possible for entries in cells $i$ and $j$ to form an inversion. This approach was effective in finding combinatorial formulas for unicellular LLT polynomials \cite{lltunicellepos,lltchrom,lltunicellschur}. Graphs arising from this construction are called \emph{unit interval graphs} and they have several equivalent characterizations \cite{uigraphs}. 

\begin{example} \label{ex:gamma} The unicellular multiskew partitions $\bm\lambda=(2/1,1/0,1/0,2/1,2/1)$ and $\bm\mu=(1/0,2/1,2/1,1/0,2/1)$ are given below with their cells labelled in reverse content reading order, along with the associated labelled graphs $\Gamma(\bm\lambda)$ and $\Gamma(\bm\mu)$. 
$$
\begin{tikzpicture}
\draw (-0.75,0.75) node {$\bm\lambda=$};
\draw (0.25,-0.25) node (1) {5} (0.25,0.75) node (2) {4} (0.75,-1.25) node (3) {3} (0.75,1.75) node (4) {2} (0.75,2.75) node (5) {1};
\draw (0,0) -- (0.5,0) -- (0.5,-0.5) -- (0,-0.5) -- (0,0) (0,0.5) -- (0.5,0.5) -- (0.5,1) -- (0,1) -- (0,0.5) (0.5,-1) -- (0.5,-1.5) -- (1,-1.5) -- (1,-1) -- (0.5,-1) (0.5,1.5) -- (0.5,2) -- (1,2) -- (1,1.5) -- (0.5,1.5) (0.5,2.5) -- (0.5,3) -- (1,3) -- (1,2.5) -- (0.5,2.5);
\end{tikzpicture} \hspace{20pt}
\begin{tikzpicture}
\draw (2,3) node {$\Gamma(\bm\lambda)$};
\filldraw (0,0) circle (5pt) node[align=center, below] (1){};
\filldraw (1,1.73) circle (5pt) node[align=center, above] (2){};
\filldraw (2,0) circle (5pt) node[align=center, below] (3){};
\filldraw (3,1.73) circle (5pt) node[align=center, above] (4){};
\filldraw (4,0) circle (5pt) node[align=center,below] (5){};
\node [below] at (1) {$v_1$};
\node [above] at (2) {$v_2$};
\node [below] at (3) {$v_3$};
\node [above] at (4) {$v_4$};
\node [below] at (5) {$v_5$};
\draw (0,0) -- (1,1.73) (1,1.73) -- (2,0) (0,0) -- (2,0) (2,0) -- (3,1.73) (2,0) -- (4,0) (3,1.73) -- (4,0);
\end{tikzpicture}\hspace{20pt}
\begin{tikzpicture}
\draw (-0.75,0.75) node {$\bm\mu=$};
\draw (0.75,-0.25) node (3) {3} (0.75,0.75) node (4) {2} (0.25,-1.25) node (1) {5} (0.25,1.75) node (2) {4} (0.75,2.75) node (5) {1};
\draw (0.5,0) -- (1,0) -- (1,-0.5) -- (0.5,-0.5) -- (0.5,0) (0.5,0.5) -- (1,0.5) -- (1,1) -- (0.5,1) -- (0.5,0.5) (0,-1) -- (0,-1.5) -- (0.5,-1.5) -- (0.5,-1) -- (0,-1) (0,1.5) -- (0,2) -- (0.5,2) -- (0.5,1.5) -- (0,1.5) (0.5,2.5) -- (0.5,3) -- (1,3) -- (1,2.5) -- (0.5,2.5);
\end{tikzpicture} \hspace{20pt}
\begin{tikzpicture}
\draw (2,3) node {$\Gamma(\bm\mu)$};
\filldraw (0,0) circle (5pt) node[align=center, below] (1){};
\filldraw (1,1.73) circle (5pt) node[align=center, above] (2){};
\filldraw (2,0) circle (5pt) node[align=center, below] (3){};
\filldraw (3,1.73) circle (5pt) node[align=center, above] (4){};
\filldraw (4,0) circle (5pt) node[align=center,below] (5){};
\node [below] at (1) {$v_1$};
\node [above] at (2) {$v_2$};
\node [below] at (3) {$v_3$};
\node [above] at (4) {$v_4$};
\node [below] at (5) {$v_5$};
\draw (0,0) -- (1,1.73) (0,0) -- (2,0) (1,1.73) -- (2,0) (1,1.73) -- (3,1.73) (2,0) -- (3,1.73) (3,1.73) -- (4,0);
\end{tikzpicture}
$$
\end{example}

When $\bm\lambda$ is unicellular, a tableau $\bm T\in\text{SSYT}_{\bm\lambda}$ is precisely a (possibly not proper) colouring $\kappa:\Gamma(\bm\lambda)\to\mathbb N$ and an inversion of $\bm T$ is an \emph{ascent} of $\kappa$, namely an edge $(v_i,v_j)\in E(\Gamma(\bm\lambda))$ with $i<j$ and $\kappa(v_i)<\kappa(v_j)$. Therefore, we can express $G_{\bm\lambda}(\bm x;q)$ in terms of $\Gamma(\bm\lambda)$ as 
\begin{equation} \label{eq:lltunicellular} G_{\bm\lambda}(\bm x;q)=\sum_{\substack{\kappa:\Gamma(\bm\lambda)\to\mathbb N\\\kappa\text{ arbitrary}}}q^{\text{asc}(\kappa)}\bm x^\kappa.\end{equation}
If we restrict the sum \eqref{eq:lltunicellular} to \emph{proper colourings}, meaning that $\kappa(v_i)\neq\kappa(v_j)$ whenever $(v_i,v_j)\in E(\Gamma(\bm\lambda))$, then we have the \emph{chromatic quasisymmetric function} $X_{\Gamma(\bm\lambda)}(\bm x;q)$. Unicellular LLT polynomials and chromatic quasisymmetric functions are related by a change of variables, namely the plethystic relationship \cite[Proposition 3.4]{shuffle} \begin{equation} \label{eq:unichrom} X_{\Gamma(\bm\lambda)}(\bm x;q)=(q-1)^{-|\Gamma(\bm\lambda)|}G_{\bm\lambda}[\bm x(q-1);q],\end{equation} 
which in particular implies that if $\bm\lambda$ and $\bm\mu$ are unicellular, then \begin{equation} \label{eq:equivuni} X_{\Gamma(\bm\lambda)}(\bm x;q)=X_{\Gamma(\bm\mu)}(\bm x;q)\text{ if and only if }G_{\bm\lambda}(\bm x;q)=G_{\bm\mu}(\bm x;q).\end{equation}
For example, the unicellular multiskew partitions $\bm\lambda$ and $\bm\mu$ from Example \ref{ex:gamma} have the same LLT polynomials and their labelled unit interval graphs have the same chromatic quasisymmetric functions. Because of this relationship, unicellular LLT polynomials may be a fruitful approach to the Stanley--Stembridge conjecture that chromatic symmetric functions of unit interval graphs are $e$-positive, or its refinement, the Shareshian--Wachs conjecture, which asks for a combinatorial elementary function expansion of the chromatic quasisymmetric function.\\

When $\bm\lambda=(R_1,\ldots,R_n)$ is a horizontal-strip, one generalization of the above construction is to include a set $S$ of special edges that record when cells of $\bm\lambda$ are in the same row. Then we can express the LLT polynomial in terms of this decorated graph $\tilde\Gamma(\bm\lambda)$ as \begin{equation}G_{\bm\lambda}(\bm x;q)=\sum_{\substack{\kappa:\tilde\Gamma(\bm\lambda)\to\mathbb N\\\kappa(v_i)\leq\kappa(v_j)\text{ if }i<j, \ (v_i,v_j)\in S}}q^{\text{asc}(\kappa)}\bm x^\kappa.\end{equation}
This approach was successfully employed by Alexandersson and Sulzgruber \cite{lltcombe}. 

\begin{example} The horizontal-strip $\bm\lambda=(4/0,5/4,8/5,6/1)$ and its associated decorated labelled graph $\tilde \Gamma(\bm\lambda)$ are given below.  
$$
\begin{tikzpicture}
\draw (-0.25,0.25) node {$\bm\lambda=$};
\draw (0.25,-1.25) node (1) {13} (0.75,-1.25) node (2) {12} (0.75,1.75) node (3) {11} (1.25,-1.25) node (4) {10} (1.25,1.75) node (5) {9} (1.75,-1.25) node (6) {8} (1.75,1.75) node (7) {7} (2.25,-0.25) node (8) {6} (2.25,1.75) node (9) {5} (2.75,0.75) node (10) {4} (2.75,1.75) node (11) {3} (3.25,0.75) node (12) {2} (3.75,0.75) node (13) {1};
\draw (0,-1.5) -- (2,-1.5) -- (2,-1) -- (0,-1) -- (0,-1.5) (0.5,-1.5) -- (0.5,-1) (1,-1.5) -- (1,-1) (1.5,-1.5) -- (1.5,-1) (2,-0.5) -- (2.5,-0.5) -- (2.5,0) -- (2,0) -- (2,-0.5) (2.5,0.5) -- (4,0.5) -- (4,1) -- (2.5,1) -- (2.5,0.5) (3,0.5) -- (3,1) (3.5,0.5) -- (3.5,1) (0.5,1.5) -- (3,1.5) -- (3,2) -- (0.5,2) -- (0.5,1.5) (1,1.5) -- (1,2) (1.5,1.5) -- (1.5,2) (2,1.5) -- (2,2) (2.5,1.5) -- (2.5,2);
\end{tikzpicture}\hspace{5pt}
\begin{tikzpicture}
\draw (6.4,2.5) node {$\tilde\Gamma(\bm\lambda)$};
\draw [color=red, dashed] (1,1.56) -- (1.9,0) -- (3.7,0) (2.8,1.56) -- (4.6,1.56) -- (6.4,1.56) -- (8.2,1.56)-- (10,1.56) (7.3,0) -- (9.1,0) -- (10.9,0) -- (11.8,1.56);
\filldraw (1,1.56) circle (5pt) node[align=center, above] (1){};
\filldraw (1.9,0) circle (5pt) node[align=center, below] (2){};
\filldraw (2.8,1.56) circle (5pt) node[align=center, above] (3){};
\filldraw (3.7,0) circle (5pt) node[align=center,below] (4){};
\filldraw (4.6,1.56) circle (5pt) node[align=center,above] (5){};
\filldraw (5.5,0) circle (5pt) node[align=center,below] (6){};
\filldraw (6.4,1.56) circle (5pt) node[align=center,above] (7){};
\filldraw (7.3,0) circle (5pt) node[align=center,below] (8){};
\filldraw (8.2,1.56) circle (5pt) node[align=center,above] (9){};
\filldraw (9.1,0) circle (5pt) node[align=center,below] (10){};
\filldraw (10,1.56) circle (5pt) node[align=center,above] (11){};
\filldraw (10.9,0) circle (5pt) node[align=center,below] (12){};
\filldraw (11.8,1.56) circle (5pt) node[align=center,above] (13){};
\node [above] at (1) {$v_1$};
\node [below] at (2) {$v_2$};
\node [above] at (3) {$v_3$};
\node [below] at (4) {$v_4$};
\node [above] at (5) {$v_5$};
\node [below] at (6) {$v_6$};
\node [above] at (7) {$v_7$};
\node [below] at (8) {$v_8$};
\node [above] at (9) {$v_9$};
\node [below] at (10) {$v_{10}$};
\node [above] at (11) {$v_{11}$};
\node [below] at (12) {$v_{12}$};
\node [above] at (13) {$v_{13}$};
\draw (1.9,0) -- (2.8,1.56) -- (3.7,0) -- (4.6,1.56) -- (5.5,0) -- (6.4,1.56) -- (7.3,0) -- (8.2,1.56) -- (9.1,0) -- (10,1.56) -- (10.9,0);
\end{tikzpicture}
$$
\end{example}

We will be interested in the following \emph{weighted graph} $\Pi(\bm\lambda)$ that the author defined as an  alternative generalization of $\Gamma(\bm\lambda)$ to the horizontal-strip case.

\begin{definition} \cite[Definition 3.1, Definition 3.2]{caterpillarllt}
Let $\bm\lambda=(R_1,\ldots,R_n)$ be a horizontal-strip. For $1\leq i<j\leq n$ we define the integers \begin{equation}M(R_i,R_j)=\begin{cases}|R_i\cap R_j| & \text{ if }l(R_i)\leq l(R_j),\\ |R_i\cap R_j^+| & \text{ if }l(R_i)>l(R_j).\end{cases}\end{equation}
We then define a weighted graph $\Pi(\bm\lambda)$ whose vertices are the rows of $\bm\lambda$. The weight of a row $R_i$ is the number of cells $|R_i|$ and rows $R_i$ and $R_j$ with $i<j$ are joined by an edge of weight $M(R_i,R_j)$. By convention, we omit edges of weight zero.
\end{definition}

\begin{remark} In \cite[Definition 3.2]{caterpillarllt} the author also defined an ordering of the vertices of $\Pi(\bm\lambda)$ based on the content reading order of the rightmost cells in the rows of $\bm\lambda$. However, because our main Theorem states that the corresponding LLT polynomial does not depend on this labelling, we will not be interested in it. Also note that if $\bm\lambda$ is unicellular, then the graph $\Pi(\bm\lambda)$ is simply $\Gamma(\bm\lambda)$ with all vertex and edge weights one.
\end{remark}

\begin{example} \label{ex:wgraph} The horizontal-strip $\bm\lambda=(4/0,5/4,8/5,6/1)$ and the weighted graph $\Pi(\bm\lambda)$ are given below. We have $M(R_1,R_4)=3$, $M(R_2,R_4)=1$, and $M(R_3,R_4)=2$. We have also drawn the horizontal-strip $\bm\mu=(5/4,9/5,7/2,3/0)$, whose weighted graph $\Pi(\bm\mu)$ is isomorphic to $\Pi(\bm\lambda)$.

$$
\begin{tikzpicture}
\draw (-0.25,0.25) node (0) {$\bm \lambda=$};
\draw (4.5,-1.25) node (1) {$R_1$} (4.5,-0.25) node (2) {$R_2$} (4.5,0.75) node (3) {$R_3$} (4.5,1.75) node (4) {$R_4$};
\draw (0,-1.5) -- (2,-1.5) -- (2,-1) -- (0,-1) -- (0,-1.5) (0.5,-1.5) -- (0.5,-1) (1,-1.5) -- (1,-1) (1.5,-1.5) -- (1.5,-1) (2,-0.5) -- (2.5,-0.5) -- (2.5,0) -- (2,0) -- (2,-0.5) (2.5,0.5) -- (4,0.5) -- (4,1) -- (2.5,1) -- (2.5,0.5) (3,0.5) -- (3,1) (3.5,0.5) -- (3.5,1) (0.5,1.5) -- (3,1.5) -- (3,2) -- (0.5,2) -- (0.5,1.5) (1,1.5) -- (1,2) (1.5,1.5) -- (1.5,2) (2,1.5) -- (2,2) (2.5,1.5) -- (2.5,2);
\end{tikzpicture}\hspace{10pt}
\begin{tikzpicture}
\node[shape=circle,draw=black,minimum size=10mm](1) at (0,0) {$4$};
\node at (0,0.75) {$R_1$};
\node[shape=circle,draw=black,minimum size=10mm](2) at (2,0) {$5$};
\node at (2,0.75) {$R_4$};
\node[shape=circle,draw=black,minimum size=10mm](3) at (2,-1.8) {$1$};
\node at (2,-2.55) {$R_2$};
\node[shape=circle,draw=black,minimum size=10mm](4) at (4,0) {$3$};
\node at (4,0.75) {$R_3$};
\path [-](1) edge node [above]{$3$} (2);
\path [-](2) edge node [right]{$1$} (3);
\path [-](2) edge node [above]{$2$} (4);
\end{tikzpicture}
\begin{tikzpicture}
\draw (-0.25,0.25) node (0) {$\bm \mu=$};
\draw (2,-1.5) -- (2.5,-1.5) -- (2.5,-1) -- (2,-1) -- (2,-1.5) (2.5,-0.5) -- (4.5,-0.5) -- (4.5,0) -- (2.5,0) -- (2.5,-0.5) (3,-0.5) -- (3,0) (3.5,-0.5) -- (3.5,0) (4,-0.5) -- (4,0) (1,0.5) -- (3.5,0.5) -- (3.5,1) -- (1,1) -- (1,0.5) (1.5,0.5) -- (1.5,1) (2,0.5) -- (2,1) (2.5,0.5) -- (2.5,1) (3,0.5) -- (3,1) (0,1.5) -- (1.5,1.5) -- (1.5,2) -- (0,2) -- (0,1.5) (0.5,1.5) -- (0.5,2) (1,1.5) -- (1,2);
\end{tikzpicture}$$

\end{example}

We can now state our main Theorem. We will state the key Lemma \ref{lem:key} in Section \ref{section:maintheorem} and we will show how it implies Theorem \ref{thm:main}. We will then prove Lemma \ref{lem:key} in Section \ref{section:keylemma}. 

\begin{theorem} \label{thm:main} Let $\bm\lambda$ and $\bm\mu$ be horizontal-strips. If the weighted graphs $\Pi(\bm\lambda)$ and $\Pi(\bm\mu)$ are isomorphic, then the LLT polynomials $G_{\bm\lambda}(\bm x;q)$ and $G_{\bm\mu}(\bm x;q)$ are equal. In other words,
\begin{equation}
\text{ if }\Pi(\bm\lambda)\cong\Pi(\bm\mu),\text{ then }G_{\bm\lambda}(\bm x;q)=G_{\bm\mu}(\bm x;q).
\end{equation}\end{theorem}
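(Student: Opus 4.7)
The plan is to reduce Theorem~\ref{thm:main} to a finite collection of elementary moves on horizontal-strips, each of which preserves both the LLT polynomial and the isomorphism class of $\Pi(\bm\lambda)$. At the outset I would observe that $G_{\bm\lambda}(\bm x;q)$ and $\Pi(\bm\lambda)$ are both invariant under a simultaneous translation of all rows by a common integer amount (the inversion conditions $c(u)=c(v)$ and $c(u)=c(v)+1$ and the quantities $M(R_i,R_j)$ are invariants of the relative positions), so we may normalize $\bm\lambda$ and $\bm\mu$ to occupy nonnegative contents and to share a leftmost cell.

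The bulk of the work is carried by Lemma~\ref{lem:key}, which I expect to furnish the core move: a modification taking a pair of rows $(R,S)$ in $\bm\lambda$ to a new pair $(R',S')$ whose intersection data $M(\,\cdot\,,\,\cdot\,)$ with every remaining row is unchanged, but whose geometric placement is different (perhaps swapped, perhaps translated), and for which the LLT polynomial is preserved. Given such a move, I would induct on the number of rows $n$. Given an isomorphism $\varphi\colon\Pi(\bm\lambda)\to\Pi(\bm\mu)$, I would single out a row $R$ of $\bm\lambda$ together with its image $\varphi(R)$ of $\bm\mu$, and by iterated application of Lemma~\ref{lem:key} modify both horizontal-strips until this pair of rows agrees both in length and in position. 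Deleting this common row then produces two $(n-1)$-row horizontal-strips whose weighted graphs remain isomorphic via $\varphi$ restricted to the complement, and the induction hypothesis closes the argument.

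The hard part will be verifying that the moves from Lemma~\ref{lem:key} are rich enough to realize an arbitrary isomorphism $\varphi$. The subtlety lies in the case distinction in the definition of $M(R_i,R_j)$: whether $l(R_i)\leq l(R_j)$ or $l(R_i)>l(R_j)$ determines whether the edge weight is $|R_i\cap R_j|$ or $|R_i\cap R_j^+|$, and these two configurations correspond to visibly different geometric arrangements. The key lemma must therefore either supply a move that transitions between these configurations or reduce the question to one in which only one configuration occurs; in either case, a careful combinatorial bookkeeping of which edge-weight profiles are realized by the normalized horizontal-strip is required, together with an argument that the inductive reduction step can always be set up (that is, that some row admits the needed move to be placed in agreement with its image). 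This connectivity argument in the space of horizontal-strips realizing a fixed weighted graph is, I expect, where the essential combinatorial content of the deduction resides, and where the technical statement of Lemma~\ref{lem:key} must be calibrated precisely enough to drive the induction through.
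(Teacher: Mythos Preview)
Your proposal has a genuine gap. The inductive scheme you sketch---match one row of $\bm\lambda$ to its image under $\varphi$ in both position and length, then delete it and recurse on $n-1$ rows---does not go through, because there is no operation ``delete a row'' that controls the LLT polynomial. Removing a row from $\bm\lambda$ changes $G_{\bm\lambda}(\bm x;q)$ in a way not determined by $\Pi(\bm\lambda)$ alone, so even after you have arranged a common row, the two residual $(n-1)$-row LLT polynomials are not obviously related to the originals. Your expectation about Lemma~\ref{lem:key} is also off: it does not supply a local move on a pair of rows that fixes all edge weights; rather, it produces a \emph{good substitute} $(\bm\lambda',\bm\mu')$ in which a specific adjacent pair of rows $R_1,R_2$ (and the corresponding $S_1,S_2$) satisfy $l(R_1)<l(R_2)$, $R_1\nleftrightarrow R_2$, and $\varphi_1=1$, $\varphi_2=2$.

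The paper's actual reduction is not row-by-row but via a deletion--contraction relation (Lemma~\ref{lem:inductiverelation}):
\[
G_{\bm\lambda}(\bm x;q)=\tfrac1q G_{\bm\lambda'}(\bm x;q)+\tfrac{q-1}{q}G_{\bm\lambda''}(\bm x;q),
\]
where $\bm\lambda'$ swaps two adjacent noncommuting rows and $\bm\lambda''$ replaces them by their union and intersection. By Lemma~\ref{lem:newgraphs} the weighted graphs $\Pi(\bm\lambda')$ and $\Pi(\bm\lambda'')$ are determined by $\Pi(\bm\lambda)$, so if Lemma~\ref{lem:key} puts both $\bm\lambda$ and $\bm\mu$ into a form where this relation applies to corresponding rows, one gets $\Pi(\bm\lambda')\cong\Pi(\bm\mu')$ and $\Pi(\bm\lambda'')\cong\Pi(\bm\mu'')$. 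The induction is then on $n(\bm\lambda)$ and on $n(\bm\lambda)-M(\bm\lambda)$ (not on the number of rows): $\bm\lambda''$ has strictly smaller $n(\cdot)$, while $\bm\lambda'$ has the same $n(\cdot)$ but larger $M(\cdot)$; the base cases are a single row and the Hall--Littlewood case $M(\bm\lambda)=n(\bm\lambda)$ (Lemma~\ref{lem:hlcase}). The substantial difficulty---occupying all of Section~\ref{section:keylemma}---is proving Lemma~\ref{lem:key}, i.e.\ showing one can always manoeuvre into position to apply the relation compatibly on both sides.
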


\begin{example} The horizontal-strips $\bm\lambda=(4/0,5/4,8/5,6/1)$ and $\bm\mu=(5/4,9/5,7/2,3/0)$ in Example \ref{ex:wgraph} have isomorphic weighted graphs, so it follows from Theorem \ref{thm:main} that $G_{\bm\lambda}(\bm x;q)=G_{\bm\mu}(\bm x;q)$. \end{example}

We define a weighted graph $\Pi$ to be \emph{admissible} if it arises from our construction, in other words if $\Pi\cong\Pi(\bm\lambda)$ for some horizontal-strip $\bm\lambda$. Then an alternative restatement of Theorem \ref{thm:main} is that if $\Pi$ is admissible, then there is a well-defined LLT polynomial $G_\Pi(\bm x;q)$ given by $G_\Pi(\bm x;q)=G_{\bm\lambda}(\bm x;q)$, where $\bm\lambda$ is any horizontal-strip with $\Pi\cong\Pi(\bm\lambda)$. \\

Theorem \ref{thm:main} justifies our choice to associate to $\bm\lambda$ this mysterious weighted graph $\Pi(\bm\lambda)$. We now mention some advantages of this definition.
\begin{enumerate}
\item \cite[Theorem 4.6]{caterpillarllt} If $\Pi(\bm\lambda)$ is triangle-free, then there is a combinatorial Schur expansion of $G_{\bm\lambda}(\bm x;q)$ defined purely in terms of $\Pi(\bm\lambda)$, but not $\bm\lambda$ itself.\\

\item \cite[Example 4.10]{caterpillarllt} As an example of (1), if we let $P_{ab}(M)$ denote a graph consisting of two vertices of weights $a$ and $b$, joined by an edge of weight $M$, where we may assume that $a\geq b\geq M$ without loss of generality, then the Schur function expansion of the LLT polynomial $G_{P_{ab}(M)}(\bm x;q)$ can be expressed in terms of $a$, $b$, and $M$ as\begin{equation} \label{eq:gpi2} G_{P_{ab}(M)}=\sum_{k=0}^bq^{\min\{M,k\}}s_{(a+b-k)k}=s_{(a+b)}+qs_{(a+b-1)1}+\cdots+q^Ms_{(a+b-M)M}+\cdots+q^Ms_{ab}.\end{equation}
\item \cite[Lemma 3.15]{caterpillarllt} If $\bm\lambda=(R_1,\ldots R_i,R_{i+1},\ldots,R_n)$ and $\bm\mu=(R_1,\ldots,R_{i+1},R_i,\ldots,R_n)$ are horizontal-strips that differ by switching a pair of adjacent rows, then we have \begin{equation}G_{\bm\lambda}(\bm x;q)=G_{\bm\mu}(\bm x;q)\text{ if and only if }\Pi(\bm\lambda)\cong\Pi(\bm\mu).\end{equation} 
In other words, equalities of LLT polynomials in this case are characterized by the associated weighted graphs. \\

\item \cite[Theorem 3.5]{caterpillarllt} The total edge weight of $\Pi(\bm\lambda)$, which we denote $M(\bm\lambda)$, is equal to the largest power of $q$ in $G_{\bm\lambda}(\bm x;q)$. By contrast, there are horizontal-strips $\bm\lambda$ and $\bm\mu$ for which $\tilde\Gamma(\bm\lambda)$ and $\tilde\Gamma(\bm\mu)$ have different numbers of edges, and yet $G_{\bm\lambda}(\bm x;q)=G_{\bm\mu}(\bm x;q)$. To be specific, let $\bm\lambda=(2/1,2/0)$ and $\bm\mu=(2/0,2/1)$ as below.

$$
\begin{tikzpicture}
\draw (-0.5,0.75) node {$\bm\lambda=$};
\draw (0.25,1.25) node (1) {1} (0.75,0.25) node (2) {2} (0.75,1.25) node (3) {3} (0.25,-0.75) node {};
\draw (0,1) -- (0.5,1) -- (0.5,1.5) -- (0,1.5) -- (0,1) (0.5,0) -- (0.5,0.5) -- (1,0.5) -- (1,0) -- (0.5,0) (0.5,1) -- (0.5,1.5) -- (1,1.5) -- (1,1) -- (0.5,1);
\end{tikzpicture} \hspace{20pt}
\begin{tikzpicture}
\draw (-1,0.5) node {$\tilde\Gamma(\bm\lambda)=$};
\draw (0,-0.5) -- (0.9,1.06) -- (1.8,-0.5);
\draw [color=red, dashed] (0,-0.5) -- (1.8,-0.5);
\filldraw (0,-0.5) circle (5pt) node[align=center, below] (1){};
\filldraw (0.9,1.06) circle (5pt) node[align=center, above] (2){};
\filldraw (1.8,-0.5) circle (5pt) node[align=center, below] (3){};
\node [below] at (1) {$v_1$};
\node [above] at (2) {$v_2$};
\node [below] at (3) {$v_3$};
\end{tikzpicture}\hspace{20pt}
\begin{tikzpicture}
\draw (-0.5,0.75) node {$\bm\mu=$};
\draw (0.25,0.25) node (1) {1} (0.75,0.25) node (2) {2} (0.75,1.25) node (3) {3} (0.25,-0.75) node {};
\draw (0,0) -- (0.5,0) -- (0.5,0.5) -- (0,0.5) -- (0,0) (0.5,0) -- (0.5,0.5) -- (1,0.5) -- (1,0) -- (0.5,0) (0.5,1) -- (0.5,1.5) -- (1,1.5) -- (1,1) -- (0.5,1);
\end{tikzpicture} \hspace{20pt}
\begin{tikzpicture}
\draw (-1,0.5) node {$\tilde\Gamma(\bm\mu)=$};
\draw (0.9,1.06) -- (1.8,-0.5);
\draw [color=red, dashed] (0,-0.5) -- (0.9,1.06);
\filldraw (0,-0.5) circle (5pt) node[align=center, below] (1){};
\filldraw (0.9,1.06) circle (5pt) node[align=center, above] (2){};
\filldraw (1.8,-0.5) circle (5pt) node[align=center, below] (3){};
\node [below] at (1) {$v_1$};
\node [above] at (2) {$v_2$};
\node [below] at (3) {$v_3$};
\end{tikzpicture}
$$

Note that $\tilde\Gamma(\bm\lambda)$ has two normal edges and one special edge, while $\tilde\Gamma(\bm\mu)$ has one normal edge and one special edge. Meanwhile, we have $\Pi(\bm\lambda)\cong\Pi(\bm\mu)\cong P_{21}(1)$, so by \eqref{eq:gpi2} we have $G_{\bm\lambda}(\bm x;q)=G_{\bm\mu}(\bm x;q)=s_3+qs_{21}$.\\
\end{enumerate}

We saw in Example \ref{ex:gamma} that even in the unicellular case, the converse to Theorem \ref{thm:main} does not hold because there are horizontal-strips $\bm\lambda$ and $\bm\mu$ with $G_{\bm\lambda}(\bm x;q)=G_{\bm\mu}(\bm x;q)$ but $\Pi(\bm\lambda)\ncong\Pi(\bm\mu)$. However, we hope that such cases underlie a deep connection to equalities of chromatic symmetric functions, which are an area of active research \cite{extendedchromsymeq,propcatchrom,tuttechromsymeq,chromtreesnote}. We explore this connection in Theorem \ref{thm:mpath} and Theorem \ref{thm:compgraph} in Section \ref{section:chromatic}. 

\section{Proof of Theorem \ref{thm:main}} \label{section:maintheorem}

In this section we prove Theorem \ref{thm:main}, modulo a technical result, Lemma \ref{lem:key}, whose proof we postpone to Section \ref{section:keylemma}. The general idea is to rewrite $\bm\lambda$ and $\bm\mu$ in a more convenient form while preserving their weighted graphs and LLT polynomials. The following definition will be convenient to describe the relationship between rows within a fixed horizontal-strip.

\begin{definition} Let $\bm\lambda=(R_1,\ldots,R_n)$ be a horizontal-strip and $1\leq i,j\leq n$ with $i\neq j$. We define $M_{i,j}(\bm\lambda)$ to be the number of edges in $\Pi(\bm\lambda)$ joining $R_i$ and $R_j$, that is \begin{equation}M_{i,j}(\bm\lambda)=\begin{cases}M(R_i,R_j)&\text{ if }i<j,\\M(R_j,R_i)&\text{ if }i>j.\end{cases}\end{equation}
We abbreviate $M_{i,j}(\bm\lambda)$ as $M_{i,j}$ if the context is clear. Note that \begin{equation}\label{eq:mijbound}0\leq M_{i,j}\leq\min\{|R_i|,|R_j|\}.\end{equation} \end{definition}

We also take this opportunity to formally define an isomorphism of weighted graphs.

\begin{definition}
Let $\bm\lambda=(R_1,\ldots,R_n)$ and $\bm\mu=(S_1,\ldots,S_n)$ be horizontal-strips. An \emph{isomorphism of weighted graphs}, denoted $\varphi:\Pi(\bm\lambda)\xrightarrow\sim\Pi(\bm\mu)$, is a permutation \begin{equation}\varphi:\{1,\ldots,n\}\to\{1,\ldots,n\}\end{equation}
that satisfies $|R_i|=|S_{\varphi_i}|$ and $M_{i,j}(\bm\lambda)=M_{\varphi_i,\varphi_j}(\bm\mu)$ for all $i$ and $j$. The weighted graphs $\Pi(\bm\lambda)$ and $\Pi(\bm\mu)$ are \emph{isomorphic}, denoted $\Pi(\bm\lambda)\cong\Pi(\bm\mu)$, if there exists such an isomorphism. We say that $\bm\lambda$ and $\bm\mu$ are \emph{similar} if $\Pi(\bm\lambda)\cong\Pi(\bm\mu)$ and $G_{\bm\lambda}(\bm x;q)=G_{\bm\mu}(\bm x;q)$ and we denote by $\mathcal S(\bm\lambda)$ the set of horizontal-strips that are similar to $\bm\lambda$.  
\end{definition}

\begin{remark} Theorem \ref{thm:main} states that $\Pi(\bm\lambda)\cong\Pi(\bm\mu)$ implies that $G_{\bm\lambda}(\bm x;q)=G_{\bm\mu}(\bm x;q)$, but until we have proven this it will be convenient to temporarily define this concept of similarity. \end{remark}

\begin{example}
In Example \ref{ex:wgraph}, we have that $\varphi:\Pi(\bm\lambda)\xrightarrow\sim\Pi(\bm\mu)$, where $\varphi$ is the permutation given by $\varphi_1=2$, $\varphi_2=1$, $\varphi_3=4$, and $\varphi_4=3$.
\end{example}

Note that similarity is an equivalence relation. We now describe some operations we can perform on a horizontal-strip while preserving similarity. 

\begin{proposition} \label{prop:similar} Let $\bm\lambda=(R_1,\ldots,R_n)$ be a horizontal-strip. 
\begin{enumerate}
\item Let $\bm\lambda^+=(R_1^+,\ldots,R_n^+)$ and $\bm\lambda^-=(R_1^-,\ldots,R_n^-)$ be the horizontal-strips obtained by translating all rows right by one cell or left by one cell respectively. Then we have $\bm\lambda^+,\bm\lambda^-\in\mathcal S(\bm\lambda)$. \\

\item Define the \emph{cycle} of $\bm\lambda$ to be $\kappa(\bm\lambda)=(R_2,\ldots,R_n,R_1^-)$. Then we have $\kappa(\bm\lambda)\in\mathcal S(\bm\lambda)$. \\

\item For a sufficiently large integer $N$, define the \emph{rotation} of $\bm\lambda$ to be $N-\bm\lambda=(N-R_n,\ldots,N-R_1)$, where $N-R=\{(1,N-j): \ j\in R\}$. Then we have $N-\bm\lambda\in\mathcal S(\bm\lambda)$.
\end{enumerate}
\end{proposition}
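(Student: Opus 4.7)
My plan is to handle the three parts separately; in each case I will verify both that $\Pi$ is preserved up to an index permutation and that there is a bijection between the associated SSYTs that preserves the monomial and the inversion statistic. Part (1) is essentially formal: uniform translation preserves all content differences and row-intersection sizes, so the formula for $M(R_i,R_j)$ and the inversion conditions (which only depend on content differences) are both invariant, and the identity bijection on cells does the job.

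For part (2), the cycled strip $\kappa(\bm\lambda)=(R_2,\ldots,R_n,R_1^-)$ differs from $\bm\lambda$ only in moving $R_1$ to the last position while simultaneously shifting it left by one cell. For the weighted graph, only edges involving $R_1$ can change; a short case split on whether $l(R_1)\le l(R_k)$, together with the translation identity $|R_1\cap R_k^+|=|R_1^-\cap R_k|$, yields $M(R_k,R_1^-)=M(R_1,R_k)$ for each $k\ge 2$. For the LLT polynomial, I would use the bijection on SSYTs that keeps $T^{(i)}$ in place for $i\ge 2$ and transports $T^{(1)}$ to the shifted last row. Inversions among rows $R_2,\ldots,R_n$ are manifestly preserved; for an inversion involving $R_1$ and some $R_j$, two simultaneous changes occur, namely the cell of $R_1$ loses one unit of content (from $c(u)$ to $c(u)-1$) and the strip index of $R_1$'s entry flips from $1$ (smallest) to $n$ (largest). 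These effects together interchange inversion types (A) and (B), so the total inversion count is unchanged.

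Part (3), the rotation, is the main obstacle. For the weighted graph, I would use the identity $(N-R)^+=(N+1)-R$ to rewrite $M(N-R_j,N-R_i)$ for $j>i$ in terms of $R_i$ and $R_j$; the result resembles the definition of $M(R_i,R_j)$ but with the test $l(R_i)\le l(R_j)$ replaced by $r(R_j)\ge r(R_i)$. When these two tests disagree, one row's content range is contained in the other's, and a direct check gives $|R_i\cap R_j|=|R_i\cap R_j^+|$, so both formulas yield the same value in all cases. For the LLT polynomial, a naive ``reflect each row'' bijection fails because it reverses the weakly-increasing property along rows, so my plan is instead to fix a cutoff $K$ on entries and define $\phi_K\colon\text{SSYT}_{\bm\lambda}^{\le K}\to\text{SSYT}_{N-\bm\lambda}^{\le K}$ that reflects each row \emph{and} replaces each entry $a$ by $K+1-a$; the complementation exactly restores the SSYT condition. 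A direct verification shows that the content involution $c\mapsto N-2-c$ combined with the strip-index reversal $i\mapsto n+1-i$ preserves the content patterns of inversion types (A) and (B), and the value complementation $a\mapsto K+1-a$ combined with the swap of which entry lies at the smaller index preserves the strict inequalities $>$ and $<$, so $\phi_K$ preserves $\text{inv}$. The monomial changes by the variable permutation $x_a\leftrightarrow x_{K+1-a}$, which is harmless because $G_{\bm\lambda}(\bm x;q)$ is a symmetric function, hence its truncation to $x_1,\ldots,x_K$ is invariant under any permutation of these variables; sending $K\to\infty$ yields $G_{N-\bm\lambda}=G_{\bm\lambda}$. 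The crux I expect is recognizing that a purely combinatorial SSYT bijection is not available directly, and one must pass through a finite truncation and exploit the symmetric-function property.
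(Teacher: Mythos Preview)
Your proposal is correct and follows essentially the same approach as the paper. In particular, for part (3) the paper uses exactly the finite-variable truncation with the entry complementation $a\mapsto K+1-a$ together with the content and strip-index reflections, obtaining $G_{N-\bm\lambda}(x_1,\ldots,x_K;q)=G_{\bm\lambda}(x_K,\ldots,x_1;q)$ and then invoking symmetry; your treatment of the weighted-graph identity $M(N-R_j,N-R_i)=M(R_i,R_j)$ is slightly more explicit than the paper's, but the argument is the same.
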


\begin{remark}
In (3), we take $N$ to be sufficiently large simply because we assume that our cells have nonnegative content. Because of (1), the precise value of $N$ will not matter to us.
\end{remark}

\begin{proof}[Proof of Proposition \ref{prop:similar}. ]\hspace{2pt}
\begin{enumerate}
\item This follows directly from the definition because $\Pi(\bm\lambda)$ and $G_{\bm\lambda}(\bm x;q)$ only depend on the relative positions of the rows of $\bm\lambda$. \\

\item This follows directly from the definition because $M(R_i,R_j)=M(R_j,R_i^-)$ and because the condition for a cell $u\in R_i$ for $i\geq 2$ to make an inversion with a cell $v=(1,j')\in R_1$ in $\bm\lambda$ is exactly the condition for $u$ to make an inversion with the cell $v^-=(1,j'-1)\in R_1^-$ in $\kappa(\bm\lambda)$.\\

\item It follows directly from the definition that $M(N-R_i,N-R_j)=M(R_j,R_i)$ and therefore $\Pi(N-\bm\lambda)\cong\Pi(\bm\lambda)$. If we restrict to a finite set of variables $(x_1,\ldots,x_k)$, then by associating a tableau $\bm T=(T^{(1)},\ldots,T^{(n)})\in\text{SSYT}_{\bm\lambda}$ to a tableau $-\bm T=(-T^{(1)},\ldots,-T^{(n)})\in\text{SSYT}_{N-\bm\lambda}$ by setting $-T^{(i)}_{1,j}=k+1-T^{(n+1-i)}_{1,N-j}$, we have \begin{equation}G_{N-\bm\lambda}(x_1,\ldots,x_k;q)=G_{\bm\lambda}(x_k,\ldots,x_1;q)\end{equation}
and because LLT polynomials are symmetric, it follows that $G_{N-\bm\lambda}(\bm x;q)=G_{\bm\lambda}(\bm x;q)$.
\end{enumerate}
\end{proof}

\begin{example} Let $\bm\lambda=(4/0,5/4,8/5,6/1)$ as in Example \ref{ex:wgraph}. The cycle $\kappa(\bm\lambda)$ has a negative content, so for convenience we will first translate right. Then the cycle $\kappa(\bm\lambda^+)=(6/5,9/6,7/2,4/0)$ and the rotation $7-\bm\lambda=(7/2,3/0,4/3,7/4)$ are drawn below.
$$
\begin{tikzpicture}
\draw (-0.25,0.25) node (0) {$\bm \lambda=$};
\draw (0,-1.5) -- (2,-1.5) -- (2,-1) -- (0,-1) -- (0,-1.5) (0.5,-1.5) -- (0.5,-1) (1,-1.5) -- (1,-1) (1.5,-1.5) -- (1.5,-1) (2,-0.5) -- (2.5,-0.5) -- (2.5,0) -- (2,0) -- (2,-0.5) (2.5,0.5) -- (4,0.5) -- (4,1) -- (2.5,1) -- (2.5,0.5) (3,0.5) -- (3,1) (3.5,0.5) -- (3.5,1) (0.5,1.5) -- (3,1.5) -- (3,2) -- (0.5,2) -- (0.5,1.5) (1,1.5) -- (1,2) (1.5,1.5) -- (1.5,2) (2,1.5) -- (2,2) (2.5,1.5) -- (2.5,2);
\end{tikzpicture}
\begin{tikzpicture}
\draw (-0.75,0.25) node (0) {$\kappa(\bm \lambda^+)=$};
\draw (-0.5,1.5) -- (1.5,1.5) -- (1.5,2) -- (-0.5,2) -- (-0.5,1.5) (0,1.5) -- (0,2) (0.5,1.5) -- (0.5,2) (1,1.5) -- (1,2) (2,-1.5) -- (2.5,-1.5) -- (2.5,-1) -- (2,-1) -- (2,-1.5) (2.5,-0.5) -- (4,-0.5) -- (4,0) -- (2.5,0) -- (2.5,-0.5) (3,-0.5) -- (3,0) (3.5,-0.5) -- (3.5,0) (0.5,0.5) -- (3,0.5) -- (3,1) -- (0.5,1) -- (0.5,0.5) (1,0.5) -- (1,1) (1.5,0.5) -- (1.5,1) (2,0.5) -- (2,1) (2.5,0.5) -- (2.5,1);
\end{tikzpicture} \ 
\begin{tikzpicture}
\draw (-1,0.25) node (0) {$7-\bm\lambda=$};
\draw (1,-1.5) -- (3.5,-1.5) -- (3.5,-1) -- (1,-1) -- (1,-1.5) (1.5,-1.5) -- (1.5,-1) (2,-1.5) -- (2,-1) (2.5,-1.5) -- (2.5,-1) (3,-1.5) -- (3,-1) (0,-0.5) -- (1.5,-0.5) -- (1.5,0) -- (0,0) -- (0,-0.5) (0.5,-0.5) -- (0.5,0) (1,-0.5) -- (1,0) (1.5,0.5) -- (2,0.5) -- (2,1) -- (1.5,1) -- (1.5,0.5) (2,1.5) -- (4,1.5) -- (4,2) -- (2,2) -- (2,1.5) (2.5,1.5) -- (2.5,2) (3,1.5) -- (3,2) (3.5,1.5) -- (3.5,2);
\end{tikzpicture}$$
\end{example}

\begin{definition}
We say that rows $R$ and $R'$ \emph{commute}, denoted $R\leftrightarrow R'$, if we have $M(R,R')=M(R',R)$, and otherwise we write $R\nleftrightarrow R'$.
\end{definition}

\begin{lemma} \cite[Lemma 3.15]{caterpillarllt} Let $\bm\lambda=(R_1,\ldots,R_n)$ be a horizontal-strip. If $R_i\leftrightarrow R_{i+1}$, then $(R_1,\ldots,R_{i+1},R_i,\ldots,R_n)\in\mathcal S(\bm\lambda)$. \end{lemma}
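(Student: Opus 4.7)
The proof splits into two independent tasks: verifying the graph isomorphism $\Pi(\bm\lambda)\cong\Pi(\bm\mu)$, and establishing the polynomial identity $G_{\bm\lambda}(\bm x;q)=G_{\bm\mu}(\bm x;q)$.

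For the graph isomorphism, I take the transposition $\varphi=(i,i+1)$ as the candidate. Vertex weights match trivially since the same multiset of rows occurs. For any edge $\{k,l\}\neq\{i,i+1\}$, the pair of rows involved is literally the same (just relabeled by $\varphi$), so the weight is unchanged. The only edge whose weight could change is the one between positions $i$ and $i+1$: it equals $M(R_i,R_{i+1})$ in $\bm\lambda$ and $M(R_{i+1},R_i)$ in $\bm\mu$, and these agree precisely by the commuting hypothesis $R_i\leftrightarrow R_{i+1}$.

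For the LLT identity, the plan is to construct a bijection $\Phi\colon\text{SSYT}_{\bm\lambda}\to\text{SSYT}_{\bm\mu}$ preserving both $\bm x^{\bm T}$ and $\text{inv}(\bm T)$. The natural first attempt is the \emph{simple swap}: fix $T^{(k)}$ for $k\notin\{i,i+1\}$ and set $\Phi(\bm T)^{(i)}=T^{(i+1)}$, $\Phi(\bm T)^{(i+1)}=T^{(i)}$. Because every external $T^{(k)}$ is either earlier than both $T^{(i)},T^{(i+1)}$ (when $k<i$) or later than both (when $k>i+1$) in both orderings, and the simple swap keeps the entry at each cell of $R_i\cup R_{i+1}$ unchanged, all cross inversions between external tableaux and the pair $\{T^{(i)},T^{(i+1)}\}$ are automatically preserved cell-by-cell. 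In the two-row case $n=2$ there are no external tableaux, and the graph on two vertices is trivially triangle-free, so \cite[Theorem 4.6]{caterpillarllt} and \cite[Example 4.10]{caterpillarllt} give $G_{(R_i,R_{i+1})}=G_{P_{ab}(M)}=G_{(R_{i+1},R_i)}$ with $a=|R_i|$, $b=|R_{i+1}|$, and $M=M(R_i,R_{i+1})=M(R_{i+1},R_i)$, all symmetric under commuting.

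The main obstacle is lifting from the two-row case to the multi-row case. The simple swap does not preserve the internal inversion count between $T^{(i)}$ and $T^{(i+1)}$ pointwise, and small examples show that for a fixed choice of external tableaux the refined fiber $\sum_{T^{(i)},T^{(i+1)}}q^{\text{inv}(\bm T)}\bm x^{T^{(i)}}\bm x^{T^{(i+1)}}$ is not equal in the two orderings, so the required cancellation must be global across external configurations. To produce an honest bijection I would proceed by case analysis on the commuting structure --- either $l(R_i)=l(R_{i+1})$, or one row strictly contained in the other on the left with weak right containment --- and in each case combine the simple swap with corrective local exchanges of entries between $T^{(i)}$ and $T^{(i+1)}$ designed to absorb the internal inversion mismatch. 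The delicate point is that any entry exchange also perturbs cross inversions, so the corrections must be chosen so as to balance at the global level; I expect to verify this either by a careful enumeration argument on tableau pairs, or by reformulating the identity as an operator identity for LLT row-creation operators that commute under the commuting hypothesis.
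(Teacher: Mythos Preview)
The paper does not actually prove this lemma; it is quoted verbatim from \cite[Lemma~3.15]{caterpillarllt} and used as a black box. So there is no ``paper's own proof'' to compare against, and your proposal should be judged on its own.

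Your argument for $\Pi(\bm\lambda)\cong\Pi(\bm\mu)$ is correct and complete: the only edge weight that could change under the transposition is $M_{i,i+1}$, and the commuting hypothesis says exactly that it does not.

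The LLT half, however, is not a proof but an outline with an acknowledged gap. You correctly observe that the simple swap preserves all cross inversions between $\{T^{(i)},T^{(i+1)}\}$ and external tableaux, and you correctly observe that the two-row identity $G_{(R_i,R_{i+1})}=G_{(R_{i+1},R_i)}$ holds. But you then point out yourself that the factor $q^{\text{inv}_{\text{ext-int}}}$ depends on the entries of $T^{(i)},T^{(i+1)}$ and therefore cannot be pulled outside the inner sum, so the two-row identity does not lift fiberwise. From that point on you only sketch a strategy (``case analysis on the commuting structure \ldots corrective local exchanges \ldots I expect to verify this''), and neither branch is carried out. As written this is a plan, not a proof; the actual work of constructing an inversion-preserving bijection, or of proving the required operator commutation, is missing.

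There is also a potential circularity worry: you invoke \cite[Theorem~4.6]{caterpillarllt} and \cite[Example~4.10]{caterpillarllt} to handle the two-row case, but those are later results in the same paper whose Lemma~3.15 you are trying to prove. Before relying on them you would need to check that their proofs do not themselves use Lemma~3.15. (In fact the two-row case $G_{(R,S)}=G_{(S,R)}$ when $R\leftrightarrow S$ can be established directly without those results, so this is fixable, but it should be done explicitly.)
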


We now examine the integers $M_{i,j}(\bm\lambda)$ in more detail, particularly their relationship to commutation. The following Proposition is a straightforward calculation.

\begin{proposition} \label{prop:mrirj} \cite[Proposition 3.8]{caterpillarllt}
Let $\bm\lambda=(R_1,\ldots,R_n)$ be a horizontal-strip and let $1\leq i,j\leq n$. Without loss of generality, assume that $l(R_i)\leq l(R_j)$.
\begin{enumerate}
\item If $r(R_i)<l(R_j)-1$, then $M(R_i,R_j)=M(R_j,R_i)=0$, so $R_i\leftrightarrow R_j$. \\

\item If $l(R_i)=l(R_j)$ or $r(R_j)\leq r(R_i)$, then $M(R_i,R_j)=M(R_j,R_i)=\min\{|R_i|,|R_j|\}$, so $R_i\leftrightarrow R_j$.\\

\item Otherwise, we have $l(R_i)<l(R_j)\leq r(R_i)+1\leq r(R_j)$, and \begin{equation}\label{eq:mrirj}M(R_i,R_j)=r(R_i)-l(R_j)+1\text{ and }M(R_j,R_i)=r(R_i)-l(R_j)+2,\text{ so }R_i\nleftrightarrow R_j.\end{equation}
In particular, we have 
\begin{equation}\label{eq:mij} M_{i,j}=r(R_i)-l(R_j)+1+\chi(i>j),\end{equation} where $\chi(i>j)=1$ if $i>j$ and $0$ otherwise.
\end{enumerate}
\end{proposition}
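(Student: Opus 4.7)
The plan is to reduce every quantity to the length of an intersection of integer intervals, exploiting the fact that each cell of a row is of the form $(1,j)$ and is uniquely determined by its content $j-1$. Under the assumption $l(R_i) \leq l(R_j)$, unpacking the definition of $M$ yields $M(R_i, R_j) = |R_i \cap R_j|$ in every case, while $M(R_j, R_i) = |R_j \cap R_i^+|$ when $l(R_j) > l(R_i)$ and $M(R_j, R_i) = |R_j \cap R_i|$ when $l(R_j) = l(R_i)$. Each row $R$ corresponds to the content interval $[l(R), r(R)]$, and $R_i^+$ corresponds to $[l(R_i)+1, r(R_i)+1]$, so cardinalities of the relevant intersections are then easy to compute.

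First, Case (1) is immediate: from $r(R_i) < l(R_j) - 1$ both $[l(R_i), r(R_i)]$ and $[l(R_i)+1, r(R_i)+1]$ lie strictly below $[l(R_j), r(R_j)]$, so both $M$-values vanish. For Case (2), if $l(R_i) = l(R_j)$ the two content intervals share a left endpoint and the shorter is contained in the longer, giving $|R_i \cap R_j| = |R_j \cap R_i| = \min\{|R_i|, |R_j|\}$; otherwise $l(R_i) < l(R_j) \leq r(R_j) \leq r(R_i)$ gives $[l(R_j), r(R_j)] \subseteq [l(R_i), r(R_i)]$ and also $[l(R_j), r(R_j)] \subseteq [l(R_i)+1, r(R_i)+1]$ (the left containment using $l(R_j) \geq l(R_i)+1$), so both values again equal $|R_j| = \min\{|R_i|, |R_j|\}$.

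For Case (3), the complementary inequalities force $l(R_i) < l(R_j) \leq r(R_i) + 1 \leq r(R_j)$. A direct computation then yields $[l(R_i), r(R_i)] \cap [l(R_j), r(R_j)] = [l(R_j), r(R_i)]$, of size $r(R_i) - l(R_j) + 1$ (empty precisely at the boundary $l(R_j) = r(R_i) + 1$), and $[l(R_j), r(R_j)] \cap [l(R_i)+1, r(R_i)+1] = [l(R_j), r(R_i)+1]$, of size $r(R_i) - l(R_j) + 2$. These two values differ, establishing $R_i \nleftrightarrow R_j$. The unified formula \eqref{eq:mij} then follows by inspection: for $i < j$ we have $M_{i,j} = M(R_i, R_j) = r(R_i) - l(R_j) + 1$, while for $i > j$ we have $M_{i,j} = M(R_j, R_i) = r(R_i) - l(R_j) + 2 = r(R_i) - l(R_j) + 1 + \chi(i > j)$.

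The only subtlety, modest as it is, lies in the asymmetric definition of $M$: the quantities $M(R_i, R_j)$ and $M(R_j, R_i)$ agree in Cases (1) and (2) but differ by exactly one in Case (3), and the equality $l(R_i) = l(R_j)$ within Case (2) must be separated from the strict inequality case because only the strict case invokes the $R_i^+$ branch of the definition. Aside from this bookkeeping of the piecewise definition, the entire argument is an interval-length computation.
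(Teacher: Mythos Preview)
Your proof is correct and is precisely the ``straightforward calculation'' the paper alludes to; the paper does not spell out a proof but merely cites \cite{caterpillarllt}, and your interval-length computation is the natural way to verify each case from the definition of $M$.
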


\begin{example} The three cases of Proposition \ref{prop:mrirj} are illustrated below. The pairs on the left and the middle commute and the pair on the right does not. As a visual description, we have that two rows commute if and only if they are either disjoint and separated by at least one cell, or if one is contained in the other. 
$$\tableau{&&&&&& \ & \ & \ \\ \\ \ & \ & \ & \ & \ }\hspace{60pt}\tableau{& \ & \ & \ \\ \\ \ & \ & \ & \ & \ }\hspace{60pt}\tableau{&&& \ & \ & \ \\ \\ \ & \ & \ & \ & \ }$$
\end{example}

\begin{remark}
Informally, once we fix the edge weight $M_{i,j}$ between two rows $R_i$ and $R_j$, if $R_i\leftrightarrow R_j$, then there is some flexibility in their positions because they need only be disjoint (and separated by at least one cell) or one is contained in the other, while if $R_i\nleftrightarrow R_j$, then by \eqref{eq:mij} their relative positions are specifically constrained.
\end{remark}

\begin{corollary} \label{cor:mrirj}
Let $\bm\lambda=(R_1,\ldots,R_n)$ be a horizontal-strip. \begin{enumerate}
\item If $R_i\leftrightarrow R_j$, then $M_{i,j}$ is either $0$ or $\min\{|R_i|,|R_j|\}$. Equivalently, if $0<M_{i,j}<\min\{|R_i|,|R_j|\}$, then $R_i\nleftrightarrow R_j$. \\

\item If $R_i\nleftrightarrow R_j$, then we either have $l(R_i)<l(R_j)$ and $r(R_i)<r(R_j)$, or we have $l(R_i)>l(R_j)$ and $r(R_i)>r(R_j)$. \\

%(I kind of want to cut this third one)
%\item Suppose that $l(R_i)<l(R_j)$ or $r(R_i)<r(R_j)$. If $M_{i,j}<\min\{|R_i|,|R_j|\}$, then \begin{equation} \label{eq:mijbig} M_{i,j}\geq r(R_i)-l(R_j)+1+\chi(i>j),\end{equation} while if $M_{i,j}>0$, then \begin{equation} \label{eq:mijsmall} M_{i,j}\leq r(R_j)-l(R_i)+1+\chi(i>j),\end{equation}
%and if $R_i\nleftrightarrow R_j$, then equality holds.\\

\item Suppose that $i<j$ and $R_i\nleftrightarrow R_j$. If $M_{i,j}=0$, then $l(R_j)=r(R_i)+1$, so in particular $l(R_i)<l(R_j)$. If $M_{i,j}=|R_i|$, then $r(R_j)=r(R_i)-1$, while if $M_{i,j}=|R_j|$, then $l(R_j)=l(R_i)-1$, so in particular we have $l(R_j)<l(R_i)$ in both cases.
\end{enumerate}
\end{corollary}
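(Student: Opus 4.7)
The plan is to read off each assertion directly from Proposition \ref{prop:mrirj}, after possibly relabeling so that $l(R_i)\leq l(R_j)$; all three parts reduce to a mechanical case check.

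For (1), I observe that cases (1) and (2) of Proposition \ref{prop:mrirj} both exhibit $R_i\leftrightarrow R_j$ with $M(R_i,R_j)\in\{0,\min\{|R_i|,|R_j|\}\}$, while case (3) is precisely the non-commuting case. Hence whenever $R_i\leftrightarrow R_j$ I must be in case (1) or (2), which forces $M_{i,j}\in\{0,\min\{|R_i|,|R_j|\}\}$; the equivalent contrapositive formulation is then immediate.

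For (2), I again relabel so $l(R_i)\leq l(R_j)$ and use that $R_i\nleftrightarrow R_j$ puts us in case (3) of Proposition \ref{prop:mrirj}. This directly yields the inequality chain $l(R_i)<l(R_j)\leq r(R_i)+1\leq r(R_j)$, so in particular $l(R_i)<l(R_j)$ and $r(R_i)<r(R_j)$. Undoing the relabeling recovers one of the two stated alternatives.

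For (3), I fix $i<j$ with $R_i\nleftrightarrow R_j$ and split according to (2). If $l(R_i)<l(R_j)$, then \eqref{eq:mij} gives $M_{i,j}=r(R_i)-l(R_j)+1$; the strict inequalities furnished by case (3) of Proposition \ref{prop:mrirj} yield $|R_i|-M_{i,j}=l(R_j)-l(R_i)\geq 1$ and $|R_j|-M_{i,j}=r(R_j)-r(R_i)\geq 1$, so the only admissible target value among $\{0,|R_i|,|R_j|\}$ is $M_{i,j}=0$, which forces $l(R_j)=r(R_i)+1$. If instead $l(R_j)<l(R_i)$, applying \eqref{eq:mij} with the roles of $i$ and $j$ swapped gives $M_{i,j}=r(R_j)-l(R_i)+2\geq 1$, ruling out $M_{i,j}=0$; the equations $M_{i,j}=|R_i|$ and $M_{i,j}=|R_j|$ then solve to $r(R_j)=r(R_i)-1$ and $l(R_j)=l(R_i)-1$ respectively, and both subcases satisfy $l(R_j)<l(R_i)$ by construction. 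The only thing to watch is applying \eqref{eq:mij} with the correct orientation, since it is stated under the convention $l(R_i)\leq l(R_j)$; this is bookkeeping rather than a genuine obstacle.
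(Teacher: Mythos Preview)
Your proof is correct and follows essentially the same approach as the paper: all three parts are read off from the trichotomy in Proposition~\ref{prop:mrirj} after normalizing so that $l(R_i)\leq l(R_j)$. The only cosmetic difference is in part~(3): the paper first fixes the value of $M_{i,j}$ and argues by contradiction (using that $M(R_i,R_j)$ and $M(R_j,R_i)$ differ by~$1$, so one being $0$ or $\min\{|R_i|,|R_j|\}$ would push the other outside the bound \eqref{eq:mijbound}), whereas you first split on the sign of $l(R_i)-l(R_j)$ and then compute directly from \eqref{eq:mij}; both amount to the same case check.
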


\begin{proof} \hspace{2pt}
\begin{enumerate}
\item Assuming without loss of generality that $l(R_i)\leq l(R_j)$, then all of the possibilities are enumerated in Proposition \ref{prop:mrirj} and we have $R_i\leftrightarrow R_j$ only when $M_{i,j}=0$ or $\min\{|R_i|,|R_j|\}$.\\

\item Assuming without loss of generality that $l(R_i)\leq l(R_j)$, then all of the possibilities are enumerated in Proposition \ref{prop:mrirj} and we have $R_i\nleftrightarrow R_j$ only when $l(R_i)<l(R_j)$ and $l(R_j)\leq r(R_i)+1\leq r(R_j)$, so in particular $r(R_i)<r(R_j)$.\\

%\item Suppose that $l(R_i)<l(R_j)$. All of the possibilities are enumerated in Proposition \ref{prop:mrirj}. If $r(R_i)<l(R_j)-1$, then \begin{equation} M_{i,j}=0\geq r(R_i)-l(R_j)+1+\chi(i>j),\end{equation} if $r(R_i)\geq r(R_j)$, then \begin{equation}M_{i,j}=|R_j|=r(R_j)-l(R_j)+1\leq r(R_j)-l(R_i)+1+\chi(i>j),\end{equation} and otherwise we have $R_i\nleftrightarrow R_j$ and equality holds.\\

\item By \eqref{eq:mrirj}, if $M_{i,j}=M(R_i,R_j)=0$, then if $l(R_j)<l(R_i)$ we would have $M(R_j,R_i)=M(R_i,R_j)-1=-1$, contradicting \eqref{eq:mijbound}, so we must have $l(R_i)<l(R_j)$ and $M_{i,j}=0=r(R_i)-l(R_j)+1$. Similarly, by \eqref{eq:mrirj}, if $M_{i,j}=M(R_i,R_j)=\min\{|R_i|,|R_j|\}$, then if $l(R_i)<l(R_j)$ we would have $M(R_j,R_i)=\min\{|R_i|,|R_j|\}+1$, contradicting \eqref{eq:mijbound}, so we must have $l(R_j)<l(R_i)$ and $M_{i,j}=r(R_j)-l(R_i)+2$. The result now follows by noting that $|R_i|=r(R_i)-l(R_i)+1$ and $|R_j|=r(R_j)-l(R_j)+1$. 
\end{enumerate}
\end{proof}

\begin{remark}
Note that the converse to (1) does not hold. It is possible to have $M(R_i,R_j)=0$ and $R_i\nleftrightarrow R_j$ as below left. However, (3) tells us that if this occurs, the higher row must be to the right. Similarly, it is possible to have $M(R_i,R_j)=\min\{|R_i|,|R_j|\}$ and $R_i\nleftrightarrow R_j$ as below middle or below right. However, (3) tells us that if this occurs, the higher row must be to the left.
$$\tableau{&&&&&& \ & \ & \ & \ \\ \\ \ & \ & \ & \ & \ & \  } \hspace{50pt} \tableau{\ & \ & \ & \ \\ \\ & \ & \ & \ & \ & \ & \ } \hspace{50pt} \tableau{\ & \ & \ & \ & \ & \ \\ \\ &&& \ & \ & \ & \ }$$
\end{remark}

We now show how we can use cycling and commuting to prove Theorem \ref{thm:main} in a special case. Recall that if $\bm\lambda=(R_1,\ldots,R_n)$, then we have \begin{equation} \label{eq:Mn} M(\bm\lambda)=\sum_{1\leq i<j\leq n}M(R_i,R_j)\leq\sum_{1\leq i<j\leq n}\min\{|R_i|,|R_j|\}=n(\bm\lambda).\end{equation}

\begin{lemma} \label{lem:hlcase} Theorem \ref{thm:main} holds if $M(\bm\lambda)=n(\bm\lambda)$. 
\end{lemma}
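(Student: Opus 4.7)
The plan is to show that every horizontal-strip $\bm\lambda$ satisfying $M(\bm\lambda)=n(\bm\lambda)$ is similar to a canonical strip depending only on the partition $\lambda$ of its row lengths. Since the weighted graph $\Pi(\bm\lambda)$ in this regime is determined by $\lambda$, the hypothesis $\Pi(\bm\lambda)\cong\Pi(\bm\mu)$ then forces $\bm\lambda$ and $\bm\mu$ to be similar to the same canonical strip, and $G_{\bm\lambda}=G_{\bm\mu}$ follows.

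First, equality in \eqref{eq:Mn} under the hypothesis $M(\bm\lambda)=n(\bm\lambda)$ forces $M(R_i,R_j)=\min\{|R_i|,|R_j|\}$ for every $i<j$, so $\Pi(\bm\lambda)$ is the complete graph whose edge weights are determined by its vertex weights. Its isomorphism class therefore depends only on the multiset of row lengths $\{|R_1|,\ldots,|R_n|\}$, which encodes the partition $\lambda$. Applying the same to $\bm\mu$ gives $M(\bm\mu)=n(\bm\mu)$ and the identical partition of row lengths. I then define the canonical horizontal-strip $\bm\lambda_0=(\lambda_1/0,\ldots,\lambda_\ell/0)$, with the rows of $\lambda$ listed in weakly decreasing order of length: every row sits at $l=0$, so by Proposition \ref{prop:mrirj}(2) every pair commutes, and $\bm\lambda_0$ depends only on $\lambda$.

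The heart of the proof is to show $\bm\lambda\in\mathcal S(\bm\lambda_0)$. The strategy is an induction on a suitable translation-invariant measure of the spread of the left endpoints $l(R_1),\ldots,l(R_n)$, for instance $F(\bm\lambda)=\sum_i l(R_i)-n\min_j l(R_j)\geq 0$. When $F(\bm\lambda)=0$ every row shares the same left edge, so translating via Proposition \ref{prop:similar}(1) puts them all at $l=0$; since every pair then commutes, adjacent commutations sort the rows by length to produce $\bm\lambda_0$. Otherwise, normalize so that $\min_j l(R_j)=0$ and pick a row $R_k$ with $l(R_k)>0$. The $M=n$ hypothesis forces $R_k$ either to nest inside some row at $l=0$ (case (2) of Proposition \ref{prop:mrirj}, commuting with that row) or to sit offset by exactly one cell from such a row (the rigid case (3) configurations given by Corollary \ref{cor:mrirj}(3)). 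In either geometry, a suitable sequence of cycles $\kappa$ (Proposition \ref{prop:similar}(2)), translations, and swaps of adjacent commuting rows produces a similar horizontal-strip with strictly smaller $F$, completing the induction.

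The main obstacle is making this inductive step precise: verifying in each geometric case that cycling and adjacent commutations strictly reduce $F$ while preserving $M=n$. This rests on the rigid positional information that Corollary \ref{cor:mrirj}(3) supplies for the offset case (3) pairs, which are essentially the only nontrivial behavior permitted under $M(\bm\lambda)=n(\bm\lambda)$, and will be the technical heart of the argument.
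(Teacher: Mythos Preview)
Your overall strategy matches the paper's exactly: show that $\bm\lambda$ is similar to the canonical strip $\bm H(\lambda)=(\lambda_1/0,\ldots,\lambda_n/0)$ by reducing the spread $F(\bm\lambda)=\sum_i l(R_i)-n\min_j l(R_j)$ via cycling and commuting. Your base case and the reduction to $\bm\lambda\in\mathcal S(\bm\lambda_0)$ are correct.

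The gap is in your inductive step, and you have essentially flagged it yourself. You pick an arbitrary $R_k$ with $l(R_k)>0$ and analyze its relationship with some row at $l=0$, but that is not the relevant obstruction: to cycle $R_k$ you must first commute it down to position $1$, so what matters is whether $R_k$ commutes with \emph{every} row below it. Your ``either nests or is offset by one'' dichotomy is true of the pair $(R_k,R_a)$ but does not give you that chain of adjacent commutations. The paper's clean move, which you are missing, is to instead choose $R_j$ with $l(R_j)$ \emph{maximal} among those with $l\geq 1$. Then for any $i<j$, if $R_i\nleftrightarrow R_j$, Corollary~\ref{cor:mrirj}(3) applied with $M_{i,j}=\min\{|R_i|,|R_j|\}$ forces $l(R_j)<l(R_i)$, contradicting maximality. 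Hence $R_j$ commutes with every row below it, can be moved to position $1$ by adjacent swaps, and then cycled to produce $R_j^-$ at the top, strictly decreasing $F$. This single observation replaces the case analysis you were anticipating.
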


\begin{proof} Let $\bm\lambda=(R_1,\ldots,R_n)$. By \eqref{eq:Mn}, we have that if $M(\bm\lambda)=n(\bm\lambda)$, then $M(R_i,R_j)=\min\{|R_i|,|R_j|\}$ for all $1\leq i<j\leq n$. Recall that we denote by $\lambda$ the partition determined by the row lengths of $\bm\lambda$. We now show that the horizontal-strip $\bm H(\lambda)=(\lambda_1/0,\ldots,\lambda_n/0)$ is similar to $\bm\lambda$, meaning that the LLT polynomial $G_{\bm\lambda}(\bm x;q)$ only depends on $\lambda$ and therefore only on the weighted graph $\Pi(\bm\lambda)$. By translating, we may assume without loss of generality that $\min\{l(R_i): \ 1\leq i\leq n\}=0$, and suppose that $l(R_a)=0$. Because $M_{i,j}=\min\{|R_i|,|R_j|\}$ for every $i,j$, we have by Proposition \ref{prop:mrirj}, Part 2 an upper bound $l(R_j)\leq r(R_a)+1=|R_a|$, so let us further assume that $\bm\lambda$ has $\sum_{i=1}^n l(R_i)$ minimal among all horizontal-strips similar to $\bm\lambda$. \\

We now claim that $l(R_i)=0$ for every $1\leq i\leq n$. If not, let $j$ be such that $l(R_j)\geq 1$ is maximal. By Corollary \ref{cor:mrirj}, Part 3, if $i<j$ and $R_i\nleftrightarrow R_j$, then because $M_{i,j}=\min\{|R_i|,|R_j|\}$, we must have $l(R_j)<l(R_i)$, contradicting maximality of $l(R_j)$, so we must have $R_i\leftrightarrow R_j$ for every $i<j$. By Proposition \ref{prop:similar}, we can now commute and cycle to find that $(R_1,\ldots,R_n,R_j^-)\in\mathcal S(\bm\lambda)$, contradicting minimality of $\sum_{i=1}^n l(R_i)$. Therefore, we indeed have $l(R_i)=0$ for every $1\leq i\leq n$, so by Proposition \ref{prop:mrirj}, Part 2, we have $R_i\leftrightarrow R_j$ for every $1\leq i,j\leq n$ and by commuting once again we have $\bm H(\lambda)\in\mathcal S(\bm\lambda)$. This completes the proof. 
\end{proof}

\begin{example}
Figure \ref{fig:hlcase} illustrates the idea of the proof of Lemma \ref{lem:hlcase}. The row $R_3$, which has $l(R_3)$ maximal, commutes with all rows below, so by commuting and cycling, we can move it to the left. Continuing in this way, the horizontal-strip $\bm\lambda$ is shown to be similar to $\bm H(4432)$ on the right. 

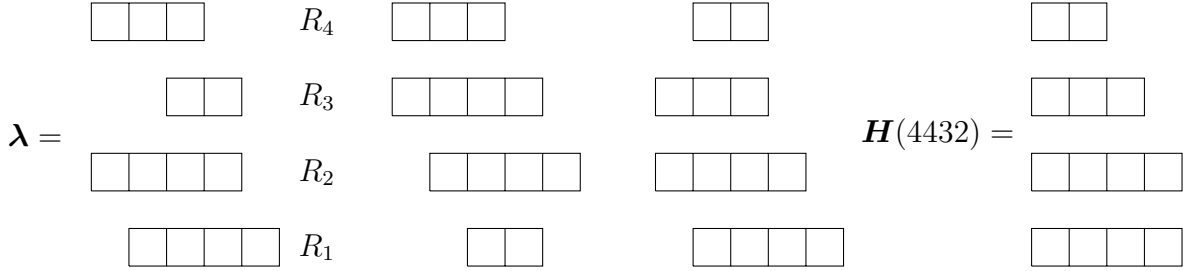
\begin{figure}\caption{An example of using commuting and cycling to show that $\bm H(\lambda)\in\mathcal S(\bm\lambda)$}\label{fig:hlcase}
$$
\begin{tikzpicture}
\draw (-0.75,0.25) node (0) {$\bm \lambda=$};
\draw (3,-1.25) node (1) {$R_1$} (3,-0.25) node (2) {$R_2$} (3,0.75) node (3) {$R_3$} (3,1.75) node (4) {$R_4$};
\draw (0.5,-1.5) -- (2.5,-1.5) -- (2.5,-1) -- (0.5,-1) -- (0.5,-1.5) (1,-1.5) -- (1,-1) (1.5,-1.5) -- (1.5,-1) (2,-1.5) -- (2,-1) (0,-0.5) -- (2,-0.5) -- (2,0) -- (0,0) -- (0,-0.5) (0.5,-0.5) -- (0.5,0) (1,-0.5) -- (1,0) (1.5,-0.5) -- (1.5,0) (1,0.5) -- (2,0.5) -- (2,1) -- (1,1) -- (1,0.5) (1.5,0.5) -- (1.5,1) (0,1.5) -- (1.5,1.5) -- (1.5,2) -- (0,2) -- (0,1.5) (0.5,1.5) -- (0.5,2) (1,1.5) -- (1,2);
\draw (5,-1.5) -- (6,-1.5) -- (6,-1) -- (5,-1) -- (5,-1.5) (5.5,-1.5) -- (5.5,-1) (4.5,-0.5) -- (6.5,-0.5) -- (6.5,0) -- (4.5,0) -- (4.5,-0.5) (5,-0.5) -- (5,0) (5.5,-0.5) -- (5.5,0) (6,-0.5) -- (6,0) (4,0.5) -- (6,0.5) -- (6,1) -- (4,1) -- (4,0.5) (4.5,0.5) -- (4.5,1) (5,0.5) -- (5,1) (5.5,0.5) -- (5.5,1) (4,1.5) -- (5.5,1.5) -- (5.5,2) -- (4,2) -- (4,1.5) (4.5,1.5) -- (4.5,2) (5,1.5) -- (5,2);
\draw (8,-1.5) -- (10,-1.5) -- (10,-1) -- (8,-1) -- (8,-1.5) (8.5,-1.5) -- (8.5,-1) (9,-1.5) -- (9,-1) (9.5,-1.5) -- (9.5,-1) (7.5,-0.5) -- (9.5,-0.5) -- (9.5,0) -- (7.5,0) -- (7.5,-0.5) (8,-0.5) -- (8,0) (8.5,-0.5) -- (8.5,0) (9,-0.5) -- (9,0) (7.5,0.5) -- (9,0.5) -- (9,1) -- (7.5,1) -- (7.5,0.5) (8,0.5) -- (8,1) (8.5,0.5) -- (8.5,1) (8,1.5) -- (9,1.5) -- (9,2) -- (8,2) -- (8,1.5) (8.5,1.5) -- (8.5,2);
\draw (11.25,0.25) node (0) {$\bm H(4432)=$};
\draw (12.5,-1.5) -- (14.5,-1.5) -- (14.5,-1) -- (12.5,-1) -- (12.5,-1.5) (13,-1.5) -- (13,-1) (13.5,-1.5) -- (13.5,-1) (14,-1.5) -- (14,-1) (12.5,-0.5) -- (14.5,-0.5) -- (14.5,0) -- (12.5,0) -- (12.5,-0.5) (13,-0.5) -- (13,0) (13.5,-0.5) -- (13.5,0) (14,-0.5) -- (14,0) (12.5,0.5) -- (14,0.5) -- (14,1) -- (12.5,1) -- (12.5,0.5) (13,0.5) -- (13,1) (13.5,0.5) -- (13.5,1) (12.5,1.5) -- (13.5,1.5) -- (13.5,2) -- (12.5,2) -- (12.5,1.5) (13,1.5) -- (13,2);
\end{tikzpicture}$$
\end{figure}
\end{example}

\begin{remark} In this case where $M(\bm\lambda)=n(\bm\lambda)$, the LLT polynomial $G_{\bm\lambda}(\bm x;q)$ is the transformed modified Hall--Littlewood polynomial  $\tilde H_\lambda(\bm x;q)$, which has many connections in algebraic combinatorics \cite{qtcat,cdm}. \end{remark}

There is another useful linear relationship between LLT polynomials, which we can think of as a deletion-contraction relation. It will be convenient for us to rearrange this relation as follows, so that we add an edge and contract, rather than delete an edge and contract.

\begin{lemma} \label{lem:inductiverelation} \cite[Lemma 3.17]{caterpillarllt} Let $\bm\lambda=(R_1,\ldots,R_n)$ be a horizontal-strip with $l(R_{i+1})>l(R_i)$ and $R_i\nleftrightarrow R_{i+1}$. Define the horizontal-strips  \begin{align}\label{eq:inductivestrips}\bm\lambda'&=(R_1,\ldots,R_{i+1},R_i,\ldots,R_n)\text{ and }\\\nonumber\bm\lambda''&=(R_1,\ldots,R_i\cup R_{i+1},R_i\cap R_{i+1},\ldots,R_n).\end{align}
Then we have
\begin{equation}\label{eq:inductiverelation} G_{\bm\lambda}(\bm x;q)=\frac 1qG_{\bm\lambda'}(\bm x;q)+\frac{q-1}qG_{\bm\lambda''}(\bm x;q).\end{equation}
\end{lemma}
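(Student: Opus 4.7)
The plan is a direct combinatorial proof that reduces the identity to a local computation on the pair of affected rows, then verifies the local identity by an induction or case analysis.

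First, I would fix a tableau $\bm S$ on all rows $R_k$ with $k \notin \{i, i+1\}$; this data is common to $\bm\lambda$, $\bm\lambda'$, and $\bm\lambda''$. A key observation is that inversions between $\bm S$ and the affected pair depend only on the multiset of (content, value) pairs occupied by the cells of the pair, not on how those pairs are distributed between the two rows: for every $R_k$ with $k<i$ both rows of the pair come after $R_k$, and for $k>i+1$ both come before, so the rules for same-content and off-by-one inversions are symmetric in the two members of the pair. Since $|R_i|+|R_{i+1}|=|R_i\cup R_{i+1}|+|R_i\cap R_{i+1}|$ and the column-by-column cell counts match, the same (content, value) multisets are realizable in all three cases, and the monomial $\bm x^{\bm T}$ is determined by that multiset. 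Hence it suffices to prove the identity after further fixing a realizable multiset at each content and summing over tableaux that realize it.

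After this reduction, the hypothesis $l(R_i)<l(R_{i+1})\leq r(R_i)+1\leq r(R_{i+1})$ from Proposition \ref{prop:mrirj} splits the contents into a left-only region $[l(R_i),l(R_{i+1})-1]$, an overlap region $[l(R_{i+1}),r(R_i)]$ of some size $k\geq 0$, and a right-only region $[r(R_i)+1,r(R_{i+1})]$. Assignments at non-overlap contents are forced, while at each overlap column we have an unordered pair of values whose distribution between rows must be chosen subject to the weakly-increasing SSYT constraints on each full row. The inversions internal to the pair are of two types: same-content inversions at overlap columns, and off-by-one inversions between adjacent overlap columns (together with one boundary contribution that differs among $\bm\lambda$, $\bm\lambda'$, and $\bm\lambda''$). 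The goal is then to verify the local identity $qG_{\bm\lambda}^{\text{loc}}=G_{\bm\lambda'}^{\text{loc}}+(q-1)G_{\bm\lambda''}^{\text{loc}}$.

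The main obstacle is that SSYT constraints couple neighboring overlap columns and the boundary between overlap and non-overlap regions, so the local sum does not factor naively column-by-column. I would address this by induction on the number $k$ of overlap columns, with base case $k=0$ (disjoint but content-adjacent rows, where $R_i\cap R_{i+1}=\emptyset$ and the identity reduces to a short direct check of the unique cross-column off-by-one inversion that differs across the three horizontal-strips), and an inductive step that peels off the leftmost overlap column by absorbing its smaller value into the forced non-overlap data. A conceptually cleaner but technically heavier alternative is to package the relation as a quadratic Hecke-type identity $qB_{R_i}B_{R_{i+1}}=B_{R_{i+1}}B_{R_i}+(q-1)B_{R_i\cup R_{i+1}}B_{R_i\cap R_{i+1}}$ for Bernstein-style creation operators realizing $G_{\bm\lambda}$, but this shifts the difficulty to establishing the operator relation itself.
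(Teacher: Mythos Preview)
The paper does not prove this lemma; it is quoted verbatim from \cite[Lemma~3.17]{caterpillarllt} with no argument given, so there is nothing in-paper to compare against. Your overall plan---reduce to a local two-row computation by fixing the fillings of all other rows, then verify the resulting local identity---is the standard shape for such relations, and your first reduction is correct: for $k<i$ or $k>i+1$, an inversion between a cell of $R_k$ and a cell of the affected pair depends only on the content and value of the pair cell, not on which of the two rows it sits in.

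One assertion, however, is false. You write that ``the same (content, value) multisets are realizable in all three cases,'' and use this to justify restricting to a single realizable multiset. In fact a multiset can be realizable for $\bm\lambda$ and $\bm\lambda'$ but not for $\bm\lambda''$. With a single overlap column $c$, left boundary value $2$ at $c-1$ (in $R_i$), right boundary value $2$ at $c+1$ (in $R_{i+1}$), and the pair $\{1,3\}$ at $c$, the filling $R_i=(2,3)$, $R_{i+1}=(1,2)$ is a valid SSYT, but in $\bm\lambda''$ the cup row would read $(2,z,2)$ with $z\in\{1,3\}$, which is never weakly increasing. The local identity still holds here ($q\cdot q^1=q^2+(q-1)\cdot 0$), but only because the $\bm\lambda''$ contribution vanishes, not because realizability matches. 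The fix is easy in principle---sum over all multisets and allow some of $G_{\bm\lambda}^{\mathrm{loc}},G_{\bm\lambda'}^{\mathrm{loc}},G_{\bm\lambda''}^{\mathrm{loc}}$ to be zero---but your write-up uses the false claim as the justification for the reduction, so as stated the argument has a gap. Relatedly, the inductive step ``peel off the leftmost overlap column by absorbing its smaller value'' is too sketchy: the cup row in $\bm\lambda''$ continues through the whole overlap while $R_i$ in $\bm\lambda$ stops at $r(R_i)$, so absorbing a value on the left changes the boundary constraints for the two triples differently, and you will have to split into exactly the cases (including the non-realizable ones above) that the false realizability claim was meant to avoid.
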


Note that by Proposition \ref{prop:mrirj}, Part 1, the condition $R_i\nleftrightarrow R_{i+1}$ implies that $r(R_i)\geq\ell(R_{i+1})-1$, so the row $R_i\cup R_{i+1}$ makes sense. \\

We now describe the weighted graphs associated to these horizontal-strips $\bm\lambda'$ and $\bm\lambda''$.

\begin{lemma} \label{lem:newgraphs} \cite[Proposition 3.18]{caterpillarllt} Let $R_1$, $R_2$, and $R$ be rows with $R_1\nleftrightarrow R_2$ and $l(R_2)>l(R_1)$ and let $M=M(R_1,R_2)$, $M_1=M(R_1,R)$, and $M_2=M(R_2,R)$. Then \begin{align} M(R_2,R_1)&=M+1, \\\label{eq:capedge}M(R_1\cap R_2,R)&=\min\{M,M_1,M_2\},\text{ and }\\\label{eq:cupedge}M(R_1\cup R_2,R)&=\min\{|R|,\max\{M_1,M_2,M_1+M_2-M\}\}.\end{align}
\end{lemma}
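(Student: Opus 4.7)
The plan is to verify the three identities by direct calculation from the definition of $M$ and the case analysis of Proposition \ref{prop:mrirj}; no induction or auxiliary machinery is required, and the content of the lemma is really a bookkeeping statement about how the numbers $M(R_i,R)$ transform when $R_1$ and $R_2$ are merged or collapsed.

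The first identity $M(R_2,R_1)=M+1$ is immediate: since $R_1\nleftrightarrow R_2$ with $l(R_1)<l(R_2)$, we are exactly in Case (3) of Proposition \ref{prop:mrirj}, and \eqref{eq:mrirj} gives $M(R_1,R_2)=r(R_1)-l(R_2)+1$ together with $M(R_2,R_1)=r(R_1)-l(R_2)+2$, so the second is one more than the first. For \eqref{eq:capedge} and \eqref{eq:cupedge} I would first identify $R_1\cap R_2$ and $R_1\cup R_2$ explicitly. Proposition \ref{prop:mrirj}(3) gives $l(R_1)<l(R_2)\leq r(R_1)+1\leq r(R_2)$, so $R_1\cap R_2$ (when nonempty) is the row with $l(R_1\cap R_2)=l(R_2)$ and $r(R_1\cap R_2)=r(R_1)$, of size $M$; and $R_1\cup R_2$ is the row with $l(R_1\cup R_2)=l(R_1)$ and $r(R_1\cup R_2)=r(R_2)$, of size $|R_1|+|R_2|-M$.

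Having identified these rows, both identities reduce to casework on the position of $R$: compare $l(R)$ and $r(R)$ against the four boundary contents $l(R_1), l(R_2), r(R_1), r(R_2)$, and, for each resulting configuration, apply Proposition \ref{prop:mrirj} to evaluate $M_1$, $M_2$, and either $M(R_1\cap R_2,R)$ or $M(R_1\cup R_2,R)$ in closed form. The target formulas should then fall out directly. The combinatorial intuition behind them is clean: $R_1\cap R_2$ is contained in both $R_1$ and $R_2$ and has only $M$ cells, so its interaction with $R$ is capped simultaneously by $M$, $M_1$, and $M_2$, matching $\min\{M,M_1,M_2\}$; and for the union, the cells of $R$ interacting with $R_1\cup R_2$ satisfy an inclusion-exclusion-style lower bound $M_1+M_2-M$ that can grow up to $|R|$, matching $\min\{|R|,\max\{M_1,M_2,M_1+M_2-M\}\}$.

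The main obstacle is organizing the casework to avoid duplication, since a priori each of $l(R)$ and $r(R)$ can fall into any of several intervals among the four boundary contents, and one must track whether $R$ is disjoint, nested, or properly overlapping with respect to each of $R_1, R_2$. A useful simplification is the symmetry under swapping the roles of $R_1$ and $R_2$ while reversing content, which is exactly the rotation $N-\cdot$ from Proposition \ref{prop:similar}(3); invoking it roughly halves the distinct configurations. After this reduction a small finite checklist remains, each item resolved by a one-line substitution into Proposition \ref{prop:mrirj}.
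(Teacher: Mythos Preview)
The paper does not prove this lemma; it is quoted from \cite[Proposition~3.18]{caterpillarllt} without argument, so there is no in-paper proof to compare against. Your plan is the natural (and essentially the only) approach: the first identity is literally \eqref{eq:mrirj}, and the other two are a finite case check on the position of $R$ relative to the four boundary contents $l(R_1)<l(R_2)\leq r(R_1)+1\leq r(R_2)$, using Proposition~\ref{prop:mrirj} to evaluate each $M$ in closed form. Your identification of $R_1\cap R_2$ and $R_1\cup R_2$ is correct, the rotation symmetry you invoke genuinely halves the cases, and the heuristics you give for why the $\min$ and the $\min/\max$ formulas arise are accurate. One small point to be careful about in executing the casework: the definition of $M(R_i,R_j)$ depends on the order of the arguments (via the comparison of $l$-values), so when you place $R$ you must also track whether $R$ sits below or above each of $R_1,R_2,R_1\cap R_2,R_1\cup R_2$ in the multiskew---the lemma is stated with $R$ above both $R_1$ and $R_2$ (since $M_1=M(R_1,R)$ and $M_2=M(R_2,R)$), and the cap and cup rows inherit the positions of $R_1,R_2$, so this is consistent, but it is worth making explicit in the write-up.
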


\begin{remark}
Lemma \ref{lem:newgraphs} describes exactly how the weighted graphs of the horizontal-strips in \eqref{eq:inductivestrips} can be obtained from $\Pi=\Pi(\bm\lambda)$. The weighted graph $\Pi'$ is obtained from $\Pi$ by increasing the weight of the edge $(R_i,R_{i+1})$ by one. The weighted graph $\Pi''$ is obtained from $\Pi$ by replacing $R_i$ and $R_{i+1}$ by new vertices $R_i\cap R_{i+1}$ and $R_i\cup R_{i+1}$ of weights $M$ and $|R_i|+|R_{i+1}|-M$ respectively, joined by an edge of weight $M$, and joining them to each other vertex $R$ by edges of weights given in \eqref{eq:capedge} and \eqref{eq:cupedge}. In particular, we have 
\begin{equation}\label{eq:nMinduction}
n(\bm\lambda')=n(\bm\lambda), \ M(\bm\lambda')=M(\bm\lambda)+1,\text{ and }n(\bm\lambda'')<n(\bm\lambda),
\end{equation}
so the relation \eqref{eq:inductiverelation} will allow us to perform induction on $n(\bm\lambda)$ and on $n(\bm\lambda)-M(\bm\lambda)$. 
\end{remark}

\begin{example} Let $\bm\lambda=(4/0,5/4,6/1,8/5)$ and note that $l(R_4)>l(R_3)$ and $R_3\nleftrightarrow R_4$. Therefore, letting $\bm\lambda'=(4/0,5/4,8/5,6/1)$ and $\bm\lambda''=(4/0,5/4,8/1,6/5)$, we have that \begin{equation}G_{\bm\lambda}(\bm x;q)= \frac 1qG_{\bm\lambda'}(\bm x;q)+\frac{q-1}qG_{\bm\lambda''}(\bm x;q).\end{equation}
The horizontal-strips $\bm\lambda$, $\bm\lambda'$, and $\bm\lambda''$, and their weighted graphs $\Pi(\bm\lambda)$, $\Pi(\bm\lambda')$, and $\Pi(\bm\lambda'')$ are given below. We can think of $\Pi(\bm\lambda)$ and $\Pi(\bm\lambda'')$ as a deletion and contraction of $\Pi(\bm\lambda')$. 

$$
\begin{tikzpicture}
\draw (-0.75,0.25) node (0) {$\bm \lambda=$};
\draw (0,-1.5) -- (2,-1.5) -- (2,-1) -- (0,-1) -- (0,-1.5) (0.5,-1.5) -- (0.5,-1) (1,-1.5) -- (1,-1) (1.5,-1.5) -- (1.5,-1) (2,-0.5) -- (2.5,-0.5) -- (2.5,0) -- (2,0) -- (2,-0.5) (2.5,1.5) -- (4,1.5) -- (4,2) -- (2.5,2) -- (2.5,1.5) (3,1.5) -- (3,2) (3.5,1.5) -- (3.5,2) (0.5,0.5) -- (3,0.5) -- (3,1) -- (0.5,1) -- (0.5,0.5) (1,0.5) -- (1,1) (1.5,0.5) -- (1.5,1) (2,0.5) -- (2,1) (2.5,0.5) -- (2.5,1);
\end{tikzpicture} \hspace{10pt}
\begin{tikzpicture}
\draw (-0.25,0.25) node (0) {$\bm \lambda'=$};
\draw (0,-1.5) -- (2,-1.5) -- (2,-1) -- (0,-1) -- (0,-1.5) (0.5,-1.5) -- (0.5,-1) (1,-1.5) -- (1,-1) (1.5,-1.5) -- (1.5,-1) (2,-0.5) -- (2.5,-0.5) -- (2.5,0) -- (2,0) -- (2,-0.5) (2.5,0.5) -- (4,0.5) -- (4,1) -- (2.5,1) -- (2.5,0.5) (3,0.5) -- (3,1) (3.5,0.5) -- (3.5,1) (0.5,1.5) -- (3,1.5) -- (3,2) -- (0.5,2) -- (0.5,1.5) (1,1.5) -- (1,2) (1.5,1.5) -- (1.5,2) (2,1.5) -- (2,2) (2.5,1.5) -- (2.5,2);
\end{tikzpicture} \hspace{10pt}
\begin{tikzpicture}
\draw (-0.75,0.25) node (0) {$\bm\lambda''=$};
\draw (0,-1.5) -- (2,-1.5) -- (2,-1) -- (0,-1) -- (0,-1.5) (0.5,-1.5) -- (0.5,-1) (1,-1.5) -- (1,-1) (1.5,-1.5) -- (1.5,-1) (2,-0.5) -- (2.5,-0.5) -- (2.5,0) -- (2,0) -- (2,-0.5) (0.5,0.5) -- (4,0.5) -- (4,1) -- (0.5,1) -- (0.5,0.5) (1,0.5) -- (1,1) (1.5,0.5) -- (1.5,1) (2,0.5) -- (2,1) (2.5,0.5) -- (2.5,1) (3,0.5) -- (3,1) (3.5,0.5) -- (3.5,1) (2.5,1.5) -- (3,1.5) -- (3,2) -- (2.5,2) -- (2.5,1.5);
\end{tikzpicture}$$

$$
\begin{tikzpicture}
\node[shape=circle,draw=black,minimum size=10mm](1) at (0,0) {$4$};
\node[shape=circle,draw=black,minimum size=10mm](2) at (2,0) {$5$};
\node[shape=circle,draw=black,minimum size=10mm](3) at (2,-1.8) {$1$};
\node[shape=circle,draw=black,minimum size=10mm](4) at (4,0) {$3$};
\path [-](1) edge node [above]{$3$} (2);
\path [-](2) edge node [right]{$1$} (3);
\path [-](2) edge node [above]{$1$} (4);
\end{tikzpicture}\hspace{15pt}
\begin{tikzpicture}
\node[shape=circle,draw=black,minimum size=10mm](1) at (0,0) {$4$};
\node[shape=circle,draw=black,minimum size=10mm](2) at (2,0) {$5$};
\node[shape=circle,draw=black,minimum size=10mm](3) at (2,-1.8) {$1$};
\node[shape=circle,draw=black,minimum size=10mm](4) at (4,0) {$3$};
\path [-](1) edge node [above]{$3$} (2);
\path [-](2) edge node [right]{$1$} (3);
\path [-](2) edge node [above]{$\textcolor{red}2$} (4);
\end{tikzpicture}\hspace{15pt}
\begin{tikzpicture}
\node[shape=circle,draw=black,minimum size=10mm](1) at (0,0) {$4$};
\node[shape=circle,draw=black,minimum size=10mm](2) at (2,0) {$\textcolor{red}7$};
\node[shape=circle,draw=black,minimum size=10mm](3) at (2,-1.8) {$1$};
\node[shape=circle,draw=black,minimum size=10mm](4) at (4,0) {$\textcolor{red}1$};
\path [-](1) edge node [above]{$3$} (2);
\path [-](2) edge node [right]{$1$} (3);
\path [-](2) edge node [above]{$\textcolor{red}1$} (4);
\end{tikzpicture}
$$
\end{example}

Our strategy is to replace $\bm\lambda$ and $\bm\mu$ by similar horizontal-strips to which we can apply \eqref{eq:inductiverelation}. We make the following definition. 

\begin{definition} Let $\bm\lambda$ and $\bm\mu$ be horizontal-strips with $\Pi(\bm\lambda)\cong\Pi(\bm\mu)$. A \emph{good substitute} for $(\bm\lambda,\bm\mu)$ is a pair of horizontal-strips $(\bm\lambda',\bm\mu')$, where $\bm\lambda'=(R_1,\ldots,R_n)\in\mathcal S(\bm\lambda)$ and $\bm\mu'=(S_1,\ldots,S_n)\in\mathcal S(\bm\mu)$ satisfy \begin{equation} l(R_1)<l(R_2), \ R_1\nleftrightarrow R_2, \ l(S_1)<l(S_2), \ S_1\nleftrightarrow S_2,\end{equation}
and $\varphi_1=1$ and $\varphi_2=2$, where $\varphi:\Pi(\bm\lambda')\xrightarrow\sim\Pi(\bm\mu')$. A single horizontal-strip $\bm\lambda$ is \emph{good} if for any horizontal-strip $\bm\mu$ such that $\Pi(\bm\lambda)\cong\Pi(\bm\mu)$, there is a good substitute for $(\bm\lambda,\bm\mu)$.  
\end{definition}

We now state a key Lemma. The proof is quite technical so we postpone it to Section \ref{section:keylemma}. 

\begin{lemma} \label{lem:key} Let $\bm\lambda=(R_1,\ldots,R_n)$ be a horizontal-strip with $n(\bm\lambda)-M(\bm\lambda)\geq 1$. Suppose that $\bm\lambda$ satisfies the condition that \begin{align} \label{eq:IH} \text{Theorem \ref{thm:main} holds for horizontal-strips }\bm\lambda'\text{ and }\bm\mu'\text{ with either }\\\nonumber n(\bm\lambda')<n(\bm\lambda),\text{ or with }n(\bm\lambda')=n(\bm\lambda)\text{ and }M(\bm\lambda')>M(\bm\lambda).\end{align}
Then $\bm\lambda$ is good.
\end{lemma}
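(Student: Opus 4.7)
The plan is to produce, for any $\bm\mu$ with $\Pi(\bm\mu) \cong \Pi(\bm\lambda)$, horizontal-strips $\bm\lambda' \in \mathcal S(\bm\lambda)$ and $\bm\mu' \in \mathcal S(\bm\mu)$ whose first two rows realize a common non-commuting edge of the weighted graph. The hypothesis $n(\bm\lambda) - M(\bm\lambda) \geq 1$ together with \eqref{eq:Mn} provides a pair $(R_i, R_j)$ with $M_{i,j} < \min\{|R_i|, |R_j|\}$; by Corollary \ref{cor:mrirj}(1) this pair is either non-commuting, or commuting with $M_{i,j} = 0$. In the latter situation, cycling from Proposition \ref{prop:similar}(2) combined with adjacent-commute swaps can slide rows leftward until two disjoint rows become adjacent in content, producing a non-commuting pair. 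So a non-commuting pair exists after passing to a suitable representative of $\mathcal S(\bm\lambda)$.

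First, I would normalize $\bm\lambda$ and $\bm\mu$ using translations, rotations, and cycling (Proposition \ref{prop:similar}) so that the rows are placed in a convenient form. Second, I would identify a \emph{matched} non-commuting pair: a pair $(R_a, R_b)$ in $\bm\lambda$ whose image $(S_{\varphi_a}, S_{\varphi_b})$ under some graph isomorphism $\varphi$ is also non-commuting in $\bm\mu$. For edges of intermediate weight $0 < M_{a,b} < \min\{|R_a|, |R_b|\}$, Corollary \ref{cor:mrirj}(1) forces both endpoints to be non-commuting in every realization, so these edges immediately yield a matched pair; for boundary-weight edges (weight $0$ or weight $\min$), additional work is required to change representatives within the similarity classes $\mathcal S(\bm\lambda)$ and $\mathcal S(\bm\mu)$, which may invoke the inductive hypothesis \eqref{eq:IH} via the deletion-contraction relation \eqref{eq:inductiverelation} to pass to smaller horizontal-strips where additional similarities are known. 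Third, using commutation swaps of adjacent commuting rows together with cycling, I would transport the matched pair to positions $1$ and $2$ of both horizontal-strips, and, if necessary, swap the first two rows (applying commutation, since the pair being swapped will have commuting counterparts on at least one side) or use rotation from Proposition \ref{prop:similar}(3) so that the first row has the smaller $l$-value.

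The main obstacle is the second step. The commuting relation of a pair of rows is not determined by their edge weight in $\Pi$ when that weight equals $0$ or $\min\{|R_i|, |R_j|\}$, so a single graph $\Pi$ may admit realizations in which the same edge is non-commuting in one and commuting in the other. Handling the situation where every non-commuting edge of $\bm\lambda$ corresponds to a commuting configuration in the given $\bm\mu$ (or vice versa) will likely require a detailed case analysis based on the local structure of $\Pi$ around the candidate edge — distinguishing for instance whether incident vertices are joined by full, empty, or intermediate edges — and the passage to smaller horizontal-strips via \eqref{eq:inductiverelation} and \eqref{eq:IH} will be needed to unlock enough freedom to match the configurations on the two sides. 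Once this delicate matching step is achieved, the transport step reduces to routine applications of the commutation and cycling operations already justified in Proposition \ref{prop:similar} and the swap lemma from \cite{caterpillarllt}.
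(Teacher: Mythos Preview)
Your outline correctly identifies the overall architecture of the argument and matches the paper's strategy at a high level: locate a non-commuting pair, match it to a non-commuting pair on the $\bm\mu$ side, and then transport both to positions $1$ and $2$. You are also right that edges of intermediate weight $0<M_{a,b}<\min\{|R_a|,|R_b|\}$ are the easy case, and that boundary-weight edges are the obstruction.

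However, this is a plan, not a proof. The sentence ``will likely require a detailed case analysis based on the local structure of $\Pi$'' is precisely where all of the content lies, and you have not supplied any of it. Concretely, two difficulties are not addressed. First, your transport step assumes you can move a matched pair to adjacent positions by commuting and cycling; in general this fails because other non-commuting rows may block the way. The paper handles this by introducing \emph{minimal noncommuting paths} and proving a dichotomy (either the pair can be made adjacent, or such a path exists) together with a very detailed structural classification of these paths that forces $\varphi_n=n$ at the end of the path. Second, for the boundary-weight case you gesture at ``passing to smaller horizontal-strips via \eqref{eq:inductiverelation} and \eqref{eq:IH},'' but you do not say which pair to contract or why the resulting graphs on the two sides agree. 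The paper's mechanism here is a specific \emph{local rotation} operation: it identifies, under carefully stated hypotheses on the neighborhood of two adjacent rows, a similar horizontal-strip obtained by reflecting a contiguous block; the invocation of \eqref{eq:IH} is what certifies that this reflection preserves the LLT polynomial. To make local rotation applicable, the paper further introduces the notions of \emph{strict pairs}, \emph{strict sequences}, and \emph{nesting} horizontal-strips, and shows that if any strict pair or sequence exists then $\bm\lambda$ is already good, reducing to the nesting case where local rotation can be iterated.

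None of these ingredients---noncommuting paths, strict pairs/sequences, local rotation, nesting reduction---appears in your proposal, and without them the ``delicate matching step'' cannot be carried out. Your sketch is a correct roadmap, but the destination is not reached.
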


We are now ready to prove that our key Lemma implies our main Theorem. 

\begin{proof}[Proof of Theorem \ref{thm:main} assuming Lemma \ref{lem:key}. ] We use induction on $n(\bm\lambda)$. If $n(\bm\lambda)=0$, then $\bm\lambda$ has only one row and the result follows from Proposition \ref{prop:similar}, Part 1, so assume that $n(\bm\lambda)\geq 1$ and that Theorem \ref{thm:main} holds for horizontal-strips $\bm\lambda'$ and $\bm\mu'$ with $n(\bm\lambda')<n(\bm\lambda)$. We also use induction on $n(\bm\lambda)-M(\bm\lambda)$. If $n(\bm\lambda)-M(\bm\lambda)=0$, then the result follows from Lemma \ref{lem:hlcase}, so assume that $n(\bm\lambda)-M(\bm\lambda)\geq 1$ and that Theorem \ref{thm:main} holds for horizontal-strips $\bm\lambda'$ and $\bm\mu'$ with $n(\bm\lambda')=n(\bm\lambda)$ and $n(\bm\lambda')-M(\bm\lambda')<n(\bm\lambda)-M(\bm\lambda)$. This is exactly the condition \eqref{eq:IH}, so assuming Lemma \ref{lem:key}, we have that $\bm\lambda$ is good.\\

Now by replacing $\bm\lambda=(R_1,\ldots,R_n)$ and $\bm\mu=(S_1,\ldots,S_n)$ by a good substitute as necessary, we may assume that $l(R_1)<l(R_2)$, $R_1\nleftrightarrow R_2$, $l(S_1)<l(S_2)$, $S_1\nleftrightarrow S_2$, and $\varphi_1=1$ and $\varphi_2=2$, where $\varphi:\Pi(\bm\lambda)\xrightarrow\sim\Pi(\bm\mu)$. Consider the horizontal-strips \begin{equation} \bm\lambda'=(R_2,R_1,R_3,\ldots,R_n)\text{ and }\bm\lambda''=(R_1\cup R_2,R_1\cap R_2,R_3,\ldots,R_n)\end{equation}
and similarly define $\bm\mu'$ and $\bm\mu''$. Lemma \ref{lem:newgraphs} describes exactly how to construct $\Pi(\bm\lambda')$ and $\Pi(\bm\lambda'')$ from $\Pi(\bm\lambda)$, and therefore we have $\Pi(\bm\lambda')\cong\Pi(\bm\mu')$ and $\Pi(\bm\lambda'')\cong\Pi(\bm\mu'')$. By \eqref{eq:nMinduction}, our induction hypothesis implies that $G_{\bm\lambda'}(\bm x;q)=G_{\bm\mu'}(\bm x;q)$ and $G_{\bm\lambda''}(\bm x;q)=G_{\bm\mu''}(\bm x;q)$. Therefore, by \eqref{eq:inductiverelation}, we have \begin{equation} G_{\bm\lambda}(\bm x;q)=\frac 1qG_{\bm\lambda'}(\bm x;q)+\frac{q-1}qG_{\bm\lambda''}(\bm x;q)=\frac 1qG_{\bm\mu'}(\bm x;q)+\frac{q-1}qG_{\bm\mu''}(\bm x;q)=G_{\bm\mu}(\bm x;q).\end{equation}
This completes the proof. 

\end{proof}

\section{A connection to extended chromatic symmetric functions}\label{section:chromatic}

Theorem \ref{thm:main} tells us that if $\Pi$ is an admissible weighted graph, then there is a well-defined horizontal-strip LLT polynomial $G_\Pi(\bm x;q)$ given by setting $G_\Pi(\bm x;q)=G_{\Pi(\bm\lambda)}(\bm x;q)$, where $\bm\lambda$ is any horizontal-strip with $\Pi\cong\Pi(\bm\lambda)$. We now apply Theorem \ref{thm:main} by exploring a connection between $G_\Pi(\bm x;q)$ and the extended chromatic symmetric function defined by Crew and Spirkl \cite{extendedchromsym}.

\begin{definition} \cite[Equation 1]{extendedchromsym} Let $G$ be a vertex-weighted graph. The \emph{extended chromatic symmetric function} of $G$ is \begin{equation}X_G(\bm x)=\sum_{\substack{\kappa:G\to\mathbb N\\\kappa\text{ proper}}}\prod_{v\in G}x_{\kappa(v)}^{w(v)},\end{equation}
where $w(v)$ denotes the weight of the vertex $v$.
\end{definition}

Their motivation was to establish a deletion-contraction relation \cite[Lemma 2]{extendedchromsym}, which exists for the chromatic polynomial of a graph but not for the chromatic symmetric function. Aliniaeifard, Wang, and van Willigenburg used this deletion-contraction relation to prove some equalities of extended chromatic symmetric functions \cite[Theorem 4.12, Theorem 7.3]{extendedchromsymeq}. In this section, we will use our deletion-contraction relation in Lemma \ref{lem:inductiverelation} to extend these results to equalities of horizontal-strip LLT polynomials.\\

It will be convenient to first define the following operations and relations on compositions.

\begin{definition}
Let $\alpha=\alpha_1\cdots\alpha_n$ and $\beta=\beta_1\cdots\beta_m$ be compositions. The \emph{reverse} of $\alpha$ is $\alpha^{\text{rev}}=\alpha_n\cdots\alpha_1$. The \emph{concatenation} and \emph{near-concatenation} of $\alpha$ and $\beta$ are \begin{equation}
\alpha\cdot\beta=\alpha_1\cdots\alpha_n\beta_1\cdots\beta_m\text{ and }\alpha\odot\beta=\alpha_1\cdots\alpha_{n-1}(\alpha_n+\beta_1)\beta_2\cdots\beta_m.\end{equation}
The \emph{composition} of $\alpha$ and $\beta$, defined by Billera, Thomas, and van Willigenburg \cite[Section 3]{ribeq} is \begin{equation}\alpha\circ\beta=\beta^{\odot\alpha_1}\cdots\beta^{\odot\alpha_n},\end{equation}
where $\beta^{\odot k}$ denotes the $k$-fold near-concatenation of $\beta$. We say that $\beta$ is a \emph{coarsening} of $\alpha$ or alternatively that $\alpha$ is a \emph{refinement} of $\beta$, denoted $\alpha\prec\beta$, if $\beta$ can be obtained from $\alpha$ by summing adjacent parts. We also define the multiset $\mathcal M(\alpha)=\{\lambda(\beta): \ \alpha\prec\beta\}$, where $\lambda(\beta)$ denotes the partition determined by rearranging the parts of $\beta$ in weakly decreasing order.
\end{definition}

There is a bijection between the set $C^N$ of compositions $\alpha$ with sum $N$ and subsets of $\{1,\ldots,N-1\}$ given by taking the partial sums of $\alpha$, other than $N$ itself. Under this bijection, a coarsening corresponds to a subset so the partially ordered set $(C^N,\prec)$ is (anti-)isomorphic to the boolean lattice $B_{N-1}$. In particular, by M\"obius inversion, if $A$ is an abelian group and $f,g:C^N\to A$, then  \begin{equation}\label{eq:mobius} f(\alpha)=\sum_{\alpha\prec\beta}g(\beta)\text{ if and only if  }g(\alpha)=\sum_{\alpha\prec\beta}(-1)^{\ell(\alpha)-\ell(\beta)}f(\beta).\end{equation}

\begin{example}\label{ex:comps}
Consider the compositions $\alpha=21231$, $\beta=23121$, $\delta=12$, and $\gamma=21$. We have that $\delta^{\text{rev}}=\gamma$,
\begin{align}\delta\circ\gamma&=\gamma^{\odot 1}\cdot\gamma^{\odot 2}=\gamma\cdot(\gamma\odot\gamma)=21\cdot(21\odot 21)=21\cdot 231=21231=\alpha,\text{ and }\\\gamma\circ\gamma&=\gamma^{\odot 2}\cdot\gamma^{\odot 1}=(\gamma\odot\gamma)\cdot\gamma=(21\odot 21)\cdot 21=231\cdot 21=23121=\beta.\end{align}
Some coarsenings of $\alpha$ include $3231$, obtained by summing the first two parts, and $54$, obtained by summing the first three parts and the last two parts. We have that 
\begin{equation}\mathcal M(\alpha)=\{32211,5211,4221,3321,3321,621,621,531,531,432,432,81,72,63,54,9\}=\mathcal M(\beta).\end{equation}
\end{example}

Billera, Thomas, and van Willigenburg found the following characterization of when $\mathcal M(\alpha)=\mathcal M(\beta)$ in terms of their operation $\circ$. 

\begin{theorem}\label{thm:coareq}\cite[Theorem 4.1]{ribeq} Let $\alpha$ and $\beta$ be compositions. We have $\mathcal M(\alpha)=\mathcal M(\beta)$ if and only if there are factorizations
\begin{equation}\alpha=\delta^{(1)}\circ\cdots\circ\delta^{(k)}\text{ and }\beta=\gamma^{(1)}\circ\cdots\circ\gamma^{(k)}\end{equation}
so that every $\gamma^{(i)}$ is either $\delta^{(i)}$ or $\delta^{(i)\text{rev}}$. \end{theorem}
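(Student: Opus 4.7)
The plan is to translate the problem into an equivalent statement about ribbon Schur functions and then establish a unique-factorization property for compositions under $\circ$. By the M\"obius inversion \eqref{eq:mobius}, the multiset $\mathcal{M}(\alpha)$ carries the same information as the ribbon Schur function $r_\alpha = \sum_{\alpha \prec \beta} (-1)^{\ell(\alpha)-\ell(\beta)} h_{\lambda(\beta)}$ in the ring of symmetric functions, so $\mathcal{M}(\alpha) = \mathcal{M}(\beta)$ if and only if $r_\alpha = r_\beta$. The theorem then becomes the statement that $r_\alpha = r_\beta$ if and only if $\alpha$ and $\beta$ admit compatible $\circ$-factorizations that differ only by reversal of individual factors.

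For the "if" direction, I would induct on the number $k$ of factors, using two key facts. First, $r_\delta = r_{\delta^{\text{rev}}}$, which follows from the observation that the ribbon diagram of $\delta^{\text{rev}}$ is the $180^\circ$ rotation of that of $\delta$ and hence admits the same set of semistandard fillings. Second, $r_{\alpha \circ \beta}$ is determined by $r_\alpha$ and $r_\beta$ alone; this can be seen by interpreting $\circ$ combinatorially, since the ribbon of $\alpha \circ \beta$ is built by gluing copies of the ribbon of $\beta$ together in the pattern prescribed by $\alpha$ (concatenation for a descent in $\alpha$, near-concatenation otherwise). Together, these facts let one replace any $\delta^{(i)}$ by $\delta^{(i)\text{rev}}$ in the factorization of $\alpha$ without changing $r_\alpha$, which completes the inductive step.

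For the "only if" direction, I would call a composition \emph{prime} if it cannot be written as $\eta \circ \zeta$ with both $\eta$ and $\zeta$ of length at least $2$, observe that every composition admits a $\circ$-factorization into primes (a termination argument on length), and then prove that this factorization is unique up to replacing individual primes by their reversals. The main obstacle is precisely this uniqueness claim: one must rule out coincidences $\delta^{(1)} \circ \cdots \circ \delta^{(k)} = \gamma^{(1)} \circ \cdots \circ \gamma^{(m)}$ whose factors fail to match termwise up to reversal. My approach would be to work in the ring of noncommutative symmetric functions, where the ribbon Schur functions form a linear basis and $\circ$ corresponds to an associative product. There one can try to extract invariants from $r_\alpha$ (for instance the multiset of factor sizes, or the identity of the innermost prime factor, which should be recoverable from the descent statistics of the compositions in the support of $r_\alpha$) that force both factorizations to have the same length $k$ and the same $\{\delta^{(i)}, \delta^{(i)\text{rev}}\}$ at each position, closing the argument.
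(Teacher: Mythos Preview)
The paper does not prove this theorem; it is quoted verbatim from \cite[Theorem~4.1]{ribeq} and used as a black box, so there is no proof to compare against.

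On the merits of your sketch: the ``if'' direction is essentially correct in outline, though the key step---that $r_{\alpha\circ\beta}$ (equivalently $\mathcal M(\alpha\circ\beta)$) is determined by $r_\alpha$ and $r_\beta$ alone---is itself a lemma that needs an argument; in \cite{ribeq} this is done by an explicit formula expressing $\mathcal M(\alpha\circ\beta)$ in terms of $\mathcal M(\alpha)$ and $\mathcal M(\beta)$, and your gluing description of the ribbon of $\alpha\circ\beta$ is the right geometric picture behind it. The ``only if'' direction, however, has a genuine gap. You correctly identify that what is needed is a unique-factorization theorem for compositions under $\circ$ up to reversal of prime factors, but your proposed method---passing to noncommutative symmetric functions and hoping to read off invariants that pin down each prime factor---is not an argument, only a wish. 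In \cite{ribeq} this uniqueness is the substantial technical content of the paper: it requires first characterizing which compositions are $\circ$-irreducible, then proving that two prime factorizations of the same composition must agree in length and match factor-by-factor up to reversal, via a careful combinatorial analysis of how the descent set of $\alpha\circ\beta$ decomposes. None of that structure is visible in your sketch, and the vague appeal to ``descent statistics of the compositions in the support of $r_\alpha$'' does not by itself separate, say, $\delta\circ\gamma$ from $\gamma\circ\delta$ when both are legal factorizations.
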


\begin{example} We saw that the compositions $\alpha$ and $\beta$ from Example \ref{ex:comps} can be factorized as $\alpha=\delta\circ\gamma$ and $\beta=\delta^{\text{rev}}\circ\gamma$, so it follows from Theorem \ref{thm:coareq} that $\mathcal M(\alpha)=\mathcal M(\beta)$. 
\end{example}

Their motivation was to classify equalities of ribbon Schur functions.

\begin{definition}
Let $\alpha$ be a composition. The \emph{ribbon Schur function} $r_\alpha(\bm x)$ is the skew Schur function indexed by the skew diagram whose $i$-th row has $\alpha_i$ cells and where adjacent rows overlap in exactly one column. When $\alpha$ has a single part $k$, then $r_\alpha(\bm x)$ is the \emph{complete homogeneous symmetric function} $h_k(\bm x)$. If $\lambda=\lambda_1\cdots\lambda_\ell$ is a partition, we also set $h_\lambda(\bm x)=h_{\lambda_1}(\bm x)\cdots h_{\lambda_\ell}(\bm x)$.
\end{definition}

\begin{theorem} \label{thm:ribeq}\cite[Equation 2.2, Proposition 2.1, Theorem 2.6]{ribeq}
The ribbon Schur functions satisfy the relation \begin{equation} \label{eq:ribrelation} r_\alpha(\bm x) r_\beta(\bm x)=r_{\alpha\cdot\beta}(\bm x)+r_{\alpha\odot\beta}(\bm x).\end{equation}
By iterating \eqref{eq:ribrelation}, we have
\begin{equation}
h_{\lambda(\alpha)}(\bm x)=r_{\alpha_1}(\bm x)\cdots r_{\alpha_\ell}(\bm x)=\sum_{\alpha\prec\beta}r_\beta(\bm x)
\end{equation}
and therefore, by \eqref{eq:mobius}, we have \begin{equation}r_\alpha(\bm x)=\sum_{\alpha\prec\beta}(-1)^{\ell(\alpha)-\ell(\beta)}h_{\lambda(\beta)}(\bm x)=\sum_{\lambda\in\mathcal M(\alpha)}(-1)^{\ell(\alpha)-\ell(\lambda)}h_\lambda(\bm x).\end{equation}
In particular, because the $h_\lambda(\bm x)$ are linearly independent, we have $r_\alpha(\bm x)=r_\beta(\bm x)$ if and only if $\mathcal M(\alpha)=\mathcal M(\beta)$.
\end{theorem}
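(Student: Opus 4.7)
The theorem gathers three facts: a multiplication identity for ribbon Schur functions, an explicit $h$-expansion of $r_\alpha$, and an equivalence criterion characterizing equality of ribbon Schurs via the multiset invariant $\mathcal M$. My plan is to establish them in that order, since the later parts are formal consequences of the earlier ones.

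For the multiplication identity $r_\alpha(\bm x) r_\beta(\bm x) = r_{\alpha\cdot\beta}(\bm x) + r_{\alpha\odot\beta}(\bm x)$, I would use the Jacobi-Trudi formula for skew Schur functions, which expresses each ribbon Schur as a determinant of complete homogeneous symmetric functions $h_k$ indexed by differences of the partial sums of the composition. Writing $r_\alpha$ and $r_\beta$ as such determinants, the product $r_\alpha \cdot r_\beta$ is itself a determinant (after assembling the two factors into a block-diagonal matrix). The identity then follows by performing a single row operation at the seam joining the two blocks: the determinant splits into two pieces, one corresponding to the concatenation $\alpha \cdot \beta$ and the other to the near-concatenation $\alpha \odot \beta$. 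A combinatorial proof is also possible by matching pairs of ribbon SSYTs to ribbon SSYTs of the two glued shapes, but the determinantal route avoids all convention issues and is the cleanest.

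The expansion $h_{\lambda(\alpha)}(\bm x) = \sum_{\alpha \prec \beta} r_\beta(\bm x)$ then follows by iterating the multiplication identity on $\ell(\alpha)$. Starting from $h_{\lambda(\alpha)} = r_{(\alpha_1)} \cdots r_{(\alpha_\ell)}$ (using $r_{(k)} = h_k$) and applying the identity at each of the $\ell-1$ seams produces $2^{\ell-1}$ terms. Each such term is indexed by a sequence of binary choices ($\cdot$ or $\odot$) at each seam, and these sequences are in bijection with the coarsenings $\beta \succ \alpha$. Möbius inversion on the boolean lattice $(C^N, \prec) \cong B_{N-1}$ via \eqref{eq:mobius} then yields $r_\alpha = \sum_{\alpha \prec \beta} (-1)^{\ell(\alpha)-\ell(\beta)} h_{\lambda(\beta)}$, and grouping coarsenings by $\lambda(\beta)$ produces the form indexed by the multiset $\mathcal M(\alpha)$.

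Finally, for the equivalence, the essential input is linear independence of $\{h_\lambda\}_{\lambda \vdash N}$. If $r_\alpha = r_\beta$, equating $h_\lambda$-coefficients in the two $h$-expansions gives $(-1)^{\ell(\alpha)-\ell(\lambda)} m_\alpha(\lambda) = (-1)^{\ell(\beta)-\ell(\lambda)} m_\beta(\lambda)$ for every partition $\lambda$, where $m_\gamma(\lambda)$ denotes the multiplicity of $\lambda$ in $\mathcal M(\gamma)$. Since the multiplicities are non-negative integers and $m_\alpha(\lambda(\alpha)) = 1$ (only the trivial coarsening has $\ell(\alpha)$ parts), the signs for $\lambda = \lambda(\alpha)$ must agree, forcing $\ell(\alpha) \equiv \ell(\beta) \pmod{2}$, so $m_\alpha(\lambda) = m_\beta(\lambda)$ for every $\lambda$ and hence $\mathcal M(\alpha) = \mathcal M(\beta)$. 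The converse is immediate: if $\mathcal M(\alpha) = \mathcal M(\beta)$ then in particular $\ell(\alpha) = \ell(\beta)$ (recovered as the maximum length of any partition in the multiset), so the signs in the two expansions agree term-by-term. The only substantive step in the whole argument is the multiplication identity; everything else is bookkeeping and Möbius inversion.
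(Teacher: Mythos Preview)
Your proposal is correct and matches the paper's treatment, which embeds the entire argument inside the theorem statement and cites the multiplication identity \eqref{eq:ribrelation} from \cite{ribeq} without proof. You go further than the paper in two places: you supply a Jacobi--Trudi argument for \eqref{eq:ribrelation} (which the paper simply imports), and you handle the sign parity carefully in the final equivalence (the paper's ``because the $h_\lambda$ are linearly independent'' glosses over the fact that one must first check $\ell(\alpha)\equiv\ell(\beta)\pmod 2$ before concluding that the multisets agree).
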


Aliniaeifard, Wang, and van Willigenburg found a similar result for the extended chromatic symmetric functions of vertex-weighted paths. These graphs can be indexed by compositions. 

\begin{definition}
Let $\alpha=\alpha_1\cdots\alpha_n$ be a composition. Define $P_\alpha$ to be the weighted graph with vertices $\{v_1,\ldots,v_n\}$ where $v_i$ has weight $\alpha_i$, and with edges $(v_i,v_{i+1})$ of weight one for $1\leq i\leq n-1$. When $\alpha$ has a single part $k$, then the extended chromatic symmetric function $X_{P_k}(\bm x)$ is the \emph{power sum symmetric function} $p_k(\bm x)$. If $\lambda=\lambda_1\cdots\lambda_\ell$ is a partition, we also set $p_\lambda(\bm x)=p_{\lambda_1}(\bm x)\cdots p_{\lambda_\ell}(\bm x)$. 
\end{definition}

\begin{proposition}\label{prop:horizontalstrippath} Let $\alpha=\alpha_1\cdots\alpha_n$ be a composition of $N$ and consider the horizontal-strip $\bm\lambda_\alpha=(R_1,\ldots,R_n)$, where $R_i=(\sum_{j=1}^{n-i+1}\alpha_j)/(\sum_{j=1}^{n-i}\alpha_j).$
Then $\Pi(\bm\lambda_\alpha)\cong P_\alpha$.
\end{proposition}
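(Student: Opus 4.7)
The proof is essentially a direct unpacking of definitions, so the plan is to set up the content coordinates of the rows carefully and then verify the edge weights using Proposition \ref{prop:mrirj}. The only subtlety is that the indexing of rows in $\bm\lambda_\alpha$ is reversed relative to the vertex indexing of $P_\alpha$, so the bijection will send $v_i$ to $R_{n-i+1}$.

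First, I would record the content data of each row. Setting $s_0=0$ and $s_k=\sum_{j=1}^k\alpha_j$, so that $R_i=s_{n-i+1}/s_{n-i}$, we have $l(R_i)=s_{n-i}$, $r(R_i)=s_{n-i+1}-1$, and $|R_i|=\alpha_{n-i+1}$. In particular the rows $R_n,R_{n-1},\ldots,R_1$ occupy the consecutive content intervals $[0,s_1-1],[s_1,s_2-1],\ldots,[s_{n-1},s_n-1]$, tiling $[0,N-1]$ with no gaps and no overlaps.

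Next I would define the candidate isomorphism $\varphi:P_\alpha\to\Pi(\bm\lambda_\alpha)$ by $\varphi(v_i)=R_{n-i+1}$. The vertex-weight condition is immediate: the weight of $v_i$ in $P_\alpha$ is $\alpha_i$, and $|R_{n-i+1}|=\alpha_{n-(n-i+1)+1}=\alpha_i$. For the edge weights, I would fix indices $1\leq k<k'\leq n$ and compute $M(R_k,R_{k'})$ directly. Since $l(R_k)=s_{n-k}>s_{n-k'}=l(R_{k'})$, the definition gives $M(R_k,R_{k'})=|R_k\cap R_{k'}^+|$, and $R_{k'}^+$ covers the contents $[s_{n-k'}+1,s_{n-k'+1}]$. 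When $k'=k+1$ this intersection consists of the single content $s_{n-k}$, giving $M(R_k,R_{k+1})=1$; when $k'\geq k+2$ we have $r(R_{k'}^+)=s_{n-k'+1}\leq s_{n-k-1}<s_{n-k}=l(R_k)$, so the intersection is empty and $M(R_k,R_{k'})=0$. (Equivalently, this is Proposition \ref{prop:mrirj}, Part 1 applied to the separated pair and Part 3 applied to the adjacent pair.)

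Finally, I would translate these computations through $\varphi$. The edge $(v_i,v_{i+1})$ of $P_\alpha$ (weight one) corresponds under $\varphi$ to the pair $(R_{n-i+1},R_{n-i})$, which has $M$-value one by the adjacent case above, while any pair of vertices at graph-distance at least two in $P_\alpha$ corresponds to a pair $R_k,R_{k'}$ with $|k-k'|\geq 2$, which has $M$-value zero. Therefore $\varphi$ preserves all vertex weights and all edge weights, establishing $\Pi(\bm\lambda_\alpha)\cong P_\alpha$. There is no real obstacle here beyond keeping the index reversal $i\leftrightarrow n-i+1$ straight throughout the bookkeeping.
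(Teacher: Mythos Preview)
Your proposal is correct and follows essentially the same approach as the paper's proof: a direct verification that $|R_i|=\alpha_{n-i+1}$, that adjacent rows have $M$-value $1$, and that non-adjacent rows have $M$-value $0$. You simply supply more detail (the partial sums $s_k$, the explicit bijection $v_i\mapsto R_{n-i+1}$, and the intersection computation from the definition), whereas the paper quotes the formula $M(R_i,R_{i+1})=r(R_{i+1})-l(R_i)+2=1$ from Proposition~\ref{prop:mrirj} in a single line.
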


\begin{proof}
We have that $|R_i|=\alpha_{n-i+1}$ is the weight of $v_{n-i+1}$, $M(R_i,R_{i+1})=r(R_{i+1})-l(R_i)+2=1$, and $M(R_i,R_j)=0$ for $j\geq i+2$. 
\end{proof}

\begin{example} Let $\alpha=21231$ and $\beta=23121$ as in Example \ref{ex:comps}. The horizontal-strips $\bm\lambda_\alpha=(9/8,8/5,5/3,3/2,2/0)$ and $\bm\lambda_\beta=(9/8,8/6,6/5,5/2,2/0)$ and the paths $P_\alpha$ and $P_\beta$ are given below. Because the (nonzero) edge weights are all one, they are not written.
$$
\begin{tikzpicture}
\draw (-0.75,-0.25) node (0) {$\bm \lambda_\alpha=$};
\draw (0,1.5) -- (1,1.5) -- (1,2) -- (0,2) -- (0,1.5) (0.5,1.5) -- (0.5,2) (1,0.5) -- (1.5,0.5) -- (1.5,1) -- (1,1) -- (1,0.5) (1.5,-0.5) -- (2.5,-0.5) -- (2.5,0) -- (1.5,0) -- (1.5,-0.5) (2,-0.5) -- (2,0) (2.5,-1.5) -- (4,-1.5) -- (4,-1) -- (2.5,-1) -- (2.5,-1.5) (3,-1.5) -- (3,-1) (3.5,-1.5) -- (3.5,-1) (4,-2.5) -- (4.5,-2.5) -- (4.5,-2) -- (4,-2) -- (4,-2.5);
\end{tikzpicture} \hspace{50pt}
\begin{tikzpicture}
\draw (-0.75,-0.25) node (0) {$\bm \lambda_\beta=$};
\draw (0,1.5) -- (1,1.5) -- (1,2) -- (0,2) -- (0,1.5) (0.5,1.5) -- (0.5,2) (1,0.5) -- (2.5,0.5) -- (2.5,1) -- (1,1) -- (1,0.5) (1.5,0.5) -- (1.5,1) (2,0.5) -- (2,1) (2.5,-0.5) -- (3,-0.5) -- (3,0) -- (2.5,0) -- (2.5,-0.5) (3,-1.5) -- (4,-1.5) -- (4,-1) -- (3,-1) -- (3,-1.5) (3.5,-1.5) -- (3.5,-1) (4,-2.5) -- (4.5,-2.5) -- (4.5,-2) -- (4,-2) -- (4,-2.5);
\end{tikzpicture}
$$

$$
\begin{tikzpicture}
\node[shape=circle,draw=black,minimum size=10mm](1) at (0,0) {$2$};
\node[shape=circle,draw=black,minimum size=10mm](2) at (1.5,0) {$1$};
\node[shape=circle,draw=black,minimum size=10mm](3) at (3,0) {$2$};
\node[shape=circle,draw=black,minimum size=10mm](4) at (4.5,0) {$3$};
\node[shape=circle,draw=black,minimum size=10mm](5) at (6,0) {$1$};
\path [-](1) edge node [above]{} (2);
\path [-](2) edge node [above]{} (3);
\path [-](3) edge node [above]{} (4);
\path [-](4) edge node [above]{} (5);
\end{tikzpicture}\hspace{50pt}
\begin{tikzpicture}
\node[shape=circle,draw=black,minimum size=10mm](1) at (0,0) {$2$};
\node[shape=circle,draw=black,minimum size=10mm](2) at (1.5,0) {$3$};
\node[shape=circle,draw=black,minimum size=10mm](3) at (3,0) {$1$};
\node[shape=circle,draw=black,minimum size=10mm](4) at (4.5,0) {$2$};
\node[shape=circle,draw=black,minimum size=10mm](5) at (6,0) {$1$};
\path [-](1) edge node [above]{} (2);
\path [-](2) edge node [above]{} (3);
\path [-](3) edge node [above]{} (4);
\path [-](4) edge node [above]{} (5);
\end{tikzpicture}
$$
\end{example}

\begin{theorem} \label{thm:chrompath}\cite[Equation 2, Equation 3, Theorem 4.12]{extendedchromsymeq} The extended chromatic symmetric functions satisfy the relation \begin{equation}\label{eq:mpathrelation}X_{P_\alpha}(\bm x)X_{P_\beta}(\bm x)=X_{P_{\alpha\cdot\beta}}(\bm x)+X_{P_{\alpha\odot\beta}}(\bm x).\end{equation}
By iterating \eqref{eq:mpathrelation}, we have
\begin{equation}
p_{\lambda(\alpha)}(\bm x)=X_{P_{\alpha_1}}(\bm x)\cdots X_{P_{\alpha_\ell}}(\bm x)=\sum_{\alpha\prec\beta}X_{P_\beta}(\bm x)
\end{equation}
and therefore, by \eqref{eq:mobius}, we have
\begin{equation}
X_{P_\alpha}(\bm x)=\sum_{\alpha\prec\beta}(-1)^{\ell(\alpha)-\ell(\beta)}p_{\lambda(\beta)}(\bm x)=\sum_{\lambda\in\mathcal M(\alpha)}(-1)^{\ell(\alpha)-\ell(\lambda)}p_\lambda(\bm x).
\end{equation}
In particular, because the $p_\lambda(\bm x)$ are linearly independent, we have $X_{P_\alpha}(\bm x)=X_{P_\beta}(\bm x)$ if and only if $\mathcal M(\alpha)=\mathcal M(\beta)$. 
\end{theorem}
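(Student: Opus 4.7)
The strategy is to mirror the proof of Theorem \ref{thm:ribeq} for ribbon Schur functions, taking the multiplicative relation \eqref{eq:mpathrelation} as the engine from which the power-sum expansion and the classification of equalities both follow by formal manipulation on the composition lattice.

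The first task is to establish \eqref{eq:mpathrelation}, which I would obtain by applying Crew and Spirkl's deletion-contraction relation \cite[Lemma 2]{extendedchromsym} to the distinguished edge $e$ joining the last vertex of $P_\alpha$ to the first vertex of $P_\beta$ inside the path $P_{\alpha\cdot\beta}$. Deleting $e$ produces the disjoint union $P_\alpha \sqcup P_\beta$, and because proper colourings of a disjoint union factor independently, the resulting extended chromatic symmetric function is $X_{P_\alpha}(\bm x)X_{P_\beta}(\bm x)$. Contracting $e$ merges the two endpoints into a single vertex of combined weight $\alpha_n + \beta_1$, producing exactly $P_{\alpha \odot \beta}$. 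The Crew--Spirkl relation then reads $X_{P_{\alpha\cdot\beta}}(\bm x) = X_{P_\alpha}(\bm x) X_{P_\beta}(\bm x) - X_{P_{\alpha\odot\beta}}(\bm x)$, which rearranges to the claim.

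For the iterated expansion, I would induct on $\ell(\alpha)$. A single-vertex path $P_k$ of weight $k$ has $X_{P_k}(\bm x) = \sum_i x_i^k = p_k(\bm x)$, so $p_{\lambda(\alpha)}(\bm x) = X_{P_{\alpha_1}}(\bm x) \cdots X_{P_{\alpha_\ell}}(\bm x)$. Repeatedly applying \eqref{eq:mpathrelation} to attach the factors one at a time produces a sum indexed by binary choices between $\cdot$ and $\odot$ at each of the $\ell(\alpha)-1$ joins; these choices are in bijection with subsets of $\{1,\ldots,N-1\}$, which is exactly the bijection between $C^N$ and $B_{N-1}$ used to index coarsenings, so the total is $\sum_{\alpha \prec \beta} X_{P_\beta}(\bm x)$. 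M\"obius inversion via \eqref{eq:mobius} on this boolean lattice then inverts the relation to give the alternating-sign expansion in terms of $p_{\lambda(\beta)}(\bm x)$.

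For the final equivalence, one direction is straightforward: if $\mathcal M(\alpha) = \mathcal M(\beta)$, then the two alternating sums match summand by summand, once one observes that $\ell(\alpha) = \ell(\beta)$ because $\ell(\alpha)$ equals the maximum length of a partition in $\mathcal M(\alpha)$, attained uniquely by $\lambda(\alpha)$ via the trivial refinement. Conversely, if $X_{P_\alpha}(\bm x) = X_{P_\beta}(\bm x)$, linear independence of $\{p_\lambda(\bm x)\}$ forces each coefficient to agree; the unique occurrence of $\lambda(\alpha)$ in $\mathcal M(\alpha)$ forces $\lambda(\alpha) \in \mathcal M(\beta)$, giving $\ell(\alpha) \leq \ell(\beta)$, and by symmetry $\ell(\alpha) = \ell(\beta)$, after which the signs cancel and the multiplicities of each partition must match. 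The main subtlety to track is the bookkeeping between the three partially ordered sets $C^N$, the subsets of $\{1,\ldots,N-1\}$, and the multiset $\mathcal M(\alpha)$, but the boolean-lattice correspondence noted before \eqref{eq:mobius} handles this cleanly.
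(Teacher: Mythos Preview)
Your proof is correct, but there is nothing to compare it against: the paper does not prove Theorem~\ref{thm:chrompath}. It is stated as a quoted result from \cite{extendedchromsymeq}, with the citation built into the theorem heading, and the text moves immediately to proving the LLT analogue, Theorem~\ref{thm:mpath}. Your argument via the Crew--Spirkl deletion-contraction relation, iteration over the boolean lattice of coarsenings, and M\"obius inversion is the standard route and matches what the paper does for the LLT case in the proof of Theorem~\ref{thm:mpath} (with the deletion-contraction replaced by Lemma~\ref{lem:inductiverelation}).
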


We now show the analogous result for horizontal-strip LLT polynomials of vertex-weighted paths.

\begin{theorem} \label{thm:mpath}
The horizontal-strip LLT polynomials satisfy the relation
\begin{equation}\label{eq:lltmpathrelation} G_{P_\alpha}(\bm x;q)G_{P_\beta}(\bm x;q)=\frac 1qG_{P_{\alpha\cdot\beta}}(\bm x;q)+\frac{q-1}qG_{P_{\alpha\odot\beta}}(\bm x;q).\end{equation}
By iterating \eqref{eq:lltmpathrelation}, we have
\begin{equation}
h_{\lambda(\alpha)}(\bm x)=G_{P_{\alpha_1}}(\bm x;q)\cdots G_{P_{\alpha_\ell}}(\bm x;q)=\sum_{\alpha\prec\beta}\frac{(q-1)^{\ell(\alpha)-\ell(\beta)}}{q^{\ell(\alpha)-1}}G_{P_\beta}(\bm x;q)
\end{equation}
and rearranging, we have 
\begin{equation}
q^{\ell(\alpha)-1}(q-1)^{-\ell(\alpha)}h_{\lambda(\alpha)}(\bm x)=\sum_{\alpha\prec\beta}(q-1)^{-\ell(\beta)}G_{P_\beta}(\bm x;q).
\end{equation}
Therefore, by \eqref{eq:mobius}, we have 
\begin{equation}
(q-1)^{-\ell(\alpha)}G_{P_\alpha}(\bm x;q)=\sum_{\alpha\prec\beta}q^{\ell(\beta)-1}(q-1)^{-\ell(\beta)}h_{\lambda(\beta)}(\bm x)
\end{equation}
and rearranging again, we have 
\begin{equation}
G_{P_\alpha}(\bm x;q)=\sum_{\alpha\prec\beta}q^{\ell(\beta)-1}(q-1)^{\ell(\alpha)-\ell(\beta)}h_{\lambda(\beta)}(\bm x)=\sum_{\lambda\in\mathcal M(\alpha)}q^{\ell(\lambda)-1}(q-1)^{\ell(\alpha)-\ell(\lambda)}h_\lambda(\bm x).
\end{equation}
In particular, because the $h_\lambda(\bm x)$ are linearly independent, we have $G_{P_\alpha}(\bm x;q)=G_{P_\beta}(\bm x;q)$ if and only if $\mathcal M(\alpha)=\mathcal M(\beta)$. 
\end{theorem}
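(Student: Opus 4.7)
My plan is to reduce the whole theorem to a single application of the deletion-contraction relation, Lemma \ref{lem:inductiverelation}, and then let the rest follow by standard M\"obius inversion, mirroring the ribbon Schur proof (Theorem \ref{thm:ribeq}) and the path chromatic proof (Theorem \ref{thm:chrompath}).

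For the product relation \eqref{eq:lltmpathrelation}, the key observation is that $G_{P_\alpha}(\bm x;q)\cdot G_{P_\beta}(\bm x;q)$ is the LLT polynomial of any horizontal-strip realizing the disjoint union $P_\alpha\sqcup P_\beta$: translating far apart representatives of $\bm\lambda_\alpha$ and $\bm\lambda_\beta$ makes the two blocks noninteracting, so the generating function factors, and Theorem \ref{thm:main} frees me to pick any convenient representative. I choose $\bm\nu_{\alpha,\beta}$, defined to be the horizontal-strip $\bm\lambda_{\alpha\cdot\beta}$ of Proposition \ref{prop:horizontalstrippath} with the two interface rows (at positions $m$ and $m+1$, corresponding to the parts $\beta_1$ and $\alpha_n$) swapped. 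A content bookkeeping calculation using Proposition \ref{prop:mrirj} shows that in $\bm\nu_{\alpha,\beta}$ the pair at positions $m,m+1$ satisfies $l(R_m)<l(R_{m+1})$, is non-commuting, and has $M_{m,m+1}=0$; Lemma \ref{lem:newgraphs} then guarantees that the swap only alters the edge between those two rows, knocking it out, so $\Pi(\bm\nu_{\alpha,\beta})\cong P_\alpha\sqcup P_\beta$. Applying Lemma \ref{lem:inductiverelation} at positions $m,m+1$, the strip $\bm\nu_{\alpha,\beta}'$ is exactly $\bm\lambda_{\alpha\cdot\beta}$ (swap reversed), while $\bm\nu_{\alpha,\beta}''$ merges them into $R_m\cup R_{m+1}$ (a single contiguous row of length $\alpha_n+\beta_1$) and $R_m\cap R_{m+1}=\emptyset$ (an empty row that contributes trivially); after dropping the empty row this is $\bm\lambda_{\alpha\odot\beta}$. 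Theorem \ref{thm:main} then yields \eqref{eq:lltmpathrelation}.

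The rest is algebra. A horizontal-strip consisting of a single row of length $k$ has no inversions, so $G_{P_k}(\bm x;q)=h_k(\bm x)$ and $\prod_i G_{P_{\alpha_i}}=h_{\lambda(\alpha)}$. I iterate \eqref{eq:lltmpathrelation} by induction on $\ell(\alpha)$: the $\tfrac1q$ branch concatenates (length increases by one) while the $\tfrac{q-1}q$ branch near-concatenates (length preserved), which together traverse exactly the poset of coarsenings, producing the stated sum $\sum_{\alpha\prec\beta}\tfrac{(q-1)^{\ell(\alpha)-\ell(\beta)}}{q^{\ell(\alpha)-1}}G_{P_\beta}$. Setting $f(\alpha)=q^{\ell(\alpha)-1}(q-1)^{-\ell(\alpha)}h_{\lambda(\alpha)}$ and $g(\alpha)=(q-1)^{-\ell(\alpha)}G_{P_\alpha}$ rewrites this as $f(\alpha)=\sum_{\alpha\prec\beta}g(\beta)$, and M\"obius inversion \eqref{eq:mobius} returns the expansion of $G_{P_\alpha}$ as a combination of $h_{\lambda(\beta)}$ indexed by coarsenings, which regrouped by partition type gives the closed form indexed by $\mathcal M(\alpha)$. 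For the equality characterization, $\{h_\lambda\}$ being a basis forces equality of $h_\lambda$-coefficients in $G_{P_\alpha}=G_{P_\beta}$; matching leading $(q-1)$-powers forces $\ell(\alpha)=\ell(\beta)$, so the multiplicities in $\mathcal M(\alpha)$ and $\mathcal M(\beta)$ agree, and the converse is immediate from the closed formula.

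The hard part is the interface swap in paragraph two: verifying that $\bm\nu_{\alpha,\beta}$ really does satisfy the hypotheses of Lemma \ref{lem:inductiverelation} at positions $m,m+1$, and that the graph transformation is precisely the deletion of the bridge edge of $P_{\alpha\cdot\beta}$ together with the union/intersection being the concatenation/near-concatenation. Once \eqref{eq:lltmpathrelation} is in hand, everything else is formally parallel to Theorems \ref{thm:ribeq} and \ref{thm:chrompath}, with $\tfrac1q$ and $\tfrac{q-1}q$ replacing $1$ and $1$ and the extra $(q-1)$-powers tracking length discrepancies.
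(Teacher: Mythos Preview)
Your proposal is correct and takes essentially the same approach as the paper. In fact your horizontal-strip $\bm\nu_{\alpha,\beta}$ is literally the paper's $\bm\lambda$: the paper builds it by concatenating $\bm\lambda_\alpha$ with a translate of $\bm\lambda_\beta$ and commuting the non-interface rows past each other, while you build it by taking $\bm\lambda_{\alpha\cdot\beta}$ and doing a single non-commuting swap at positions $m,m+1$; a content check shows these produce the same sequence of rows, and in both cases the application of Lemma~\ref{lem:inductiverelation} gives $\bm\lambda_{\alpha\cdot\beta}$ and (after dropping the empty intersection row) $\bm\lambda_{\alpha\odot\beta}$ on the nose. The subsequent M\"obius inversion and the $\ell(\alpha)=\ell(\beta)$ argument for the equality characterization are exactly as in the paper.
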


\begin{proof} We need only prove the first statement \eqref{eq:lltmpathrelation}. Let $\bm\lambda_\alpha=(R_1,\ldots,R_n)$ and $\bm\lambda_\beta=(S_1,\ldots,S_m)$ be the horizontal-strips defined in Proposition \ref{prop:horizontalstrippath}. Suppose that $\alpha$ has sum $N$ and consider the horizontal-strips
\begin{align}\bm\lambda&=(N+S_1,\ldots,N+S_{m-1},R_1,N+S_m,R_2,\ldots,R_n),\\\nonumber \bm\lambda'&=(N+S_1,\ldots,N+S_{m-1},N+S_m,R_1,R_2\ldots,R_n),\text{ and }\\\nonumber\bm\lambda''&=(N+S_1,\ldots,N+S_{m-1},(N+S_{m-1})\cup R_1,(N+S_{m-1})\cap R_1,R_2,\ldots,R_n).
\end{align} Note that $\bm\lambda$ was constructed by translating $\bm\lambda_\beta$, concatenating, and switching rows $(R_t,N+S_{t'})$ with $(t,t')\neq (1,m)$, which commute by Proposition \ref{prop:mrirj}, Part 1 because $\ell(N+S_{t'})>r(R_t)+1$. Therefore, we have that $\Pi(\bm\lambda)$ is the disjoint union of $P_\alpha$ and $P_\beta$ and so $G_{\bm\lambda}(\bm x;q)=G_{P_\alpha}(\bm x;q)G_{P_\beta}(\bm x;q)$. Lemma \ref{lem:newgraphs} describes exactly how the weighted graphs $\Pi(\bm\lambda')$ and $\Pi(\bm\lambda'')$ are constructed and we see that $\Pi(\bm\lambda')\cong P_{\alpha\cdot\beta}$ and $\Pi(\bm\lambda'')\cong P_{\alpha\odot\beta}$. Because $\ell(N+S_m)=r(R_1)+1>l(R_1)$, we have $R_1\nleftrightarrow (N+S_m)$ by Proposition \ref{prop:mrirj}, Part 3 and now \eqref{eq:lltmpathrelation} follows from Lemma \ref{lem:inductiverelation}. 
\end{proof}

We take a moment to summarize these results.

\begin{corollary}
Let $\alpha$ and $\beta$ be compositions. The following are equivalent.
\begin{enumerate}
\item $\mathcal M(\alpha)=\mathcal M(\beta)$
\item $\alpha=\delta^{(1)}\circ\cdots\circ\delta^{(k)}$ and $\beta=\gamma^{(1)}\circ\cdots\circ\gamma^{(k)}$, with every $\gamma^{(i)}=\delta^{(i)}$ or $\delta^{(i)\text{rev}}$
\item $r_\alpha(\bm x)=r_\beta(\bm x)$
\item $X_{P_\alpha}(\bm x)=X_{P_\beta}(\bm x)$
\item $G_{P_\alpha}(\bm x;q)=G_{P_\beta}(\bm x;q)$
\end{enumerate}
\end{corollary}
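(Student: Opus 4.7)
The plan is simply to assemble the equivalences from results already established in this section; no new combinatorial work is needed. The five conditions are connected in a small cycle through condition (1), so it suffices to verify each equivalence (1)$\Leftrightarrow$(i) for $i=2,3,4,5$ separately.

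First, (1)$\Leftrightarrow$(2) is exactly the statement of Theorem \ref{thm:coareq} of Billera, Thomas, and van Willigenburg. Next, (1)$\Leftrightarrow$(3) follows from the final sentence of Theorem \ref{thm:ribeq}: the expansion $r_\alpha(\bm x)=\sum_{\lambda\in\mathcal M(\alpha)}(-1)^{\ell(\alpha)-\ell(\lambda)}h_\lambda(\bm x)$ together with linear independence of the $h_\lambda(\bm x)$ shows that $r_\alpha(\bm x)=r_\beta(\bm x)$ if and only if $\mathcal M(\alpha)=\mathcal M(\beta)$. (One must note that $\ell(\alpha)$ and $\ell(\beta)$ necessarily have the same parity when $\mathcal M(\alpha)=\mathcal M(\beta)$, since both $\alpha$ and $\beta$ have the same sum $N$ and the parity of the length can be read off from any single element of $\mathcal M(\alpha)$ via $(-1)^{\ell(\alpha)} = (-1)^{N} \cdot (-1)^{\ell(\lambda)}$ signs; but actually the correct observation is that $\mathcal M(\alpha)$ determines the multiset of partitions $\lambda(\beta)$ for $\alpha\prec\beta$, and in particular the minimal-length such $\beta$ is $\alpha$ itself, so $\ell(\alpha)$ is determined by $\mathcal M(\alpha)$.)

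Similarly, (1)$\Leftrightarrow$(4) is the final sentence of Theorem \ref{thm:chrompath}, using the expansion of $X_{P_\alpha}(\bm x)$ in the linearly independent basis $\{p_\lambda(\bm x)\}$. Finally, (1)$\Leftrightarrow$(5) is exactly the last sentence of Theorem \ref{thm:mpath}, which we just proved: the expansion
\begin{equation}
G_{P_\alpha}(\bm x;q)=\sum_{\lambda\in\mathcal M(\alpha)}q^{\ell(\lambda)-1}(q-1)^{\ell(\alpha)-\ell(\lambda)}h_\lambda(\bm x)
\end{equation}
together with linear independence of the $h_\lambda(\bm x)$ over $\mathbb Q(q)$ yields the desired equivalence, with the same remark about $\ell(\alpha)$ being recoverable from $\mathcal M(\alpha)$.

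There is no substantive obstacle here; the only mild subtlety is to confirm that, in each of (3), (4), (5), the common length $\ell(\alpha)$ appearing in the coefficient is itself determined by $\mathcal M(\alpha)$ (so that matching the expansions really does force $\mathcal M(\alpha)=\mathcal M(\beta)$). This is immediate because $\alpha$ is the unique refinement of itself of maximal length in the set $\{\beta : \alpha\prec\beta\}$ whose partition type is recorded by $\mathcal M(\alpha)$.
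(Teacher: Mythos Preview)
Your proof is correct and follows exactly the same approach as the paper: citing Theorem~\ref{thm:coareq}, Theorem~\ref{thm:ribeq}, Theorem~\ref{thm:chrompath}, and Theorem~\ref{thm:mpath} for the four equivalences with (1). Your additional remarks about recovering $\ell(\alpha)$ from $\mathcal M(\alpha)$ are fine but unnecessary, since the ``if and only if'' statements are already part of the cited theorems.
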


\begin{proof} We have 
$(1)\iff(2)$ by Theorem \ref{thm:coareq}, $(1)\iff(3)$ by Theorem \ref{thm:ribeq}, $(1)\iff(4)$ by Theorem \ref{thm:chrompath}, and $(1)\iff(5)$ by Theorem \ref{thm:mpath}. 
\end{proof}

Aliniaeifard, Wang, and van Willigenburg generalize Theorem \ref{thm:chrompath} to the following construction. 

\begin{definition}\label{def:compgraph}\cite[Definition 7.1]{extendedchromsymeq}
Let $G$ and $H$ be vertex-weighted graphs with distinguished (not necessarily distinct) vertices $a_G,z_G\in G$ and $a_H,z_H\in H$. The \emph{concatenation} of $G$ and $H$, denoted $G\cdot H$, is the disjoint union of $G$ and $H$ with an edge joining $z_G$ and $a_H$. The \emph{near-concatenation} of $G$ and $H$, denoted $G\odot H$, is the graph $G\cdot H$ with the edge $(z_G,a_H)$ contracted. If $\alpha=\alpha_1\cdots\alpha_n$ is a composition, the \emph{composition} of $\alpha$ and $G$ is the graph
\begin{equation}\alpha\circ G=G^{\odot\alpha_1}\cdots G^{\odot\alpha_n},\end{equation}
where $G^{\odot k}$ denotes the $k$-fold near-concatenation of $G$. 
\end{definition}

\begin{example}
For any composition $\alpha$, we have $\alpha\circ P_1\cong P_\alpha$. For $G=P_{121}$ with $a=v_1$ and $z=v_2$, the graphs $12\circ G$ and $21\circ G$ are given below.
$$
\begin{tikzpicture}
\node[shape=circle,draw=black,minimum size=10mm](1) at (0,0) {$1$};
\node[shape=circle,draw=black,minimum size=10mm](2) at (1.5,0) {$2$};
\node[shape=circle,draw=black,minimum size=10mm](3) at (1.5,1.5) {$1$};
\node[shape=circle,draw=black,minimum size=10mm](4) at (3,0) {$1$};
\node[shape=circle,draw=black,minimum size=10mm](5) at (4.5,0) {$3$};
\node[shape=circle,draw=black,minimum size=10mm](6) at (4.5,1.5) {$1$};
\node[shape=circle,draw=black,minimum size=10mm](7) at (6,0) {$2$};
\node[shape=circle,draw=black,minimum size=10mm](8) at (6,1.5) {$1$};
\draw (0,-0.75) node {$a_1$};
\draw (1.5,-0.75) node {$z_1$};
\draw (3,-0.75) node {$a_2$};
\draw (4.5,-0.75) node {$z_2,a_3$};
\draw (6,-0.75) node {$z_3$};
\path [-](1) edge node [above]{} (2);
\path [-](2) edge node [left]{} (3);
\path [-](2) edge node [above]{} (4);
\path [-](4) edge node [above]{} (5);
\path [-](5) edge node [left]{} (6);
\path [-](5) edge node [above]{} (7);
\path [-](7) edge node [left]{} (8);
\end{tikzpicture}\hspace{50pt}
\begin{tikzpicture}
\node[shape=circle,draw=black,minimum size=10mm](1) at (0,0) {$1$};
\node[shape=circle,draw=black,minimum size=10mm](2) at (1.5,0) {$3$};
\node[shape=circle,draw=black,minimum size=10mm](3) at (1.5,1.5) {$1$};
\node[shape=circle,draw=black,minimum size=10mm](4) at (3,0) {$2$};
\node[shape=circle,draw=black,minimum size=10mm](5) at (3,1.5) {$1$};
\node[shape=circle,draw=black,minimum size=10mm](6) at (4.5,0) {$1$};
\node[shape=circle,draw=black,minimum size=10mm](7) at (6,0) {$2$};
\node[shape=circle,draw=black,minimum size=10mm](8) at (6,1.5) {$1$};
\draw (0,-0.75) node {$a_1$};
\draw (1.5,-0.75) node {$z_1,a_2$};
\draw (3,-0.75) node {$z_2$};
\draw (4.5,-0.75) node {$a_3$};
\draw (6,-0.75) node {$z_3$};
\path [-](1) edge node [above]{} (2);
\path [-](2) edge node [left]{} (3);
\path [-](2) edge node [above]{} (4);
\path [-](4) edge node [left]{} (5);
\path [-](4) edge node [above]{} (6);
\path [-](6) edge node [above]{} (7);
\path [-](7) edge node [left]{} (8);
\end{tikzpicture}
$$
\end{example}

Note that it follows from the definition that \begin{equation}\label{eq:compgraph} (\alpha\cdot\beta)\circ G=(\alpha\circ G)\cdot(\beta\circ G)\text{ and }(\alpha\odot\beta)\circ G=(\alpha\circ G)\odot(\beta\circ G).\end{equation}

\begin{theorem}\cite[Theorem 7.3]{extendedchromsymeq} Let $G$ be a vertex-weighted graph with distinguished vertices $a$ and $z$. If $\mathcal M(\alpha)=\mathcal M(\beta)$, then $X_{P_{\alpha\circ G}}(\bm x)=X_{P_{\beta\circ G}}(\bm x)$. Moreover, if $G$ is simple and connected, then the converse holds. \end{theorem}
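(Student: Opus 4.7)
The plan is to follow exactly the template of Theorem \ref{thm:mpath} and Theorem \ref{thm:chrompath}, but now with the underlying ``building block'' $P_1$ replaced by the arbitrary graph $G$. The engine driving everything is the Crew--Spirkl deletion-contraction identity $X_\Gamma(\bm x) = X_{\Gamma \setminus e}(\bm x) - X_{\Gamma/e}(\bm x)$. Applying this to the single bridge edge $e = (z_G, a_H)$ in $G \cdot H$ immediately yields the product identity
\begin{equation}
X_G(\bm x) \, X_H(\bm x) = X_{G \cdot H}(\bm x) + X_{G \odot H}(\bm x),
\end{equation}
since deleting $e$ leaves the disjoint union $G \sqcup H$ (whose extended chromatic symmetric function is the product $X_G X_H$) and contracting $e$ produces $G \odot H$.

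Combining this product identity with the factorization rules \eqref{eq:compgraph}, induction on $\ell(\alpha)$ gives
\begin{equation}
\prod_{i=1}^{\ell(\alpha)} X_{G^{\odot \alpha_i}}(\bm x) = \sum_{\alpha \prec \beta} X_{\beta \circ G}(\bm x).
\end{equation}
M\"obius inversion on $(C^N, \prec)$, exactly as in the derivation of Theorem \ref{thm:chrompath}, then produces
\begin{equation}
X_{\alpha \circ G}(\bm x) = \sum_{\lambda \in \mathcal M(\alpha)} (-1)^{\ell(\alpha) - \ell(\lambda)} \prod_i X_{G^{\odot \lambda_i}}(\bm x),
\end{equation}
where I use that $\prod_i X_{G^{\odot \beta_i}}$ depends on $\beta$ only through the multiset of its parts. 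Since $\ell(\alpha) = \max\{\ell(\lambda) : \lambda \in \mathcal M(\alpha)\}$ is visible from $\mathcal M(\alpha)$ itself, the right-hand side depends only on $\mathcal M(\alpha)$. This settles the forward direction.

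For the converse, under the hypothesis that $G$ is simple and connected, I would need to invert the display above and recover $\mathcal M(\alpha)$ from $X_{\alpha \circ G}(\bm x)$. The main obstacle is establishing that the family $\{\prod_i X_{G^{\odot \lambda_i}} : \lambda \text{ a partition}\}$ is linearly independent in the ring of symmetric functions, so that the coefficients in the formula above uniquely determine $\mathcal M(\alpha)$. My approach would be a leading-term argument in the power-sum basis: expanding $X_{G^{\odot k}}$ into power sums produces a distinguished ``top'' partition indexed by the total vertex weight $k \cdot w(V(G))$, and simple-connectedness of $G$ should force the corresponding coefficient to be nonzero; across different $\lambda$'s, the leading $p$-products are then distinct, so a triangularity argument gives the required linear independence. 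Once linear independence is in hand, matching the two expansions of $X_{\alpha \circ G}(\bm x) = X_{\beta \circ G}(\bm x)$ term by term forces $\mathcal M(\alpha) = \mathcal M(\beta)$, completing the converse.
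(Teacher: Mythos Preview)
The paper does not prove this theorem; it is quoted verbatim from \cite[Theorem 7.3]{extendedchromsymeq} and appears only as motivation for the LLT analogue, Theorem~\ref{thm:compgraph}. So there is no ``paper's own proof'' to compare against here.

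That said, your forward direction is correct and is exactly the argument the paper uses to prove the LLT analogue in Theorem~\ref{thm:compgraph}: deletion--contraction at the bridge edge gives the product relation, iteration gives a sum over coarsenings, and M\"obius inversion on $(C^N,\prec)$ produces an expression depending only on $\mathcal M(\alpha)$. Note that the paper only states and proves the forward direction for LLT polynomials; it does not attempt a converse there.

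Your converse sketch has the right shape but a real gap. The claim that $\{\prod_i X_{G^{\odot\lambda_i}}\}_\lambda$ is linearly independent does not follow from a ``leading $p$-term'' argument as written: the coefficient of $p_{kW}$ in $X_{G^{\odot k}}$ is $\sum_{S:(V,S)\text{ connected}}(-1)^{|S|}$, and this can vanish for simple connected graphs (any graph with a single edge whose contraction leaves a graph with the same number of spanning connected subgraphs of each parity will kill it; more concretely, one must actually compute). The argument in \cite{extendedchromsymeq} is more delicate than a single leading coefficient and uses additional structure. If you want to supply a self-contained proof of the converse you will need to either reproduce their argument or find a genuinely different linear-independence proof; the one-line triangularity claim as stated is not sufficient.
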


We prove a similar result for horizontal-strip LLT polynomials, but we require a condition on the distinguished vertices to ensure that the resulting weighted graphs be admissible. Recall that \emph{content reading order} is the total ordering on cells by increasing content and from bottom to top along constant content lines. 

\begin{definition}
Let $\Pi=\Pi(\bm\lambda)$ be an admissible weighted graph. Let $a$ and $z$ be the vertices corresponding to the rows containing the first and last cells of $\bm\lambda$ in content reading order. Then we define concatenation, near-concatenation, and composition as in Definition \ref{def:compgraph} with respect to these distinguished vertices $a$ and $z$. 
\end{definition}

\begin{theorem}\label{thm:compgraph}
Let $\Pi=\Pi(\bm\lambda)$ be an admissible weighted graph. For any composition $\alpha$, the weighted graph $\alpha\circ\Pi$ is admissible, and if $\mathcal M(\alpha)=\mathcal M(\beta)$, then $G_{\alpha\circ\Pi}(\bm x;q)=G_{\beta\circ\Pi}(\bm x;q)$. 
\end{theorem}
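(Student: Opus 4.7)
The plan is to parallel the proof of Theorem \ref{thm:mpath}, first establishing admissibility of $\alpha\circ\Pi$, then proving a product identity generalizing \eqref{eq:lltmpathrelation}, and finally applying Möbius inversion to derive an expansion of $G_{\alpha\circ\Pi}(\bm x;q)$ indexed by $\mathcal M(\alpha)$, which then implies the theorem after invoking Theorem \ref{thm:main} to see that the LLT polynomials are well-defined.

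For admissibility, I would fix a horizontal-strip $\bm\lambda_0$ with $\Pi(\bm\lambda_0)\cong\Pi$ and let $R_a$ and $R_z$ denote the rows of $\bm\lambda_0$ containing the first and last cells in content reading order. Then I would build a horizontal-strip realizing $\alpha\circ\Pi$ by lining up $|\alpha|$ translated copies of $\bm\lambda_0$ from left to right; within each $\alpha_i$-block I would fuse the $R_z$ of one copy with the $R_a$ of the next into a single elongated row, and between adjacent blocks I would leave the corresponding $R_z$ and $R_a$ disjoint but content-adjacent, so that the corresponding edge receives weight one. Since $R_z$ and $R_a$ have the rightmost and leftmost cells of their respective copies, Proposition \ref{prop:mrirj} implies that the fused row has the same edge weight to every other row in the two copies as $R_z$ and $R_a$ did individually, so the resulting weighted graph is exactly $\alpha\circ\Pi$.

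Next, I would establish the multiplicative identity
\begin{equation*}
G_{\alpha\circ\Pi}(\bm x;q)\,G_{\beta\circ\Pi}(\bm x;q)=\frac{1}{q}G_{(\alpha\cdot\beta)\circ\Pi}(\bm x;q)+\frac{q-1}{q}G_{(\alpha\odot\beta)\circ\Pi}(\bm x;q),
\end{equation*}
mirroring the proof of Theorem \ref{thm:mpath}. Starting from horizontal-strips $\bm\mu$ and $\bm\nu$ realizing $\alpha\circ\Pi$ and $\beta\circ\Pi$, I would translate $\bm\nu$ far to the right by some $N$ and use the commutations from Proposition \ref{prop:mrirj} Part 1 to reorder the combined rows so that all inter-strip pairs are separated by a content gap, except that the $z$-row $R_z$ of $\bm\mu$ immediately precedes the translated $a$-row $N+S_a$ of $\bm\nu$ with $r(R_z)+1=l(N+S_a)$. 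The resulting $\bm\lambda$ then realizes the disjoint union $(\alpha\circ\Pi)\sqcup(\beta\circ\Pi)$, so $G_{\bm\lambda}=G_{\alpha\circ\Pi}G_{\beta\circ\Pi}$; the swap $\bm\lambda'$ from Lemma \ref{lem:inductiverelation} inserts an edge of weight one between $R_z$ and $N+S_a$ and, by Lemma \ref{lem:newgraphs}, realizes $(\alpha\cdot\beta)\circ\Pi$; and $\bm\lambda''$, replacing the pair by their union and (empty) intersection, realizes $(\alpha\odot\beta)\circ\Pi$. Lemma \ref{lem:inductiverelation} then yields the identity.

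Iterating this identity with the single-part factorization $\alpha=(\alpha_1)\cdot(\alpha_2)\cdots(\alpha_{\ell(\alpha)})$ and applying Möbius inversion exactly as in the proof of Theorem \ref{thm:mpath} gives
\begin{equation*}
G_{\alpha\circ\Pi}(\bm x;q)=\sum_{\lambda\in\mathcal M(\alpha)}q^{\ell(\lambda)-1}(q-1)^{\ell(\alpha)-\ell(\lambda)}\prod_{i=1}^{\ell(\lambda)}G_{(\lambda_i)\circ\Pi}(\bm x;q),
\end{equation*}
whose right-hand side manifestly depends on $\alpha$ only through $\mathcal M(\alpha)$, completing the proof. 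The main obstacle is the construction underlying the product identity: one must carefully order the rows of the combined horizontal-strip $\bm\lambda$ so that the disjoint-union property of the weighted graph holds while leaving intact the delicate local configuration between $R_z$ and $N+S_a$, and then, via Lemma \ref{lem:newgraphs}, track the edge weights from the perturbed rows to all the remote vertices on each side to confirm that $\Pi(\bm\lambda')$ and $\Pi(\bm\lambda'')$ are exactly $(\alpha\cdot\beta)\circ\Pi$ and $(\alpha\odot\beta)\circ\Pi$.
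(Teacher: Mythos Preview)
Your proposal is correct and follows essentially the same approach as the paper: establish admissibility of the concatenation and near-concatenation by an explicit horizontal-strip construction (the paper does this modularly for $\Pi_1\cdot\Pi_2$ and $\Pi_1\odot\Pi_2$ and uses cycling to ensure the $z$- and $a$-rows have the unique extremal content, whereas you build $\alpha\circ\Pi$ directly, but the underlying idea is the same), prove the product identity via Lemma~\ref{lem:inductiverelation} and Lemma~\ref{lem:newgraphs}, and then iterate and M\"obius-invert exactly as in Theorem~\ref{thm:mpath} to obtain an expansion indexed by $\mathcal M(\alpha)$.
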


\begin{proof}
Let $\Pi_1=\Pi(\bm\lambda)$ and $\Pi_2=\Pi(\bm\mu)$ be admissible weighted graphs. We first show that $\Pi_1\cdot\Pi_2$ and $\Pi_1\odot\Pi_2$ are admissible, which proves the first statement. By cycling and translating, we may assume without loss of generality that $\bm\lambda=(R_1,\ldots,R_n)$ has a unique cell $u\in R_1$ of maximal content $N-1$ and $\bm\mu=(S_1,\ldots,S_m)$ has a unique cell $v\in S_m$ of minimal content $0$. Then by translating $\bm\mu$ and concatenating, we can consider the horizontal-strip $(R_1,\ldots,R_n,N+S_1,\ldots,N+S_m)$, whose weighted graph is the disjoint union $\Pi_1\sqcup \Pi_2$. Consider the horizontal-strips
\begin{align}\bm\lambda&=(N+S_1,\ldots,N+S_{m-1},R_1,N+S_m,R_2,\ldots,R_n),\\\nonumber \bm\lambda'&=(N+S_1,\ldots,N+S_{m-1},N+S_m,R_1,R_2\ldots,R_n),\text{ and }\\\nonumber\bm\lambda''&=(N+S_1,\ldots,N+S_{m-1},(N+S_{m-1})\cup R_1,(N+S_{m-1})\cap R_1,R_2,\ldots,R_n).
\end{align} Note that $\bm\lambda$ was constructed by switching rows $(R_t,N+S_{t'})$ with $(t,t')\neq (1,m)$, which commute by Proposition \ref{prop:mrirj}, Part 1 because $\ell(N+S_{t'})>r(R_t)+1$. Therefore, we have that $\Pi(\bm\lambda)\cong\Pi_1\sqcup \Pi_2$ and so $G_{\bm\lambda}(\bm x;q)=G_{\Pi_1}(\bm x;q)G_{\Pi_2}(\bm x;q)$. Lemma \ref{lem:newgraphs} describes exactly how the weighted graphs $\Pi(\bm\lambda')$ and $\Pi(\bm\lambda'')$ are constructed and we see that $\Pi(\bm\lambda')\cong\Pi_1\cdot\Pi_2$ and $\Pi(\bm\lambda'')\cong\Pi_1\odot\Pi_2$, so indeed these weighted graphs are admissible. Because $\ell(N+S_m)=r(R_1)+1>l(R_1)$, we have $R_1\nleftrightarrow (N+S_m)$ by Proposition \ref{prop:mrirj}, Part 3 and now by Lemma \ref{lem:inductiverelation} we have 
\begin{equation}
G_{\Pi_1}(\bm x;q)G_{\Pi_2}(\bm x;q)=\frac 1qG_{\Pi_1\cdot\Pi_2}(\bm x;q)+\frac{q-1}qG_{\Pi_1\odot\Pi_2}(\bm x;q).
\end{equation}
Therefore, by \eqref{eq:compgraph}, for compositions $\alpha$ and $\beta$, the horizontal-strip LLT polynomials satisfy the relation 
\begin{equation}\label{eq:compgraphrelation} G_{\alpha\circ\Pi}(\bm x;q)G_{\beta\circ\Pi}(\bm x;q)=\frac 1qG_{(\alpha\cdot\beta)\circ \Pi}(\bm x;q)+\frac{q-1}qG_{(\alpha\odot\beta)\circ\Pi}(\bm x;q).\end{equation}
By iterating \eqref{eq:compgraphrelation}, we have
\begin{equation}G_{\Pi^{\odot\alpha_1}}(\bm x;q)\cdots G_{\Pi^{\odot\alpha_\ell}}(\bm x;q)=\sum_{\alpha\prec\beta}\frac{(q-1)^{\ell(\alpha)-\ell(\beta)}}{q^{\ell(\alpha)-1}}G_{\beta\circ\Pi}(\bm x;q)\end{equation}
and rearranging, we have
\begin{equation}q^{\ell(\alpha)-1}(q-1)^{-\ell(\alpha)}\prod_{i=1}^\ell G_{\Pi^{\odot\alpha_i}}(\bm x;q)=\sum_{\alpha\prec\beta}(q-1)^{-\ell(\beta)}G_{\beta\circ\Pi}(\bm x;q).\end{equation}
Therefore, by \eqref{eq:mobius}, we have
\begin{equation}(q-1)^{-\ell(\alpha)}G_{\alpha\circ\Pi}(\bm x;q)=\sum_{\alpha\prec\beta}q^{\ell(\beta)-1}(q-1)^{-\ell(\beta)}\prod_{i=1}^\ell G_{\Pi^{\odot\beta_i}}(\bm x;q)\end{equation}
and rearranging again, we have
\begin{align}
G_{\alpha\circ \Pi}(\bm x;q)&=\sum_{\alpha\prec\beta}q^{\ell(\beta)-1}(q-1)^{\ell(\alpha)-\ell(\beta)}\prod_{i=1}^\ell G_{\Pi^{\odot\beta_i}}(\bm x;q)\\\nonumber&=\sum_{\lambda\in\mathcal M(\alpha)}q^{\ell(\lambda)-1}(q-1)^{\ell(\alpha)-\ell(\lambda)}\prod_{i=1}^\ell G_{\Pi^{\odot\lambda_i}}(\bm x;q).
\end{align}
In particular, if $\mathcal M(\alpha)=\mathcal M(\beta)$, then $G_{\alpha\circ\Pi}(\bm x;q)=G_{\beta\circ\Pi}(\bm x;q)$. 
\end{proof}

\section{Proof of Lemma \ref{lem:key}} \label{section:keylemma}

In this section we prove Lemma \ref{lem:key}, which completes the proof of Theorem \ref{thm:main}. The general idea will be to use rotating, cycling, and commuting to move rows into the desired position. We will be able to do this unless there is a \emph{noncommuting path}, which is a concept we will introduce in Definition \ref{def:ncp}. In Lemma \ref{lem:mncp1} and Lemma \ref{lem:mncp2}, we will describe the structure of a minimal noncommuting path very precisely, which will allow us to rule out several cases. We begin with the following definition, which expresses a relationship between rows that will be convenient to consider.

\begin{definition} Let $\bm\lambda=(R_1,\ldots,R_n)$ be a horizontal-strip. We write $R_i\prec R_j$ if $M_{i,j}=|R_i|$ and $R_i\nprec R_j$ otherwise. We also write $R_i\precnsim R_j$ to mean that $R_i\prec R_j$ and $R_j\nprec R_i$. \end{definition}

\begin{proposition} \label{prop:prec}
Let $\bm\lambda=(R_1,\ldots,R_n)$ be a horizontal-strip. \begin{enumerate}
\item Suppose that $i<j$. Then we have $R_i\prec R_j$ if and only if either $R_i\subseteq R_j$ or $R_i\subseteq R_j^+$.\\

\item Suppose that $i>j$. Then we have $R_i\prec R_j$ if and only if either $R_i\subseteq R_j$ or $R_i\subseteq R_j^-$.\\

\item We have $R_i\prec R_j$ and $R_i\leftrightarrow R_j$ if and only if $R_i\subseteq R_j$.
\end{enumerate}
\end{proposition}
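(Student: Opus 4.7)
The plan is to unpack the definition of $M_{i,j}$ by cases on the relative order of $l(R_i)$ and $l(R_j)$, translate the equation $M_{i,j}=|R_i|$ into a containment of row-sets, and then invoke Proposition \ref{prop:mrirj} for Part (3).

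For Part (1), since $i<j$ we have $M_{i,j}=M(R_i,R_j)$, which by definition equals $|R_i\cap R_j|$ when $l(R_i)\leq l(R_j)$ and $|R_i\cap R_j^+|$ when $l(R_i)>l(R_j)$. In the first case, $M_{i,j}=|R_i|$ is equivalent to $R_i\subseteq R_j$; in the second, to $R_i\subseteq R_j^+$. This gives the forward implication. For the converse, I would check the two boundary cases: if $R_i\subseteq R_j$ and $l(R_i)>l(R_j)$, then $R_i$ misses the leftmost cell of $R_j$ and is therefore also contained in $R_j^+$, so the second case of the definition applies; and if $R_i\subseteq R_j^+$ then $l(R_i)\geq l(R_j)+1>l(R_j)$, so again the second case applies.

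For Part (2) the reasoning is parallel. Now $i>j$ forces $M_{i,j}=M(R_j,R_i)$, and the equation $M(R_j,R_i)=|R_i|$ becomes either $R_i\subseteq R_j$ (when $l(R_j)\leq l(R_i)$) or $R_i^+\subseteq R_j$ (when $l(R_j)>l(R_i)$), the latter being equivalent to $R_i\subseteq R_j^-$. The same boundary check as above closes the converse.

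For Part (3), the backward direction is immediate: if $R_i\subseteq R_j$, then $R_i\prec R_j$ by Part (1) or (2), and $R_i\leftrightarrow R_j$ by the containment case of Proposition \ref{prop:mrirj}(2). For the forward direction, suppose $R_i\prec R_j$ and $R_i\leftrightarrow R_j$. By Parts (1) and (2), either $R_i\subseteq R_j$ (in which case we are done) or we fall in the ``shifted'' subcase, where $R_i\subseteq R_j^+$ with $l(R_i)>l(R_j)$ (for $i<j$), or $R_i\subseteq R_j^-$ with $l(R_j)>l(R_i)$ (for $i>j$). In the shifted subcase the row with smaller left content is $R_j$ (resp.\ $R_i$), so applying Proposition \ref{prop:mrirj} with that as the first argument, the disjoint case is excluded since $M_{i,j}=|R_i|>0$, the non-commuting case is excluded by hypothesis, and the containment case forces $r$ of the ``shifted'' row to be bounded by $r$ of the other; combined with the shift inclusion this yields $R_i\subseteq R_j$.

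The only delicate point, and thus the main place to be careful, is the edge case of the shifted subcase where $r(R_i)=r(R_j)+1$: this is exactly the non-commuting configuration of Proposition \ref{prop:mrirj}(3), so it must be ruled out by the commutation hypothesis before one can conclude $R_i\subseteq R_j$. Everything else is definitional bookkeeping.
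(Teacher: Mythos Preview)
Your proof is correct and follows essentially the same approach as the paper: unpack the definition of $M_{i,j}$ by cases on $l(R_i)$ versus $l(R_j)$ for Parts (1)--(2), then use Proposition~\ref{prop:mrirj} to handle Part (3). The only cosmetic differences are that the paper dispatches Part (2) in one line by rotating $\bm\lambda$ rather than repeating the parallel argument, and in Part (3) it argues the contrapositive (assume $R_i\nsubseteq R_j$, land in the non-commuting case of Proposition~\ref{prop:mrirj}(3)) rather than your exclusion-of-cases phrasing; both amount to the same thing.
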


\begin{proof}\hspace{2pt}
\begin{enumerate}
\item If $R_i\prec R_j$, then by definition we have \begin{equation}M_{i,j}=|R_i|=\begin{cases} |R_i\cap R_j|&\text{ if }l(R_i)\leq l(R_j),\\|R_i\cap R_j^+|&\text{ if }l(R_i)>l(R_j),\end{cases}\end{equation}
and therefore we must have $R_i\subseteq R_j$ or $R_i\subseteq R_j^+$. Conversely, if $R_i\subseteq R_j^+$, then $l(R_i)>l(R_j)$ and $M_{i,j}=|R_i|$ by Proposition \ref{prop:mrirj}, Part 2, while if $R_i\nsubseteq R_j^+$ and $R_i\subseteq R_j$, then $l(R_i)=l(R_j)$ and again $M_{i,j}=|R_i|$ by Proposition \ref{prop:mrirj}, Part 2. \\

\item This follows from the previous part by considering a rotation of $\bm\lambda$. \\

\item If $R_i\subseteq R_j$, then $R_i\prec R_j$ by the previous parts and we have $l(R_j)\leq l(R_i)\leq r(R_i)\leq r(R_j)$, so $R_i\leftrightarrow R_j$ by Proposition \ref{prop:mrirj}, Part 2. Conversely, if $i<j$, $R_i\prec R_j$, and $R_i\nsubseteq R_j$, then by Part 1 we have $R_i\subseteq R_j^+$, so $l(R_j)<l(R_i)\leq r(R_j)+1\leq r(R_i)$ and $R_i\nleftrightarrow R_j$ by Proposition \ref{prop:mrirj}, Part 3. The case where $i>j$ follows by rotating. 
\end{enumerate}
\end{proof}

\begin{example} Let $\bm\lambda=(R_1,R_2,R_3)=(7/4,6/3,5/0)$ as in the example below. We have $M(R_1,R_2)=3=|R_1|=|R_2|$, so $R_1\prec R_2$ and $R_2\prec R_1$. We have $M(R_1,R_3)=2<|R_1|$, so $R_1\nprec R_3$, and we have $M(R_2,R_3)=3=|R_2|<|R_3|$, so $R_2\precnsim R_3$.

$$\begin{tikzpicture}
\draw (-0.75,-0.25) node (0) {$\bm \lambda=$};
\draw (4,-1.25) node (1) {$R_1$} (4,-0.25) node (2) {$R_2$} (4,0.75) node (3) {$R_3$};
\draw (2,-1.5) -- (3.5,-1.5) -- (3.5,-1) -- (2,-1) -- (2,-1.5) (2.5,-1.5) -- (2.5,-1) (3,-1.5) -- (3,-1) (1.5,-0.5) -- (3,-0.5) -- (3,0) -- (1.5,0) -- (1.5,-0.5) (2,-0.5) -- (2,0) (2.5,-0.5) -- (2.5,0) (0,0.5) -- (2.5,0.5) -- (2.5,1) -- (0,1) -- (0,0.5) (0.5,0.5) -- (0.5,1) (1,0.5) -- (1,1) (1.5,0.5) -- (1.5,1) (2,0.5) -- (2,1);
\end{tikzpicture}$$
Informally, we can think of the relation $R_i\prec R_j$ as being very similar to the relation $R_i\subseteq R_j$, except that we may need to shift a row by one cell. Because of this possible shift, the relation $\prec$ is not transitive. In the above example, we have $R_1\prec R_2$ and $R_2\prec R_3$, but $R_1\nprec R_3$.  \end{example}

The following concept will be important to define the potential obstruction to commuting. 

\begin{definition} \label{def:ncp} A sequence of $n\geq 3$ rows $(R_1,\ldots,R_n)$ is a \emph{noncommuting path} from $R_1$ to $R_n$ if $R_i\nleftrightarrow R_{i+1}$ for every $1\leq i\leq n-1$. A noncommuting path is \emph{minimal} if there is no subsequence of rows $(R_1=R_{i_1},R_{i_2},\ldots,R_{i_k}=R_n)$ with $i_1<i_2<\cdots<i_k$ and $3\leq k<n$ that forms a noncommuting path from $R_1$ to $R_n$. \end{definition}

In particular, if $(R_1,\ldots,R_n)$ is a minimal noncommuting path, then we have $R_i\leftrightarrow R_j$ for every $i,j$ with $1<|j-i|<n-1$. Because we require a noncommuting path to have length at least $3$, we cannot conclude that $R_1\leftrightarrow R_n$ in a minimal noncommuting path.\\

Our next Lemma shows that, given two rows $R_i$ and $R_j$ of $\bm\lambda$ with $i<j$, either there is a minimal noncommuting path in $\bm\lambda$ from $R_i$ to $R_j$, or we may assume that $j=i+1$. 

\begin{lemma} \label{lem:aoncp} Let $\bm\lambda=(R_1,\ldots,R_n)$ be a horizontal-strip and let $1\leq i<j\leq n$. Then one of the following holds. \begin{enumerate}
\item There is a minimal noncommuting path $(R_i=R_{i_1},\ldots,R_{i_k}=R_j)$ in $\bm\lambda$.\\

\item There is a horizontal-strip $\bm\mu=(S_1,\ldots,S_n)\in\mathcal S(\bm\lambda)$ and an isomorphism of weighted graphs $\varphi:\Pi(\bm\lambda)\xrightarrow\sim\Pi(\bm\mu)$ such that $l(S_{\varphi_i})=l(R_i)$, $l(S_{\varphi_j})=l(R_j)$, and $\varphi_j=\varphi_i+1$.
\end{enumerate}
\end{lemma}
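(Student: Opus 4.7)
The plan is to encode the horizontal-strips in $\mathcal{S}(\bm\lambda)$ reachable from $\bm\lambda$ by iterated commutation (Lemma 3.15) as linear extensions of a suitable partial order on $\{1,\ldots,n\}$, and then dichotomize on whether this partial order forces some row to sit strictly between $R_i$ and $R_j$. Concretely, I would introduce the partial order $P$ on $\{1,\ldots,n\}$ given as the transitive closure of the relation $a <_P b$ iff $a<b$ and $R_a \nleftrightarrow R_b$; antisymmetry is automatic since $a <_P b$ implies $a<b$ in integer order. A standard bubble-sort argument then shows that the orderings of rows reachable from $(R_1,\ldots,R_n)$ by adjacent commutation swaps are exactly the linear extensions of $P$, so every such linear extension yields a horizontal-strip $\bm\mu \in \mathcal{S}(\bm\lambda)$ together with its natural isomorphism $\varphi$, and because commutation swaps do not alter the underlying skew diagrams we automatically have $l(S_{\varphi_a})=l(R_a)$ for every $a$.

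I would then dichotomize on $P$. If there exists some $m \neq i,j$ with $i <_P m <_P j$, unpacking the definition of $<_P$ yields an increasing chain $i = k_0 < k_1 < \cdots < k_r = j$ in $\bm\lambda$ with $R_{k_t} \nleftrightarrow R_{k_{t+1}}$ for every $t$ and $r \geq 2$. Choosing $r$ minimal among all such chains, no proper subsequence of the chain can itself be a noncommuting path (else we could replace our chain by a strictly shorter one of the same type), so $(R_i, R_{k_1}, \ldots, R_{k_{r-1}}, R_j)$ is a minimal noncommuting path in $\bm\lambda$, giving statement (1). Otherwise, I would construct a linear extension of $P$ with $\varphi_j = \varphi_i + 1$ by placing every $m \neq i,j$ either strictly before $i$ or strictly after $j$: if $m <_P i$, or if $m <_P j$ and $m$ is incomparable to $i$, place $m$ before $i$; if $j <_P m$, or if $i <_P m$ and $m$ is incomparable to $j$, place $m$ after $j$; and if $m$ is incomparable to both, assign it freely. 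The dichotomy assumption rules out the only potentially conflicting case $i <_P m <_P j$, so taking an arbitrary linear extension of $P$ restricted to each side produces a linear extension of $P$ globally in which $i$ is immediately followed by $j$, and the corresponding $\bm\mu$ witnesses statement (2).

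The main obstacle will be justifying the correspondence between reachable orderings and linear extensions of $P$ (a standard bubble-sort fact, but one that requires care here since $\nleftrightarrow$ depends on the shapes of the rows and not on their indices), together with verifying the absence of hidden conflicts in the construction of the linear extension in the second case. Both issues reduce to the observation that a commutation swap is permitted exactly when the two adjacent elements are $P$-incomparable, and that the placement dichotomy above exhausts all $P$-constraints on elements relative to $i$ and $j$, so the only obstruction is precisely the one identified in the first case.
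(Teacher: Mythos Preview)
Your proof is correct and captures the same underlying idea as the paper, but the packaging is different. The paper argues by a short induction on $j-i$: if $R_j\leftrightarrow R_{j-1}$ swap them, otherwise try to commute $R_{j-1}$ past everything down to $R_i$; failure of the latter produces a noncommuting relation $R_{j-1}\nleftrightarrow R_{t}$ with $i\le t\le j-2$, and ``continuing in this way'' yields a noncommuting path from $R_i$ to $R_j$, which contains a minimal one. Your version replaces this ad hoc induction with the poset $P$ generated by $a<_P b\iff a<b$ and $R_a\nleftrightarrow R_b$, identifies the commutation-reachable orderings with the linear extensions of $P$, and then invokes the standard fact that $i$ and $j$ can be made adjacent in some linear extension precisely when no $m$ satisfies $i<_P m<_P j$. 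This is cleaner and makes the dichotomy completely transparent; the paper's ``continuing in this way'' is really just an informal construction of such a chain $i<_P m<_P j$, which your framework handles uniformly.

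Two small points worth tightening in your write-up. First, when you pick the chain of minimal length in case~(1), you should say explicitly that you minimize over chains with $r\ge 2$; otherwise, if $R_i\nleftrightarrow R_j$ directly, the literal minimum is $r=1$, which is not a noncommuting path in the paper's sense ($n\ge 3$). Second, your verification that the ``before/after'' partition extends to a global linear extension is correct but terse; you might note that any relation $b<_P a$ with $b$ placed after $j$ and $a$ placed before $i$ forces $j<_P i$ (or contradicts one of the incomparability clauses), which is impossible since $<_P$ refines integer order. With those clarifications your argument is complete and arguably more reusable than the paper's.
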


\begin{proof}
We use induction on $j-i$. If $j-i=1$, then the second possibility holds by simply taking $\bm\lambda=\bm\mu$, so assume that $j-i\geq 2$. If $R_j\leftrightarrow R_{j-1}$, then by commuting we have $(R_1,\ldots,R_j,R_{j-1},\ldots,R_n)\in\mathcal S(\bm\lambda)$ and we are done by our induction hypothesis on $j-i$, so we may assume that $R_j\nleftrightarrow R_{j-1}$. Similarly, if $R_{j-1}\leftrightarrow R_t$ for every $i\leq t\leq j-2$, then by commuting we would have $(R_1,\ldots,R_{j-1},R_i,\ldots,R_j,\ldots,R_n)\in\mathcal S(\bm\lambda)$ and we are again done by induction. So we may assume that $R_{j-1}\nleftrightarrow R_t$ for some $i\leq t\leq j-2$, and continuing in this way there must be a noncommuting path in $\bm\lambda$ from $R_i$ to $R_j$. Finally, if this noncommuting path is not minimal, then it contains a minimal one. 
\end{proof}

Examples of minimal noncommuting paths are given in Figure \ref{fig:mncps}. Informally, our next goal is to show that all minimal noncommuting paths look like these examples. Specifically, our goal is to prove Lemma \ref{lem:mncp1}, which describes what a minimal noncommuting path may look like, and Lemma \ref{lem:mncp2}, which describes the extent to which our weighted graph determines the structure of a minimal noncommuting path. This will allow us to prove Corollary \ref{cor:adjacentnoncommutingdone}, which proves Lemma \ref{lem:key} in many cases. We will first prove some elementary Propositions.  

\begin{figure}
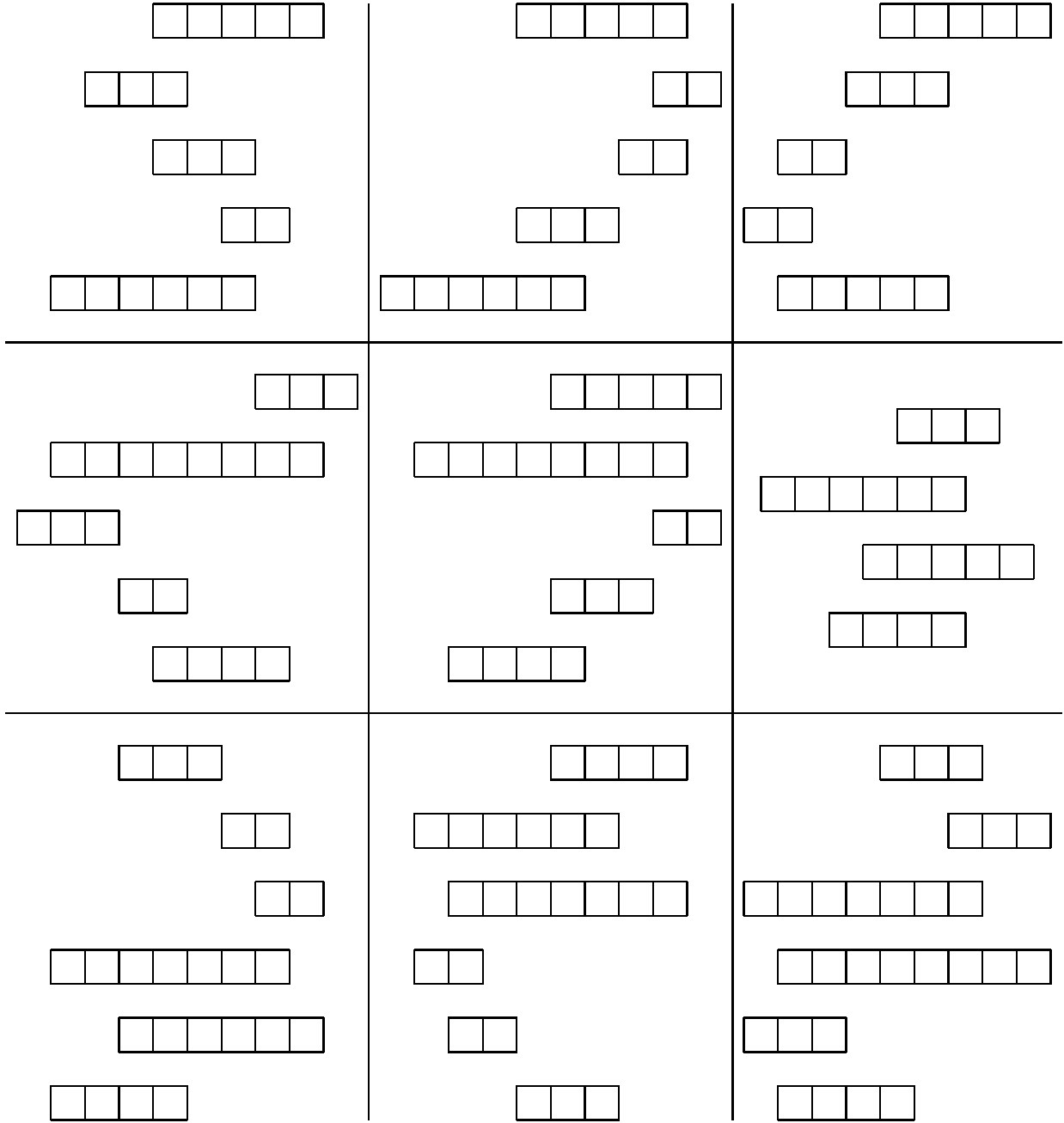

\caption{Some examples of minimal noncommuting paths}\label{fig:mncps}
$$
\begin{tabular}{c|c|c}
\tableau{&&& \ & \ & \ & \ & \ \\ \\ & \ & \ & \ \\ \\ &&& \ & \ & \ \\ \\ &&&&& \ & \ \\ \\ \ & \ & \ & \ & \ & \ } &
\tableau{&&&& \ & \ & \ & \ & \ \\ \\ &&&&&&&& \ & \ \\ \\ &&&&&&& \ & \ \\ \\ &&&& \ & \ & \ \\ \\ \ & \ & \ & \ & \ & \ } &
\tableau{&&&& \ & \ & \ & \ & \ \\ \\ &&& \ & \ & \ \\ \\ & \ & \ \\ \\ \ & \ \\ \\ & \ & \ & \ & \ & \ }
\\ && \\ \hline && \\
\tableau{&&&&&&& \ & \ & \ \\ \\ & \ & \ & \ & \ & \ & \ & \ & \ \\ \\ \ & \ & \ \\ \\ &&& \ & \ \\ \\ &&&& \ & \ & \ & \ }&
\tableau{&&&&& \ & \ & \ & \ & \ \\ \\ & \ & \ & \ & \ & \ & \ & \ & \  \\ \\ &&&&&&&& \ & \ \\ \\ &&&&& \ & \ & \ \\ \\ && \ & \ & \ & \ }&
\tableau{ \\ &&&& \ & \ & \ \\ \\ \ & \ & \ & \ & \ & \ \\ \\ &&& \ & \ & \ & \ & \ \\ \\ && \ & \ & \ & \ }
\\ && \\ \hline && \\ 
\tableau{&& \ & \ & \ \\ \\ &&&&& \ & \ \\ \\ &&&&&& \ & \ \\ \\ \ & \ & \ & \ & \ & \ & \ \\ \\ && \ & \ & \ & \ & \ & \ \\ \\  \ & \ & \ & \ }&
\tableau{&&&& \ & \ & \ & \ \\ \\ \ & \ & \ & \ & \ & \ \\ \\ & \ & \ & \ & \ & \ & \ & \ \\ \\ \ & \ \\ \\ & \ & \ \\ \\ &&& \ & \ & \ }&
\tableau{&&&& \ & \ & \ \\ \\ &&&&&& \ & \ & \ \\ \\ \ & \ & \ & \ & \ & \ & \ \\ \\ & \ & \ & \ & \ & \ & \ & \ & \ \\ \\ \ & \ & \ \\ \\ & \ & \ & \ & \  }
\end{tabular}$$
\end{figure}

\begin{proposition} \label{prop:prec2} Let $\bm\lambda=(R_1,\ldots,R_n)$ be a horizontal-strip. \begin{enumerate}
\item Suppose that $R_i\nleftrightarrow R_j$, $R_j\nleftrightarrow R_k$, $R_i\leftrightarrow R_k$, and that the integers $l(R_j)-l(R_i)$ and $l(R_k)-l(R_j)$ have the same sign. Then $M_{i,k}=0$. \\

\item Suppose that $R_i\nleftrightarrow R_j$, $R_j\nleftrightarrow R_k$, $R_i\leftrightarrow R_k$, and that the integers $l(R_j)-l(R_i)$ and $l(R_k)-l(R_j)$ have opposite signs. Then $R_i\prec R_k$ or $R_k\prec R_i$. \\

\item Suppose that $R_i\leftrightarrow R_j$, $R_i\leftrightarrow R_k$, and $R_j\nleftrightarrow R_k$. Then $R_j\prec R_i$ if and only if $R_k\prec R_i$. 
\end{enumerate}
\end{proposition}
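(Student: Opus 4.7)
The plan is to translate each hypothesis into inequalities between the endpoints $l(R)$ and $r(R)$ via Proposition \ref{prop:mrirj}, Corollary \ref{cor:mrirj}, and Proposition \ref{prop:prec}, and then to perform a short case analysis on the relative positions of $l(R_i)$, $l(R_j)$, $l(R_k)$. The two facts I will use repeatedly are: a noncommuting pair is ``monotone'' by Corollary \ref{cor:mrirj}(2) in the sense that the row with smaller $l$ also has smaller $r$, and the intervals overlap or are adjacent, i.e. $l_{\text{larger}} \leq r_{\text{smaller}}+1$, by Proposition \ref{prop:mrirj}(3); while a commuting pair is either disjoint with a gap of at least one cell or nested, by Proposition \ref{prop:mrirj}(1)--(2).

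For Part 1, after possibly rotating $\bm\lambda$ by Proposition \ref{prop:similar}(3), I may assume $l(R_i)<l(R_j)<l(R_k)$. Corollary \ref{cor:mrirj}(2) then forces $r(R_i)<r(R_j)<r(R_k)$. Since $R_i\leftrightarrow R_k$ with $l(R_i)<l(R_k)$ and $r(R_i)<r(R_k)$, the nested case of Proposition \ref{prop:mrirj}(2) is ruled out, so we must be in the disjoint case, giving $r(R_i)+1<l(R_k)$ and hence $M_{i,k}=0$.

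For Part 2, I will similarly reduce to $l(R_i),l(R_k)<l(R_j)$. Corollary \ref{cor:mrirj}(2) then gives $r(R_i),r(R_k)<r(R_j)$, and Proposition \ref{prop:mrirj}(3) gives the overlap bounds $l(R_j)\leq r(R_i)+1$ and $l(R_j)\leq r(R_k)+1$. Assuming WLOG $l(R_i)\leq l(R_k)$, the disjoint case of $R_i\leftrightarrow R_k$ would force $l(R_k)>r(R_i)+1\geq l(R_j)>l(R_k)$, a contradiction. So $R_i,R_k$ are nested, and $l(R_i)\leq l(R_k)\leq r(R_k)\leq r(R_i)$ yields $R_k\subseteq R_i$, hence $R_k\prec R_i$ by Proposition \ref{prop:prec}(3); the opposite WLOG gives $R_i\prec R_k$.

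For Part 3, Proposition \ref{prop:prec}(3) reduces the statement to $R_j\subseteq R_i\iff R_k\subseteq R_i$. By Corollary \ref{cor:mrirj}(2) applied to $R_j\nleftrightarrow R_k$, I may assume $l(R_j)<l(R_k)$ and $r(R_j)<r(R_k)$, and Proposition \ref{prop:mrirj}(3) gives $l(R_k)\leq r(R_j)+1$. If $R_j\subseteq R_i$, then $l(R_i)\leq l(R_j)<l(R_k)$, and the disjoint case of $R_i\leftrightarrow R_k$ would require $l(R_k)>r(R_i)+1\geq r(R_j)+1$, contradicting the overlap bound, so $R_k\subseteq R_i$. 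Conversely, if $R_k\subseteq R_i$, then $r(R_j)<r(R_k)\leq r(R_i)$, and the analogous chase (using $l(R_k)\leq r(R_j)+1$ to rule out the disjoint case $r(R_j)+1<l(R_i)\leq l(R_k)$, and $r(R_j)<r(R_i)$ to rule out $R_i\subseteq R_j$) forces $R_j\subseteq R_i$. I expect the only real obstacle to be carefully juggling the sign choices and WLOG assumptions; each individual endpoint inequality is immediate but easy to mistranscribe.
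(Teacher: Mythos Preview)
Your proof is correct and follows essentially the same approach as the paper: translate the commuting/noncommuting hypotheses into endpoint inequalities via Proposition~\ref{prop:mrirj} and Corollary~\ref{cor:mrirj}, then do a short case analysis. The only minor stylistic differences are that in Part~2 the paper observes directly that $M_{i,k}>0$ (since both $R_i$ and $R_k$ contain a cell of content $l(R_j)-1$) and then invokes Corollary~\ref{cor:mrirj}(1), whereas you argue by ruling out the disjoint case; and in Part~3 the paper does a three-way case split on the position of $r(R_k)$ relative to $l(R_i)-1$ and $r(R_i)-1$, whereas you use the disjoint/nested dichotomy more directly.
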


Note that in (1) and (2), because $R_i\nleftrightarrow R_j$ and $R_j\nleftrightarrow R_k$, the integers $l(R_j)-l(R_i)$ and $l(R_k)-l(R_j)$ are nonzero by Proposition \ref{prop:mrirj}, Part 2.

\begin{proof} [Proof of Proposition \ref{prop:prec2}. ]\hspace{2pt}
\begin{enumerate}
\item Without loss of generality, we may assume that $l(R_i)<l(R_j)<l(R_k)$, and then because $R_i\nleftrightarrow R_j$ and $R_j\nleftrightarrow R_k$, by Corollary \ref{cor:mrirj}, Part 2, we must have $r(R_i)<r(R_j)<r(R_k)$ as well. But now we cannot have $R_i\subseteq R_k$ or $R_k\subseteq R_i$, so by Proposition \ref{prop:prec}, Part 3, we cannot have $R_i\prec R_k$ or $R_k\prec R_i$. Therefore, because $R_i\leftrightarrow R_k$, by Corollary \ref{cor:mrirj}, Part 1, we must have $M_{i,k}=0$.\\

\item By rotating, we may assume without loss of generality that $l(R_i)\leq l(R_j)-1$ and $l(R_k)\leq l(R_j)-1$. Because $R_i\nleftrightarrow R_j$ and $R_k\nleftrightarrow R_j$, by Proposition \ref{prop:mrirj}, Part 1, we have $r(R_i)\geq l(R_j)-1$ and $r(R_k)\geq l(R_j)-1$. But now $M_{i,k}>0$ and because $R_i\leftrightarrow R_k$, we have by Corollary \ref{cor:mrirj}, Part 1, that $R_i\prec R_k$ or $R_k\prec R_i$. \\

\item By symmetry, it suffices to prove that $R_j\prec R_i$ implies $R_k\prec R_i$. Suppose that $R_j\prec R_i$, and then because $R_i\leftrightarrow R_j$, by Proposition \ref{prop:prec}, Part 1, we have $R_j\subseteq R_i$, that is $l(R_i)\leq l(R_j)\leq r(R_j)\leq r(R_i)$. Suppose that $l(R_k)<l(R_i)$. By Proposition \ref{prop:mrirj} Parts 1 and 2, if $r(R_k)<l(R_i)-1\leq l(R_j)-1$, then $R_j\leftrightarrow R_k$, if $l(R_i)-1\leq r(R_k)\leq r(R_i)-1$, then $R_i\nleftrightarrow R_k$, and if $r(R_k)\geq r(R_i)\geq r(R_j)$, then again $R_j\leftrightarrow R_k$, a contradiction in all cases, so we must have $l(R_k)\geq l(R_i)$. Similarly, by rotating, we must have $r(R_k)\leq r(R_i)$, so we have $R_k\subseteq R_i$ and $R_k\prec R_i$. 
\end{enumerate}
\end{proof}

\begin{proposition} \label{prop:mncp} Let $\bm\lambda=(R_1,\ldots,R_n)$ be a minimal noncommuting path. 
\begin{enumerate}
\item If $R_i\prec R_j$ and $R_j\prec R_i$ for some $j\geq i+2$, then we must have $n=3$ or $n=4$ and $j=i+2$. In particular, if $n\geq 5$, then $R_i\prec R_j$ implies that in fact $R_i\precnsim R_j$. \\

\item If $R_j\prec R_i$ for some $j\geq i+2$, then $R_k\prec R_i$ for every $k\geq i+2$, with the possible exception of $k=n$ if $i=1$. Similarly, if $R_j\prec R_i$ for some $j\leq i-2$, then $R_k\prec R_i$ for every $k\leq i-2$, with the possible exception of $k=1$ if $i=n$. \\

\item Suppose that $R_j\precnsim R_i$ for some $i\geq j+2$, that $i$ is minimal with these two properties, and that $i\neq n$ if $j=1$. Then either $l(R_{i-1})>\cdots>l(R_j)$ and $l(R_i)<l(R_{i-1})$, or $l(R_{i-1})<\cdots<l(R_j)$ and $l(R_i)>l(R_{i-1})$.
\end{enumerate}
\end{proposition}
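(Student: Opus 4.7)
The plan is to prove the three parts in sequence using Propositions \ref{prop:prec} and \ref{prop:prec2} together with the minimality consequences of the noncommuting path.

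For Part 1, I would use a subsequence-deletion argument. Assuming implicitly that $(i,j) \neq (1,n)$, minimality of the path gives $R_i \leftrightarrow R_j$ since $1 < |j - i| < n - 1$; combined with both $R_i \prec R_j$ and $R_j \prec R_i$, Proposition \ref{prop:prec}(3) forces $R_i = R_j$. When $j - i \leq n - 3$, the subsequence $(R_1, \ldots, R_i, R_{j+1}, \ldots, R_n)$ is a strictly shorter noncommuting path from $R_1$ to $R_n$ of length at least $3$ (using $R_i = R_j \nleftrightarrow R_{j+1}$), contradicting minimality. When $j - i = n - 2$, then $(i,j) \in \{(1,n-1),(2,n)\}$; in the first case, $R_1 = R_{n-1}$ together with $R_{n-2} \nleftrightarrow R_{n-1}$ forces $R_1 \nleftrightarrow R_{n-2}$, but for $n \geq 5$ minimality demands $R_1 \leftrightarrow R_{n-2}$, a contradiction, and the other case is symmetric. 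Only $n = 3$ or $(n = 4$ with $j = i + 2)$ survives, and the ``in particular" for $n \geq 5$ is then immediate.

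For Part 2, I would propagate the relation $R_k \prec R_i$ along the path using Proposition \ref{prop:prec2}(3). For any adjacent pair $(R_k, R_{k+1})$ with both indices in $[i+2, n]$ (excluding $n$ when $i = 1$), the path gives $R_k \nleftrightarrow R_{k+1}$ and minimality yields $R_i \leftrightarrow R_k$ and $R_i \leftrightarrow R_{k+1}$, so Proposition \ref{prop:prec2}(3) produces $R_k \prec R_i \iff R_{k+1} \prec R_i$. Starting from the given $R_j \prec R_i$, this equivalence propagates step-by-step along the entire allowed range, covering every $R_k$ with $k \geq i+2$ modulo the stated exception. The symmetric half of the statement (for $j \leq i - 2$) follows by applying the rotation operation from Proposition \ref{prop:similar}(3).

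For Part 3, WLOG assume $l(R_{j+1}) > l(R_j)$ (else apply a rotation). Proposition \ref{prop:prec}(1) together with $R_j \precnsim R_i$ gives $l(R_i) \leq l(R_j)$. The core task is to prove the strict monotonicity $l(R_j) < l(R_{j+1}) < \cdots < l(R_{i-1})$, after which $l(R_i) \leq l(R_j) < l(R_{i-1})$ yields the claimed reversal at $R_i$. I would proceed by contradiction: let $k \in [j+1, i-2]$ be the smallest index with $l(R_{k+1}) < l(R_k)$. Proposition \ref{prop:prec2}(2) applied to $(R_{k-1}, R_k, R_{k+1})$, combined with $R_{k-1} \leftrightarrow R_{k+1}$ from minimality and Proposition \ref{prop:prec}(3), yields either $R_{k-1} \subseteq R_{k+1}$ or $R_{k+1} \subseteq R_{k-1}$. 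Iterated applications of Proposition \ref{prop:prec2}(1) along the monotone initial segment force $R_j$ to be disjoint-separated from every $R_{j+2}, \ldots, R_{k-1}$. In the case $R_{k-1} \subseteq R_{k+1}$, the constraint $R_k \nleftrightarrow R_{k+1}$ together with $R_j \leftrightarrow R_{k+1}$ forces $R_{k+1}$ to cover $R_j$ (possibly up to a one-cell shift), yielding $R_j \prec R_{k+1}$ and hence $R_j \precnsim R_{k+1}$ by Part 1, contradicting minimality of $i$ since $k + 1 < i$. The case $R_{k+1} \subseteq R_{k-1}$ requires tracking the path's continuation further to derive a violation of either minimality or the hypothesis $i \neq n$ when $j = 1$. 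The main obstacle will be this detailed analysis of the second subcase, where the contradiction does not arise directly from comparing $R_j$ with $R_{k+1}$ and instead requires carefully following the path structure between $R_{k+1}$ and $R_i$.
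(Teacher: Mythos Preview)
Your arguments for Parts 1 and 2 are essentially the paper's: deduce $R_i=R_j$ from $R_i\leftrightarrow R_j$ and Proposition~\ref{prop:prec}(3), then exploit the resulting noncommuting shortcuts for Part~1; propagate via Proposition~\ref{prop:prec2}(3) for Part~2.

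For Part~3, your setup matches the paper's, but you miss the key simplification: both subcases are handled uniformly by the Part~2 you just proved. In the case $R_{k-1}\prec R_{k+1}$, your claim that ``$R_{k+1}$ covers $R_j$'' is not justified by the ingredients you list (knowing $R_j$ is disjoint from $R_{k-1}$ and that $R_{k-1}\subseteq R_{k+1}$ does not by itself force $l(R_{k+1})\leq l(R_j)$ when $k-1>j$). The clean argument is: $R_{k-1}\prec R_{k+1}$ with $k-1\leq (k+1)-2$, so Part~2 (in the downward direction) gives $R_m\prec R_{k+1}$ for all $m\leq k-1$; in particular $R_j\prec R_{k+1}$, hence $R_j\precnsim R_{k+1}$ by Part~1, contradicting minimality of $i$.

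The second subcase, which you flag as the main obstacle, is in fact symmetric and no harder. From $R_{k+1}\prec R_{k-1}$, Part~2 (upward direction) gives $R_m\prec R_{k-1}$ for all $m\geq k+1$ (the exception $m=n$ when $k-1=1$ is ruled out since $k-1=j=1$ would give $R_i\prec R_j$, contradicting $R_j\precnsim R_i$; hence $k\geq j+2$). In particular $R_i\precnsim R_{k-1}$, so $R_i\subseteq R_{k-1}$ by Proposition~\ref{prop:prec}(3), giving $l(R_{k-1})\leq l(R_i)$. But $l(R_i)\leq l(R_j)<l(R_{k-1})$ from the monotone segment, a contradiction. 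No ``tracking the path's continuation'' is needed.
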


\begin{remark}
Figure \ref{fig:mncpequalrows} shows that if $n\leq 4$, then it is possible to have a minimal noncommuting path with $R_1\prec R_3$ and $R_3\prec R_1$. If $n\geq 5$, then this will not happen because there will be some $R_t\nleftrightarrow R_{t'}$ with $t'\geq t+2$ and $(t,t')\neq(1,n)$, contradicting minimality.
\begin{figure}
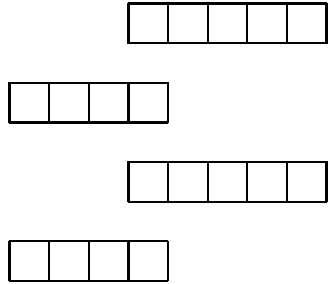
\caption{A minimal noncommuting path with $R_1\prec R_3$ and $R_3\prec R_1$}\label{fig:mncpequalrows}
$$\tableau{&&& \ & \ & \ & \ & \ \\ \\ \ & \ & \ & \ \\ \\ &&& \ & \ & \ & \ & \ \\ \\ \ & \ & \ & \ }$$
\end{figure}
Informally, (2) states that if we are contained in $R_i$, then we must remain stuck in $R_i$ and (3) states that if $R_j$ is contained in some minimal $R_i$, then we must move in the same direction until $R_i$. 
\end{remark}

\begin{proof} [Proof of Proposition \ref{prop:mncp}. ]\hspace{2pt}
\begin{enumerate}
\item Because $j\geq i+2$, we have $R_i\leftrightarrow R_j$ by minimality and therefore $R_i=R_j$ by Proposition \ref{prop:prec}. Now $R_{i+1}\nleftrightarrow R_j$, so by minimality of the noncommuting path we must have $j=i+2$. Similarly, if $i\geq 2$, then $R_{i-1}\nleftrightarrow R_j$, so we must then have $i=2$ and $j=n=4$, if $j\leq n-1$, then $R_i\nleftrightarrow R_{j+1}$, so we must have $i=1$ and $j=3=n-1$ so again $n=4$, and otherwise we have $i=1$ and $j=3=n$. \\

\item If $i+2\leq k\leq n-1$, $R_k\prec R_i$, and $k+1\neq n$ in the case of $i=1$, then because $R_i\leftrightarrow R_k$, $R_i\leftrightarrow R_{k+1}$, and $R_k\nleftrightarrow R_{k+1}$, by Proposition \ref{prop:prec2}, Part 3, we have $R_k\prec R_i$ if and only if $R_{k+1}\prec R_i$, so the first statement follows by induction on $k$ and the second statement follows by rotating.\\

\item Recall that because $R_j\prec R_i$ and $R_j\leftrightarrow R_i$, by Proposition \ref{prop:prec} we must have $l(R_i)\leq l(R_j)$ and $r(R_i)\geq r(R_j)$. Suppose that either $l(R_{j+1})>l(R_j)$ and $l(R_{t+1})<l(R_t)$ for some minimal $j+1\leq t\leq i-2$, or $l(R_{j+1})<l(R_j)$ and $l(R_{t+1})>l(R_t)$ for some minimal $j+1\leq t\leq i-2$, so in particular, $n\geq 5$. Then by Proposition \ref{prop:prec2}, Part 2, we have either $R_{t-1}\prec R_{t+1}$ or $R_{t+1}\prec R_{t-1}$. If $R_{t-1}\prec R_{t+1}$, then by the previous two parts we have $R_j\precnsim R_{t+1}$, contradicting minimality of $i$. If $R_{t+1}\prec R_{t-1}$, then by the previous two parts we have $R_i\precnsim R_{t-1}$, but this means that $t\geq j+2$, and now by Proposition \ref{prop:prec}, Part 3, if $l(R_{j+1})>l(R_j)$ this is impossible because $l(R_i)\leq l(R_j)<l(R_t)$, and if $l(R_{j+1})<l(R_j)$ this is impossible because $r(R_i)\geq r(R_j)>r(R_t)$. Therefore, we must have either $l(R_{i-1})>\cdots>l(R_j)$ and $l(R_i)\leq l(R_j)<l(R_{i-1})$, or $l(R_{i-1})<\cdots<l(R_j)$ and $r(R_i)\geq r(R_j)>r(R_{i-1})$, so $l(R_i)>l(R_{i-1})$ by Corollary \ref{cor:mrirj}, Part 2.
\end{enumerate}
\end{proof}

\begin{proposition} \label{prop:ellgreat}
Let $\bm\lambda=(R_1,\ldots,R_n)$ be a horizontal-strip with $R_1\nleftrightarrow R_2$. Let $\bm\mu=(S_1,\ldots,S_n)$ be a horizontal-strip with $\varphi:\Pi(\bm\lambda)\xrightarrow\sim\Pi(\bm\mu)$ and let $i=\varphi_1$ and $j=\varphi_2$. If $l(R_2)>l(R_1)$, also assume that $l(S_j)>l(S_i)$. Then \begin{equation}
l(S_j)-l(S_i)\geq l(R_2)-l(R_1)\text{ with equality only if }i<j.\end{equation}
\end{proposition}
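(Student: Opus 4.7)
The plan is a direct case analysis driven by the explicit formula \eqref{eq:mij} from Proposition \ref{prop:mrirj}. Since $R_1\nleftrightarrow R_2$, we are in case 3 of that proposition, so $l(R_1)\neq l(R_2)$. Writing $M=M_{1,2}(\bm\lambda)$ and applying \eqref{eq:mij} to $(R_1,R_2)$ yields
\begin{equation*}
l(R_2)-l(R_1)=\begin{cases}|R_1|-M&\text{if }l(R_1)<l(R_2),\\M-|R_2|-1&\text{if }l(R_1)>l(R_2).\end{cases}
\end{equation*}
Moreover, case 3 forces $M\leq\min(|R_1|,|R_2|)-1$ in the first situation and $M\in[1,|R_2|]$ in the second. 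The isomorphism $\varphi$ gives $M_{i,j}(\bm\mu)=M$, $|S_i|=|R_1|$, and $|S_j|=|R_2|$, so the same machinery can be applied to $(S_i,S_j)$ after deciding which of the two plays the ``left row'' role.

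I will split on the sign of $l(R_2)-l(R_1)$, and within each on whether $S_i\nleftrightarrow S_j$ or $S_i\leftrightarrow S_j$. In every noncommuting subcase I apply \eqref{eq:mij} to $(S_i,S_j)$ exactly as above, which yields an exact formula for $l(S_j)-l(S_i)$ in terms of $M$, $|S_i|$ or $|S_j|$, the orientation (which of $S_i,S_j$ has smaller $l$), and the indicator $\chi(\cdot>\cdot)$. Subtracting from the displayed formula, the difference $l(S_j)-l(S_i)-(l(R_2)-l(R_1))$ collapses to either $0$ or $1$, with $0$ occurring precisely when the orientations match and $i<j$. In every commuting subcase I invoke Corollary \ref{cor:mrirj} Part 1 to force $M\in\{0,\min(|R_1|,|R_2|)\}$, which combined with the bounds above either contradicts other hypotheses or produces a strict inequality from the separation ($M=0$: rows separated by at least one cell) or containment ($M=\min$: one row contains the other) geometry.

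For instance, when $l(R_1)<l(R_2)$ the lemma's hypothesis forces $l(S_j)>l(S_i)$, so $S_i$ is the left row. If $S_i\nleftrightarrow S_j$, then \eqref{eq:mij} yields $l(S_j)-l(S_i)=|R_1|-M+\chi(i>j)$, which matches $l(R_2)-l(R_1)$ iff $i<j$. If $S_i\leftrightarrow S_j$, the bound $M\leq\min(|R_1|,|R_2|)-1$ rules out $M=\min(|R_1|,|R_2|)$, so $M=0$ and separation gives $l(S_j)-l(S_i)>|R_1|\geq l(R_2)-l(R_1)$ strictly. The case $l(R_1)>l(R_2)$ is handled symmetrically, with the additional easy subcase that if $l(S_j)\geq l(S_i)$ then $l(S_j)-l(S_i)\geq 0>-1\geq l(R_2)-l(R_1)$; the remaining subcase $l(S_j)<l(S_i)$ reduces, via the same two applications of \eqref{eq:mij}, to $l(S_j)-l(S_i)=M-|R_2|-1+\chi(j>i)$, equal to $l(R_2)-l(R_1)$ exactly when $i<j$.

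The argument is not conceptually difficult; the main obstacle is careful bookkeeping. In each of the roughly eight subcases I must correctly identify which of $S_i,S_j$ is the left row, extract the correct $\chi(i>j)$ or $\chi(j>i)$ when applying \eqref{eq:mij}, and reconcile with the sign of $l(R_2)-l(R_1)$. The essential content of the lemma is that \eqref{eq:mij} sets up a one-to-one correspondence between noncommuting row configurations and the data (edge weight, left-row size, index order), so any other configuration with the same edge weight either reproduces $(R_1,R_2)$ exactly (matching orientation and $i<j$, giving equality), or shifts by exactly one (opposite sequence order, strict by one), or breaks into a disjoint/contained configuration (commuting, strict by Corollary \ref{cor:mrirj}).
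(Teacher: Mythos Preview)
Your approach is correct and essentially identical to the paper's: both split on the sign of $l(R_2)-l(R_1)$ and compute directly from \eqref{eq:mij}, the only difference being that the paper folds your commuting/noncommuting subcases for $(S_i,S_j)$ into a single uniform inequality (e.g., $M_{i,j}(\bm\mu)\leq r(S_j)-l(S_i)+1+\chi(i<j)$ holds in all three parts of Proposition~\ref{prop:mrirj}). One bookkeeping slip of exactly the sort you anticipated: in the subcase $l(R_1)>l(R_2)$, $l(S_j)<l(S_i)$, $S_i\nleftrightarrow S_j$, the correct formula is $l(S_j)-l(S_i)=M-|R_2|-\chi(j>i)$, not $M-|R_2|-1+\chi(j>i)$; your stated conclusion (equality iff $i<j$) is the one that actually follows from the corrected formula.
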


\begin{remark}
Informally, Proposition \ref{prop:ellgreat} states that the leftmost possible position of $S_j$, given the weighted graph, occurs when $S_j$ is above $S_i$ and $S_i\nleftrightarrow S_j$. The following example demonstrates the necessity of the hypothesis that if $l(R_2)>l(R_1)$, then $l(S_j)>l(S_i)$. 
$$\begin{tikzpicture}
\draw (-0.25,0.25) node (0) {$\bm \lambda=$};
\draw (4.5,-0.25) node (1) {$R_1$} (4.5,0.75) node (2) {$R_2$};
\draw (0.5,-0.5) -- (2.5,-0.5) -- (2.5,0) -- (0.5,0) -- (0.5,-0.5) (1,-0.5) -- (1,0) (1.5,-0.5) -- (1.5,0) (2,-0.5) -- (2,0) (2.5,0.5) -- (4,0.5) -- (4,1) -- (2.5,1) -- (2.5,0.5) (3,0.5) -- (3,1) (3.5,0.5) -- (3.5,1);
\end{tikzpicture}\hspace{50pt}
\begin{tikzpicture}
\draw (-0.25,0.25) node (0) {$\bm \mu=$};
\draw (5,-0.25) node (1) {$S_i$} (5,0.75) node (2) {$S_j$};
\draw (2.5,-0.5) -- (4.5,-0.5) -- (4.5,0) -- (2.5,0) -- (2.5,-0.5) (3,-0.5) -- (3,0) (3.5,-0.5) -- (3.5,0) (4,-0.5) -- (4,0) (0.5,0.5) -- (2,0.5) -- (2,1) -- (0.5,1) -- (0.5,0.5) (1,0.5) -- (1,1) (1.5,0.5) -- (1.5,1);
\end{tikzpicture}$$
\end{remark}

\begin{proof}[Proof of Proposition \ref{prop:ellgreat}. ]
We calculate directly, using \eqref{eq:mij} and noting that $|R|=r(R)-l(R)+1$. If $l(R_2)<l(R_1)$, then the statement holds unless $l(S_j)<l(S_i)$, and because $R_1\nleftrightarrow R_2$, we have by Corollary \ref{cor:mrirj}, Part 3, that $M_{1,2}(\bm\lambda)=M_{i,j}(\bm\mu)>0$. By Proposition \ref{prop:mrirj}, Parts 2 and 3, we have $M_{1,2}(\bm\lambda)=r(R_2)-l(R_1)+2$ and we either have $M_{i,j}(\bm\mu)=r(S_j)-l(S_i)+1+\chi(i<j)$, or $M_{i,j}(\bm\mu)=\min\{|S_i|,|S_j|\}\leq r(S_j)-l(S_i)+1+\chi(i<j)$. Therefore we have \begin{align} l(S_j)-l(S_i)&=r(S_j)-|S_j|+1-l(S_i)\geq M_{i,j}(\bm\mu)-|S_j|-\chi(i<j)\\\nonumber&\geq M_{1,2}(\bm\lambda)-|R_2|-1=r(R_2)-|R_2|+1-l(R_1)=l(R_2)-l(R_1),\end{align} with equality only if $i<j$.\\

Similarly, if $l(R_2)>l(R_1)$, then by hypothesis we have $l(S_j)>l(S_i)$, and because $R_1\nleftrightarrow R_2$, we have by Corollary \ref{cor:mrirj}, Part 3, that $M_{i,j}(\bm\mu)<\min\{|S_i|,|S_j|\}$. By Proposition \ref{prop:mrirj}, Parts 1 and 3, we have $M_{1,2}(\bm\lambda)=r(R_1)-l(R_2)+1$ and we either have $M_{i,j}(\bm\mu)=r(S_i)-l(S_j)+1+\chi(i<j)$ or $M_{i,j}(\bm\mu)=0\geq r(S_i)-l(S_j)+1+\chi(i<j)$. Therefore we have \begin{align}
l(S_j)-l(S_i)&=l(S_j)-r(S_i)-1+|S_i|\geq |S_i|-M_{i,j}(\bm\mu)+\chi(i>j)\\\nonumber&\geq |R_1|-M_{1,2}(\bm\lambda)=l(R_2)-r(R_1)-1+|R_1|=l(R_2)-l(R_1),
\end{align} with equality only if $i<j$. 
\end{proof}

\begin{proposition} \label{prop:mncp3support} Let $\bm\lambda=(R_1,\ldots,R_n)$ be a horizontal-strip with $l(R_1)<l(R_3)$ and $R_1\nleftrightarrow R_3$. Let $\bm\mu=(S_1,\ldots,S_n)$ be a horizontal-strip and $\varphi:\Pi(\bm\lambda)\xrightarrow\sim\Pi(\bm\mu)$ such that $\varphi_1<\varphi_3$, $l(S_{\varphi_1})<l(S_{\varphi_3})$, and $S_{\varphi_1}\nleftrightarrow S_{\varphi_3}$. Then if $l(R_2)>l(R_1)$ and $R_1\nleftrightarrow R_2$, then $l(S_{\varphi_2})>l(S_{\varphi_1})$. Similarly, if $l(R_3)>l(R_2)$ and $R_2\nleftrightarrow R_3$, then $l(S_{\varphi_3})>l(S_{\varphi_2})$. \end{proposition}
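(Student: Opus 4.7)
The plan is to prove the first implication by contradiction; the second will follow by a symmetric argument, for instance by applying the rotation of Proposition~\ref{prop:similar}, Part 3, which reverses the roles of left- and right-endpoints. So I would suppose $l(R_2) > l(R_1)$ with $R_1 \nleftrightarrow R_2$ and, for contradiction, that $l(S_{\varphi_2}) \leq l(S_{\varphi_1})$. First I would translate $\bm\mu$ (which preserves $\Pi(\bm\mu)$ and $G_{\bm\mu}(\bm x;q)$ by Proposition~\ref{prop:similar}, Part 1) so that $l(S_{\varphi_1}) = l(R_1)$; then $|S_{\varphi_1}|=|R_1|$ gives $r(S_{\varphi_1})=r(R_1)$. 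Since $\varphi_1<\varphi_3$, $l(S_{\varphi_1})<l(S_{\varphi_3})$, and $S_{\varphi_1}\nleftrightarrow S_{\varphi_3}$, Proposition~\ref{prop:mrirj}, Part 3 gives $M_{\varphi_1,\varphi_3}(\bm\mu)=r(S_{\varphi_1})-l(S_{\varphi_3})+1$; equating this with $M_{1,3}(\bm\lambda)=r(R_1)-l(R_3)+1$ pins down $l(S_{\varphi_3})=l(R_3)$ and hence $r(S_{\varphi_3})=r(R_3)$.

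Next, the formula $M_{1,2}(\bm\lambda)=r(R_1)-l(R_2)+1$ together with $l(R_1)<l(R_2)$ and $r(R_1)<r(R_2)$ (the latter from Corollary~\ref{cor:mrirj}, Part 2) yields $M_{1,2}(\bm\lambda)\leq |R_1|-1$ and $M_{1,2}(\bm\lambda)\leq |R_2|-1$, so $M_{1,2}(\bm\lambda)<\min\{|R_1|,|R_2|\}$. If $l(S_{\varphi_2})=l(S_{\varphi_1})$, then Proposition~\ref{prop:mrirj}, Part 2 would force $S_{\varphi_1}\leftrightarrow S_{\varphi_2}$ with $M_{\varphi_1,\varphi_2}(\bm\mu)=\min\{|R_1|,|R_2|\}$, a contradiction; hence $l(S_{\varphi_2})<l(S_{\varphi_1})=l(R_1)$ strictly. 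I would then split on whether $S_{\varphi_1}\leftrightarrow S_{\varphi_2}$: in the commuting case, Corollary~\ref{cor:mrirj}, Part 1 combined with $M_{1,2}(\bm\lambda)<\min$ forces $M_{1,2}(\bm\lambda)=0$ and so $l(R_2)=r(R_1)+1$ with $r(S_{\varphi_2})<l(R_1)-1$; in the non-commuting case, Proposition~\ref{prop:mrirj}, Part 3 applied to $(S_{\varphi_2},S_{\varphi_1})$ yields the explicit value $r(S_{\varphi_2})=l(R_1)+r(R_1)-l(R_2)-\chi(\varphi_1<\varphi_2)$, which in particular satisfies $r(S_{\varphi_2})<r(R_1)$.

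Finally I would derive a contradiction by comparing $M_{\varphi_2,\varphi_3}(\bm\mu)=M_{2,3}(\bm\lambda)$ against the computed positions of $S_{\varphi_2}$ and $S_{\varphi_3}=[l(R_3),r(R_3)]$, case-splitting on whether $R_2\leftrightarrow R_3$ and on the relative order of $l(R_2)$ and $l(R_3)$ (both forced into $(l(R_1),r(R_1)+1]$ by $R_1\nleftrightarrow R_2$ and $R_1\nleftrightarrow R_3$). In the commuting subcase, $r(S_{\varphi_2})<l(R_1)-1<l(R_3)-1$ forces $M_{\varphi_2,\varphi_3}(\bm\mu)=0$ by Proposition~\ref{prop:mrirj}, Part 1, whereas $l(R_2)=r(R_1)+1\geq l(R_3)$ combined with $r(R_3)>r(R_1)$ gives $M_{2,3}(\bm\lambda)\geq 1$ via Proposition~\ref{prop:mrirj}; in the non-commuting subcase, the bounds $l(S_{\varphi_2})<l(R_1)<l(R_3)$ and $r(S_{\varphi_2})<r(R_1)<r(R_3)=r(S_{\varphi_3})$ together with Proposition~\ref{prop:mrirj} pin $M_{\varphi_2,\varphi_3}(\bm\mu)$ to a value incompatible with $M_{2,3}(\bm\lambda)$. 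The main obstacle will be the careful bookkeeping in this last step, though each subcase reduces to a direct computation from Proposition~\ref{prop:mrirj} and the position constraints on $l(R_2)$ and $l(R_3)$.
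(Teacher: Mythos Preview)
Your approach is essentially the same as the paper's: argue by contradiction, deduce $l(S_{\varphi_2})<l(S_{\varphi_1})$ and $r(S_{\varphi_2})<r(S_{\varphi_1})$ from $M_{1,2}<\min\{|R_1|,|R_2|\}$, and then derive an incompatibility in $M_{2,3}$. The paper streamlines your final step by avoiding the case split on whether $S_{\varphi_1}\leftrightarrow S_{\varphi_2}$ (the inequalities $l(S_{\varphi_2})<l(S_{\varphi_1})$, $r(S_{\varphi_2})<r(S_{\varphi_1})$ follow directly from $M_{\varphi_1,\varphi_2}<\min$ via Proposition~\ref{prop:mrirj}, Part~2) and, rather than pinning down positions, compares edge weights: it shows $M_{2,3}(\bm\lambda)>M_{1,3}(\bm\lambda)$ when $l(R_2)<l(R_3)$ and $M_{2,3}(\bm\lambda)>M_{1,2}(\bm\lambda)$ when $l(R_2)>l(R_3)$, while on the $\bm\mu$ side the reversed inequalities $M_{\varphi_2,\varphi_3}\leq M_{\varphi_1,\varphi_3}$ and $M_{\varphi_2,\varphi_3}\leq M_{\varphi_1,\varphi_2}$ hold.
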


\begin{remark} Informally, Proposition \ref{prop:mncp3support} describes the extent to which rows $R_1$ and $R_3$ with $l(R_1)<l(R_3)$ and $R_1\nleftrightarrow R_3$, given the weighted graph data, determine the relative horizontal position of another row $R_2$.\end{remark}

\begin{proof}[Proof of Proposition \ref{prop:mncp3support}. ]
We first suppose for a contradiction that $l(R_2)>l(R_1)$ and $R_1\nleftrightarrow R_2$, but $l(S_{\varphi_2})\leq l(S_{\varphi_1})$. Because $l(R_2)>l(R_1)$ and $R_1\nleftrightarrow R_2$, we have by Corollary \ref{cor:mrirj}, Part 2, that $r(R_2)>r(R_1)$ and by Corollary \ref{cor:mrirj}, Part 3, that $M_{1,2}(\bm\lambda)<\min\{|R_1|,|R_2|\}$. Therefore, we must have $M_{\varphi_1,\varphi_2}(\bm\mu)<\min\{|S_{\varphi_1}|,|S_{\varphi_2}|\}$ and by Proposition \ref{prop:mrirj}, Part 2, we have that $l(S_{\varphi_2})<l(S_{\varphi_1})<l(S_{\varphi_3})$ and $r(S_{\varphi_2})<r(S_{\varphi_1})<r(S_{\varphi_3})$. In particular, we have $M_{\varphi_2,\varphi_3}(\bm\mu)<\min\{|S_{\varphi_2}|,|S_{\varphi_3}|\}$, so $M_{2,3}(\bm\lambda)<\min\{|R_2|,|R_3|\}$ and $l(R_2)\neq l(R_3)$. \\

Now if $l(R_2)<l(R_3)$, then by \eqref{eq:mrirj} we have \begin{equation} M_{2,3}(\bm\lambda)=r(R_2)-l(R_3)+1>r(R_1)-l(R_3)+1=M_{1,3}(\bm\lambda),\end{equation}
but if $r(S_{\varphi_2})<l(S_{\varphi_3})-1$, then $M_{\varphi_2,\varphi_3}(\bm\mu)=0\leq M_{\varphi_1,\varphi_3}(\bm\mu)$, and if $r(S_{\varphi_2})\geq l(S_{\varphi_3})-1$, then \begin{equation} M_{\varphi_2,\varphi_3}(\bm\mu)=r(S_{\varphi_2})-l(S_{\varphi_3})+1+\chi(\varphi_2>\varphi_3)\leq r(S_{\varphi_1})-l(S_{\varphi_3})+1=M_{\varphi_1,\varphi_3}(\bm\mu),\end{equation}
a contradiction in either case. Similarly, if $l(R_2)>l(R_3)$, then by \eqref{eq:mrirj} we have 
\begin{equation}
M_{2,3}(\bm\lambda)=r(R_3)-l(R_2)+2>r(R_1)-l(R_2)+1=M_{1,2}(\bm\lambda),\end{equation}
but if $r(S_{\varphi_2})<l(S_{\varphi_3})-1$, then $M_{\varphi_2,\varphi_3}(\bm\mu)=0\leq M_{\varphi_1,\varphi_2}(\bm\mu)$, and if $r(S_{\varphi_2})\geq l(S_{\varphi_3})-1$, then 
\begin{equation}
M_{\varphi_2,\varphi_3}(\bm\mu)= r(S_{\varphi_2})-l(S_{\varphi_3})+1+\chi(\varphi_2>\varphi_3)\leq r(S_{\varphi_2})-l(S_{\varphi_3})+1=M_{\varphi_1,\varphi_2}(\bm\mu),
\end{equation}
a contradiction in either case. This proves the first claim and the second follows by rotating.
\end{proof}

\begin{proposition} \label{prop:ellgreat2} Let $\bm\lambda=(R_1,\ldots,R_n)$ be a minimal noncommuting path with $l(R_n)>\cdots>l(R_1)$. Let $\bm\mu=(S_1,\ldots,S_n)$ be a horizontal-strip, $\varphi:\Pi(\bm\lambda)\xrightarrow\sim\Pi(\bm\mu)$, and let $i$ be such that $l(S_i)$ is minimal and $j$ be such that $l(S_j)$ is maximal. Then \begin{equation}
r(S_j)-l(S_i)\geq r(R_n)-l(R_1)\text{ with equality only if }i=1\text{ and }j=n.
\end{equation}
\end{proposition}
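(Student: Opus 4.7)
The plan is to proceed by induction on $n$ and apply Proposition \ref{prop:ellgreat} along the noncommuting path.

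For the base case $n=3$, I would enumerate the six possible orderings of $(l(S_{\varphi_1}), l(S_{\varphi_2}), l(S_{\varphi_3}))$ in $\bm\mu$. In the monotonic case $l(S_{\varphi_1})<l(S_{\varphi_2})<l(S_{\varphi_3})$, Proposition \ref{prop:ellgreat} applied to the pairs $(R_1,R_2)$ and $(R_2,R_3)$ yields a telescoping inequality $l(S_{\varphi_3})-l(S_{\varphi_1})\geq l(R_3)-l(R_1)$. Since $|S_{\varphi_3}|=|R_3|$, this produces $r(S_{\varphi_3})-l(S_{\varphi_1})\geq r(R_3)-l(R_1)$, with equality forcing $\varphi_t<\varphi_{t+1}$ for each $t$, hence $i=\varphi_1=1$ and $j=\varphi_3=3$. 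For the five non-monotonic orderings, I would bring in additional information from the pair $(R_1,R_3)$: apply Proposition \ref{prop:ellgreat} when $R_1\nleftrightarrow R_3$, and use Proposition \ref{prop:prec} together with Corollary \ref{cor:mrirj} to pin down positions when $R_1\leftrightarrow R_3$. In each such case, I would compute $l(S_j)-l(S_i)$ and $|S_j|$ explicitly via the formulas of Proposition \ref{prop:mrirj} part 3, verifying that any shortfall in $|S_j|$ relative to $|R_3|$ is compensated by a strict surplus in the $l$-spread.

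For the inductive step $n\geq 4$, the plan is to apply the inductive hypothesis to the sub-path $(R_2,\ldots,R_n)$, which is itself a minimal noncommuting path of length $n-1$ with strictly increasing $l$-values, and whose image in $\bm\mu$ is the horizontal-strip $(S_{\varphi_2},\ldots,S_{\varphi_n})$. This yields $r(S_{j'})-l(S_{i'})\geq r(R_n)-l(R_2)$, where $i',j'$ are the extremal $l$-value indices in this sub-collection. Then Proposition \ref{prop:ellgreat} applied to $(R_1,R_2)$ contributes an additional $l(R_2)-l(R_1)$ to the spread, so adding the two bounds yields $r(S_j)-l(S_i)\geq r(R_n)-l(R_1)$.

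The main obstacle will be aligning the sub-path extremal indices with the global ones: I must show that introducing $S_{\varphi_1}$ into the sub-collection does not create a new $l$-extreme that disrupts the bound, and I must handle the case where the sign hypothesis for $(R_1,R_2)$ fails (so that Proposition \ref{prop:ellgreat} does not apply directly). Both issues are controlled by the structural description of minimal noncommuting paths in Proposition \ref{prop:mncp}, particularly parts 2 and 3, combined with the positional constraints given by Proposition \ref{prop:mncp3support} applied to the triple $(R_1,R_2,R_3)$. For the equality statement, tracking when each telescoped inequality is tight forces $\varphi$ to preserve the ordering along the path, from which $i=1$ and $j=n$ follow.
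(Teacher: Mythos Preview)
Your inductive approach is genuinely different from the paper's, and it has a real gap in the inductive step.

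The paper does not use induction at all. Instead, it observes that since $l(R_{t+1})>l(R_t)$ and $R_t\nleftrightarrow R_{t+1}$, Proposition~\ref{prop:prec2} Part~1 forces $M_{t,t'}(\bm\lambda)=0$ whenever $|t'-t|\geq 2$, and Corollary~\ref{cor:mrirj} Part~3 forces the $l(S_{\varphi_t})$ to be distinct. It then lets $\sigma$ sort the rows of $\bm\mu$ by increasing $l$-value and telescopes $r(S_j)-l(S_i)$ along this sorted order using \eqref{eq:mij}, obtaining
\[
r(S_j)-l(S_i)=\sum_{t=1}^n|S_{\sigma_t}|-M(\bm\mu)-1+\sum_{t=1}^{n-1}\chi(\sigma_t>\sigma_{t+1}).
\]
The same telescoping in $\bm\lambda$ (where the sort is the identity) gives $r(R_n)-l(R_1)=\sum_t|R_t|-M(\bm\lambda)-1$. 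Since $\varphi$ preserves vertex weights and total edge weight, the inequality reduces to $\sum_t\chi(\sigma_t>\sigma_{t+1})\geq 0$, with equality iff $\sigma=\mathrm{id}$.

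Your inductive step has two problems. First, you invoke Proposition~\ref{prop:mncp3support} for the triple $(R_1,R_2,R_3)$, but that proposition requires $R_1\nleftrightarrow R_3$; in a minimal noncommuting path with $n\geq 4$ we have $R_1\leftrightarrow R_3$, so the hypothesis fails. Second, and more structurally, the inductive hypothesis on $(R_2,\ldots,R_n)$ bounds the spread among $\{S_{\varphi_2},\ldots,S_{\varphi_n}\}$ by $r(R_n)-l(R_2)$, but when $S_{\varphi_1}$ lands strictly between the extremes $S_{i'}$ and $S_{j'}$ of that sub-collection, you have $i=i'$ and $j=j'$ and you need the spread to be at least $r(R_n)-l(R_1)$, which is strictly more than the inductive hypothesis provides. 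There is no pair $(R_1,R_t)$ with $R_1\nleftrightarrow R_t$ for $t\geq 3$ to which Proposition~\ref{prop:ellgreat} applies, so you cannot recover the missing $l(R_2)-l(R_1)$ in this case without essentially redoing the paper's global telescoping argument. The descent-counting identity above is what makes this work, and it does not decompose naturally along the induction you propose.
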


\begin{example}
Proposition \ref{prop:ellgreat2} describes a situation like the one in Figure \ref{fig:ellgreat2}. Informally, when the rows of $\bm\lambda$ move up and to the right, it is possible to have $M_{t,t+1}(\bm\lambda)=0$, and therefore the weighted graph $\Pi(\bm\lambda)$ does not know that these rows do not commute. As a result, the corresponding rows may not be adjacent in $\bm\mu$ but could be permuted in some way. This is a crucial example to keep in mind because this is a recurring challenge we must address. Proposition \ref{prop:ellgreat2} states that we can still conclude that the leftmost possible position of the rightmost row occurs when it is at the top. 

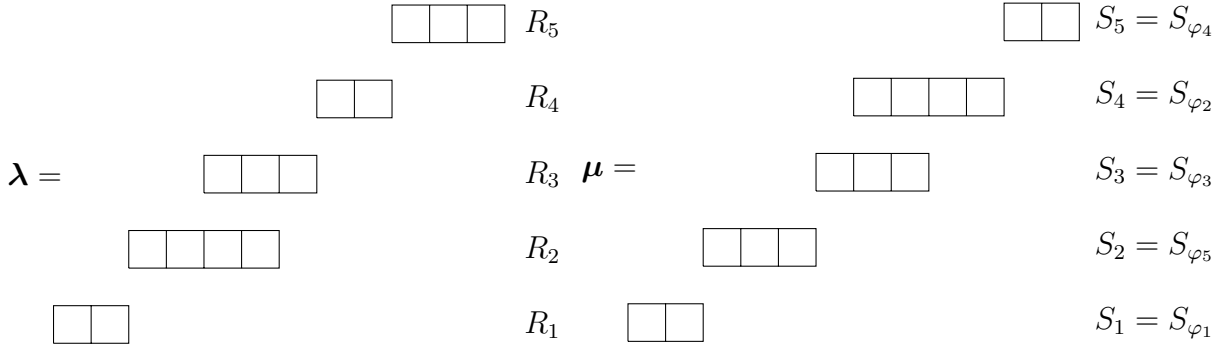
\begin{figure}\caption{A minimal noncommuting path $\bm\lambda=(R_1,\ldots,R_5)$ with $l(R_5)>\cdots>l(R_1)$ and a horizontal-strip $\bm\mu=(S_1,\ldots,S_5)$ with $\Pi(\bm\lambda)\cong\Pi(\bm\mu)$}\label{fig:ellgreat2}
$$\begin{tikzpicture}
\draw (1.25,-0.25) node (0) {$\bm \lambda=$};
\draw (8,-2.25) node (1) {$R_1$} (8,-1.25) node (2) {$R_2$} (8,-0.25) node (3) {$R_3$} (8,0.75) node (4) {$R_4$} (8,1.75) node (5) {$R_5$};
\draw (1.5,-2.5) -- (2.5,-2.5) -- (2.5,-2) -- (1.5,-2) -- (1.5,-2.5) (2,-2.5) -- (2,-2) (2.5,-1.5) -- (4.5,-1.5) -- (4.5,-1) -- (2.5,-1) -- (2.5,-1.5) (3,-1.5) -- (3,-1) (3.5,-1.5) -- (3.5,-1) (4,-1.5) -- (4,-1) (3.5,-0.5) -- (5,-0.5) -- (5,0) -- (3.5,0) -- (3.5,-0.5) (4,-0.5) -- (4,0) (4.5,-0.5) -- (4.5,0) (5,0.5) -- (6,0.5) -- (6,1) -- (5,1) -- (5,0.5) (5.5,0.5) -- (5.5,1) (6,1.5) -- (7.5,1.5) -- (7.5,2) -- (6,2) -- (6,1.5) (6.5,1.5) -- (6.5,2) (7,1.5) -- (7,2);
\end{tikzpicture}
\begin{tikzpicture}
\draw (1.25,-0.25) node (0) {$\bm \mu=$};
\draw (8.5,-2.25) node (1) {$S_1=S_{\varphi_1}$} (8.5,-1.25) node (2) {$S_2=S_{\varphi_5}$} (8.5,-0.25) node (3) {$S_3=S_{\varphi_3}$} (8.5,0.75) node (4) {$S_4=S_{\varphi_2}$} (8.5,1.75) node (5) {$S_5=S_{\varphi_4}$};
\draw (1.5,-2.5) -- (2.5,-2.5) -- (2.5,-2) -- (1.5,-2) -- (1.5,-2.5) (2,-2.5) -- (2,-2) (2.5,-1.5) -- (4,-1.5) -- (4,-1) -- (2.5,-1) -- (2.5,-1.5) (3,-1.5) -- (3,-1) (3.5,-1.5) -- (3.5,-1) (4,-0.5) -- (5.5,-0.5) -- (5.5,0) -- (4,0) -- (4,-0.5) (4.5,-0.5) -- (4.5,0) (5,-0.5) -- (5,0) (4.5,0.5) -- (6.5,0.5) -- (6.5,1) -- (4.5,1) -- (4.5,0.5) (5,0.5) -- (5,1) (5.5,0.5) -- (5.5,1) (6,0.5) -- (6,1) (6.5,1.5) -- (7.5,1.5) -- (7.5,2) -- (6.5,2) -- (6.5,1.5) (7,1.5) -- (7,2);
\end{tikzpicture}$$
\end{figure}
\end{example}

\begin{proof}[Proof of Proposition \ref{prop:ellgreat2}. ]
Note that we cannot directly apply Proposition \ref{prop:ellgreat} because we do not know that $l(S_{\varphi_{t+1}})>l(S_{\varphi_t})$ for every $t$. Instead, we will reorder the rows of $\bm\mu$ and compute directly. By Proposition \ref{prop:prec2}, Part 1, we have $M_{t,t'}(\bm\lambda)=0$ if $|t'-t|\geq 2$ and by Corollary \ref{cor:mrirj}, Part 3, we have $M_{t,t+1}(\bm\lambda)<\min\{|R_t|,|R_{t+1}|\}$, so $M_{\varphi_t,\varphi_{t+1}}(\bm\mu)<\min\{|S_{\varphi_t}|,|S_{\varphi_{t+1}}|\}$ and in particular the $l(S_{\varphi_t})$ are distinct. Let $\sigma$ be the permutation that sorts the rows of $\bm\mu$ so that \begin{equation}
l(S_i)=l(S_{\sigma_1})<l(S_{\sigma_2})<\cdots<l(S_{\sigma_{n-1}})<l(S_{\sigma_n})=l(S_j).
\end{equation}
Now by \eqref{eq:mij}, we have
\begin{align}
r(S_j)-l(S_i)&=l(S_{\sigma_n})-l(S_{\sigma_1})+|S_{\sigma_n}|-1\\\nonumber&=l(S_{\sigma_{n-1}})-l(S_{\sigma_1})+|S_{\sigma_n}|+|S_{\sigma_{n-1}}|-M_{\sigma_{n-1},\sigma_n}(\bm\mu)-1+\chi(\sigma_{n-1}>\sigma_n)\\\nonumber\cdots&=\sum_{t=1}^n|S_{\sigma_t}|-M(\bm\mu)-1+\sum_{t=1}^{n-1}\chi(\sigma_t>\sigma_{t+1})\\\nonumber&\geq\sum_{t=1}^n|R_t|-M(\bm\lambda)-1\\\nonumber\cdots&=l(R_{n-1})-l(R_1)+|R_n|+|R_{n-1}|-M_{n-1,n}(\bm\lambda)-1\\\nonumber&=r(S_n)-l(S_1)+|R_n|-1=r(R_n)-l(R_1),
\end{align}
with equality only if $i=\sigma_1<\cdots<\sigma_n=j$, so $i=1$ and $j=n$. 
\end{proof}

We now describe the structure of a minimal noncommuting path $(R_1,\ldots,R_n)$, where $l(R_1)<l(R_n)$ and $R_1\nleftrightarrow R_n$. Informally, such a minimal noncommuting path must look loosely like one of the examples in Figure \ref{fig:mncps}. 

\begin{lemma}\label{lem:mncp1}
Let $\bm\lambda=(R_1,\ldots,R_n)$ be a minimal noncommuting path with $n\geq 4$, $l(R_1)<l(R_n)$, and $R_1\nleftrightarrow R_n$. 
\begin{enumerate}
\item Suppose that there is no $i\geq 3$ for which $R_1\precnsim R_i$ and that there is no $j\leq n-2$ for which $R_n\precnsim R_j$. Then one of the following holds.
\begin{itemize}
\item We have $l(R_2)>l(R_1)$, $l(R_{n-1})<\cdots<l(R_2)$, and $l(R_n)>l(R_{n-1})$. 
\item We have $l(R_{n-1})>\cdots>l(R_1)$, $R_t\prec R_n$ for $2\leq t\leq n-2$, and $l(R_n)<l(R_{n-1})$. 
\item We have $l(R_2)<l(R_1)$, $l(R_n)>\cdots>l(R_2)$, and $R_t\prec R_1$ for $3\leq t\leq n-1$.\\
\end{itemize}

\item Now suppose that $R_1\precnsim R_i$ for some minimal $i\geq 3$, and that in fact $i=n-1$. Then $l(R_n)>l(R_{n-1})$ and one of the following holds. 
\begin{itemize}
\item We have $l(R_{i-1})<\cdots<l(R_1)$ and $l(R_i)>l(R_{i-1})$. 
\item We have $l(R_{i-1})>\cdots>l(R_1)$, $R_t\prec R_n$ for $2\leq t\leq i-1$, and $l(R_i)<l(R_{i-1})$. 
\item We have $n=4$, $l(R_2)>l(R_1)$, $l(R_3)<l(R_2)$, and $R_4\precnsim R_2$.\\
\end{itemize}

\item Now suppose that $R_1\precnsim R_i$ for some minimal $i\geq 3$, and that $i\leq n-2$. Then $R_n\precnsim R_i$, so $R_n\precnsim R_j$ for some maximal $i\leq j\leq n-2$. Additionally, one of the following holds. 
\begin{itemize}
\item We have $l(R_{i-1})<\cdots<l(R_1)$ and $l(R_i)>l(R_{i-1})$. 
\item We have $i=j=3$, $l(R_2)>l(R_1)$, $R_n\precnsim R_2$, and $l(R_3)<l(R_2)$. 
\end{itemize}
Similarly, one of the following holds.
\begin{itemize}
\item We have $l(R_n)<\cdots<l(R_{j+1})$ and $l(R_{j+1})>l(R_j)$.
\item We have $i=j=n-2$, $l(R_n)>l(R_{n-1})$, $R_1\precnsim R_{n-1}$, and $l(R_{n-1})<l(R_{n-2})$.
\end{itemize}
Finally, we must have either $j=i$, or $j=i+1$ and $l(R_j)<l(R_i)$. 
\end{enumerate}
\end{lemma}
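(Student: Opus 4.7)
The plan is to analyze the sign sequence $\epsilon_t := \operatorname{sign}(l(R_{t+1}) - l(R_t)) \in \{+,-\}$ for $1 \leq t \leq n-1$, which is well-defined and nonzero since consecutive rows fail to commute (Proposition~\ref{prop:mrirj}, Part~2). A \emph{direction change} at $t \in \{2, \ldots, n-1\}$ will mean $\epsilon_t \neq \epsilon_{t-1}$. Minimality of the noncommuting path combined with $n \geq 4$ ensures $R_{t-1} \leftrightarrow R_{t+1}$, so Proposition~\ref{prop:prec2}, Part~2 will force either $R_{t-1} \prec R_{t+1}$ or $R_{t+1} \prec R_{t-1}$ at every direction change, while Proposition~\ref{prop:prec2}, Part~1 gives $M_{t-1,t+1}=0$ in the same-direction case and contributes no new $\prec$ relation. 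The rest of the argument will track how these forced $\prec$ relations propagate along the path via Proposition~\ref{prop:mncp}, Part~2 and how they constrain the monotonicity of the $l$-values via Proposition~\ref{prop:mncp}, Part~3.

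For Part~(1), I would first show that the hypothesis---no $R_1 \precnsim R_i$ for $i \geq 3$ and no $R_n \precnsim R_j$ for $j \leq n-2$---forces every direction change to occur at $t=2$ or $t=n-1$. Indeed, a direction change at an interior $3 \leq t \leq n-2$ would yield a $\prec$ between $R_{t-1}$ and $R_{t+1}$; propagation via Proposition~\ref{prop:mncp}, Part~2 extends this to either $R_1 \prec R_{t+1}$ or $R_n \prec R_{t-1}$ (no endpoint exception, since $t+1 \neq n$ and $t-1 \neq 1$), and Proposition~\ref{prop:mncp}, Part~1 rules out the reverse $\prec$ (which would require $n \leq 4$, incompatible with $3 \leq t \leq n-2$); this contradicts the hypothesis. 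With direction changes confined to $\{2,n-1\}$, a finite case analysis on which subset occurs and on the sign of each forced $\prec$, together with propagation via Proposition~\ref{prop:mncp}, Part~2, will produce the containment statements of bullets~2 and~3 and the pure-zigzag shape of bullet~1.

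For Part~(2), I would apply Proposition~\ref{prop:mncp}, Part~3 directly to $(j,i) = (1, n-1)$ to obtain the two monotone patterns among $l(R_1), \ldots, l(R_{n-1})$, matching the first two bullets; the third bullet ($n=4$ with $R_4 \precnsim R_2$) arises separately from the degenerate simultaneous containments permitted by Proposition~\ref{prop:mncp}, Part~1 when $n \leq 4$. The auxiliary claim $l(R_n) > l(R_{n-1})$ will follow from $R_1 \subseteq R_{n-1}$ (by $R_1 \prec R_{n-1}$, $R_1 \leftrightarrow R_{n-1}$, and Proposition~\ref{prop:prec}, Part~3), giving $l(R_{n-1}) \leq l(R_1) < l(R_n)$.

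For Part~(3), Proposition~\ref{prop:mncp}, Part~3 again supplies the monotonicity up to $R_i$, producing the first trichotomy. The hard part will be establishing the dual trichotomy: proving that some $j$ with $R_n \precnsim R_j$ must exist and then pinning $j$ down to $i$ or $i+1$. The plan is an \emph{exit} argument: $R_1 \subseteq R_i$, while $R_n$ lies strictly to the right of $R_1$ (using $R_1 \nleftrightarrow R_n$ and Corollary~\ref{cor:mrirj}, Part~2), so the path must leave $R_i$; running the symmetric backward analysis from $R_n$ (the ``similarly'' halves of Proposition~\ref{prop:mncp}, Parts~2~and~3) will yield the required $R_n \precnsim R_j$, and combining the forward monotonicity into $R_i$ with the backward monotonicity out of $R_j$ will force $j \in \{i, i+1\}$, with the special case $j = i+1$ and $l(R_{i+1}) < l(R_i)$ again corresponding to the degenerate configuration from Proposition~\ref{prop:mncp}, Part~1. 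The main obstacle throughout Part~(3) will be verifying that intermediate indices between $i$ and $j$ cannot support extra $\precnsim$ relations that would contradict minimality of $i$ or maximality of $j$; this is where the careful $l$-value bookkeeping via Corollary~\ref{cor:mrirj} and Proposition~\ref{prop:prec} is unavoidable.
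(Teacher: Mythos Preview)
Your overall approach---tracking the sign sequence of $l(R_{t+1}) - l(R_t)$ and using direction changes together with Propositions~\ref{prop:prec2} and~\ref{prop:mncp}---is essentially the paper's strategy. Parts~(1) and~(2) are sound; the direction-change framing repackages the paper's case analysis, and the claim that interior direction changes (at $3 \leq t \leq n-2$) are forbidden under the Part~(1) hypothesis is correct. Be aware that the promised ``finite case analysis'' still has nontrivial content: for instance, the monotone all-$+$ pattern has no direction changes at all, and you must rule it out by combining $R_1 \nleftrightarrow R_n$ with $R_1 \leftrightarrow R_{n-1}$ to reach a positional contradiction. The paper organizes Part~(1) a bit differently, branching first on the sign of $l(R_2)-l(R_1)$ and immediately applying Proposition~\ref{prop:prec2}, Part~2 to the triple $(R_2,R_1,R_n)$, but the substance is the same.

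The genuine gap is in Part~(3), in your ``exit'' argument for the existence of $j$ with $R_n \precnsim R_j$. Your intuition that ``the path must leave $R_i$'' is backwards: in fact $R_n$ stays \emph{inside} $R_i$. From $R_1 \prec R_i$ and $R_1 \leftrightarrow R_i$ one gets $R_1 \subseteq R_i$ (Proposition~\ref{prop:prec}, Part~3), so $l(R_i) \leq l(R_1) < l(R_n)$; since $i \leq n-2$ forces $R_i \leftrightarrow R_n$ by minimality, while $R_1 \nleftrightarrow R_n$ gives $l(R_n) \leq r(R_1) + 1 \leq r(R_i) + 1$, the far-apart option for $R_i \leftrightarrow R_n$ is excluded and one obtains $R_n \subseteq R_i$, hence $R_n \prec R_i$. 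This is exactly Proposition~\ref{prop:prec2}, Part~3 applied to the triple $(R_i, R_1, R_n)$, and the paper invokes it in one line. Your ``symmetric backward analysis from $R_n$'' cannot produce $R_n \precnsim R_j$ without some seed relation to propagate, so as written that step is circular; once you replace the exit idea with this direct containment (and then take $j$ maximal), the rest of your plan for pinning $j$ to $\{i, i+1\}$ via two-sided propagation and Proposition~\ref{prop:mncp}, Part~1 goes through as in the paper.
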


\begin{remark}
If the hypothesis of (1) does not hold, then $R_1\precnsim R_i$ for some $i\geq 3$ or $R_n\precnsim R_j$ for some $j\leq n-2$. By rotating, we may assume that $R_1\precnsim R_i$ for some $i\geq 3$, and by Corollary \ref{cor:mrirj}, Part 3, we cannot have $R_1\prec R_n$, so $i\leq n-1$. Therefore (2) and (3) cover all of the cases we will need.
\end{remark}

\begin{proof} [Proof of Lemma \ref{lem:mncp1}. ] Note that because $n\geq 4$, we must have $R_2\leftrightarrow R_n$ by minimality. \\

Suppose that there is no $i\geq 3$ for which $R_1\precnsim R_i$ and that there is no $j\leq n-2$ for which $R_n\precnsim R_j$. Additionally, suppose that $l(R_2)>l(R_1)$. Because $l(R_1)<l(R_n)$, by Proposition \ref{prop:prec2}, Part 2, we must have $R_2\prec R_n$ or $R_n\prec R_2$, so by our hypothesis we must have $R_2\prec R_n$ and by Proposition \ref{prop:mncp}, Part 2, that $R_t\prec R_n$ for $2\leq t\leq n-2$. Note that if $R_2\prec R_k$ for any $4\leq k\leq n-1$, then we would have $R_1\prec R_k$ by Proposition \ref{prop:mncp}, Part 2, and because $n\geq 5$, $R_1\precnsim R_k$ by Proposition \ref{prop:mncp} Part 1, contradicting our hypothesis. Therefore, by Proposition \ref{prop:mncp}, Part 3, we have either $l(R_{n-1})<\cdots<l(R_2)$ and $l(R_n)>l(R_{n-1})$ and the first possibility holds, or we have $l(R_{n-1})>\cdots>l(R_2)>l(R_1)$ and $l(R_n)<l(R_{n-1})$ and the second possibility holds. Now suppose that $l(R_2)<l(R_1)$. If $l(R_n)>l(R_{n-1})$, then by rotating we can reduce to the case where $l(R_2)>l(R_1)$ and it follows that the third possibility holds, so it remains to consider the case where $l(R_n)<l(R_{n-1})$. Because $l(R_{n-1})>l(R_n)>r(R_2)\geq l(R_2)$, we have $R_2\leftrightarrow R_{n-1}$ and $n\geq 5$, so Proposition \ref{prop:mncp}, Part 1 applies. Also, we must have $l(R_{t+1})>l(R_t)$ for some minimal $2\leq t\leq n-2$. By Proposition \ref{prop:prec2}, Part 2, we must have $R_{t-1}\prec R_{t+1}$ or $R_{t+1}\prec R_{t-1}$. However, by Proposition \ref{prop:mncp}, Part 2, if $R_{t-1}\prec R_{t+1}$, then $R_1\precnsim R_{t+1}$, and if $R_{t+1}\prec R_{t-1}$, then $R_n\precnsim R_{t-1}$, contradicting our hypothesis in both cases.\\

Now suppose that $R_1\precnsim R_i$ for some minimal $i\geq 3$, and that in fact $i=n-1$. Then $l(R_{n-1})\leq l(R_1)<l(R_n)$ by Proposition \ref{prop:prec}. By Proposition \ref{prop:mncp}, Part 2, we must have either $l(R_{i-1})<\cdots<l(R_1)$ and $l(R_i)>l(R_{i-1})$ and the first possibility holds, or we have $l(R_{i-1})>\cdots>l(R_1)\geq l(R_i)$ and $l(R_i)<l(R_{i-1})$. Then because $l(R_n)>l(R_1)$, by Proposition \ref{prop:prec2}, Part 2, we must have either $R_2\prec R_n$, in which case we have $R_t\prec R_n$ for $2\leq t\leq n-2=i-1$ by Proposition \ref{prop:mncp}, Part 2, and the second possibility holds, or we have $R_n\precnsim R_2$. In this case, if $n\geq 5$ then $R_i\prec R_2$ and $R_2\prec R_i$ by Proposition \ref{prop:mncp}, Part 2, but this is impossible by Proposition \ref{prop:mncp}, Part 1, so we must have $n=4$ and the third possibility holds.\\

Now suppose that $R_1\precnsim R_i$ for some minimal $i\geq 3$, and that $i\leq n-2$, so in particular, $n\geq 5$ and Proposition \ref{prop:mncp}, Part 1 applies. Then because $R_1\nleftrightarrow R_n$, by Proposition \ref{prop:prec2}, Part 3, we have $R_n\precnsim R_i$, so $R_n\precnsim R_j$ for some maximal $i\leq j\leq n-2$. By Proposition \ref{prop:mncp}, Part 3 we must have either $l(R_{i-1})<\cdots<l(R_1)$ and $l(R_i)>l(R_{i-1})$, in which case the first possibility holds, or we have $l(R_{i-1})>\cdots>l(R_1)$ and $l(R_i)<l(R_{i-1})$. But in this case, because $l(R_n)>l(R_1)$, by Proposition \ref{prop:prec2}, Part 2, we have $R_2\prec R_n$ or $R_n\prec R_2$. If $R_2\prec R_n$, then by Proposition \ref{prop:mncp}, Part 2, we would have $R_j\prec R_n$, which is impossible by definition of $j$, so we must have $R_n\precnsim R_2$. If $j\geq 4$, then we would have $R_j\prec R_2$ by Proposition \ref{prop:mncp}, Part 2, but because $R_n\prec R_j$ and $R_1\nleftrightarrow R_n$, by Proposition \ref{prop:prec2}, Part 3, we have $R_1\prec R_j$ and $R_2\prec R_j$ by Proposition \ref{prop:mncp}, Part 2 again, which is impossible by Proposition \ref{prop:mncp}, Part 1. Therefore, we must have $i=j=3$ and the second possibility holds. This proves the first claim and the second follows by rotating. Finally, if $j\geq i+2$, then by Proposition \ref{prop:mncp}, Part 2, we would have $R_i\prec R_j$ and $R_j\prec R_i$, which is impossible by Proposition \ref{prop:mncp}, Part 1, so we must have either $j=i$ or $j=i+1$. If $j=i+1$, then by Proposition \ref{prop:mncp}, Part 2, we have $R_{i-1}\prec R_j$ and now $l(R_j)\leq l(R_{i-1})<l(R_i)$ by Proposition \ref{prop:prec}. This completes the proof. 
\end{proof}

We now describe the image of a minimal noncommuting path under an isomorphism of weighted graphs.

\begin{lemma} \label{lem:mncp2}
Let $\bm\lambda=(R_1,\ldots,R_n)$ be a minimal noncommuting path with $l(R_1)<l(R_n)$ and $R_1\nleftrightarrow R_n$. Let $\bm\mu=(S_1,\ldots,S_n)$ and $\varphi:\Pi(\bm\lambda)\xrightarrow\sim\Pi(\bm\mu)$ satisfy $\varphi_1=1$, $l(S_1)<l(S_{\varphi_n})$, and $S_1\nleftrightarrow S_{\varphi_n}$. Then $\varphi_n=n$.
\end{lemma}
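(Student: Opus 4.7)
The plan is to argue by contradiction using the rigid structural description of minimal noncommuting paths provided by Lemma \ref{lem:mncp1}, combined with the positional control provided by Propositions \ref{prop:ellgreat}, \ref{prop:mncp3support}, and \ref{prop:ellgreat2}. Suppose $\varphi_n = k \neq n$ and let $m \neq 1$ satisfy $\varphi_m = n$. Because $\varphi$ preserves vertex weights and edge weights, Corollary \ref{cor:mrirj}(1) implies that the sequence $(S_1, S_{\varphi_2}, \ldots, S_{\varphi_n})$ of rows of $\bm\mu$ is again a noncommuting path, and the containment relations $R_i \prec R_j$ are mirrored by $S_{\varphi_i} \prec S_{\varphi_j}$. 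Therefore the structural description given by Lemma \ref{lem:mncp1} transfers from $\bm\lambda$ to this sequence in $\bm\mu$.

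The argument proceeds by case analysis following Lemma \ref{lem:mncp1}. In the monotone sub-case where $l(R_1) < \cdots < l(R_n)$, Proposition \ref{prop:ellgreat2} gives $r(S_{\varphi_n}) - l(S_1) \geq r(R_n) - l(R_1)$ with equality exactly when the sorted order of the $S_{\varphi_t}$ by $l$-value matches the original indexing order in $\bm\mu$; combined with the hypotheses $\varphi_1 = 1$ and $l(S_1) < l(S_{\varphi_n})$ with $S_1 \nleftrightarrow S_{\varphi_n}$, the matching edge weights $M_{1,n}(\bm\lambda) = M_{1,\varphi_n}(\bm\mu)$ computed via \eqref{eq:mij} force the equality case and hence $\varphi_n = n$. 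In the non-monotone sub-cases (where the path folds back, or $R_1 \precnsim R_i$ for some $3 \leq i \leq n-1$), I would walk along the path applying Proposition \ref{prop:ellgreat} to each consecutive non-commuting pair $R_t \nleftrightarrow R_{t+1}$ and Proposition \ref{prop:mncp3support} to each flanking triple: the first bounds $l(S_{\varphi_{t+1}}) - l(S_{\varphi_t})$ below by $l(R_{t+1}) - l(R_t)$ with equality only when $\varphi_t < \varphi_{t+1}$, and the second fixes which side of $S_{\varphi_{t-1}}$ and $S_{\varphi_{t+1}}$ the row $S_{\varphi_t}$ lies on. The anchor $\varphi_1 = 1$ at the leftmost position, together with the hypotheses at the right end, would force equality to hold at every step, yielding $\varphi_1 < \varphi_2 < \cdots < \varphi_n$ and hence $\varphi_n = n$.

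The main obstacle is the subcase of Lemma \ref{lem:mncp1}(3), in which $R_1 \precnsim R_i$ for some $3 \leq i \leq n-2$ and correspondingly $R_n \precnsim R_j$ for some $i \leq j \leq n-2$, so the path folds into a row $R_i$ that contains both endpoints and neither Proposition \ref{prop:ellgreat2} nor a monotone propagation of inequalities applies directly. Here I plan to combine Proposition \ref{prop:mncp3support} with the propagation of containment in Proposition \ref{prop:mncp}(2) to constrain the placement of the interior rows inside the contained segment, then use the minimality of the noncommuting path (so that no shortcut subsequence exists in $\bm\mu$ either) together with the $+\chi(i>j)$ correction term in \eqref{eq:mij} to rule out every candidate $k \neq n$ for $\varphi_n$. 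The several small degenerate configurations flagged in Lemma \ref{lem:mncp1}(2)(3) (for instance $n = 4$ with $R_4 \precnsim R_2$, or $i = j = 3$) must be treated separately by direct arithmetic on the edge weights \eqref{eq:mij} to reach a contradiction in each.
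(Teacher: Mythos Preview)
Your overall strategy matches the paper's proof: case-split via Lemma \ref{lem:mncp1}, then chain the inequalities from Propositions \ref{prop:ellgreat}, \ref{prop:mncp3support}, and \ref{prop:ellgreat2}, using the normalization $l(R_1)=l(S_1)$ (so $l(R_n)=l(S_{\varphi_n})$ by \eqref{eq:mij}) to force equality throughout and hence $\varphi_n=n$.

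However, your opening claim that ``Corollary \ref{cor:mrirj}(1) implies that the sequence $(S_1, S_{\varphi_2}, \ldots, S_{\varphi_n})$ of rows of $\bm\mu$ is again a noncommuting path'' is false in general, and this is precisely the difficulty the paper highlights. Corollary \ref{cor:mrirj}(1) only yields $S_{\varphi_t}\nleftrightarrow S_{\varphi_{t+1}}$ when $0<M_{t,t+1}<\min\{|R_t|,|R_{t+1}|\}$; when $M_{t,t+1}=0$ or $M_{t,t+1}=\min\{|R_t|,|R_{t+1}|\}$ the corresponding rows in $\bm\mu$ may commute (see the example and remark following Proposition \ref{prop:ellgreat2}). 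Consequently Lemma \ref{lem:mncp1} does \emph{not} transfer to $\bm\mu$, and you cannot assume $l(S_{\varphi_{t+1}})>l(S_{\varphi_t})$ in monotone stretches of $\bm\lambda$; the paper handles this by invoking Proposition \ref{prop:ellgreat2} on the whole stretch rather than Proposition \ref{prop:ellgreat} step by step. Since the rest of your argument (the propagation via Propositions \ref{prop:ellgreat} and \ref{prop:mncp3support}) implicitly relies on knowing which side $S_{\varphi_{t+1}}$ lies on relative to $S_{\varphi_t}$, you need to remove this claim and instead establish the needed directional information case by case, as the paper does.

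One further subtlety your sketch misses: in the paper's Case 2a (your ``folds back'' situation with $R_1\precnsim R_{n-1}$), the chain of inequalities can narrowly fail when $l(S_{\varphi_i})\leq l(S_{\varphi_{i-1}})$. The paper shows this forces $i=3$ and a very specific configuration in which two vertices of $\Pi(\bm\lambda)$ are indistinguishable, and then swaps their roles to reduce to the generic case. Your plan to handle the degenerate configurations ``by direct arithmetic on the edge weights'' would need to discover this vertex-swapping trick.
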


\begin{example}
Informally, Lemma \ref{lem:mncp2} describes a situation like the one below. The conditions $l(R_1)<l(R_6)$, $R_1\nleftrightarrow R_6$, $l(S_1)<l(S_{\varphi_6})$, and $S_1\nleftrightarrow S_{\varphi_6}$ fix the horizontal positions of $R_1$, $R_6$, $S_1$, and $S_{\varphi_6}$. Then if we have a minimal noncommuting path in $\bm\lambda$ from $R_1$ to $R_6$, the row $S_{\varphi_6}$ must be above the other rows in $\bm\mu$. Note that the intermediate rows can be permuted as in this example.

$$\begin{tikzpicture}
\draw (1.25,0.25) node (0) {$\bm \lambda=$};
\draw (8,-2.25) node (1) {$R_1$} (8,-1.25) node (2) {$R_2$} (8,-0.25) node (3) {$R_3$} (8,0.75) node (4) {$R_4$} (8,1.75) node (5) {$R_5$} (8,2.75) node (6) {$R_6$};
\draw (1.5,-2.5) -- (2.5,-2.5) -- (2.5,-2) -- (1.5,-2) -- (1.5,-2.5) (2,-2.5) -- (2,-2) (2.5,-1.5) -- (4.5,-1.5) -- (4.5,-1) -- (2.5,-1) -- (2.5,-1.5) (3,-1.5) -- (3,-1) (3.5,-1.5) -- (3.5,-1) (4,-1.5) -- (4,-1) (3.5,-0.5) -- (5,-0.5) -- (5,0) -- (3.5,0) -- (3.5,-0.5) (4,-0.5) -- (4,0) (4.5,-0.5) -- (4.5,0) (5,0.5) -- (6,0.5) -- (6,1) -- (5,1) -- (5,0.5) (5.5,0.5) -- (5.5,1) (6,1.5) -- (7.5,1.5) -- (7.5,2) -- (6,2) -- (6,1.5) (6.5,1.5) -- (6.5,2) (7,1.5) -- (7,2) (2,2.5) -- (7,2.5) -- (7,3) -- (2,3) -- (2,2.5) (2.5,2.5) -- (2.5,3) (3,2.5) -- (3,3) (3.5,2.5) -- (3.5,3) (4,2.5) -- (4,3) (4.5,2.5) -- (4.5,3) (5,2.5) -- (5,3) (5.5,2.5) -- (5.5,3) (6,2.5) -- (6,3) (6.5,2.5) -- (6.5,3);
\end{tikzpicture}
\begin{tikzpicture}
\draw (1.25,0.25) node (0) {$\bm \mu=$};
\draw (8.5,-2.25) node (1) {$S_1=S_{\varphi_1}$} (8.5,-1.25) node (2) {$S_2=S_{\varphi_5}$} (8.5,-0.25) node (3) {$S_3=S_{\varphi_3}$} (8.5,0.75) node (4) {$S_4=S_{\varphi_2}$} (8.5,1.75) node (5) {$S_5=S_{\varphi_4}$} (8.5,2.75) node (6) {$S_6=S_{\varphi_6}$};
\draw (1.5,-2.5) -- (2.5,-2.5) -- (2.5,-2) -- (1.5,-2) -- (1.5,-2.5) (2,-2.5) -- (2,-2) (2.5,-1.5) -- (4,-1.5) -- (4,-1) -- (2.5,-1) -- (2.5,-1.5) (3,-1.5) -- (3,-1) (3.5,-1.5) -- (3.5,-1) (4,-0.5) -- (5.5,-0.5) -- (5.5,0) -- (4,0) -- (4,-0.5) (4.5,-0.5) -- (4.5,0) (5,-0.5) -- (5,0) (4.5,0.5) -- (6.5,0.5) -- (6.5,1) -- (4.5,1) -- (4.5,0.5) (5,0.5) -- (5,1) (5.5,0.5) -- (5.5,1) (6,0.5) -- (6,1) (6.5,1.5) -- (7.5,1.5) -- (7.5,2) -- (6.5,2) -- (6.5,1.5) (7,1.5) -- (7,2) (2,2.5) -- (7,2.5) -- (7,3) -- (2,3) -- (2,2.5) (2.5,2.5) -- (2.5,3) (3,2.5) -- (3,3) (3.5,2.5) -- (3.5,3) (4,2.5) -- (4,3) (4.5,2.5) -- (4.5,3) (5,2.5) -- (5,3) (5.5,2.5) -- (5.5,3) (6,2.5) -- (6,3) (6.5,2.5) -- (6.5,3);
\end{tikzpicture}$$
\end{example}

\begin{proof}[Proof of Lemma \ref{lem:mncp2}. ]
By translating all rows, we may assume without loss of generality that $l(R_1)=l(S_1)$, and then by \eqref{eq:mij} we have $l(R_n)=l(S_{\varphi_n})$ as well. The idea is to repeatedly apply Proposition \ref{prop:ellgreat} and Proposition \ref{prop:ellgreat2} to write the inequality $l(S_{\varphi_n})\geq l(R_n)$, for which equality holds only if $\varphi_n=n$. Because equality does indeed hold, we will conclude that $\varphi_n=n$. \\

\noindent\textbf{Case 0: We have $n=3$. }

By Proposition \ref{prop:mncp3support}, if $l(R_2)>l(R_1)$, then $l(S_{\varphi_2})>l(S_1)$, so by Proposition \ref{prop:ellgreat} we have $l(S_{\varphi_2})\geq l(R_2)$. By Proposition \ref{prop:mncp3support} again, if $l(R_3)>l(R_2)$, then $l(S_{\varphi_3})>l(S_{\varphi_2})$, so by Proposition \ref{prop:ellgreat} we have $l(S_{\varphi_3})\geq l(R_3)$ with equality only if $\varphi_3>\varphi_2$. Because equality does indeed hold, we must have $\varphi_3>\varphi_2$ and therefore $\varphi_3=3$. \\

We may now assume that $n\geq 4$, so it remains to consider the several possibilities outlined in Lemma \ref{lem:mncp1}. Cases 1a and 1b illustrate the main ideas.\\

\noindent\textbf{Case 1: There is no $i\geq 3$ for which $R_1\precnsim R_i$ and there is no $j\leq n-2$ for which $R_n\precnsim R_j$. }\\

\noindent\textbf{Case 1a: We have $l(R_2)>l(R_1)$, $l(R_{n-1})<\cdots<l(R_2)$, and $l(R_n)>l(R_{n-1})$. }

By Proposition \ref{prop:mncp3support}, we have $l(S_{\varphi_2})>l(S_1)$, so by Proposition \ref{prop:ellgreat} we have $l(S_{\varphi_2})\geq l(R_2)$. By Proposition \ref{prop:ellgreat} again, we have $l(S_{\varphi_{n-1}})\geq l(R_{n-1})$ with equality only if $\varphi_{n-1}>\cdots>\varphi_2$. By Proposition \ref{prop:mncp3support}, we have $l(S_{\varphi_n})>l(S_{\varphi_{n-1}})$, so by Proposition \ref{prop:ellgreat} we have $l(S_{\varphi_n})\geq l(R_n)$ with equality only if we also have $\varphi_n>\varphi_{n-1}$. Because equality does indeed hold, we must have $\varphi_n>\varphi_{n-1}>\cdots>\varphi_2$, so $\varphi_n=n$.\\

\noindent\textbf{Case 1b: We have $l(R_{n-1})>\cdots>l(R_1)$, $R_t\prec R_n$ for $2\leq t\leq n-2$, and $l(R_n)<l(R_{n-1})$. }

As in Case 1a, by Proposition \ref{prop:mncp3support}, we have $l(S_{\varphi_2})>l(S_1)$, so by Proposition \ref{prop:ellgreat} we have $l(S_{\varphi_2})\geq l(R_2)$. However, we must now be careful because we need not have $l(S_{\varphi_{n-1}})>\cdots>l(S_{\varphi_2})$, so we take a slightly different approach. Let $k\in\{\varphi_t: \ 2\leq t\leq n-1\}$ be such that $l(S_k)$ is maximal. By Proposition \ref{prop:ellgreat2}, we have $r(S_k)\geq r(R_{n-1})$ with equality only if $k=n-1$. If $R_{n-1}\nprec R_n$, then we must have $k=\varphi_{n-1}$ because $S_{\varphi_t}\prec S_{\varphi_n}$ for every $2\leq t\leq n-1$ except for $S_k$. Now by Proposition \ref{prop:ellgreat}, we have $l(S_{\varphi_n})\geq l(R_n)$ with equality only if $\varphi_n>\varphi_{n-1}=k=n-1$. Because equality does indeed hold, we must have $\varphi_n=n$. On the other hand, if $R_{n-1}\prec R_n$, then $S_k\prec S_{\varphi_n}$ and by Corollary \ref{cor:mrirj}, Part 3, we have $r(S_{\varphi_n})\geq r(S_k)-1\geq r(R_{n-1})-1=r(R_n)$ with equality only if $\varphi_n>k=n-1$. Because equality does indeed hold, we must have $\varphi_n=n$. \\

\noindent\textbf{Case 1c: We have $l(R_2)<l(R_1)$, $l(R_n)>\cdots>l(R_2)$, and $R_t\prec R_1$ for $3\leq t\leq n-1$. }

By rotating, the conclusion follows from Case 1b.\\

We now assume that $R_1\precnsim R_i$ for some minimal $i\geq 3$ or $R_n\precnsim R_j$ for some maximal $j\leq n-2$. By rotating, we may assume that $R_1\precnsim R_i$ for some minimal $i\geq 3$. Note that by Corollary \ref{cor:mrirj}, Part 3, we cannot have $R_1\prec R_n$, so we have $i\leq n-1$. It remains to consider the cases where $i=n-1$ and where $i\leq n-2$. \\

\noindent\textbf{Case 2: We have $R_1\precnsim R_i$ for some minimal $i\geq 3$, and in fact $i=n-1$. }\\

\noindent\textbf{Case 2a: We have $l(R_{i-1})<\cdots<l(R_1)$ and $l(R_i)>l(R_{i-1})$. }

By Proposition \ref{prop:ellgreat}, we have $l(S_{\varphi_{i-1}})\geq l(R_{i-1})$ with equality only if $\varphi_{i-1}>\cdots>\varphi_2$. If $l(S_{\varphi_i})>l(S_{\varphi_{i-1}})$, then by Proposition \ref{prop:ellgreat} again, we have $l(S_{\varphi_n})\geq l(R_n)$ with equality only if $\varphi_n>\varphi_i>\varphi_{i-1}>\cdots>\varphi_2$. Because equality does indeed hold, we must have $\varphi_n=n$.\\

Now suppose that $l(S_{\varphi_i})\leq l(S_{\varphi_{i-1}})$. We will show that this is narrowly possible, but $\bm\mu$ will be so specifically determined that we will be able to reduce to a previous case. By Corollary \ref{cor:mrirj}, Part 3, we have $M_{i-1,i}(\bm\lambda)<\min\{|R_{i-1}|,|R_i|\}$, so we must have $r(S_1)\leq r(S_{\varphi_i})+1\leq r(S_{\varphi_{i-1}})$. If $i\geq 4$, then because $M_{t,n}(\bm\lambda)=0$ for $2\leq t\leq i-1$, $M_{1,t}(\bm\lambda)=0$ for $3\leq t\leq i-1$ by Proposition \ref{prop:prec2}, Part 1, and $M_{t,t+1}(\bm\lambda)>0$ for $1\leq t\leq i-2$ by Corollary \ref{cor:mrirj}, Part 3, we must have $r(S_{\varphi_{i-1}})\leq l(S_n)-1\leq r(S_1)$, so $M_{1,\varphi_{i-1}}(\bm\mu)>0$ and $i=3$. Now $r(S_{\varphi_2})\leq l(S_{\varphi_4})-1\leq r(S_1)\leq r(S_{\varphi_3})+1\leq r(S_{\varphi_2})$, so we must have equality everywhere, meaning that $M_{1,\varphi_4}(\bm\mu)=M_{\varphi_3,\varphi_4}(\bm\mu)=0$ and  $M_{1,\varphi_2}(\bm\mu)=\min\{|S_1|,|S_{\varphi_2}|\}$, so $R_1\prec R_2$ or $R_2\prec R_1$. If $R_2\prec R_1$, then we have $l(R_3)>l(R_2)=l(R_1)-1$ and in fact $R_1=R_3$, contradicting $R_1\precnsim R_3$. If $R_1\prec R_2$, then we have $|R_2|-1=M_{2,3}(\bm\lambda)=M_{\varphi_2,\varphi_3}(\bm\mu)=|S_{\varphi_3}|-1=|R_3|-1$, so $|R_2|=|R_3|$. Moreover, we have $R_1\prec R_2$, $R_1\prec R_3$, and $M_{2,4}(\bm\lambda)=M_{3,4}(\bm\lambda)=0$ so in fact these two vertices are equivalent in $\Pi(\bm\lambda)$, and we can swap the roles of these two vertices to reduce to the case where $l(S_{\varphi_i})>l(S_{\varphi_{i-1}})$. \\

%(!!!) weird thing is possible if $n=4$, $i=3$... 

\noindent\textbf{Case 2b: We have $l(R_{i-1})>\cdots>l(R_1)$, $R_t\prec R_n$ for $2\leq t\leq i-1$, and $l(R_i)<l(R_{i-1})$. }

Let $k\in\{\varphi_t: \ 2\leq t\leq i-1\}$ be such that $l(S_k)$ is maximal. By Proposition \ref{prop:ellgreat2}, we have $r(S_k)\geq r(R_{i-1})$ with equality only if $k=i-1$. If $R_{i-1}\nprec R_i$, then we must have $k=\varphi_{i-1}$ because $S_{\varphi_t}\prec S_{\varphi_i}$ for every $2\leq t\leq i-1$ except for $S_k$. Now by Proposition \ref{prop:ellgreat}, we have $l(S_{\varphi_n})\geq l(R_n)$ with equality only if $\varphi_n>\varphi_i>\varphi_{i-1}=k=i-1$. Because equality does indeed hold, we must have $\varphi_n=n$. On the other hand, if $R_{i-1}\prec R_i$, then $S_k\prec S_{\varphi_i}$, and by Corollary \ref{cor:mrirj}, Part 3, we have $r(S_{\varphi_i})\geq r(S_k)-1\geq r(R_{i-1})-1=r(R_i)$ with equality only if $\varphi_i>k=i-1$. Then by Proposition \ref{prop:ellgreat} again, we have $l(S_{\varphi_n})\geq l(R_n)$ with equality only if $\varphi_n>\varphi_i$. Because equality does indeed hold, we must have $\varphi_n=n$. \\

\noindent\textbf{Case 2c: We have $n=4$, $l(R_2)>l(R_1)$, $l(R_3)<l(R_2)$, and $R_4\precnsim R_2$. }

By Proposition \ref{prop:mncp3support}, we have $l(S_{\varphi_2})>l(S_1)$ and $l(S_{\varphi_4})>l(S_{\varphi_3})$, so by Proposition \ref{prop:ellgreat}, we have $l(S_{\varphi_4})\geq l(R_4)$ with equality only if $\varphi_4>\varphi_3>\varphi_2$. Because equality does indeed hold, we must have $\varphi_4=4$.\\

\noindent\textbf{Case 3: We have $R_1\precnsim R_i$ for some minimal $3\leq i\leq n-2$. }

We first show that $l(S_{\varphi_i})\geq l(R_i)$ with equality only if $\varphi_i>\varphi_t$ for all $t<i$. If $i=j=3$, $l(R_2)>l(R_1)$, $R_n\precnsim R_2$, and $l(R_3)<l(R_2)$, then we have $l(S_{\varphi_2})>l(S_1)$ by Proposition \ref{prop:mncp3support} and the result follows as before from Proposition \ref{prop:ellgreat}. Now suppose that $l(R_{i-1})<\cdots<l(R_1)$ and $l(R_i)>l(R_{i-1})$. We show that $l(S_{\varphi_i})>l(S_{\varphi_{i-1}})$. Suppose that $l(S_{\varphi_i})\leq l(S_{\varphi_{i-1}})$. By Corollary \ref{cor:mrirj}, Part 3, we have $M_{i-1,i}(\bm\lambda)<\min\{|R_{i-1}|,|R_i|\}$, so we must have $r(S_{\varphi_n})\leq r(S_{\varphi_i})+1\leq r(S_{\varphi_{i-1}})$. Because $M_{t,n}(\bm\lambda)=0$ for $2\leq t\leq i-1$ and $M_{t,t+1}(\bm\lambda)>0$ for $1\leq t\leq i-2$ by Corollary \ref{cor:mrirj}, Part 3,  we must have $r(S_{\varphi_{i-1}})<l(S_{\varphi_n})\leq r(S_{\varphi_n})$, a contradiction, so indeed $l(S_{\varphi_i})>l(S_{\varphi_{i-1}})$. Now by Proposition \ref{prop:ellgreat}, we have $l(S_{\varphi_i})\geq l(R_i)$ with equality only if $\varphi_i>\varphi_t$ for all $t<i$. Because either $j=i$, or $j=i+1$ and $l(R_j)<l(R_i)$, we in fact have $l(S_{\varphi_j})\geq l(R_j)$ with equality only if $\varphi_j>\varphi_t$ for all $t<j$. Finally, by rotating and repeating the previous argument, we have $l(S_{\varphi_n})\geq l(R_n)$ with equality only if $\varphi_n>\varphi_t$ for all $t<n$. Because equality does indeed hold, we must have $\varphi_n=n$. This completes the proof. 
\end{proof}

The payoff of all our work so far is the following Corollary. 

\begin{corollary} \label{cor:adjacentnoncommutingdone}
Let $\bm\lambda=(R_1,\ldots,R_n)$ be a horizontal-strip with $l(R_i)<l(R_{i+1})$ and $R_i\nleftrightarrow R_{i+1}$. Let $\bm\mu=(S_1,\ldots,S_n)$ and $\varphi:\Pi(\bm\lambda)\xrightarrow\sim\Pi(\bm\mu)$ be such that $S_{\varphi_i}\nleftrightarrow S_{\varphi_{i+1}}$. Then there exists a good substitute for $(\bm\lambda,\bm\mu)$. 
\end{corollary}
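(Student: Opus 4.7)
The strategy is to use the cycling, rotating, and commuting operations from Proposition~\ref{prop:similar} and Lemma~\ref{lem:aoncp} to transform both $\bm\lambda$ and $\bm\mu$ into similar horizontal-strips whose first two rows witness a good substitute. The challenge is that in $\bm\mu$ the rows $S_{\varphi_i}$ and $S_{\varphi_{i+1}}$ need not be adjacent, so we will invoke Lemma~\ref{lem:aoncp} to move them adjacent, and deal separately with the case where a minimal noncommuting path obstructs this.

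First I would cycle $\bm\lambda$ by $i-1$ applications of Proposition~\ref{prop:similar}(2) to place $R_i$ at position $1$ and $R_{i+1}$ at position $2$, reducing to the case $i=1$, with $R_1\nleftrightarrow R_2$ and $l(R_1)<l(R_2)$. Cycling $\bm\mu$ analogously brings $S_{\varphi_1}$ to position $1$, so I may assume $\varphi_1=1$; let $k=\varphi_2\ge 2$, and note that $S_1\nleftrightarrow S_k$ is preserved. Now apply Lemma~\ref{lem:aoncp} to $\bm\mu$ with indices $1$ and $k$.

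In the favourable case (part (2) of Lemma~\ref{lem:aoncp}), there exists $\bm\mu'\in\mathcal{S}(\bm\mu)$ and an isomorphism $\psi$ with $\psi_k=\psi_1+1$ and the leftmost cells of the corresponding rows unchanged. Cycling $\bm\mu'$ to bring the image of $S_1$ to position $1$ (and hence the image of $S_k$ to position $2$) and composing the isomorphisms yields a new pair $(\bm\lambda^*,\bm\mu^*)$ with an isomorphism sending $1\mapsto 1$ and $2\mapsto 2$. If $l(S^*_1)<l(S^*_2)$ this is already a good substitute; if instead $l(S^*_1)>l(S^*_2)$, I apply Proposition~\ref{prop:similar}(3) to rotate $\bm\mu^*$ and then cycle suitably so that the weight-$|R_1|$ row lands at position $1$ with smaller leftmost coordinate, which by the dual behaviour of leftmost and rightmost cells under rotation exactly reverses the ordering at positions $1,2$. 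A direct check using Proposition~\ref{prop:mrirj} and Corollary~\ref{cor:mrirj} then verifies the good substitute conditions.

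The main obstacle, which I expect to be the hardest part, is the remaining case: Lemma~\ref{lem:aoncp}(1) yields a minimal noncommuting path $(T_1,\ldots,T_m)$ in $\bm\mu$ from $S_1$ to $S_k$ with $m\ge 3$. To rule this out, I would combine the structural description of Lemma~\ref{lem:mncp1} with the positional rigidity of Lemma~\ref{lem:mncp2}. Since $T_1=S_1$ and $T_m=S_k$ correspond under $\varphi^{-1}$ to $R_1$ and $R_2$, which are adjacent and non-commuting in $\bm\lambda$ with $l(R_1)<l(R_2)$, pulling the MNCP back through $\varphi^{-1}$ produces a sequence of positions in $\bm\lambda$ starting at $1$ and ending at $2$; Lemma~\ref{lem:mncp2}, applied after suitable cycling/rotating and restriction to the sub-horizontal-strip carved out by the MNCP, then forces this pullback chain to have length at least $m\ge 3$, while the chain from $R_1$ to $R_2$ in $\bm\lambda$ has length $2$. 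This contradiction rules out case~(1) of Lemma~\ref{lem:aoncp}, so only the favourable case can occur, and we obtain a good substitute as above.
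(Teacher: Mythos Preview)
Your overall strategy matches the paper's proof: cycle to put the distinguished rows first, apply Lemma~\ref{lem:aoncp} on the $\bm\mu$ side, and in the obstructed case derive a contradiction from Lemma~\ref{lem:mncp2}. The paper's argument is essentially a one-liner once the reductions are in place. However, your handling of the $l$-ordering on the $\bm\mu$ side has a real gap.

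The paper arranges \emph{upfront}, by cycling and rotating, that $i=1$, $\varphi_1=1$, and $l(S_{\varphi_1})<l(S_{\varphi_2})$. This last inequality is crucial in both branches: in the favourable branch of Lemma~\ref{lem:aoncp} it is exactly what guarantees the resulting pair is a good substitute (since Lemma~\ref{lem:aoncp}(2) preserves the relevant $l$-values), and in the MNCP branch it is a hypothesis of Lemma~\ref{lem:mncp2}. You instead defer this, and in the favourable case with $l(S^*_1)>l(S^*_2)$ you propose rotating only $\bm\mu^*$ and then cycling. But rotation sends the rows at positions $1,2$ to positions $n,n-1$; any subsequent cycling that places the weight-$|R_1|$ row at position~$1$ sends the weight-$|R_2|$ row to position~$n$, not position~$2$. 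So the composed isomorphism has $\varphi_2=n\neq 2$, and you do not obtain a good substitute. (Cycling the other way gives $\varphi_1=2,\ \varphi_2=1$, which is no better.) The same issue bites in the MNCP branch: Lemma~\ref{lem:mncp2} requires $l(S_1)<l(S_j)$ for the MNCP endpoints, and your ``suitable cycling/rotating'' is not specified in a way that simultaneously restores this ordering and keeps the pullback landing at positions $1$ and $2$ in $\bm\lambda$.

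The fix is simple and is exactly what the paper does: since $S_{\varphi_i}\nleftrightarrow S_{\varphi_{i+1}}$ forces $l(S_{\varphi_i})\neq l(S_{\varphi_{i+1}})$, rotate (both sides if you like, or just $\bm\mu$ together with a relabelling of the isomorphism) \emph{before} invoking Lemma~\ref{lem:aoncp}, so that $l(S_{\varphi_1})<l(S_{\varphi_2})$. With that single normalization both branches go through cleanly, and your MNCP argument becomes exactly the paper's: restricting to the sub-horizontal-strip of the MNCP and applying Lemma~\ref{lem:mncp2} (you do not need Lemma~\ref{lem:mncp1} here) gives $\varphi^{-1}_{j_k}$ equal to the largest index among the $k\geq 3$ pulled-back positions, whereas $\varphi^{-1}_{j_k}=2$, a contradiction.
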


\begin{proof}
By cycling and rotating, we may assume without loss of generality that $i=1$, $l(S_{\varphi_1})<l(S_{\varphi_2})$, and $\varphi_1=1$. Let $j=\varphi_2$. By Lemma \ref{lem:aoncp}, either we may replace $\bm\mu$ by a similar horizontal-strip to assume that $j=2$, in which case we have our good substitute and we are done, or there is a minimal noncommuting path $(S_1=S_{j_1},\ldots,S_{j_k}=S_j)$ in $\bm\mu$ from $S_1$ to $S_j$. However, in this case, by considering the corresponding rows in $\bm\lambda$, we would have $2=\varphi_j^{-1}=\varphi_{j_k}^{-1}\geq k\geq 3$ by Lemma \ref{lem:mncp2}, a contradiction. Therefore, there is indeed a good substitute for $(\bm\lambda,\bm\mu)$.
\end{proof}

Our next goal is to extend Corollary \ref{cor:adjacentnoncommutingdone} by showing that certain properties of the weighted graph $\Pi(\bm\lambda)$ will force certain rows not to commute. 

\begin{definition} Let $\bm\lambda=(R_1,\ldots,R_n)$ be a horizontal-strip. A pair of rows $(R_i,R_j)$ of $\bm\lambda$ with $i<j$ and $l(R_i)<l(R_j)$ is \emph{strict} if either 
\begin{enumerate}
\item $0<M_{i,j}<\min\{|R_i|,|R_j|\}$, or 
\item $M_{i,j}=0$ and $M_{i,k}+M_{j,k}\geq |R_k|+1$ for some $k$. 
\end{enumerate}
\end{definition}

\begin{example}
The two possibilities for strictness are given below. Note that on the right, we have $M_{i,k}+M_{j,k}=2+3=|R_k|+1$. Informally, in the second possibility where $M_{i,j}=0$, the weighted graph normally would not know about the relationship between $R_i$ and $R_j$. However, the presence of this row $R_k$ glues the rows $R_i$ and $R_j$ together and means that the weighted graph data forces rows $R_i$ and $R_j$ to not commute.
$$
\begin{tikzpicture}
\draw (-0.5,-0.75) node (1) {$R_i$} (-0.5,0.25) node (2) {$R_j$};
\draw (-5.5,-1) -- (-3,-1) -- (-3,-0.5) -- (-5.5,-0.5) -- (-5.5,-1) (-5,-1) -- (-5,-0.5) (-4.5,-1) -- (-4.5,-0.5) (-4,-1) -- (-4,-0.5) (-3.5,-1) -- (-3.5,-0.5) (-4,0) -- (-1,0) -- (-1,0.5) -- (-4,0.5) -- (-4,0) (-3.5,0) -- (-3.5,0.5) (-3,0) -- (-3,0.5) (-2.5,0) -- (-2.5,0.5) (-2,0) -- (-2,0.5) (-1.5,0) -- (-1.5,0.5);
\draw (6.5,-1.25) node (1) {$R_i$} (6.5,-0.25) node (2) {$R_j$} (6.5,0.75) node (3) {$R_k$};
\draw (0.5,-1.5) -- (3,-1.5) -- (3,-1) -- (0.5,-1) -- (0.5,-1.5) (1,-1.5) -- (1,-1) (1.5,-1.5) -- (1.5,-1) (2,-1.5) -- (2,-1) (2.5,-1.5) -- (2.5,-1) (3,-0.5) -- (6,-0.5) -- (6,0) -- (3,0) -- (3,-0.5) (3.5,-0.5) -- (3.5,0) (4,-0.5) -- (4,0) (4.5,-0.5) -- (4.5,0) (5,-0.5) -- (5,0) (5.5,-0.5) -- (5.5,0) (2,0.5) -- (4,0.5) -- (4,1) -- (2,1) -- (2,0.5) (2.5,0.5) -- (2.5,1) (3,0.5) -- (3,1) (3.5,0.5) -- (3.5,1);
\end{tikzpicture}
$$
\end{example}

\begin{remark} Because we define strictness using the weighted graph data, it is preserved under isomorphisms. To be specific, if $\bm\lambda=(R_1,\ldots,R_n)$ and $\bm\mu=(S_1,\ldots,S_n)$ are horizontal-strips with $\varphi:\Pi(\bm\lambda)\xrightarrow\sim\Pi(\bm\mu)$, then if $(R_i,R_j)$ is strict, we can cycle and rotate to assume that $l(S_{\varphi_i})<l(S_{\varphi_j})$ and $\varphi_i<\varphi_j$, and then the pair $(S_{\varphi_i},S_{\varphi_j})$ is strict. 
\end{remark}

\begin{proposition}\label{prop:strict}
Let $\bm\lambda=(R_1,\ldots,R_n)$ be a horizontal-strip and suppose that the pair of rows $(R_i,R_j)$ is strict. Then $R_i\nleftrightarrow R_j$. 
\end{proposition}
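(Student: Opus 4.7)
The strictness hypothesis has two clauses and I will handle them by contradiction. Clause (1) is immediate: if $0 < M_{i,j} < \min\{|R_i|, |R_j|\}$ and $R_i \leftrightarrow R_j$, then Corollary~\ref{cor:mrirj}, Part~1 forces $M_{i,j} \in \{0, \min\{|R_i|, |R_j|\}\}$, contradicting the strict inequalities.

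For clause (2), where $M_{i,j} = 0$ and some $R_k$ satisfies $M_{i,k} + M_{j,k} \geq |R_k| + 1$, I will assume $R_i \leftrightarrow R_j$ and then prove the stronger inequality $M_{i,k} + M_{j,k} \leq |R_k|$ for \emph{every} row $R_k$, yielding a contradiction. The first step is to extract the geometry of $R_i$ and $R_j$: since $M_{i,j} = 0$ and $l(R_i) \leq l(R_j)$, Proposition~\ref{prop:mrirj}, Part~2 forces $l(R_i) < l(R_j)$ strictly (else $M_{i,j} = \min\{|R_i|, |R_j|\} \geq 1$), and then Part~1 together with $R_i \leftrightarrow R_j$ forces $r(R_i) \leq l(R_j) - 2$. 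Thus $R_i$ and $R_j$ are content-disjoint with at least one empty content column separating them.

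Next I will bound $M_{i,k} + M_{j,k}$ using the explicit formulas from Proposition~\ref{prop:mrirj}, by case analysis on $l(R_k)$ relative to $l(R_i)$ and $l(R_j)$, with a further split on $r(R_k)$ relative to $r(R_i)$ and $l(R_j)$. The guiding picture is that cells of $R_k$ contributing to $M_{i,k}$ live at contents roughly $\leq r(R_i)$, while cells contributing to $M_{j,k}$ live at contents roughly $\geq l(R_j) - 1$; the gap $r(R_i) + 2 \leq l(R_j)$ keeps these two subsets of $R_k$ disjoint, yielding the bound $|R_k|$ in all routine subcases.

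The main obstacle will be the asymmetric ``$+1$'' term $\chi(i > j)$ built into the formula~\eqref{eq:mij} for $M_{i,j}$: it allows each of $M_{i,k}$ and $M_{j,k}$ to absorb one extra cell of $R_k$ beyond the raw overlap, so the naive counting yields only $M_{i,k} + M_{j,k} \leq |R_k| + 1$. The key observation that sharpens this bound is that realizing the bonus simultaneously in both terms requires $i > k$ (so that the shift appears in $M_{i,k}$) and $j < k$ (so that the shift appears in $M_{j,k}$), and this pair of conditions contradicts the hypothesis $i < j$. Using the order constraint $i < j$ to rule out this double-bonus configuration sharpens the bound to $|R_k|$, and the resulting contradiction completes the proof.
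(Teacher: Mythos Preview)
Your proposal is correct and uses essentially the same engine as the paper: bound $M_{i,k}+M_{j,k}$ via the overlap formulas and invoke $i<j$ to rule out the simultaneous $\chi(i>k)=\chi(k>j)=1$ bonus. The only structural difference is that the paper argues directly rather than by contradiction: from $M_{i,j}=0$ and $l(R_i)<l(R_j)$ alone it gets $l(R_j)\geq r(R_i)+1$, then the chain $|R_k|+1\leq M_{i,k}+M_{j,k}\leq |R_k|+(r(R_i)-l(R_j)+1)+1\leq |R_k|+1$ forces $l(R_j)=r(R_i)+1$ exactly, hence $R_i\nleftrightarrow R_j$. Your contradiction version buys one extra unit of gap ($l(R_j)\geq r(R_i)+2$), which lets you aim for the sharper bound $M_{i,k}+M_{j,k}\leq |R_k|$; the paper's direct route avoids your planned case analysis on the position of $R_k$ by observing that the inequality $M_{i,k}\leq r(R_i)-l(R_k)+1+\chi(i>k)$ (and its mate for $M_{j,k}$) holds uniformly once both $M_{i,k},M_{j,k}>0$, so you may be able to streamline your argument the same way.
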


\begin{proof}
If $0<M_{i,j}<\min\{|R_i|,|R_j|\}$, then $R_i\nleftrightarrow R_j$ by Corollary \ref{cor:mrirj}, Part 1, so suppose that $M_{i,j}=0$ and $M_{i,k}+M_{j,k}\geq |R_k|+1$ for some $k$. Because $l(R_i)<l(R_j)$ and $M_{i,j}=0$, we have that $l(R_i)\geq r(R_j)+1$ and because $M_{i,k},M_{j,k}\leq |R_k|$, we must have $M_{i,k},M_{j,k}>0$ so $l(R_k)\leq r(R_i)+1\leq l(R_j)\leq r(R_k)+1$ by Proposition \ref{prop:mrirj}, Part 1. Now we either have $M_{i,k}=r(R_i)-l(R_k)+1+\chi(i>k)$ or $M_{i,k}=\min\{|R_i|,|R_k|\}\leq r(R_i)-l(R_k)+1+\chi(i>k)$ and similarly we have $M_{j,k}\leq r(R_k)-l(R_j)+1+\chi(k>j)$. 
Because $i<j$, we have $\chi(i>k)+\chi(k>j)\leq 1$, so \begin{equation}|R_k|+1\leq M_{i,k}+M_{j,k}\leq |R_k|+r(R_i)-l(R_j)+1+\chi(i>k)+\chi(k>j)\leq |R_k|+1,\end{equation}
so we must have equality everywhere and in particular, $r(R_j)-l(R_i)+1=0$, so $R_i\nleftrightarrow R_j$. 
\end{proof}

\begin{remark} This proof shows that if $l(R_i)<l(R_j)$ and $M_{i,j}=0$, then in fact $M_{i,k}+M_{j,k}\leq |R_k|+1$ for all $k$, so we could replace the condition $M_{i,k}+M_{j,k}\geq |R_k|+1$ with the equivalent condition $M_{i,k}+M_{j,k}=|R_k|+1$.  \end{remark}

\begin{corollary} \label{cor:adjacentstrictdone}
Let $\bm\lambda=(R_1,\ldots,R_n)$ be a horizontal-strip with a pair of adjacent strict rows $(R_i,R_{i+1})$. Then $\bm\lambda$ is good.
\end{corollary}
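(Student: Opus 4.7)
The plan is to reduce directly to Corollary \ref{cor:adjacentnoncommutingdone}, whose hypotheses are tailored for precisely this situation. Fix an arbitrary horizontal-strip $\bm\mu=(S_1,\ldots,S_n)$ with $\Pi(\bm\lambda)\cong\Pi(\bm\mu)$ via some isomorphism $\varphi$; I need to produce a good substitute for the pair $(\bm\lambda,\bm\mu)$.

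First I would record the two conditions on $\bm\lambda$ itself. By definition of a strict pair, $l(R_i)<l(R_{i+1})$, and by Proposition \ref{prop:strict} we have $R_i\nleftrightarrow R_{i+1}$. This already supplies the part of the hypothesis of Corollary \ref{cor:adjacentnoncommutingdone} that concerns $\bm\lambda$.

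Next I would transfer strictness across $\varphi$. Because strictness of $(R_i,R_{i+1})$ is phrased entirely in terms of the vertex weights $|R_\bullet|$ and edge weights $M_{\bullet,\bullet}$, the corresponding pair $(S_{\varphi_i},S_{\varphi_{i+1}})$ in $\bm\mu$ satisfies the same numerical identities. Using Proposition \ref{prop:similar}, I would cycle and/or rotate $\bm\mu$ (each operation preserving the similarity class of $\bm\mu$ and the isomorphism $\Pi(\bm\lambda)\cong\Pi(\bm\mu)$) to arrange that $l(S_{\varphi_i})<l(S_{\varphi_{i+1}})$ and $\varphi_i<\varphi_{i+1}$, as noted in the remark preceding Proposition \ref{prop:strict}. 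Then $(S_{\varphi_i},S_{\varphi_{i+1}})$ is strict in the adjusted $\bm\mu$, and a second application of Proposition \ref{prop:strict} yields $S_{\varphi_i}\nleftrightarrow S_{\varphi_{i+1}}$.

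At this point every hypothesis of Corollary \ref{cor:adjacentnoncommutingdone} is satisfied, so it directly produces a good substitute for $(\bm\lambda,\bm\mu)$. Since $\bm\mu$ was arbitrary, $\bm\lambda$ is good. There is no real obstacle here: all the hard work lies upstream, in Proposition \ref{prop:strict} (which converts the weighted-graph condition of strictness into the geometric condition of noncommutation) and in Corollary \ref{cor:adjacentnoncommutingdone} (whose proof in turn rests on the delicate analysis of minimal noncommuting paths in Lemma \ref{lem:mncp2}). The present corollary is simply the packaging of those two results into a form we can feed to the main induction.
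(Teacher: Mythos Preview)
Your proof is correct and follows essentially the same route as the paper: transfer the strictness of $(R_i,R_{i+1})$ to $(S_{\varphi_i},S_{\varphi_{i+1}})$ after cycling and rotating $\bm\mu$, invoke Proposition~\ref{prop:strict} to get noncommutation on both sides, and then apply Corollary~\ref{cor:adjacentnoncommutingdone}. The paper's version is marginally terser---it records the inequality $M_{i,i+1}<\min\{|R_i|,|R_{i+1}|\}$ explicitly to justify $l(S_{\varphi_i})\neq l(S_{\varphi_{i+1}})$ before rotating---but your appeal to the remark preceding Proposition~\ref{prop:strict} covers the same ground.
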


\begin{proof}
Let $\bm\mu=(S_1,\ldots,S_n)$ and $\varphi:\Pi(\bm\lambda)\xrightarrow\sim\Pi(\bm\mu)$. By definition, we have $M_{i,i+1}(\bm\lambda)<\min\{|R_i|,|R_{i+1}|\}$, so $M_{\varphi_i,\varphi_{i+1}}(\bm\mu)<\min\{|S_{\varphi_i}|,|S_{\varphi_{i+1}}|\}$ and $l(S_{\varphi_i})\neq l(S_{\varphi_{i+1}})$. Therefore, by cycling and rotating, we may assume without loss of generality that $l(S_{\varphi_i})<l(S_{\varphi_{i+1}})$ and $\varphi_i<\varphi_{i+1}$, so the pair $(S_{\varphi_i},S_{\varphi_{i+1}})$ is strict. By Proposition \ref{prop:strict}, we have $S_{\varphi_i}\nleftrightarrow S_{\varphi_{i+1}}$, so by Corollary \ref{cor:adjacentnoncommutingdone}, there exists a good substitute for $(\bm\lambda,\bm\mu)$. 
\end{proof}

We now investigate some useful properties of strict pairs.

\begin{proposition} \label{prop:strict2}
Let $\bm\lambda=(R_1,\ldots,R_n)$ be a horizontal-strip with $i<j$, $l(R_i)<l(R_j)$, and $R_i\nleftrightarrow R_j$. Suppose that $R_i\nleftrightarrow R_k$ and $R_j\nleftrightarrow R_k$ for some $k$ with $k<i$ or $k>j$. Then the pair $(R_i,R_j)$ is strict. 
\end{proposition}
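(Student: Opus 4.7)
The plan is to split into two cases according to the value of $M_{i,j}$.

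First, since $i<j$, $l(R_i)<l(R_j)$, and $R_i\nleftrightarrow R_j$, Proposition \ref{prop:mrirj} Part 3 gives $M_{i,j}=r(R_i)-l(R_j)+1$. If $M_{i,j}\geq 1$, then Corollary \ref{cor:mrirj} Part 1 applied contrapositively to $R_i\nleftrightarrow R_j$ forces $M_{i,j}<\min\{|R_i|,|R_j|\}$, and condition (1) of strictness holds immediately. The remaining case is $M_{i,j}=0$, equivalently $r(R_i)=l(R_j)-1$, and here I will verify condition (2) using the given row $R_k$.

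The next step is to pin down the position of $R_k$, showing $l(R_i)<l(R_k)<l(R_j)$ and $r(R_i)<r(R_k)<r(R_j)$. The hypothesis $R_i\nleftrightarrow R_k$ together with Proposition \ref{prop:mrirj} Part 2 rules out $l(R_k)=l(R_i)$ and either containment $R_i\subseteq R_k$ or $R_k\subseteq R_i$, and similarly for the pair $(R_j,R_k)$. A short case analysis using Proposition \ref{prop:mrirj} Part 1 eliminates the remaining alternatives: $l(R_k)<l(R_i)$ would force $r(R_k)\leq r(R_i)-1=l(R_j)-2$, so $R_k\leftrightarrow R_j$, contradicting the hypothesis; and $l(R_k)>l(R_j)$ combined with $R_i\nleftrightarrow R_k$ and Part 3 would force $l(R_k)\leq r(R_i)+1=l(R_j)$, again a contradiction. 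Symmetric arguments on the right endpoints give $r(R_i)<r(R_k)<r(R_j)$.

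With $R_k$ thus situated between $R_i$ and $R_j$, apply formula \eqref{eq:mij} to each of $M_{i,k}$ and $M_{j,k}$. The key observation is that since either $k<i$ or $k>j$, exactly one of $\chi(i>k)$ and $\chi(k>j)$ equals $1$ and the other $0$, so these indicators together contribute exactly $+1$. Adding the two resulting expressions and using $r(R_i)=l(R_j)-1$ yields
\begin{equation}
M_{i,k}+M_{j,k}=\bigl(r(R_k)-l(R_k)+1\bigr)+\bigl(r(R_i)-l(R_j)+2\bigr)=|R_k|+1,
\end{equation}
which verifies condition (2) of strictness (in fact with equality, consistent with the remark following the statement). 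The main obstacle is the positional analysis in the second paragraph, but it reduces to a systematic application of the three cases of Proposition \ref{prop:mrirj}; once the position of $R_k$ is nailed down, the arithmetic is immediate.
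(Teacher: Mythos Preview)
Your proof is essentially correct and follows the same approach as the paper, but there is one small logical slip in the first case. You invoke Corollary~\ref{cor:mrirj} Part~1 ``contrapositively'' to conclude from $R_i\nleftrightarrow R_j$ that $M_{i,j}<\min\{|R_i|,|R_j|\}$. The contrapositive of Part~1 is ``if $0<M_{i,j}<\min\{|R_i|,|R_j|\}$ then $R_i\nleftrightarrow R_j$,'' which runs the wrong way for your purpose; indeed the Remark after Corollary~\ref{cor:mrirj} explicitly notes that noncommuting rows \emph{can} have $M_{i,j}=\min\{|R_i|,|R_j|\}$. The correct citation is Corollary~\ref{cor:mrirj} Part~3 (as the paper uses): since $i<j$, $R_i\nleftrightarrow R_j$, and $M_{i,j}=\min\{|R_i|,|R_j|\}$ would force $l(R_j)<l(R_i)$, contradicting the hypothesis $l(R_i)<l(R_j)$.

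With that fix, your argument matches the paper's: reduce to $M_{i,j}=0$, locate $R_k$ strictly between $R_i$ and $R_j$, and compute $M_{i,k}+M_{j,k}=|R_k|+1$ via \eqref{eq:mij} and the observation that $\chi(i>k)+\chi(k>j)=1$. Your positional case analysis is somewhat more explicit than the paper's (which reaches $l(R_i)<l(R_k)<l(R_j)$ in one line via Proposition~\ref{prop:mrirj} Part~1 and Corollary~\ref{cor:mrirj} Part~2), but the content is the same.
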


\begin{proof}
By Corollary \ref{cor:mrirj}, Part 3, we cannot have $M_{i,j}=\min\{|R_i|,|R_j|\}$, and if $0<M_{i,j}<\min\{|R_i|,|R_j|\}$, then we are done, so suppose that $M_{i,j}=0$ and therefore $l(R_j)=r(R_i)+1$ by Corollary \ref{cor:mrirj}, Part 3. Because $R_i\nleftrightarrow R_k$, by Proposition \ref{prop:mrirj}, Part 1 we must have $l(R_k)\leq r(R_i)+1=l(R_j)$, and because $R_j\nleftrightarrow R_k$, we must have $r(R_k)\geq l(R_j)-1=r(R_i)$, so by Corollary \ref{cor:mrirj}, Part 2, we in fact have $l(R_i)<l(R_k)<l(R_j)$. Now by \eqref{eq:mij}, we have \begin{equation}M_{i,k}+M_{j,k}=r(R_i)-l(R_k)+1+\chi(i>k)+r(R_k)-l(R_j)+1+\chi(k>j)=|R_k|+1,\end{equation}
so the pair $(R_i,R_j)$ is strict.
\end{proof}

\begin{proposition}\label{prop:strict3}
Let $\bm\lambda=(R_1,\ldots,R_n)$ be a minimal noncommuting path and suppose that the pairs $(R_{t'},R_{t'+1})$ are not strict for $1\leq t'<n-1$. Then we have the following.
\begin{enumerate}
\item If $l(R_{t+1})>l(R_t)$, then $M_{t,t+1}=0$ and $l(R_{t+1})=r(R_t)+1$.\\

\item If $l(R_{t+1})>l(R_t)$ and $l(R_t)<l(R_{t-1})$, then $R_{t+1}\precnsim R_{t-1}$. \\

\item If $l(R_{t+1})<l(R_t)$ and $l(R_t)>l(R_{t-1})$, then $R_{t-1}\precnsim R_{t+1}$.
\end{enumerate}
\end{proposition}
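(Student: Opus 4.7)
The plan is to prove (1) first directly, then use (1) together with the not-strict hypothesis on the pair $(R_t,R_{t+1})$ to upgrade the dichotomy of Proposition \ref{prop:prec2}, Part 2 to the direction needed for (2). Part (3) will then follow from (2) by rotation via Proposition \ref{prop:similar}, Part 3.

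For (1), since $(R_1,\ldots,R_n)$ is a noncommuting path we have $R_t\nleftrightarrow R_{t+1}$. With $l(R_{t+1})>l(R_t)$, Corollary \ref{cor:mrirj}, Part 3 rules out both $M_{t,t+1}=|R_t|$ and $M_{t,t+1}=|R_{t+1}|$, since each of these would force $l(R_{t+1})<l(R_t)$. Because the pair $(R_t,R_{t+1})$ is not strict, condition (1) of the strictness definition fails, so $0<M_{t,t+1}<\min\{|R_t|,|R_{t+1}|\}$ is impossible. Hence $M_{t,t+1}=0$, and Corollary \ref{cor:mrirj}, Part 3 then yields $l(R_{t+1})=r(R_t)+1$.

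For (2), minimality of the noncommuting path provides $R_{t-1}\leftrightarrow R_{t+1}$. The integers $l(R_t)-l(R_{t-1})<0$ and $l(R_{t+1})-l(R_t)>0$ have opposite signs, so Proposition \ref{prop:prec2}, Part 2 forces either $R_{t-1}\prec R_{t+1}$ or $R_{t+1}\prec R_{t-1}$. To obtain $R_{t+1}\precnsim R_{t-1}$ it suffices to exclude the first alternative. Suppose for contradiction that $R_{t-1}\prec R_{t+1}$; then Proposition \ref{prop:prec}, Part 3 gives $R_{t-1}\subseteq R_{t+1}$, so in particular $l(R_{t+1})\leq l(R_{t-1})$. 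On the other hand, Proposition \ref{prop:mrirj}, Part 1 applied to the noncommuting pair $(R_{t-1},R_t)$ yields $l(R_{t-1})\leq r(R_t)+1$, and by (1) the right-hand side equals $l(R_{t+1})$. Hence $l(R_{t-1})=l(R_{t+1})=r(R_t)+1$. The formula in Proposition \ref{prop:mrirj}, Part 3 then gives $M_{t-1,t}=r(R_t)-l(R_{t-1})+2=1$, and $R_{t-1}\subseteq R_{t+1}$ with matching left endpoints gives $M_{t-1,t+1}=|R_{t-1}|$ by Proposition \ref{prop:mrirj}, Part 2. Taking $k=t-1$ in condition (2) of the strictness definition for the pair $(R_t,R_{t+1})$ (which has $l(R_t)<l(R_{t+1})$ and $M_{t,t+1}=0$) produces $M_{t,k}+M_{t+1,k}=1+|R_{t-1}|=|R_{t-1}|+1$, so $(R_t,R_{t+1})$ is strict, contradicting the hypothesis.

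Finally, (3) is the mirror image of (2): the rotation $\bm\lambda\mapsto N-\bm\lambda$ of Proposition \ref{prop:similar}, Part 3 preserves $\Pi(\bm\lambda)$, reverses the row order, and flips every inequality among the $l(R_i)$ while swapping the two sides of each $\prec$. Thus the hypotheses of (3) transform into those of (2) on the rotated strip, and the conclusion $R_{t+1}\precnsim R_{t-1}$ there translates back to $R_{t-1}\precnsim R_{t+1}$ for $\bm\lambda$. The main technical delicacy is the explicit calculation in (2) verifying that the witness $k=t-1$ really makes the strictness condition succeed; once those two values of $M$ are pinned down, everything else is routine bookkeeping with the earlier propositions.
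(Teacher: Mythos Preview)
Your proof is correct and follows essentially the same approach as the paper. The only notable difference is in Part (2): the paper reaches the contradiction more quickly by observing that $R_{t-1}\prec R_{t+1}$ already means $M_{t-1,t+1}=|R_{t-1}|$ by definition, and that $M_{t-1,t}>0$ follows directly from Corollary \ref{cor:mrirj}, Part 3 (since $l(R_t)<l(R_{t-1})$ and $R_{t-1}\nleftrightarrow R_t$ rule out $M_{t-1,t}=0$), so the sum is $\geq |R_{t-1}|+1$ without any need to pin down $l(R_{t-1})=l(R_{t+1})$ exactly. Your extra computation is correct but unnecessary.
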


\begin{proof}\hspace{2pt}
\begin{enumerate}
\item By Corollary \ref{cor:mrirj}, Part 3 we cannot have $M_{t,t+1}=\min\{|R_t|,|R_{t+1}|\}$, and if $0<M_{t,t+1}<\min\{|R_t|,|R_{t+1}|\}$, then the pair $(R_t,R_{t+1})$ would be strict, so we must have $M_{t,t+1}=0$ and $l(R_{t+1})=r(R_t)+1$.\\

\item By the previous part, we must have $M_{t,t+1}=0$. By Proposition \ref{prop:prec2}, Part 2, we must have $R_{t-1}\prec R_{t+1}$ or $R_{t+1}\prec R_{t-1}$. However, if $R_{t-1}\prec R_{t+1}$, then because $M_{t-1,t}>0$ by Corollary \ref{cor:mrirj}, Part 3, we would have $M_{t-1,t}+M_{t-1,t+1}\geq |R_{t-1}|+1$ and the pair $(R_t,R_{t+1})$ would be strict, so we must have $R_{t+1}\precnsim R_{t-1}$.\\

\item By rotating, this follows from the previous part.
\end{enumerate}
\end{proof}

\begin{example} The diagrams below illustrate the contradictions that we deduce in the proofs of Parts 2 and 3 of Proposition \ref{prop:strict3}. If $R_{t-1}\prec R_{t+1}$ as on the left, then the pair $(R_t,R_{t+1})$ would be strict, so we concluded that $R_{t+1}\precnsim R_{t-1}$. If $R_{t+1}\prec R_{t-1}$ as on the right, then the pair $(R_{t-1},R_t)$ would be strict, so we concluded that $R_{t-1}\precnsim R_{t+1}$. 
$$
\begin{tikzpicture}
\draw (6.5,-1.25) node (1) {$R_{t-1}$} (6.5,-0.25) node (2) {$R_t$} (6.5,0.75) node (3) {$R_{t+1}$};
\draw (3,-1.5) -- (4.5,-1.5) -- (4.5,-1) -- (3,-1) -- (3,-1.5) (3.5,-1.5) -- (3.5,-1) (4,-1.5) -- (4,-1) (0.5,-0.5) -- (3,-0.5) -- (3,0) -- (0.5,0) -- (0.5,-0.5) (1,-0.5) -- (1,0) (1.5,-0.5) -- (1.5,0) (2,-0.5) -- (2,0) (2.5,-0.5) -- (2.5,0) (3,0.5) -- (6,0.5) -- (6,1) -- (3,1) -- (3,0.5) (3.5,0.5) -- (3.5,1) (4,0.5) -- (4,1) (4.5,0.5) -- (4.5,1) (5,0.5) -- (5,1) (5.5,0.5) -- (5.5,1);
\end{tikzpicture}\hspace{50pt}
\begin{tikzpicture}
\draw (6.5,-1.25) node (1) {$R_{t-1}$} (6.5,-0.25) node (2) {$R_t$} (6.5,0.75) node (3) {$R_{t+1}$};
\draw (0.5,-1.5) -- (3,-1.5) -- (3,-1) -- (0.5,-1) -- (0.5,-1.5) (1,-1.5) -- (1,-1) (1.5,-1.5) -- (1.5,-1) (2,-1.5) -- (2,-1) (2.5,-1.5) -- (2.5,-1) (3,-0.5) -- (6,-0.5) -- (6,0) -- (3,0) -- (3,-0.5) (3.5,-0.5) -- (3.5,0) (4,-0.5) -- (4,0) (4.5,-0.5) -- (4.5,0) (5,-0.5) -- (5,0) (5.5,-0.5) -- (5.5,0) (1.5,0.5) -- (3,0.5) -- (3,1) -- (1.5,1) -- (1.5,0.5) (2,0.5) -- (2,1) (2.5,0.5) -- (2.5,1);
\end{tikzpicture}
$$
\end{example}

It will also be convenient to make the following definition.

\begin{definition}
Let $\bm\lambda=(R_1,\ldots,R_n)$ be a horizontal-strip. A \emph{strict sequence} of $\bm\lambda$ is a sequence of rows $(R_{j_1},\ldots,R_{j_k})$ such that $k\geq 2$, $j_1<\cdots<j_k$, $M_{j_t,j_{t'}}=0$ for every $1\leq t<t'\leq k$, and there is some $h$ with $h<j_1$ or $h>j_k$ for which \begin{equation} M_{j_t,h}>0\text{ for all }1\leq t\leq k\text{ and }M_{j_1,h}+\cdots+M_{j_k,h}\geq|R_h|+1.\end{equation}
\end{definition}

Note that if a pair of rows $(R_i,R_j)$ is a strict sequence, then it meets the second condition of being a strict pair. 

\begin{example} \label{ex:strictsequence}
In Figure \ref{fig:strictsequence}, we have $M_{t,t'}=0$ for every $1\leq t<t'\leq 6$, $M_{t,7}>0$ for $1\leq t\leq 6$, and $M_{1,7}+\cdots+M_{6,7}=2+5+2+4+3+3=19=|R_7|+1$, so $(R_1,\ldots,R_6)$ is a strict sequence. Informally, our next Proposition will show that every strict sequence looks very much like this example. Because $M_{t,t'}=0$ for $1\leq t<t'\leq 6$, the weighted graph normally would not know about the relationship between these rows. However, the presence of the row $R_7$ glues these rows together and means that the weighted graph data forces the adjacent rows not to commute. 

\begin{figure}\caption{An example of a strict sequence}\label{fig:strictsequence}
$$\begin{tikzpicture}
\draw (-0.25,-0.25) node (0) {$\bm \lambda=$};
\draw (11,-3.25) node (1) {$R_1$} (11,-2.25) node (2) {$R_2$} (11,-1.25) node (3) {$R_3$} (11,-0.25) node (4) {$R_4$} (11,0.75) node (5) {$R_5$} (11,1.75) node (6) {$R_6$} (11,2.75) node (7) {$R_7$};
\draw (0,-3.5) -- (1.5,-3.5) -- (1.5,-3) -- (0,-3) -- (0,-3.5) (0.5,-3.5) -- (0.5,-3) (1,-3.5) -- (1,-3) (1.5,-2.5) -- (4,-2.5) -- (4,-2) -- (1.5,-2) -- (1.5,-2.5) (2,-2.5) -- (2,-2) (2.5,-2.5) -- (2.5,-2) (3,-2.5) -- (3,-2) (3.5,-2.5) -- (3.5,-2) (4,-1.5) -- (5,-1.5) -- (5,-1) -- (4,-1) -- (4,-1.5) (4.5,-1.5) -- (4.5,-1) (5,-0.5) -- (7,-0.5) -- (7,0) -- (5,0) -- (5,-0.5) (5.5,-0.5) -- (5.5,0) (6,-0.5) -- (6,0) (6.5,-0.5) -- (6.5,0) (7,0.5) -- (8.5,0.5) -- (8.5,1) -- (7,1) -- (7,0.5) (7.5,0.5) -- (7.5,1) (8,0.5) -- (8,1) (8.5,1.5) -- (10.5,1.5) -- (10.5,2) -- (8.5,2) -- (8.5,1.5) (9,1.5) -- (9,2) (9.5,1.5) -- (9.5,2) (10,1.5) -- (10,2) (0.5,2.5) -- (9.5,2.5) -- (9.5,3) -- (0.5,3) -- (0.5,2.5) (1,2.5) -- (1,3) (1.5,2.5) -- (1.5,3) (2,2.5) -- (2,3) (2.5,2.5) -- (2.5,3) (3,2.5) -- (3,3) (3.5,2.5) -- (3.5,3) (4,2.5) -- (4,3) (4.5,2.5) -- (4.5,3) (5,2.5) -- (5,3) (5.5,2.5) -- (5.5,3) (6,2.5) -- (6,3) (6.5,2.5) -- (6.5,3) (7,2.5) -- (7,3) (7.5,2.5) -- (7.5,3) (8,2.5) -- (8,3) (8.5,2.5) -- (8.5,3) (9,2.5) -- (9,3);
\end{tikzpicture}
$$
\end{figure}
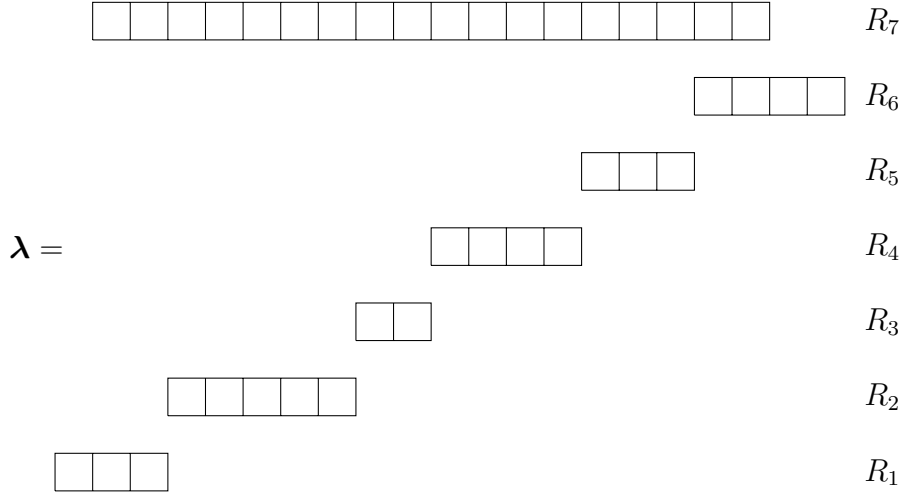
\end{example}

\begin{remark}\label{rem:strictsequencetransfer} Because we define a strict sequence using the weighted graph data, it is preserved under isomorphisms. To be specific, if $\bm\lambda=(R_1,\ldots,R_n)$ and $\bm\mu=(S_1,\ldots,S_n)$ are horizontal-strips with $\varphi:\Pi(\bm\lambda)\xrightarrow\sim\Pi(\bm\mu)$, then if $(R_{j_1},\ldots,R_{j_k})$ is a strict sequence, we can cycle to assume that $\varphi_h>\varphi_{j_t}$ for every $1\leq t\leq k$. Because $M_{j_t,j_{t'}}(\bm\lambda)=0$ for $1\leq t<t'\leq k$, the integers $l(S_{\varphi_{j_t}})$ for $1\leq t\leq k$ must be distinct, so we can let $\sigma:\{1,\ldots,k\}\to\{1,\ldots,k\}$ be the permutation that sorts them in increasing order, in other words \begin{equation}l(S_{\varphi_{j_{\sigma_1}}})<l(S_{\varphi_{j_{\sigma_2}}})<\cdots<l(S_{\varphi_{j_{\sigma_k}}}).\end{equation}
Then the sequence $(S_{\varphi_{j_{\sigma_1}}},\ldots,S_{\varphi_{j_{\sigma_k}}})$ is strict. 
\end{remark}

\begin{proposition} \label{prop:strictsequence}
Let $\bm\lambda=(R_1,\ldots,R_n)$ be a horizontal-strip with a sequence of rows $(R_{j_1},\ldots,R_{j_k})$ with $k\geq 2$, $j_1<\cdots<j_k$, $M_{j_t,j_{t'}}=0$ for every $1\leq t<t'\leq k$, and there is some $h$ with $h<j_1$ or $h>j_k$ for which $M_{j_t,h}>0$ for every $1\leq t\leq k$. Then this sequence is strict if and only if $l(R_{j_{t+1}})=r(R_{j_t})+1$ for every $1\leq t\leq k-1$ and \begin{equation}\label{eq:strictsequencecondition}
l(R_{j_1})+\chi(j_1>h)\leq l(R_h)\leq r(R_h)+\chi(h>j_k)\leq r(R_{j_k}).
\end{equation}
\end{proposition}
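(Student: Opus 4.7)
The plan is to prove the proposition by first establishing the bound $\sum_{t=1}^{k} M_{j_t, h} \leq |R_h| + 1$ geometrically, and then characterizing exactly when equality (i.e., strictness) holds.

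To each row $R_{j_t}$ I would associate a set $\text{cell}(t)$ of contents that encodes $M_{j_t, h}$. In the case $h > j_k$, I set $\text{cell}(t) = [l(R_h), \min\{r(R_{j_t}), r(R_h)\}]$ when $l(R_{j_t}) \leq l(R_h)$, and $\text{cell}(t) = [l(R_{j_t}), \min\{r(R_{j_t}), r(R_h) + 1\}]$ when $l(R_{j_t}) > l(R_h)$; the case $h < j_1$ is defined symmetrically, so that every $\text{cell}(t)$ sits in an extended interval $\tilde R_h$ of size $|R_h| + 1$ (namely $[l(R_h), r(R_h) + 1]$ or $[l(R_h) - 1, r(R_h)]$ depending on which side $h$ lies on). By Proposition \ref{prop:mrirj}, one has $|\text{cell}(t)| = M_{j_t, h}$ in every subcase.

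I then verify that the sets $\text{cell}(t)$ are pairwise disjoint. The key observation is that at most one $R_{j_t}$ can sit on the far side of $R_h$ from $h$ (i.e., have $l(R_{j_t}) \leq l(R_h)$ in the case $h > j_k$, and similarly for $h < j_1$): otherwise two such rows would each have to contain the extreme content of $R_h$ in order to satisfy $M_{j_t, h} > 0$, contradicting $M_{j_t, j_{t'}} = 0$. For the remaining indices $t$, the cell sits inside $R_{j_t}$, and the disjointness $M_{j_t, j_{t'}} = 0$ together with Corollary \ref{cor:mrirj}, Part 3 forces the underlying rows to be content-disjoint, so the cells are disjoint. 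Summing gives $\sum_t M_{j_t, h} \leq |\tilde R_h| = |R_h| + 1$, so strictness is equivalent to the $\text{cell}(t)$ tiling $\tilde R_h$ exactly.

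I then unpack the tiling condition. If the spatial order of the rows disagrees with the index order $j_1 < \cdots < j_k$, there exist $t < t'$ with $l(R_{j_t}) > l(R_{j_{t'}})$, and expanding $M_{j_t, j_{t'}} = |R_{j_t} \cap R_{j_{t'}}^+| = 0$ forces a content gap of at least one between the rows; combined with the at-most-one-outer-row analysis this gap must land inside $\tilde R_h$, breaking the tiling. So the index order agrees with the spatial order. Given that, covering the leftmost content of $\tilde R_h$ yields $l(R_{j_1}) \leq l(R_h)$ when $h > j_k$ or $l(R_{j_1}) + 1 \leq l(R_h)$ when $h < j_1$; covering the rightmost content yields $r(R_{j_k}) \geq r(R_h) + 1$ or $r(R_{j_k}) \geq r(R_h)$ respectively; and adjacency of consecutive cells reduces to $l(R_{j_{t+1}}) = r(R_{j_t}) + 1$ for $1 \leq t \leq k - 1$. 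These match the stated chain of inequalities once the $\chi$-terms are unpacked. Conversely, assuming the stated conditions, a direct evaluation of each $M_{j_t, h}$ via Proposition \ref{prop:mrirj} confirms that the cells tile $\tilde R_h$ exactly, so the sum equals $|R_h| + 1$ and the sequence is strict.

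The main obstacle is the symmetric but separate bookkeeping for the two cases $h > j_k$ and $h < j_1$, together with the $\chi$ adjustments. Conceptually, the extra cell that raises the bound from $|R_h|$ to $|R_h| + 1$ comes from the content shift built into the definition of $M$: rows on the side of $R_h$ opposite to $h$ are compared against $R_h^{\pm}$ rather than $R_h$ itself, which effectively provides one extra content unit for the tiling.
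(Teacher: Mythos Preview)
Your tiling argument is correct and gives a genuinely different route from the paper's proof. The paper proceeds purely algebraically: it sorts the rows spatially by a permutation $\sigma$, bounds each $M_{\sigma_t,h}$ using \eqref{eq:mij}, and then telescopes. The crux of the paper's argument is the one-line inequality
\[
\chi(\sigma_1>h)-\sum_{t=1}^{k-1}\chi(\sigma_t>\sigma_{t+1})+\chi(h>\sigma_k)\leq 1,
\]
with equality exactly when $\sigma$ is the identity; this simultaneously gives the bound $\sum_t M_{j_t,h}\leq |R_h|+1$ and the characterization of equality (spatial order equals index order, all intermediate inequalities tight). Your approach instead interprets each $M_{j_t,h}$ as the length of an interval $\mathrm{cell}(t)$ inside an extended host interval $\tilde R_h$ of size $|R_h|+1$, proves disjointness, and reads off the equality case as an exact tiling. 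This is more geometric and arguably more intuitive, and it makes transparent where the ``extra'' unit in $|R_h|+1$ comes from (the $\pm$ shift in the definition of $M$), whereas the paper's $\chi$-identity handles the same bookkeeping in a single compressed line.

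One small point worth tightening in your write-up: when you argue that a disagreement between spatial and index order forces a gap in the tiling, you should take the witnessing pair $(t,t')$ to be \emph{spatially consecutive} (i.e., a descent of the sorting permutation $\sigma$). Otherwise the content you identify as ``missing'' could in principle be covered by a third cell. Choosing a descent guarantees that the gap lies strictly between two adjacent cells of the putative tiling, so no other cell can fill it. With that adjustment your equality analysis goes through cleanly.
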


\begin{proof}
By Proposition \ref{prop:mrirj} Part 2, because all of the $M_{j_t,j_{t'}}$ are zero, the integers $l(R_{j_t})$ for $1\leq t\leq k$ are distinct, so let $\sigma:\{1,\ldots,k\}\to\{j_1,\ldots,j_k\}$ sort the rows $R_{j_t}$ so that $l(R_{j_t})$ is increasing. Then because the $M_{j_t,j_{t'}}$ are zero we have \begin{equation}
l(R_{\sigma_1})<r(R_{\sigma_1})+1+\chi(\sigma_1>\sigma_2)\leq l(R_{\sigma_2})<r(R_{\sigma_2})+1+\chi(\sigma_2>\sigma_3)\leq\cdots\leq l(R_{\sigma_k}).\end{equation}
Because $M_{\sigma_1,h}>0$, we must have $l(R_h)\leq r(R_{\sigma_1})+1\leq l(R_{\sigma_2})$ and because $M_{\sigma_k,h}>0$, we must have $r(R_h)\geq l(R_{\sigma_k})-1\geq r(R_{\sigma_{k-1}})$. We now have that \begin{align}
M_{\sigma_1,h}&\leq r(R_{\sigma_1})-l(R_h)+1+\chi(\sigma_1>h)\leq l(R_{\sigma_2})-l(R_h)+\chi(\sigma_1>h)-\chi(\sigma_1>\sigma_2)\\\nonumber
M_{\sigma_2,h}&=|R_{\sigma_2}|=r(R_{\sigma_2})-l(R_{\sigma_2})+1\leq l(R_{\sigma_3})-l(R_{\sigma_2})-\chi(\sigma_2>\sigma_3)\\\nonumber\cdots&=\\\nonumber M_{\sigma_{k-1},h}&=|R_{\sigma_{k-1}}|=r(R_{\sigma_{k-1}})-l(R_{\sigma_{k-1}})+1\leq l(R_{\sigma_k})-l(R_{\sigma_{k-1}})-\chi(\sigma_{k-1}>\sigma_k)\\\nonumber 
M_{\sigma_k,h}&\leq r(R_h)-l(R_{\sigma_k})+1+\chi(h>\sigma_k).
\end{align}
Also note that $\chi(\sigma_1>h)-\chi(\sigma_1>\sigma_2)-\cdots-\chi(\sigma_{k-1}>\sigma_k)+\chi(h>\sigma_k)\leq 1$ with equality only if $\sigma_1<\cdots<\sigma_k$. Therefore, by summing the above equations, we have \begin{equation}\sum_{t=1}^kM_{\sigma_t,h}\leq r(R_h)-l(R_h)+1+1=|R_h|+1,
\end{equation} so the sequence is strict if and only if we have equality everywhere, meaning that $\sigma_1<\cdots<\sigma_k$, $l(R_{j_{t+1}})=r(R_{j_t})+1$ for $1\leq t\leq k-1$, and \eqref{eq:strictsequencecondition} holds. \end{proof}

\begin{remark}
This proof shows that if $l(R_{j_1})<\cdots<l(R_{j_k})$ and the $M_{j_t,j_{t'}}=0$, then in fact $M_{j_1,h}+\cdots+M_{j_k,h}\leq |R_h|+1$ for all $h$, so we could replace the condition $M_{j_1,h}+\cdots+M_{j_k,h}\geq |R_h|+1$ with the equivalent condition $M_{j_1,h}+\cdots+M_{j_k,h}=|R_h|+1$.  
\end{remark}

\begin{proposition} \label{prop:strictsequence2} Let $\bm\lambda=(R_1,\ldots,R_n)$ be a horizontal-strip with a strict sequence of rows $(R_{j_1},\ldots,R_{j_k})$. Suppose that there is some $j_t<x<j_{t+1}$ with $l(R_x)=l(R_{j_{t+1}})$ and $R_{j_{t+1}}\precnsim R_x$. Then one of the following holds. \begin{enumerate}
\item The sequence $(R_{j_1},\ldots,R_{j_t},R_x)$ is strict.\\

\item There is a shorter strict sequence of the form $(R_{j_1},\ldots,R_{j_t},R_x,R_{j_{t'}},\ldots,R_{j_k})$ for some $t'\geq t+2$.\\

\item There is a strict pair $(R_x,R_{j_{t'+1}})$ for some $t'\geq t+1$. 
\end{enumerate}
\end{proposition}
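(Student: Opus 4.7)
The proof will proceed by a case analysis on the rightmost content $r(R_x)$. First, apply Proposition \ref{prop:strictsequence} to the original strict sequence: this yields the consecutive-row property $l(R_{j_{s+1}}) = r(R_{j_s})+1$ for $1 \leq s \leq k-1$, together with a witness $h$ (with $h<j_1$ or $h>j_k$) satisfying \eqref{eq:strictsequencecondition}, and in particular $M_{j_1,h} + \cdots + M_{j_k,h} = |R_h|+1$. Since $l(R_x) = l(R_{j_{t+1}})$ and $R_{j_{t+1}} \precnsim R_x$ with $j_{t+1} > x$, Proposition \ref{prop:prec}, Part 2 forces $R_{j_{t+1}} \subseteq R_x$ but $R_x \not\subseteq R_{j_{t+1}}$ and $R_x \not\subseteq R_{j_{t+1}}^-$, from which $r(R_x) > r(R_{j_{t+1}})$.

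Now let $t'$ be the largest index in $\{t+1,\ldots,k\}$ with $r(R_{j_{t'}}) \leq r(R_x)$, which is well-defined since $t' = t+1$ works. Split into three cases matching the three options. If $t'=k$, so $r(R_x) \geq r(R_{j_k})$, then I claim $(R_{j_1},\ldots,R_{j_t},R_x)$ is strict, giving option (1): disjointness $M_{j_s,x}=0$ for $s\leq t$ follows from $r(R_{j_s}) \leq r(R_{j_t}) < l(R_x)$, the consecutive-row property extends because $l(R_x) = r(R_{j_t})+1$, and the same $h$ remains a valid witness because \eqref{eq:strictsequencecondition} still holds with $r(R_x)$ in place of $r(R_{j_k})$. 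If $t' < k$ and $r(R_x) = r(R_{j_{t'}}) = l(R_{j_{t'+1}}) - 1$, then the sequence $(R_{j_1},\ldots,R_{j_t},R_x,R_{j_{t'+1}},\ldots,R_{j_k})$ has consecutive-row property at both new joins and pairwise $M=0$ is inherited, giving option (2) with statement index $t'+1 \geq t+2$. Otherwise, $t' < k$ and $r(R_{j_{t'}}) < r(R_x) < r(R_{j_{t'+1}})$; since $l(R_x) < l(R_{j_{t'+1}})$ and $l(R_{j_{t'+1}}) \leq r(R_x)+1 \leq r(R_{j_{t'+1}})$, Proposition \ref{prop:mrirj}, Part 3 gives $0 < M_{x,j_{t'+1}} = r(R_x) - l(R_{j_{t'+1}}) + 1 < \min\{|R_x|,|R_{j_{t'+1}}|\}$, so $(R_x,R_{j_{t'+1}})$ is a strict pair by the first clause, realizing option (3).

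The main obstacle will be the bookkeeping for the sum condition $\sum_s M_{j_s,h} = |R_h|+1$ demanded by Proposition \ref{prop:strictsequence} in options (1) and (2). In option (1) one needs $M_{x,h} = M_{j_{t+1},h} + \cdots + M_{j_k,h}$; in option (2) one needs $M_{x,h} = M_{j_{t+1},h} + \cdots + M_{j_{t'},h}$. In each instance $R_x$ covers precisely the content range of the omitted rows $R_{j_{t+1}} \cup \cdots \cup R_{j_{t'}}$ (or $\cup R_{j_k}$ in option (1)), and the identity follows by direct computation from the formulas of Proposition \ref{prop:mrirj} and the known position of $R_h$ relative to this range via \eqref{eq:strictsequencecondition}. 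One must also verify $M_{x,h} > 0$, which is immediate once $R_x$ lies within the content span of $R_h$ as guaranteed by the witness constraints.
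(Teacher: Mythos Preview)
Your argument is correct and follows the same case split as the paper's proof: take $t'$ maximal with $r(R_{j_{t'}})\le r(R_x)$ (equivalently $R_{j_{t'}}\prec R_x$, since $l(R_x)=l(R_{j_{t+1}})\le l(R_{j_{t'}})$), and then branch on whether $t'=k$, whether $r(R_x)=r(R_{j_{t'}})$, or whether $r(R_{j_{t'}})<r(R_x)<r(R_{j_{t'+1}})$.

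Two small points. First, in your option~(2) you need the new sequence to be \emph{shorter}, i.e.\ $t'\ge t+2$ in your notation; this holds because you already established $r(R_x)>r(R_{j_{t+1}})$, so $r(R_x)=r(R_{j_{t'}})$ forces $t'\neq t+1$. The paper makes this explicit by noting $R_x\nprec R_{j_{t+1}}$. Second, your final paragraph misreads Proposition~\ref{prop:strictsequence}: that proposition does not \emph{demand} the sum $\sum_s M_{j_s,h}=|R_h|+1$ as a separate hypothesis---it is the defining inequality of strictness, and the proposition replaces it by the equivalent conditions ``consecutive-row property $+$ \eqref{eq:strictsequencecondition}''. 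Since you already verify those two conditions (and the genuine hypotheses: pairwise $M=0$ and $M_{x,h}>0$), the sum identity $M_{x,h}=M_{j_{t+1},h}+\cdots$ comes for free and need not be checked by hand. The paper simply invokes Proposition~\ref{prop:strictsequence} once the consecutive-row property and \eqref{eq:strictsequencecondition} are in place, which is cleaner than the direct computation you sketch.
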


\begin{example}
Informally, Proposition \ref{prop:strictsequence2} describes a situation like the one in Figure \ref{fig:strictsequence2}. The sequence $(R_{j_1},\ldots,R_{j_6})$ is the strict sequence from Example \ref{ex:strictsequence}. The rows $R_{x_1}$, $R_{x_2}$, and $R_{x_3}$ illustrate the three possibilities described in Proposition \ref{prop:strictsequence2}. Informally, because $R_{x_1}$ extends past $R_{j_t}$, the sequence $(R_{j_1},R_{j_2},R_{x_1})$ is strict. Because $R_{x_2}=R_{j_3}\cup R_{j_4}\cup R_{j_5}$, it can replace these rows to produce the shorter strict sequence $(R_{j_1},R_{j_2},R_{x_2},R_{j_6})$. Finally, because $R_{x_3}$ ends between the rows $R_{j_4}$ and $R_{j_5}$, it results in the strict pair $(R_{x_3},R_{j_5})$. 

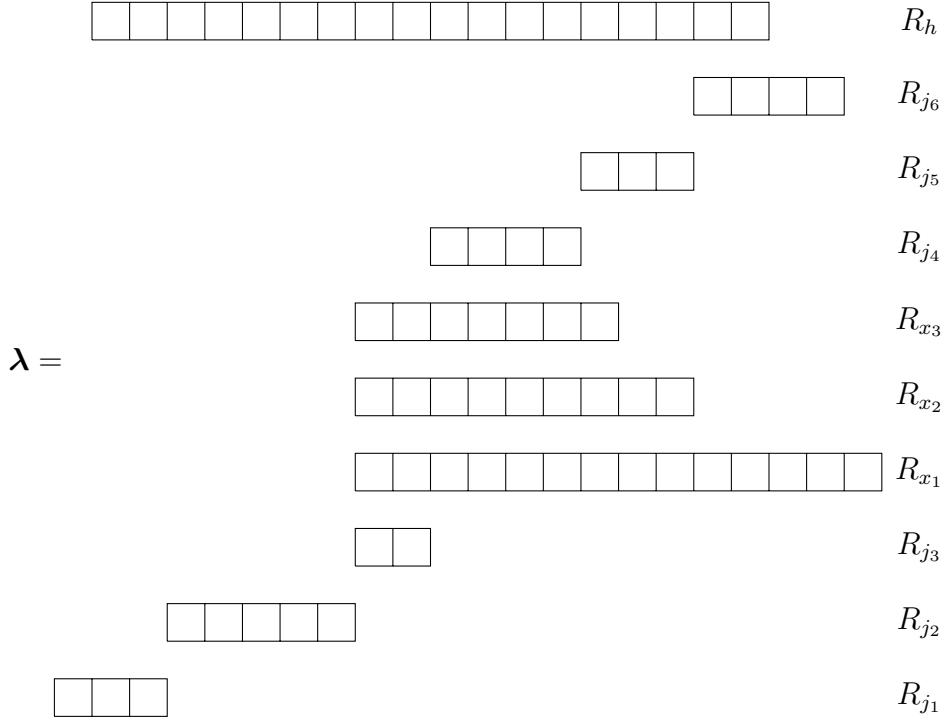
\begin{figure}\caption{A strict sequence with a row $R_x$ as in Proposition \ref{prop:strictsequence2} }\label{fig:strictsequence2}
$$\begin{tikzpicture}
\draw (-0.25,-1.75) node (0) {$\bm \lambda=$};
\draw (11.5,-6.25) node (1) {$R_{j_1}$} (11.5,-5.25) node (2) {$R_{j_2}$} (11.5,-4.25) node (3) {$R_{j_3}$} (11.5,-3.25) node (4) {$R_{x_1}$} (11.5,-2.25) node (5) {$R_{x_2}$} (11.5,-1.25) node (6) {$R_{x_3}$} (11.5,-0.25) node (7) {$R_{j_4}$} (11.5,0.75) node (8) {$R_{j_5}$} (11.5,1.75) node (9) {$R_{j_6}$} (11.5,2.75) node (10) {$R_h$};
\draw (0,-6.5) -- (1.5,-6.5) -- (1.5,-6) -- (0,-6) -- (0,-6.5) (0.5,-6.5) -- (0.5,-6) (1,-6.5) -- (1,-6) (1.5,-5.5) -- (4,-5.5) -- (4,-5) -- (1.5,-5) -- (1.5,-5.5) (2,-5.5) -- (2,-5) (2.5,-5.5) -- (2.5,-5) (3,-5.5) -- (3,-5) (3.5,-5.5) -- (3.5,-5) (4,-4.5) -- (5,-4.5) -- (5,-4) -- (4,-4) -- (4,-4.5) (4.5,-4.5) -- (4.5,-4) (4,-1.5) -- (7.5,-1.5) -- (7.5,-1) -- (4,-1) -- (4,-1.5) (4.5,-1.5) -- (4.5,-1) (5,-1.5) -- (5,-1) (5.5,-1.5) -- (5.5,-1) (6,-1.5) -- (6,-1) (6.5,-1.5) -- (6.5,-1) (7,-1.5) -- (7,-1) (4,-2.5) -- (8.5,-2.5) -- (8.5,-2) -- (4,-2) -- (4,-2.5) (4.5,-2.5) -- (4.5,-2) (5,-2.5) -- (5,-2) (5.5,-2.5) -- (5.5,-2) (6,-2.5) -- (6,-2) (6.5,-2.5) -- (6.5,-2) (7,-2.5) -- (7,-2) (7.5,-2.5) -- (7.5,-2) (8,-2.5) -- (8,-2) (4,-3.5) -- (11,-3.5) -- (11,-3) -- (4,-3) -- (4,-3.5) (4.5,-3.5) -- (4.5,-3) (5,-3.5) -- (5,-3) (5.5,-3.5) -- (5.5,-3) (6,-3.5) -- (6,-3) (6.5,-3.5) -- (6.5,-3) (7,-3.5) -- (7,-3) (7.5,-3.5) -- (7.5,-3) (8,-3.5) -- (8,-3) (8.5,-3.5) -- (8.5,-3) (9,-3.5) -- (9,-3) (9.5,-3.5) -- (9.5,-3) (10,-3.5) -- (10,-3) (10.5,-3.5) -- (10.5,-3) (5,-0.5) -- (7,-0.5) -- (7,0) -- (5,0) -- (5,-0.5) (5.5,-0.5) -- (5.5,0) (6,-0.5) -- (6,0) (6.5,-0.5) -- (6.5,0) (7,0.5) -- (8.5,0.5) -- (8.5,1) -- (7,1) -- (7,0.5) (7.5,0.5) -- (7.5,1) (8,0.5) -- (8,1) (8.5,1.5) -- (10.5,1.5) -- (10.5,2) -- (8.5,2) -- (8.5,1.5) (9,1.5) -- (9,2) (9.5,1.5) -- (9.5,2) (10,1.5) -- (10,2) (0.5,2.5) -- (9.5,2.5) -- (9.5,3) -- (0.5,3) -- (0.5,2.5) (1,2.5) -- (1,3) (1.5,2.5) -- (1.5,3) (2,2.5) -- (2,3) (2.5,2.5) -- (2.5,3) (3,2.5) -- (3,3) (3.5,2.5) -- (3.5,3) (4,2.5) -- (4,3) (4.5,2.5) -- (4.5,3) (5,2.5) -- (5,3) (5.5,2.5) -- (5.5,3) (6,2.5) -- (6,3) (6.5,2.5) -- (6.5,3) (7,2.5) -- (7,3) (7.5,2.5) -- (7.5,3) (8,2.5) -- (8,3) (8.5,2.5) -- (8.5,3) (9,2.5) -- (9,3);
\end{tikzpicture}$$
\end{figure}
\end{example}

\begin{proof}[Proof of Proposition \ref{prop:strictsequence2}. ]
By the definition of a strict sequence, there is some $h$ with $h<j_1$ or $h>j_k$ for which $M_{j_t,h}>0$ for all $1\leq t\leq k$ and $\sum_{t=1}^kM_{j_t,h}\geq |R_h|+1$, and by cycling we may assume without loss of generality that $h>j_k$, so that $l(R_{j_1})\leq l(R_h)<r(R_h)+1\leq r(R_{j_k})$ by \eqref{eq:strictsequencecondition}. Noting that $R_{j_{t+1}}\precnsim R_x$, let $t+1\leq t'\leq k$ be maximal such that $R_{j_{t'}}\prec R_x$. If $t'=k$, then we must have $r(R_x)\geq r(R_{j_k})\geq r(R_h)+1$ by Proposition \ref{prop:prec} and now the sequence $(R_{j_1},\ldots,R_{j_t},R_x)$ is strict by Proposition \ref{prop:strictsequence}, so the first possibility holds and we may now assume that $t'\leq k-1$. By maximality of $t'$, we have that $R_{j_{t'+1}}\nprec R_x$. If $M_{x,j_{t'+1}}=0$, then we must have $r(R_x)=r(R_{j_{t'}})=l(R_{j_{t'+1}})-1$ and the sequence $(R_{j_1},\ldots,R_{j_t},R_x,R_{j_{t'+1}},\ldots,R_{j_k})$ is strict by Proposition \ref{prop:strictsequence}. Also note that because $R_x\nprec R_{j_{t+1}}$, we must have $t'>t+1$ so this strict sequence is indeed shorter and the second possibility holds. Finally, if $M_{x,j_{t'+1}}>0$, then because $l(R_x)=l(R_{j_{t+1}})<l(R_{j_{t'+1}})$, we have $R_x\nprec R_{j_{t'+1}}$, and by maximality of $t$ we have $R_{j_{t'+1}}\nprec R_x$, so $0<M_{x,j_{t'+1}}<\min\{|R_x|,|R_{j_{t'+1}}|\}$, the pair $(R_x,R_{j_{t'+1}})$ is strict and the third possibility holds.
\end{proof}

We now describe the structure of a minimal noncommuting path with no strict pairs.

\begin{proposition}
\label{prop:mncpnostrict}
Let $\bm\lambda=(R_1,\ldots,R_n)$ be a minimal noncommuting path with $l(R_1)<l(R_n)$ and $R_1\nleftrightarrow R_n$ and suppose that the pairs $(R_t,R_{t+1})$ are not strict for $1\leq t\leq n-1$. Then one of the following holds.
\begin{enumerate}
\item One of the sequences $(R_1,\ldots,R_{n-1})$, $(R_2,\ldots,R_{n-1})$, or $(R_2,\ldots,R_n)$ is strict.\\

\item We have $n=4$, $l(R_2)=l(R_4)=r(R_1)-1=r(R_3)-1$, $R_4\precnsim R_2$, and $R_1\precnsim R_3$.
\end{enumerate}
\end{proposition}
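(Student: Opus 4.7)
The plan is a case analysis based on Lemma~\ref{lem:mncp1} (for $n\geq 4$), with $n=3$ handled separately. The central tool is Proposition~\ref{prop:strict3}: under the no-strict-adjacent-pair hypothesis, every increasing adjacent pair $(R_t,R_{t+1})$ (with $l(R_t)<l(R_{t+1})$) satisfies $M_{t,t+1}=0$ and $l(R_{t+1})=r(R_t)+1$, and peaks or valleys of the $l$-sequence induce $\precnsim$ relations between the flanking rows. In each case I will show that one of the three named sequences meets the criteria of Proposition~\ref{prop:strictsequence}, or else (only in a specific $n=4$ configuration) the exceptional conclusion~(2) holds.

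First I would dispose of $n=3$ by checking each ordering of $l(R_2)$: the case $l(R_1)<l(R_2)<l(R_3)$ gives $l(R_2)=r(R_1)+1$ and $l(R_3)=r(R_2)+1$, so $r(R_1)<l(R_3)-1$, hence $R_1\leftrightarrow R_3$ by Proposition~\ref{prop:mrirj}, a contradiction; the cases with a peak or valley in $(R_1,R_2,R_3)$ make one of the adjacent pairs strict via the other adjacent row as witness (a direct computation shows the sum $M_{i,k}+M_{j,k}$ telescopes to exactly $|R_k|+1$), and $l(R_3)<l(R_2)<l(R_1)$ contradicts $l(R_1)<l(R_3)$. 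For $n\geq 4$, Case~1 of Lemma~\ref{lem:mncp1} splits as follows: (1a) is eliminated via Proposition~\ref{prop:strict3}(3) at $t=2$ forcing $R_1\precnsim R_3$, against Case~1; (1b) applies Proposition~\ref{prop:strict3}(1) to make $R_1,\ldots,R_{n-1}$ right-adjacent, so combined with $R_t\prec R_n$ for $2\leq t\leq n-2$ and $r(R_n)<r(R_{n-1})$ (from $R_{n-1}\nleftrightarrow R_n$ and Corollary~\ref{cor:mrirj}), Proposition~\ref{prop:strictsequence} certifies $(R_1,\ldots,R_{n-1})$ as strict with witness $R_n$; and (1c) is the rotational mirror yielding $(R_2,\ldots,R_n)$.

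Case~2 ($R_1\precnsim R_{n-1}$) has three subcases. In (2a), for $n\geq 5$, the positional constraints derived from $R_1\precnsim R_{n-1}$ combined with $R_1\nleftrightarrow R_2$ and $R_1\nleftrightarrow R_n$ force $R_2\nleftrightarrow R_{n-1}$ (or $R_2\nleftrightarrow R_n$), violating minimality of the noncommuting path; so (2a) does not occur. Subcase (2b) is handled as in (1b) to yield one of the listed strict sequences, using the extra $R_t\prec R_n$ data to supply the witness. Subcase (2c) gives the exceptional conclusion~(2) directly: Proposition~\ref{prop:strict3}(1) applied to $(R_1,R_2)$ and $(R_3,R_4)$ pins down $l(R_2)=r(R_1)+1$ and $l(R_4)=r(R_3)+1$, the equality $r(R_1)=r(R_3)$ follows from $R_4\precnsim R_2$ combined with $R_1\precnsim R_3$ (the latter derived as in (1a)), and hence $l(R_2)=l(R_4)$. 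For Case~3, Lemma~\ref{lem:mncp1}(3) pins down the left and right blocks (with $j\in\{i,i+1\}$), and the middle rows $R_2,\ldots,R_{n-1}$ form a strict sequence whose witness is $R_1$ or $R_n$ depending on the substructure.

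The hardest step will be subcase~(2a), where the minimality contradiction requires carefully tracking how the right-endpoint alignments forced by $R_1\precnsim R_{n-1}$ interact with the no-strict-adjacent data from Proposition~\ref{prop:strict3} to push $R_2$ into a position incompatible with $R_2\leftrightarrow R_{n-1}$. Verifying the boundary inequalities $l(R_{j_1})+\chi(j_1>h)\leq l(R_h)\leq r(R_h)+\chi(h>j_k)\leq r(R_{j_k})$ of Proposition~\ref{prop:strictsequence} in the remaining subcases is routine once the right-adjacent block structure is in place, and the exceptional conclusion in (2c) follows by direct computation from Proposition~\ref{prop:strict3}.
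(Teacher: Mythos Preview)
Your overall case structure following Lemma~\ref{lem:mncp1} is right, and Cases 1a, 1c, and 2c match the paper. But Cases 2a, 2b, and 3 have genuine gaps, and Case 1b has a minor one.

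In Case 1b you only certify $(R_1,\ldots,R_{n-1})$ as strict, but the definition of strict sequence requires $M_{j_t,h}>0$ for every $t$, and $M_{1,n}$ can be zero here (when $l(R_n)=r(R_1)+1$). The paper splits: if $M_{1,n}>0$ then $(R_1,\ldots,R_{n-1})$ is strict, otherwise $(R_2,\ldots,R_{n-1})$ is.

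The more serious issue is that in Cases 2a, 2b, and 3 the paper shows these subcases are \emph{impossible}, via a uniform mechanism you have not identified: Proposition~\ref{prop:strict3}(2)/(3) applied at the valley or peak of the $l$-sequence produces a $\precnsim$ relation that directly contradicts a $\prec$ relation already present in the case data (coming from Lemma~\ref{lem:mncp1} or from Proposition~\ref{prop:mncp}(2)). Concretely: in 2a, Proposition~\ref{prop:mncp}(2) gives $R_{i-2}\prec R_i$, while Proposition~\ref{prop:strict3}(2) at $t=i-1$ gives $R_i\precnsim R_{i-2}$; in 2b, Proposition~\ref{prop:strict3}(2) at $t=n-1$ gives $R_n\precnsim R_{n-2}$, contradicting $R_{n-2}\prec R_n$ from the case data; in Case~3 the same contradiction at $t=i-1$ forces the second bullet of Lemma~\ref{lem:mncp1}(3) on both ends, pinning $n=5$, after which Proposition~\ref{prop:mncp}(1),(2) give $R_2\prec R_4$ and $R_4\prec R_2$, impossible. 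Your proposed arguments do not work: in 2a the claim that $R_2\nleftrightarrow R_{n-1}$ fails since Proposition~\ref{prop:mncp}(2) actually gives $R_2\subseteq R_{n-1}$; in 2b, ``as in (1b)'' only makes $R_1,\ldots,R_{n-2}$ right-adjacent, and neither $(R_1,\ldots,R_{n-2})$ nor $(R_2,\ldots,R_{n-2})$ is among the three sequences in conclusion~(1); in Case~3 the sequence $(R_2,\ldots,R_{n-1})$ cannot be strict because $l(R_3)<l(R_2)$ with $2<3$ forces $M_{2,3}>0$ by Corollary~\ref{cor:mrirj}(3).
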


\begin{example}
Informally, Proposition \ref{prop:mncpnostrict} tells us that a minimal noncommuting path with no strict pairs (other than possibly $(R_1,R_n)$) must look like one of the examples below. 
$$\begin{tabular}{c|c|c|c}
\tableau{&& \ & \ & \ & \ \\ \\ &&&&& \ & \ \\ \\ &&& \ & \ \\ \\ \ & \ & \ } & \tableau{&& \ & \ & \ & \ \\ \\ &&&&& \ & \ \\ \\ && \ & \ & \ \\ \\ \ & \ } & \tableau{&&&& \ & \ & \ \\ \\ && \ & \ \\ \\ \ & \ \\ \\ & \ & \ & \ & \ } & \tableau{&&& \ & \ & \ \\ \\ \ & \ & \ \\ \\ &&& \ & \ & \ & \ \\ \\ & \ & \ }\end{tabular}$$
\end{example}

\begin{proof}[Proof of Proposition \ref{prop:mncpnostrict}. ]
We consider several cases.\\

\noindent\textbf{Case 0: We have $n=3$. }

In this case, we have $R_1\nleftrightarrow R_2$ and $R_2\nleftrightarrow R_3$. Because $l(R_1)<l(R_3)$, we must have either $l(R_1)<l(R_2)$, in which case the pair $(R_1,R_2)$ is strict by Proposition \ref{prop:strict2}, or $l(R_2)<l(R_3)$, in which case the pair $(R_2,R_3)$ is strict by Proposition \ref{prop:strict2}, a contradiction. \\

We may now assume that $n\geq 4$, so it remains to consider the several possibilities outlined in Lemma \ref{lem:mncp1}.\\

\noindent\textbf{Case 1: There is no $i\geq 3$ for which $R_1\precnsim R_i$ and there is no $j\leq n-2$ for which $R_n\precnsim R_j$. }\\

\noindent\textbf{Case 1a: We have $l(R_2)>l(R_1)$, $l(R_{n-1})<\cdots<l(R_2)$, and $l(R_n)>l(R_{n-1})$. }

Because $l(R_2)>l(R_1)$ and $l(R_3)<l(R_2)$, we have $R_1\precnsim R_3$ by Proposition \ref{prop:strict3}, contradicting our hypothesis.\\

\noindent\textbf{Case 1b: We have $l(R_{n-1})>\cdots>l(R_1)$, $R_t\prec R_n$ for $2\leq t\leq n-2$, and $l(R_n)<l(R_{n-1})$. }

By Proposition \ref{prop:strict3}, we must have $l(R_{t+1})=r(R_t)+1$ for $1\leq t\leq n-2$. Now by Proposition \ref{prop:strictsequence}, if $M_{1,n}>0$ then the sequence $(R_1,\ldots,R_{n-1})$ is strict and if $M_{1,n}=0$ then the sequence $(R_2,\ldots,R_{n-1})$ is strict, so the first possibility holds.\\

\noindent\textbf{Case 1c: We have $l(R_2)<l(R_1)$, $l(R_n)>\cdots>l(R_2)$, and $R_t\prec R_1$ for $3\leq t\leq n-1$. }

By rotating, the conclusion follows from Case 1b.\\

We now assume that $R_1\precnsim R_i$ for some minimal $i\geq 3$ or $R_n\precnsim R_j$ for some maximal $j\leq n-2$. By rotating, we may assume that $R_1\precnsim R_i$ for some minimal $i\geq 3$. Note that by Corollary \ref{cor:mrirj}, Part 3, we cannot have $R_1\prec R_n$, so we have $i\leq n-1$. It remains to consider the cases where $i=n-1$ and where $i\leq n-2$. \\

\noindent\textbf{Case 2: We have $R_1\precnsim R_i$ for some minimal $i\geq 3$, and in fact $i=n-1$. }\\

\noindent\textbf{Case 2a: We have $l(R_{i-1})<\cdots<l(R_1)$ and $l(R_i)>l(R_{i-1})$. }

By Proposition \ref{prop:mncp}, Part 2, we have $R_{i-2}\prec R_i$, but because $l(R_{i-1})<l(R_{i-2})$ and $l(R_i)>l(R_{i-1})$, we have $R_i\precnsim R_{i-2}$ by Proposition \ref{prop:strict3}, a contradiction.\\

\noindent\textbf{Case 2b: We have $l(R_{i-1})>\cdots>l(R_1)$, $R_t\prec R_n$ for $2\leq t\leq i-1$, and $l(R_i)<l(R_{i-1})$. }

Because $l(R_i)<l(R_{i-1})$ and $l(R_n)>l(R_i)$, by Proposition \ref{prop:strict3}, Part 2 we have $R_n\precnsim R_{i-1}$, contradicting our hypothesis.\\

\noindent\textbf{Case 2c: We have $n=4$, $l(R_2)>l(R_1)$, $l(R_3)<l(R_2)$, and $R_4\precnsim R_2$. }

By Proposition \ref{prop:prec}, Proposition \ref{prop:mrirj}, and Proposition \ref{prop:strict3}, Part 1, we must have $r(R_1)+1=l(R_2)\leq l(R_4)\leq r(R_1)+1$, so $l(R_2)=l(R_4)$, and we must have $l(R_4)-1=r(R_3)\geq r(R_1)=l(R_2)-1=l(R_4)-1$, so $r(R_3)=r(R_1)$. In particular, we have $M_{1,n}=0$. By Proposition \ref{prop:strict3}, Parts 2 and 3, we must have $R_4\precnsim R_2$ and $R_1\precnsim R_3$ and the second possibility holds. \\

\noindent\textbf{Case 3: We have $R_i\precnsim R_i$ for some minimal $3\leq i\leq n-2$. }

If $l(R_{i-1})<l(R_{i-2})$ and $l(R_i)>l(R_{i-1})$, then by Proposition \ref{prop:mncp}, Part 2, we have $R_{i-2}\prec R_i$, but by Proposition \ref{prop:strict3}, Part 2 we have $R_i\precnsim R_{i-2}$, a contradiction. Therefore, by Lemma \ref{lem:mncp1}, we must have $i=j=3$, $l(R_2)>l(R_1)$, $R_n\precnsim R_2$, and $l(R_3)<l(R_2)$. Similarly, by rotating, we must have $i=j=n-2$, so $n=5$, $l(R_5)>l(R_4)$, $R_1\precnsim R_4$, and $l(R_4)<l(R_3)$. However, by Proposition \ref{prop:mncp}, Parts 1 and 2, we have $R_2\prec R_4$ and $R_4\prec R_2$, which is impossible because $n\geq 5$. 
\end{proof}

We now describe another operation that we can perform on a horizontal-strip while preserving similarity. We can think of it as a \emph{local rotation}.

\begin{lemma} \label{lem:localrotation} Let $\bm\lambda=(R_1,\ldots,R_n)$ and suppose that $l(R_i)=r(R_{i-1})+1$ for some $2\leq i\leq n$. Assume that $\bm\lambda$ satisfies the inductive hypothesis \eqref{eq:IH} in Lemma \ref{lem:key}. Let $I=\{1,\ldots,i-2,i+1,\ldots,n\}$ and define the four disjoint subsets of $I$
\begin{align*}
A&=\{t\in I: \ M_{i-1,t}=M_{i,t}=0\},\\
B&=\{t\in I: \ R_{i-1}\prec R_t, \ R_i\prec R_t\},\\
C_{i-1}&=\{t\in I: \ R_t\prec R_{i-1}, \ M_{i,t}=0, \ M_{a,t}=0\text{ for all }a\in A, \ R_t\prec R_b\text{ for all }b\in B\},\\
C_i&=\{t\in I: \ R_t\prec R_i, \ M_{i-1,t}=0, \ M_{a,t}=0\text{ for all }a\in A, \ R_t\prec R_b\text{ for all }b\in B\}.
\end{align*}
Let $C=C_{i-1}\cup C_i\cup\{i-1,i\}$ and suppose that $A\cup B\cup C=\{1,\ldots,n\}$, in other words every row of $\bm\lambda$ falls into one of these categories. Then there is a horizontal-strip $\bm\mu=(S_1,\ldots,S_n)\in\mathcal S(\bm\lambda)$ and $\varphi:\Pi(\bm\lambda)\xrightarrow\sim\Pi(\bm\mu)$ with $l(S_{\varphi_t})=l(R_t)$ for all $t\in A\cup B$ and $l(S_{\varphi_i})=l(R_{i-1})$. 
\end{lemma}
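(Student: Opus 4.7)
The plan is to construct $\bm\mu$ explicitly as a ``local reflection'' of $\bm\lambda$ through the midpoint of the window $[l(R_{i-1}), r(R_i)]$ spanned by the two touching rows, verify $\Pi(\bm\lambda) \cong \Pi(\bm\mu)$ directly, and then prove $G_{\bm\lambda}(\bm x;q) = G_{\bm\mu}(\bm x;q)$ by applying Lemma \ref{lem:inductiverelation} to both strips at this touching pair and invoking the inductive hypothesis \eqref{eq:IH}.

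Write $L_0 = l(R_{i-1})$ and $R_0 = r(R_i) = L_0 + |R_{i-1}| + |R_i| - 1$. For any row $R = a/b$ with $L_0 \leq b$ and $a - 1 \leq R_0$, define $\mathrm{rot}(R) = (L_0 + R_0 - b + 1)/(L_0 + R_0 - a + 1)$, the reflection of $R$ through the midpoint of the window. Then $|\mathrm{rot}(R)| = |R|$, $\mathrm{rot}(\mathrm{rot}(R)) = R$, and in particular $l(\mathrm{rot}(R_i)) = L_0 = l(R_{i-1})$. Construct $\bm\mu$ by setting $S_t = R_t$ for $t \in A \cup B$, fixing a bijection $\sigma : C \to C$ that swaps $i-1 \leftrightarrow i$ and sends $C_{i-1}$ bijectively to $C_i$ in an index-respecting way, and setting $S_{\sigma(t)} = \mathrm{rot}(R_t)$ for $t \in C$. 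Let $\varphi$ extend $\sigma$ by the identity on $A \cup B$, so $\varphi_i = i-1$ and $l(S_{\varphi_i}) = l(\mathrm{rot}(R_i)) = l(R_{i-1})$, and $l(S_{\varphi_t}) = l(R_t)$ for $t \in A \cup B$, as required. To verify $\Pi(\bm\lambda) \cong \Pi(\bm\mu)$ under $\varphi$, check each type of edge: vertex weights agree since $\mathrm{rot}$ preserves length; edges within $A \cup B$ are unchanged; $A$--$C$ edges vanish in both since $R_a \in A$ is well-separated from the whole window; $B$--$C$ edges all have weight $|R_c| = |\mathrm{rot}(R_c)|$ since $R_b$ contains the window (up to a possible shift) and thus both $R_c \prec R_b$ and $\mathrm{rot}(R_c) \prec R_b$; and $C$--$C$ edges are preserved because $\mathrm{rot}$ is an isometry of the window and $\sigma$ is chosen so the reversal of left-endpoint order tracks how $M_{i,j}$ depends on index order through Proposition \ref{prop:mrirj}.

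To establish $G_{\bm\lambda} = G_{\bm\mu}$, note that the adjacent pair $(R_{i-1}, R_i)$ satisfies $l(R_i) = r(R_{i-1}) + 1 > l(R_{i-1})$ and $R_{i-1} \nleftrightarrow R_i$, so Lemma \ref{lem:inductiverelation} gives
\[ G_{\bm\lambda}(\bm x;q) = \tfrac{1}{q} G_{\bm\lambda'}(\bm x;q) + \tfrac{q-1}{q} G_{\bm\lambda''}(\bm x;q), \]
where $\bm\lambda'$ swaps $R_{i-1}, R_i$ and $\bm\lambda''$ replaces them with $R_{i-1} \cup R_i$ (the single row filling $[L_0, R_0]$) and the empty intersection. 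By the symmetric geometry, the pair $(S_{i-1}, S_i) = (\mathrm{rot}(R_i), \mathrm{rot}(R_{i-1}))$ in $\bm\mu$ satisfies the same hypotheses, and yields an analogous decomposition $G_{\bm\mu} = \tfrac{1}{q} G_{\bm\mu'} + \tfrac{q-1}{q} G_{\bm\mu''}$. Since $\mathrm{rot}(R_i) \cup \mathrm{rot}(R_{i-1}) = [L_0, R_0] = R_{i-1} \cup R_i$, both $\bm\lambda''$ and $\bm\mu''$ have the same row at position $i-1$ and the same empty row at position $i$, with the remaining $C$-rows still related by reflection; the same edge-weight arguments as in the isomorphism check give $\Pi(\bm\lambda'') \cong \Pi(\bm\mu'')$, and an analogous check handles $\Pi(\bm\lambda') \cong \Pi(\bm\mu')$. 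By \eqref{eq:nMinduction} we have $n(\bm\lambda'') < n(\bm\lambda)$ and $n(\bm\lambda') = n(\bm\lambda)$ with $M(\bm\lambda') = M(\bm\lambda) + 1$, so the inductive hypothesis \eqref{eq:IH} yields $G_{\bm\lambda''} = G_{\bm\mu''}$ and $G_{\bm\lambda'} = G_{\bm\mu'}$, from which $G_{\bm\lambda} = G_{\bm\mu}$ follows.

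The main obstacle is the careful edge-weight verification for $B$--$C$ pairs when $R_b \in B$ is a shifted superset of the window rather than a strict containing row. In those cases $R_c \prec R_b$ does not immediately imply $R_c \subseteq R_b$, and the relationship may flip between $R_c \subseteq R_b^+$ and $R_c \subseteq R_b^-$ as we pass from $\bm\lambda$ to $\bm\mu$; one must use Proposition \ref{prop:prec} together with the explicit formulas of Proposition \ref{prop:mrirj} to show that $\mathrm{rot}(R_c) \prec R_b$ still holds with the same edge weight $|R_c|$, and that the permutation $\sigma$ consistently tracks the reversal of index order so that $M_{t,t'}(\bm\lambda) = M_{\varphi_t, \varphi_{t'}}(\bm\mu)$ for all $t, t' \in C$.
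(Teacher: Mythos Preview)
Your overall strategy---reflect the window $[L_0,R_0]=[l(R_{i-1}),r(R_i)]$ and then apply Lemma~\ref{lem:inductiverelation} together with the inductive hypothesis \eqref{eq:IH}---is exactly the paper's approach, and your argument for $G_{\bm\lambda}=G_{\bm\mu}$ in the last step is essentially the paper's. However, the construction of $\bm\mu$ as written has a genuine gap.

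You posit a bijection $\sigma:C\to C$ that ``sends $C_{i-1}$ bijectively to $C_i$,'' but in general $|C_{i-1}|\neq|C_i|$, so no such $\sigma$ exists. What is actually needed for $C$--$C$ edges is that $\sigma$ reverse the index order on $C$, so that the identity $M(R_t,R_{t'})=M(N-R_{t'},N-R_t)$ applies. But even an order-reversing $\sigma$ on $C$ alone is not enough: elements of $A\cup B$ may have indices interspersed among those of $C$, and the $B$--$C$ edge verification (which you flag as the main obstacle) ultimately requires $\chi(b>c)=\chi(b>\sigma(c))$ for every $b\in B$ and $c\in C$. Moreover, without further preparation the rows $R_c$ for $c\in C$ need not lie inside $[L_0,R_0]$: for instance a $C_{i-1}$-row with index $c>i$ may satisfy only $R_c\subseteq R_{i-1}^-$ by Proposition~\ref{prop:prec}, so $l(R_c)=L_0-1$ is possible, and then $\mathrm{rot}(R_c)$ escapes the window entirely.

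The paper resolves all of these issues with a preliminary step you omit: it first uses commuting and cycling to replace $\bm\lambda$ by a similar horizontal-strip in which $C=\{x,\ldots,y\}$ is a contiguous block of indices, with $C_{i-1}=\{x,\ldots,i-2\}$ below and $C_i=\{i+1,\ldots,y\}$ above. This rearrangement itself requires a short argument (using the defining conditions of $C_i$) that each $C_i$-row commutes with every $A\cup B\cup C_{i-1}$-row lying between it and $R_i$. Once $C$ is contiguous, every $b\in A\cup B$ lies either below all of $C$ or above all of $C$, so $\chi(b>c)$ is constant over $c\in C$; every $R_c$ genuinely lies inside $[L_0,R_0]$; and the simple order-reversing map $\varphi_t=x+y-t$ on the block handles the $C$--$C$ edges. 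After this rearrangement your edge-weight checks go through cleanly.
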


\begin{example}
Figure \ref{fig:localrotation} illustrates a situation where we can apply Lemma \ref{lem:localrotation}. The rows below $\bm\lambda$ with crosses signify rows that cannot be present because the condition $A\cup B\cup C=\{1,\ldots,n\}$ requires that every row of $\bm\lambda$ be either disjoint from $R_{i-1}$ and $R_i$, contained in $R_{i-1}$, contained in $R_i$, or containing both. Lemma \ref{lem:localrotation} allows us to locally rotate the six rows of $C$ to produce the similar horizontal-strip $\bm\mu$. Although our proof constructs this specific $\bm\mu$, we will only need that $l(S_{\varphi_t})=l(R_t)$ for all $t\in A\cup B$ and $l(S_{\varphi_i})=l(R_{i-1})$. 
\end{example}

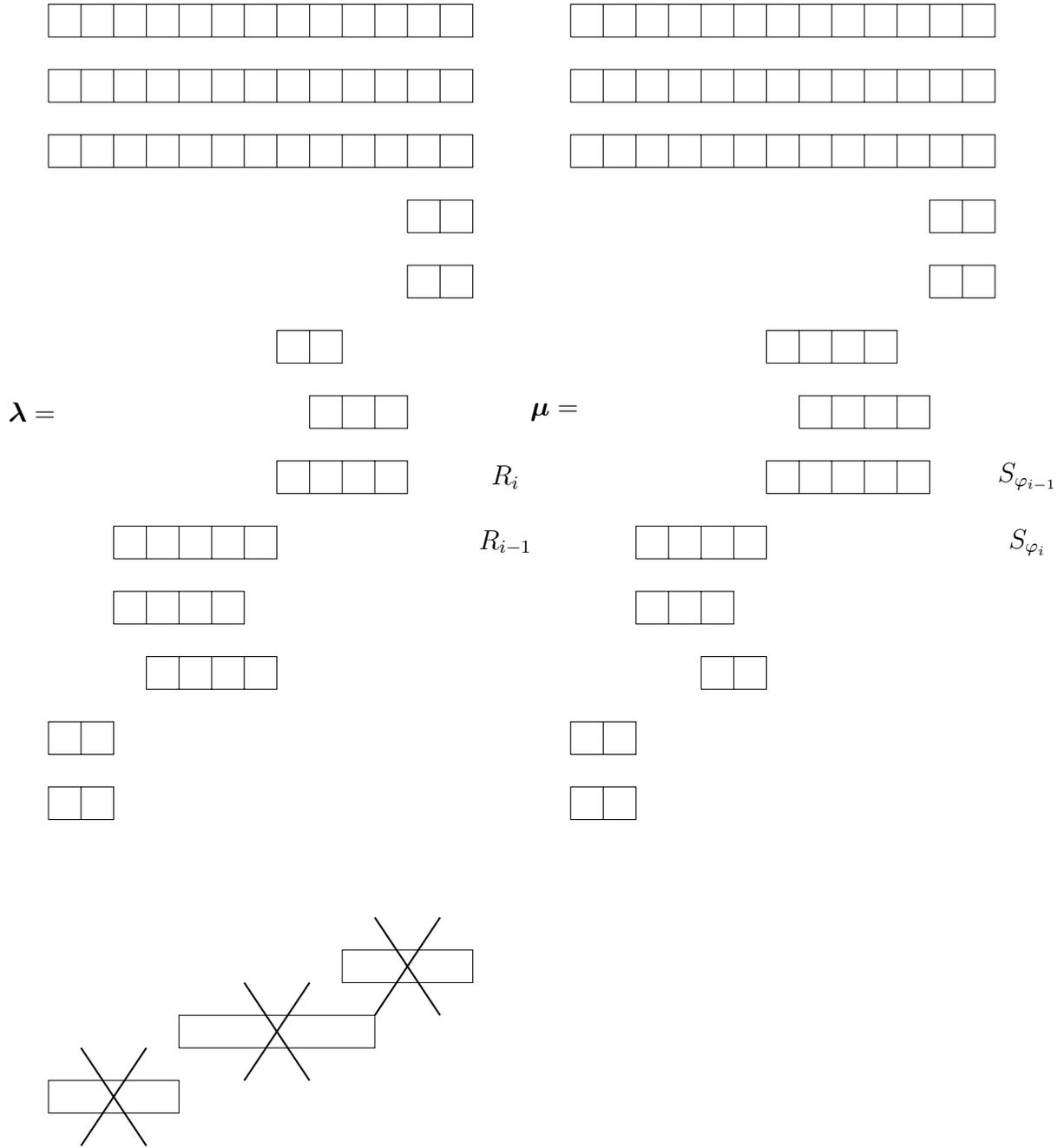
\begin{figure}\caption{A horizontal-strip $\bm\lambda$ and a local rotation $\bm\mu$}\label{fig:localrotation}
$$
\begin{tikzpicture}
\draw (-0.25,-1.25) node (0) {$\bm \lambda=$};
\draw (7,-3.25) node (1) {$R_{i-1}$} (7,-2.25) node (2) {$R_i$};
\draw (0,2.5) -- (6.5,2.5) -- (6.5,3) -- (0,3) -- (0,2.5) (0.5,2.5) -- (0.5,3) (1,2.5) -- (1,3) (1.5,2.5) -- (1.5,3) (2,2.5) -- (2,3) (2.5,2.5) -- (2.5,3) (3,2.5) -- (3,3) (3.5,2.5) -- (3.5,3) (4,2.5) -- (4,3) (4.5,2.5) -- (4.5,3) (5,2.5) -- (5,3) (5.5,2.5) -- (5.5,3) (6,2.5) -- (6,3) (0,3.5) -- (6.5,3.5) -- (6.5,4) -- (0,4) -- (0,3.5) (0.5,3.5) -- (0.5,4) (1,3.5) -- (1,4) (1.5,3.5) -- (1.5,4) (2,3.5) -- (2,4) (2.5,3.5) -- (2.5,4) (3,3.5) -- (3,4) (3.5,3.5) -- (3.5,4) (4,3.5) -- (4,4) (4.5,3.5) -- (4.5,4) (5,3.5) -- (5,4) (5.5,3.5) -- (5.5,4) (6,3.5) -- (6,4) (0,4.5) -- (6.5,4.5) -- (6.5,5) -- (0,5) -- (0,4.5) (0.5,4.5) -- (0.5,5) (1,4.5) -- (1,5) (1.5,4.5) -- (1.5,5) (2,4.5) -- (2,5) (2.5,4.5) -- (2.5,5) (3,4.5) -- (3,5) (3.5,4.5) -- (3.5,5) (4,4.5) -- (4,5) (4.5,4.5) -- (4.5,5) (5,4.5) -- (5,5) (5.5,4.5) -- (5.5,5) (6,4.5) -- (6,5) (5.5,1.5) -- (6.5,1.5) -- (6.5,2) -- (5.5,2) -- (5.5,1.5) (6,1.5) -- (6,2) (5.5,0.5) -- (6.5,0.5) -- (6.5,1) -- (5.5,1) -- (5.5,0.5) (6,0.5) -- (6,1) (0,-6.5) -- (1,-6.5) -- (1,-6) -- (0,-6) -- (0,-6.5) (0.5,-6.5) -- (0.5,-6) (0,-7.5) -- (1,-7.5) -- (1,-7) -- (0,-7) -- (0,-7.5) (0.5,-7.5) -- (0.5,-7);
\draw (3.5,-0.5) -- (4.5,-0.5) -- (4.5,0) -- (3.5,0) -- (3.5,-0.5) (4,-0.5) -- (4,0) (4,-1.5) -- (5.5,-1.5) -- (5.5,-1) -- (4,-1) -- (4,-1.5) (4.5,-1.5) -- (4.5,-1) (5,-1.5) -- (5,-1) (3.5,-2.5) -- (5.5,-2.5) -- (5.5,-2) -- (3.5,-2) -- (3.5,-2.5) (4,-2.5) -- (4,-2) (4.5,-2.5) -- (4.5,-2) (5,-2.5) -- (5,-2) (1,-3.5) -- (3.5,-3.5) -- (3.5,-3) -- (1,-3) -- (1,-3.5) (1.5,-3.5) -- (1.5,-3) (2,-3.5) -- (2,-3) (2.5,-3.5) -- (2.5,-3) (3,-3.5) -- (3,-3) (1,-4.5) -- (3,-4.5) -- (3,-4) -- (1,-4) -- (1,-4.5) (1.5,-4.5) -- (1.5,-4) (2,-4.5) -- (2,-4) (2.5,-4.5) -- (2.5,-4) (1.5,-5.5) -- (3.5,-5.5) -- (3.5,-5) -- (1.5,-5) -- (1.5,-5.5) (2,-5.5) -- (2,-5) (2.5,-5.5) -- (2.5,-5) (3,-5.5) -- (3,-5);
\draw (0,-12) -- (2,-12) -- (2,-11.5) -- (0,-11.5) -- (0,-12) (4.5,-10) -- (6.5,-10) -- (6.5,-9.5) -- (4.5,-9.5) -- (4.5,-10) (2,-11) -- (5,-11) -- (5,-10.5) -- (2,-10.5) -- (2,-11);
\draw [thick] (0.5,-12.5) -- (1.5,-11) (0.5,-11) -- (1.5,-12.5) (5,-10.5) -- (6,-9) (5,-9) -- (6,-10.5) (3,-11.5) -- (4,-10) (3,-10) -- (4,-11.5);
\draw (7.75,-1.25) node (0) {$\bm \mu=$};
\draw (15,-3.25) node (1) {$S_{\varphi_i}$} (15,-2.25) node (2) {$S_{\varphi_{i-1}}$};
\draw (8,2.5) -- (14.5,2.5) -- (14.5,3) -- (8,3) -- (8,2.5) (8.5,2.5) -- (8.5,3) (9,2.5) -- (9,3) (9.5,2.5) -- (9.5,3) (10,2.5) -- (10,3) (10.5,2.5) -- (10.5,3) (11,2.5) -- (11,3) (11.5,2.5) -- (11.5,3) (12,2.5) -- (12,3) (12.5,2.5) -- (12.5,3) (13,2.5) -- (13,3) (13.5,2.5) -- (13.5,3) (14,2.5) -- (14,3) (8,3.5) -- (14.5,3.5) -- (14.5,4) -- (8,4) -- (8,3.5) (8.5,3.5) -- (8.5,4) (9,3.5) -- (9,4) (9.5,3.5) -- (9.5,4) (10,3.5) -- (10,4) (10.5,3.5) -- (10.5,4) (11,3.5) -- (11,4) (11.5,3.5) -- (11.5,4) (12,3.5) -- (12,4) (12.5,3.5) -- (12.5,4) (13,3.5) -- (13,4) (13.5,3.5) -- (13.5,4) (14,3.5) -- (14,4) (8,4.5) -- (14.5,4.5) -- (14.5,5) -- (8,5) -- (8,4.5) (8.5,4.5) -- (8.5,5) (9,4.5) -- (9,5) (9.5,4.5) -- (9.5,5) (10,4.5) -- (10,5) (10.5,4.5) -- (10.5,5) (11,4.5) -- (11,5) (11.5,4.5) -- (11.5,5) (12,4.5) -- (12,5) (12.5,4.5) -- (12.5,5) (13,4.5) -- (13,5) (13.5,4.5) -- (13.5,5) (14,4.5) -- (14,5) (13.5,1.5) -- (14.5,1.5) -- (14.5,2) -- (13.5,2) -- (13.5,1.5) (14,1.5) -- (14,2) (13.5,0.5) -- (14.5,0.5) -- (14.5,1) -- (13.5,1) -- (13.5,0.5) (14,0.5) -- (14,1) (8,-6.5) -- (9,-6.5) -- (9,-6) -- (8,-6) -- (8,-6.5) (8.5,-6.5) -- (8.5,-6) (8,-7.5) -- (9,-7.5) -- (9,-7) -- (8,-7) -- (8,-7.5) (8.5,-7.5) -- (8.5,-7);
\draw (11,-0.5) -- (13,-0.5) -- (13,0) -- (11,0) -- (11,-0.5) (11.5,-0.5) -- (11.5,0) (12,-0.5) -- (12,0) (12.5,-0.5) -- (12.5,0) (11.5,-1.5) -- (13.5,-1.5) -- (13.5,-1) -- (11.5,-1) -- (11.5,-1.5) (12,-1.5) -- (12,-1) (12.5,-1.5) -- (12.5,-1) (13,-1.5) -- (13,-1) (11,-2.5) -- (13.5,-2.5) -- (13.5,-2) -- (11,-2) -- (11,-2.5) (11.5,-2.5) -- (11.5,-2) (12,-2.5) -- (12,-2) (12.5,-2.5) -- (12.5,-2) (13,-2.5) -- (13,-2) (9,-3.5) -- (11,-3.5) -- (11,-3) -- (9,-3) -- (9,-3.5) (9.5,-3.5) -- (9.5,-3) (10,-3.5) -- (10,-3) (10.5,-3.5) -- (10.5,-3) (9,-4.5) -- (10.5,-4.5) -- (10.5,-4) -- (9,-4) -- (9,-4.5) (9.5,-4.5) -- (9.5,-4) (10,-4.5) -- (10,-4) (10,-5.5) -- (11,-5.5) -- (11,-5) -- (10,-5) -- (10,-5.5) (10.5,-5.5) -- (10.5,-5);
\end{tikzpicture}$$
\end{figure}

\begin{proof}[Proof of Lemma \ref{lem:localrotation}. ]
Informally, we will first use commuting and cycling to bring the rows of $C$ together. To be specific, we first claim that if $t\in C_i$ and $t'\notin C_i$ with either $t>t'>i$, then $R_t\leftrightarrow R_{t'}$. Because $t\in C_i$, we have $R_t\prec R_i$ and $M_{i-1,t}=0$, so by Proposition \ref{prop:prec} we have $l(R_i)\leq l(R_t)\leq r(R_t)\leq r(R_i)$. Now if $t'\in A$, then $M_{i-1,t'}=M_{i,t}=M_{t,t'}=0$, so either $r(R_{t'})<l(R_{i-1})-1<l(R_t)-1$ and $R_t\leftrightarrow R_{t'}$ by Proposition \ref{prop:mrirj}, Parts 2 and 3, or $l(R_{t'})>r(R_i)\geq r(R_t)\geq l(R_t)$, but now we cannot have $R_t\nleftrightarrow R_{t'}$ by Corollary \ref{cor:mrirj}, Part 3. If $t'\in B$, then $R_{t'}\prec R_{i-1}$, so by Proposition \ref{prop:prec} we have $l(R_{t'})\leq l(R_{i-1})-1<l(R_t)$, but now we cannot have $R_t\nleftrightarrow R_{t'}$ by Corollary \ref{cor:mrirj}, Part 3. If $t'\in C_{i-1}$, then because $R_{t'}\prec R_{i-1}$ and $M_{i,t'}=0$, we have $r(R_{t'})<l(R_i)-1\leq l(R_t)-1$, so $R_t\leftrightarrow R_{t'}$ by Proposition \ref{prop:mrirj}, Part 1. This establishes our claim. \\

Therefore, by cycling and commuting, we may assume that $C_i=\{i+1,\ldots,y\}$ for some $y$ and similarly, by considering a rotation of $\bm\lambda$, we may assume that $C_{i-1}=\{x,\ldots,i-2\}$ for some $x$. In particular, we have $l(R_{i-1})\leq l(R_t)\leq r(R_t)\leq r(R_{i-1})$ for every $t\in C_{i-1}$ and $l(R_i)\leq l(R_t)\leq r(R_t)\leq r(R_i)$ for every $t\in C_i$. To summarize, we have \begin{equation}\bm\lambda=(R_1,\ldots,R_{x-1},R_x,\ldots,R_{i-2},R_{i-1},R_i,R_{i+1},\ldots,R_y,R_{y+1},\ldots,R_n),\end{equation}
where $A\cup B=\{1,\ldots,x-1\}\cup\{y+1,\ldots,n\}$, $C_{i-1}=\{x,\ldots,i-2\}$, and $C_i=\{i+1,\ldots,y\}$.\\

Now let $N=l(R_{i-1})+r(R_i)$ and define the horizontal-strip $\bm\mu=(S_1,\ldots,S_n)$ by $S_t=R_t$ if $t<x$ or $t>y$, and $S_t=N-R_{x+y-t}$ otherwise, that is 
\begin{equation}\bm\mu=(R_1,\ldots,R_{x-1},N-R_y,\ldots,N-R_i,N-R_{i-1},\ldots,N-R_x,R_{y+1},\ldots,R_n),\end{equation}
and define $\varphi:\{1,\ldots,n\}\to\{1,\ldots,n\}$ by $\varphi_t=t$ if $t<x$ or $t>y$ and $\varphi_t=x+y-t$ otherwise. Indeed, we have $l(S_{\varphi_t})=l(R_t)$ for all $t\in A\cup B$ and $l(S_{\varphi_i})=\ell(N-R_i)=l(R_{i-1})+r(R_i)-r(R_i)=l(R_{i-1})$. We claim that $\varphi:\Pi(\bm\lambda)\xrightarrow\sim\Pi(\bm\mu)$. We have $|R_t|=|S_{\varphi_t}|$ for every $1\leq t\leq n$, so it remains to check that the edge weights are preserved. If $t,t'\in A\cup B$, then the relative positions of $R_t$ and $R_{t'}$ have not changed, so indeed $M_{t,t'}(\bm\lambda)=M_{\varphi_t,\varphi_{t'}}(\bm\mu)$. If $t,t'\in C$, then this follows because $M(R_t,R_{t'})=M(N-R_{t'},N-R_t)$. Now suppose that $t\in A\cup B$ and $t'\in C$. We have either $t'<i-1$ and $R_{t'}\prec R_{i-1}$ or $t'>i$ and $R_{t'}\prec R_i$, so in either case we have \begin{equation}l(R_{i-1})\leq l(R_{t'})\leq r(R_{t'})\leq r(R_i), \text{ so }l(R_{i-1})\leq\ell(N-R_{t'})\leq r(N-R_{t'})\leq r(R_i).\end{equation}
Also note that $\chi(t>i)=\chi(t>i-1)=\chi(t>t')=\chi(t>x+y-t')$. Now if $t\in A$, we have either \begin{align}l(R_t)&>r(R_i)+\chi(i>t)\geq r(N-R_{t'})+\chi(i>x+y-t')\text{ or }\\r(R_t)&<l(R_{i-1})-\chi(t>i)\leq\ell(N-R_{t'})-\chi(t>x+y-t'),\end{align}
so in either case, we have $M_{\varphi_t,\varphi_{t'}}(\bm\mu)=0$. Similarly, if $t\in B$, we have 
\begin{align}
l(R_t)&\leq l(R_{i-1})+\chi(i-1>t)\leq\ell(N-R_{t'})+\chi(x+y-t'>t)\text{ and }\\
r(R_t)&\geq r(R_i)-\chi(t>i)\geq r(N-R_{t'})-\chi(t>x+y-t'),
\end{align}
so we have $N-R_{t'}\prec R_t$ and  $M_{\varphi_t,\varphi_{t'}}(\bm\mu)=|S_{\varphi_{t'}}|$.\\

Finally, we show that $G_{\bm\lambda}(\bm x;q)=G_{\bm\mu}(\bm x;q)$. Define the horizontal-strips
\begin{align}
\bm\lambda'&=(R_1,\ldots,R_i,R_{i-1},\ldots,R_n),\\
\bm\lambda''&=(R_1,\ldots,R_{i-1}\cup R_i,R_{i-1}\cap R_i,\ldots,R_n),\\
\bm\mu'&=(R_1,\ldots,N-R_y,\ldots,N-R_{i-1},N-R_i,\ldots,N-R_x,\ldots,R_n),\text{ and }\\
\bm\mu''&=(R_1,\ldots,(N-R_i)\cup(N-R_{i-1}),(N-R_i)\cap(N-R_{i-1}),\ldots,R_n).
\end{align}
Because Lemma \ref{lem:newgraphs} describes exactly how to derive the weighted graphs $\Pi(\bm\lambda')$ and $\Pi(\bm\lambda'')$ from $\Pi(\bm\lambda)$, we have that $\Pi(\bm\lambda')\cong\Pi(\bm\mu')$ and $\Pi(\bm\lambda'')\cong\Pi(\bm\mu'')$. We also have $n(\bm\lambda'')<n(\bm\lambda)$, $n(\bm\lambda')=n(\bm\lambda)$, and $M(\bm\lambda')>M(\bm\lambda)$, so because $\bm\lambda$ satisfies \eqref{eq:IH} by hypothesis, we have that $G_{\bm\lambda'}(\bm x;q)=G_{\bm\mu'}(\bm x;q)$ and $G_{\bm\lambda''}(\bm x;q)=G_{\bm\mu''}(\bm x;q)$. Finally, by Lemma \ref{lem:inductiverelation}, we have \begin{equation}G_{\bm\lambda}(\bm x;q)=\frac 1qG_{\bm\lambda'}(\bm x;q)+\frac{q-1}qG_{\bm\lambda''}(\bm x;q)=\frac 1qG_{\bm\mu'}(\bm x;q)+\frac{q-1}qG_{\bm\mu''}(\bm x;q)=G_{\bm\mu}(\bm x;q).\end{equation}
This completes the proof.
\end{proof}

The hypothesis of Lemma \ref{lem:localrotation} that $A\cup B\cup C=\{1,\ldots,n\}$ is a little technical so it will be convenient to rephrase it as follows.

\begin{proposition}\label{prop:localrotationhypothesis}
Let $\bm\lambda=(R_1,\ldots,R_n)$ be a horizontal-strip with $l(R_i)=r(R_{i-1})+1$ and define the sets $A$, $B$, $C$, and $I$ as in Lemma \ref{lem:localrotation}. If the following hold for every $t\in I$, then we have $A\cup B\cup C=\{1,\ldots,n\}$.\begin{enumerate}
\item If $M_{i-1,t}>0$ and $M_{i,t}>0$, then $R_{i-1}\prec R_t$ and $R_i\prec R_t$.
\item If $M_{i-1,t}>0$ and $M_{i,t}=0$, then $R_t\prec R_{i-1}$. 
\item If $M_{i-1,t}=0$ and $M_{i,t}>0$, then $R_t\prec R_i$. 
\item If $R_t\prec R_{i-1}$, then $M_{i,t}=0$, $M_{a,t}=0$ for all $a\in A$, and $R_t\prec R_b$ for all $b\in B$.
\item If $R_t\prec R_i$, then $M_{i-1,t}=0$, $M_{a,t}=0$ for all $a\in A$, and $R_t\prec R_b$ for all $b\in B$.
\end{enumerate}
\end{proposition}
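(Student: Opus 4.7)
The plan is a direct case analysis on the values of $M_{i-1,t}$ and $M_{i,t}$ for each $t \in I$. Since the four sets $A$, $B$, $C_{i-1}$, $C_i$ are defined by conditions that include specifying these two edge weights (either equal to zero, or positive via a $\prec$ relation), every $t \in I$ will fall into exactly one of four mutually exclusive cases, and we can read off the desired membership from the hypotheses.

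First I would fix $t \in I$ and split into four cases. If $M_{i-1,t} = 0$ and $M_{i,t} = 0$, then $t \in A$ immediately by definition. If $M_{i-1,t} > 0$ and $M_{i,t} > 0$, then hypothesis (1) gives $R_{i-1} \prec R_t$ and $R_i \prec R_t$, so $t \in B$ by definition.

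Next I would handle the two mixed cases, which are symmetric. If $M_{i-1,t} > 0$ and $M_{i,t} = 0$, hypothesis (2) supplies $R_t \prec R_{i-1}$, and then hypothesis (4) supplies the remaining conditions needed to place $t$ in $C_{i-1}$, namely that $M_{a,t} = 0$ for every $a \in A$ and $R_t \prec R_b$ for every $b \in B$ (the condition $M_{i,t} = 0$ is already part of our case). Dually, if $M_{i-1,t} = 0$ and $M_{i,t} > 0$, hypothesis (3) gives $R_t \prec R_i$ and hypothesis (5) supplies the remaining conditions, so $t \in C_i$.

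Since these four cases exhaust all possibilities for $t \in I$, we conclude $I \subseteq A \cup B \cup C_{i-1} \cup C_i$, and combined with the automatic membership $\{i-1, i\} \subseteq C$, this gives $A \cup B \cup C = \{1,\ldots,n\}$. There is no real obstacle here since the conclusion follows mechanically from the case analysis; the nontrivial work was already done in formulating hypotheses (1)--(5) precisely so that each case produces exactly the conditions in the corresponding set's definition.
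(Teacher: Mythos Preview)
Your proposal is correct and takes essentially the same approach as the paper: a four-way case split on whether $M_{i-1,t}$ and $M_{i,t}$ are zero or positive, invoking hypotheses (1)--(5) in exactly the same pattern to place $t$ in $A$, $B$, $C_{i-1}$, or $C_i$ respectively.
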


\begin{proof}
Let $t\in I$. We need to show that $t\in A\cup B\cup C_{i-1}\cup C_i$. The integers $M_{i-1,t}$ and $M_{i,t}$ are either zero or nonzero. If $M_{i-1,t}=M_{i,t}=0$, then $t\in A$. If $M_{i-1,t}>0$ and $M_{i,t}>0$, then by (1) we have $t\in B$. If $M_{i-1,t}>0$ and $M_{i,t}=0$, then by (2) and (4) we have $t\in C_{i-1}$. If $M_{i-1,t}=0$ and $M_{i,t}>0$, then by (3) and (5) we have $t\in C_i$.
\end{proof}

The next Lemma is very technical but it is the key idea that uses local rotation to extend Corollary \ref{cor:adjacentstrictdone} to strict sequences.

\begin{lemma} \label{lem:strictsequencedone}
Let $\bm\lambda=(R_1,\ldots,R_n)$ be a horizontal-strip with a sequence $(R_{j_1},\ldots,R_{j_k})$ with $k\geq 2$, $j_1<\cdots<j_k$, $l(R_{j_{t+1}})=r(R_{j_t})+1$ for $1\leq t\leq k-1$, and suppose that there is no noncommuting path in $\bm\lambda$ from $R_{j_t}$ to $R_{j_{t+1}}$ for any $1\leq t\leq k-1$. Assume that $\bm\lambda$ satisfies \eqref{eq:IH}. Let $\bm\mu=(S_1,\ldots,S_n)$ and  $\varphi:\Pi(\bm\lambda)\xrightarrow\sim\Pi(\bm\mu)$ be such that \begin{equation}\label{eq:strictsequencehypothesis}\text{ for some permutation }\sigma:\{1,\ldots,k\}\to\{1,\ldots,k\}\text{ we have }l(S_{\varphi_{j_{\sigma_{t+1}}}})=r(S_{\varphi_{j_{\sigma_t}}})+1\end{equation}
for every $1\leq t\leq k-1$. Then there exists a good substitute for $(\bm\lambda,\bm\mu)$. 
\end{lemma}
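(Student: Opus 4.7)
The plan is to reduce to Corollary \ref{cor:adjacentnoncommutingdone} by using Lemma \ref{lem:localrotation} to reorder the horizontal positions of the sequence rows in $\bm\lambda$ so that they appear in the same order as in $\bm\mu$. I would first verify that consecutive sequence rows do not commute in either horizontal-strip: from $l(R_{j_{t+1}})=r(R_{j_t})+1>l(R_{j_t})$, direct computation via Proposition \ref{prop:mrirj} gives $M(R_{j_t},R_{j_{t+1}})=0$ while $M(R_{j_{t+1}},R_{j_t})=|R_{j_{t+1}}\cap R_{j_t}^+|=1$, so $R_{j_t}\nleftrightarrow R_{j_{t+1}}$; the analogous calculation using the hypothesis on $\bm\mu$ gives $S_{\varphi_{j_{\sigma_t}}}\nleftrightarrow S_{\varphi_{j_{\sigma_{t+1}}}}$.

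The induction will be on the inversion count of $\sigma$. For the base case $\sigma=\mathrm{id}$, the no-noncommuting-path hypothesis together with Lemma \ref{lem:aoncp} lets us replace $\bm\lambda$ by a similar horizontal-strip in which $R_{j_1}$ and $R_{j_2}$ are positionally adjacent; cycling puts them in positions $1,2$. Since $\sigma_1=1,\sigma_2=2$, the corresponding rows $S_{\varphi_{j_1}},S_{\varphi_{j_2}}$ are $\sigma$-consecutive in $\bm\mu$ and hence do not commute there either, and Corollary \ref{cor:adjacentnoncommutingdone} produces the good substitute. For the inductive step, select an adjacent descent $\sigma_s>\sigma_{s+1}$ and apply Lemma \ref{lem:localrotation} to swap the two sequence rows indexed by $\sigma_s$ and $\sigma_{s+1}$, after first using commutations (justified by the no-noncommuting-path hypothesis and Lemma \ref{lem:aoncp}) to place them in positions $i-1,i$. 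A direct check on the explicit positions produced in the proof of Lemma \ref{lem:localrotation} shows that the horizontal-adjacency property $l(\mathrm{next})=r(\mathrm{prev})+1$ of the sequence is preserved by the swap (with the new pair interchanged), while the no-noncommuting-path hypothesis survives because it depends only on the weighted graph, which local rotation preserves. The new effective permutation has one fewer inversion than $\sigma$, completing the inductive step.

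The main obstacle is verifying the hypothesis $A\cup B\cup C=\{1,\ldots,n\}$ of Lemma \ref{lem:localrotation} at each invocation, via the five conditions of Proposition \ref{prop:localrotationhypothesis}. For a row $R_t$ outside the sequence which violates one of them, I would expect the analysis to expose either a strict pair $(R_t,R_{j_r})$, in which case Corollary \ref{cor:adjacentstrictdone} applies directly, or, by feeding $R_t$ into Proposition \ref{prop:strictsequence2} together with the current strict sequence, a shorter strict sequence or a new strict pair on which we may recurse; the inductive hypothesis \eqref{eq:IH} of Lemma \ref{lem:key} is available since each of these reductions decreases $n(\cdot)$ or increases $M(\cdot)$. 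A secondary subtlety is confirming that bringing $R_{j_{\sigma_s}}$ and $R_{j_{\sigma_{s+1}}}$ to positionally adjacent slots does not interfere with the hypotheses at later stages, which follows because commutations and local rotations act on the weighted graph purely by permutation.
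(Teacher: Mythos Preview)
Your inductive step has a genuine gap. You want to apply Lemma~\ref{lem:localrotation} to the pair $R_{j_{\sigma_s}},R_{j_{\sigma_{s+1}}}$ at an adjacent descent $\sigma_s>\sigma_{s+1}$, but Lemma~\ref{lem:localrotation} requires the two rows to be \emph{horizontally} adjacent in $\bm\lambda$, i.e.\ $l(R_i)=r(R_{i-1})+1$. In $\bm\lambda$ the sequence rows satisfy $l(R_{j_{t+1}})=r(R_{j_t})+1$, so $R_{j_{\sigma_s}}$ and $R_{j_{\sigma_{s+1}}}$ are horizontally adjacent there only when $\sigma_s-\sigma_{s+1}=1$; for a general permutation (e.g.\ $\sigma=(2,4,1,3)$) no such descent exists. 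Worse, you cannot use Lemma~\ref{lem:aoncp} to bring these two rows to consecutive indices: whenever $\sigma_s-\sigma_{s+1}\geq 2$, the sequence $(R_{j_{\sigma_{s+1}}},R_{j_{\sigma_{s+1}+1}},\ldots,R_{j_{\sigma_s}})$ is itself a noncommuting path in $\bm\lambda$ between them, since consecutive sequence rows do not commute. A second issue: your assertion that the no-noncommuting-path hypothesis ``depends only on the weighted graph'' is false. Whether $R\leftrightarrow R'$ depends on both $M(R,R')$ and $M(R',R)$, and $\Pi(\bm\lambda)$ records only one of these; after a local rotation the geometry changes and the hypothesis must be re-established from the new positions, not inferred from graph isomorphism.

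The paper avoids both problems by never trying to swap $\sigma$-consecutive rows directly. It runs an outer induction on $k$ and an inner induction on $l(R_y)-r(R_x)$, where $x=j_{t_0}$, $y=j_{t_1}$ and $\{t_0,t_1\}=\{\sigma_1,\sigma_2\}$. Each step applies Lemma~\ref{lem:localrotation} to the pair $R_{j_{t_1-1}},R_{j_{t_1}}$, which \emph{are} horizontally adjacent in $\bm\lambda$, pushing $R_y$ one slot to the left. The verification of the $A\cup B\cup C$ hypothesis does not come from strict-pair or Proposition~\ref{prop:strictsequence2} shortcuts; rather, the outer induction on $k$ is used first to reduce to the situation~\eqref{eq:strictsequencedone3} (every external row has $F_t$ either empty, all of $\{1,\ldots,k\}$, or a singleton with the listed containment properties), after which the five conditions of Proposition~\ref{prop:localrotationhypothesis} follow by direct inspection.
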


\begin{example}
Lemma \ref{lem:strictsequencedone} applies in a situation like the one below. Informally, the condition \eqref{eq:strictsequencehypothesis} asks that these rows in $\bm\lambda$ still link end to end in $\bm\mu$, although they may be permuted. In this example, we have $\sigma_1=2$, $\sigma_2=4$, $\sigma_3=1$, and $\sigma_4=3$.

$$\begin{tikzpicture}
\draw (-0.25,-1.75) node (0) {$\bm \lambda=$};
\draw (7,-3.25) node (4) {$R_{j_1}$} (7,-2.25) node (5) {$R_{j_2}$} (7,-1.25) node (6) {$R_{j_3}$} (7,-0.25) node (7) {$R_{j_4}$};
\draw (0,-3.5) -- (2,-3.5) -- (2,-3) -- (0,-3) -- (0,-3.5) (0.5,-3.5) -- (0.5,-3) (1,-3.5) -- (1,-3) (1.5,-3.5) -- (1.5,-3) (2,-2.5) -- (3.5,-2.5) -- (3.5,-2) -- (2,-2) -- (2,-2.5) (2.5,-2.5) -- (2.5,-2) (3,-2.5) -- (3,-2) (3.5,-1.5) -- (5.5,-1.5) -- (5.5,-1) -- (3.5,-1) -- (3.5,-1.5) (4,-1.5) -- (4,-1) (4.5,-1.5) -- (4.5,-1) (5,-1.5) -- (5,-1) (5.5,-0.5) -- (6.5,-0.5) -- (6.5,0) -- (5.5,0) -- (5.5,-0.5) (6,-0.5) -- (6,0);
\end{tikzpicture} \ 
\begin{tikzpicture}
\draw (-0.25,-1.75) node (0) {$\bm \mu=$};
\draw (7,-3.25) node (4) {$S_{\varphi_{j_2}}$} (7,-2.25) node (5) {$S_{\varphi_{j_4}}$} (7,-1.25) node (6) {$S_{\varphi_{j_1}}$} (7,-0.25) node (7) {$S_{\varphi_{j_3}}$};
\draw (0,-3.5) -- (1.5,-3.5) -- (1.5,-3) -- (0,-3) -- (0,-3.5) (0.5,-3.5) -- (0.5,-3) (1,-3.5) -- (1,-3) (1.5,-2.5) -- (2.5,-2.5) -- (2.5,-2) -- (1.5,-2) -- (1.5,-2.5) (2,-2.5) -- (2,-2) (2.5,-1.5) -- (4.5,-1.5) -- (4.5,-1) -- (2.5,-1) -- (2.5,-1.5) (3,-1.5) -- (3,-1) (3.5,-1.5) -- (3.5,-1) (4,-1.5) -- (4,-1) (4.5,-0.5) -- (6.5,-0.5) -- (6.5,0) -- (4.5,0) -- (4.5,-0.5) (5,-0.5) -- (5,0) (5.5,-0.5) -- (5.5,0) (6,-0.5) -- (6,0);
\end{tikzpicture}$$
\end{example}

\begin{remark}
If $(R_{j_1},\ldots,R_{j_k})$ is a strict sequence of $\bm\lambda$, then $k\geq 2$, $j_1<\cdots<j_k$, and by Proposition \ref{prop:strictsequence}, $l(R_{j_{t+1}})=r(R_{j_t})+1$ for $1\leq t\leq k-1$. Moreover, if $\bm\mu=(S_1,\ldots,S_n)$ and $\varphi:\Pi(\bm\lambda)\xrightarrow\sim\Pi(\bm\mu)$, then by Remark \ref{rem:strictsequencetransfer} and by cycling $\bm\mu$ if necessary, there will be a permutation $\sigma:\{1,\ldots,k\}\to\{1,\ldots,k\}$ with $(S_{\varphi_{j_{\sigma_1}}},\ldots,S_{\varphi_{j_{\sigma_k}}})$ a strict sequence and therefore $l(S_{\varphi_{j_{\sigma_{t+1}}}})=r(S_{\varphi_{j_{\sigma_t}}})+1$ for every $1\leq t\leq k-1$. Therefore, a strict sequence satisfies the hypothesis \eqref{eq:strictsequencehypothesis} of Lemma \ref{lem:strictsequencedone}. 
\end{remark}

\begin{remark}
Informally, the strategy will be to apply Lemma \ref{lem:localrotation} to perform a series of local rotations to permute the rows of $\bm\lambda$ to match those of $\bm\mu$. We will be able to perform these local rotations unless some other row $R_t$ of $\bm\lambda$ violates some condition of Proposition \ref{prop:localrotationhypothesis}, forcing certain rows of $\bm\lambda$ to link end to end. However, in this case, these rows of $\bm\lambda$ will be a proper subset of rows that satisfies \eqref{eq:strictsequencehypothesis} and we can use induction to reason about these rows. 
\end{remark}

\begin{proof}[Proof of Lemma \ref{lem:strictsequencedone}. ]
We use induction on $k$. If $k=2$, then because $l(R_{j_2})=r(R_{j_1})+1$ and by hypothesis there is no noncommuting path in $\bm\lambda$ from $R_{j_1}$ to $R_{j_2}$, we can use Lemma \ref{lem:aoncp} to replace $\bm\lambda$ with a similar horizontal-strip as necessary to assume that $j_2=j_1+1$, and then by \eqref{eq:strictsequencehypothesis} we have either $l(S_{\varphi_{j_2}})=r(S_{\varphi_{j_1}})+1$ or $l(S_{\varphi_{j_1}})=r(S_{\varphi_{j_2}})+1$, so $S_{\varphi_{j_1}}\nleftrightarrow S_{\varphi_{j_2}}$ and the result follows from Corollary \ref{cor:adjacentnoncommutingdone}. So we now assume that $k\geq 3$ and that the result holds for $2\leq k'\leq k-1$. In particular, if $J\subseteq\{1,\ldots,k\}$ is an interval with $2\leq |J|\leq k-1$ and such that $\sigma^{-1}(J)=\{1\leq t'\leq k: \ \sigma_{t'}\in J\}\subseteq\{1,\ldots,k\}$ is an interval, then the sequence of rows $(R_{j_t}: \ t\in J)$ satisfies \eqref{eq:strictsequencehypothesis} and we are done by our induction hypothesis on $k$. In particular, if $\sigma_1=1$ or $\sigma_k=1$, then we can take $J=\{2,\ldots,k\}$, and if $\sigma_1=k$ or $\sigma_k=k$, then we can take $J=\{1,\ldots,k-1\}$, so we may assume that \begin{equation}\label{eq:strictsequencedone1} 2\leq \sigma_1,\sigma_k\leq k-1.\end{equation}

We now continue to use our induction hypothesis to make several additional simplifying assumptions. For $1\leq t\leq n$ such that $t\neq j_{t'}$ for any $1\leq t'\leq k$, consider the sets \begin{equation}E_t=\{1\leq t'\leq k: \ R_{j_{t'}}\prec R_t\}\text{ and }F_t=\{1\leq t\leq k: \ M_{j_{t'},t}(\bm\lambda)>0\}.\end{equation}
Note that $E_t\subseteq F_t$. Also, if $t_1<t_2<t_3$ and $t_1,t_3\in F_t$, then by Proposition \ref{prop:mrirj}, Parts 1 and 3, we have \begin{equation} l(R_t)\leq r(R_{j_{t_1}})+1\leq l(R_{j_{t_2}})\leq r(R_{j_{t_2}})\leq l(R_{j_{t_3}})-1\leq r(R_t),\end{equation}
so by Proposition \ref{prop:prec} we have $R_{t_2}\prec R_t$ and $t_2\in E_t$, so $E_t$ and $F_t$ are intervals in $\{1,\ldots,k\}$. Similarly, consider the sets \begin{equation}E_t'=\{1\leq t'\leq k: \ S_{\varphi_{j_{\sigma_{t'}}}}\prec S_{\varphi_t}\}\text{ and }F_t'=\{1\leq t'\leq k: \ M_{\varphi_{j_{\sigma_{t'}}},\varphi_t}(\bm\mu)>0\}.\end{equation}
Note that $E_t'\subseteq F_t'$, $E_t'=\sigma^{-1}(E_t)$, $F_t'=\sigma^{-1}(F_t)$ and as before, if $t_1<t_2<t_3$ and $t_1,t_3\in F_t'$, then by Proposition \ref{prop:mrirj}, Parts 1 and 3, we have \begin{equation} l(S_{\varphi_t})\leq r(S_{\varphi_{j_{\sigma_{t_1}}}})+1\leq l(S_{\varphi_{j_{\sigma_{t_2}}}})\leq r(S_{\varphi_{j_{\sigma_{t_2}}}})\leq l(S_{\varphi_{j_{\sigma_{t_3}}}})-1\leq r(S_{\varphi_t}),\end{equation}
so by Proposition \ref{prop:prec}, we have $S_{\varphi_{j_{\sigma_{t_2}}}}\prec S_{\varphi_t}$ and $t_2\in E_t'$, so $E_t'$ and $F_t'$ are intervals in $\{1,\ldots,k\}$. Therefore, if $2\leq |F_t|\leq k-1$, then taking $J=F_t$ above we are done by our induction hypothesis on $k$. Similarly, if $|F_t|=k$ and $t'\notin E_t$, then taking $J=E_t$ above we are done by our induction hypothesis on $k$. This means that we may assume that \begin{equation}\label{eq:strictsequencedone2}\text{ if }|F_t|\geq 2,\text{ then }E_t=F_t=\{1,\ldots,k\}.\end{equation}

Now suppose that $F_t=\{t'\}$. Our goal is to show that $R_t\prec R_{j_{t'}}$, that $M_{a,t}(\bm\lambda)=0$ for every $a$ with $M_{a,j_{t'}}(\bm\lambda)=0$, and $R_t\prec R_b$ for every $b$ with $R_{j_{t'}}\prec R_b$.\\

If $2\leq t'\leq k-1$, then $M_{j_{t'-1},t}(\bm\lambda)=0$ and $M_{j_{t'+1},t}(\bm\lambda)=0$ by definition of $F_t$ and by Proposition \ref{prop:mrirj}, Parts 1 and 3, we would have 
\begin{equation}l(R_{j_{t'}})=r(R_{j_{t'-1}})+1\leq l(R_t)\leq r(R_t)\leq l(R_{j_{t'+1}})-1=r(R_{j_{t'}})\end{equation}
and therefore $R_t\prec R_{j_{t'}}$ by Proposition \ref{prop:prec}. This means that if $R_t\nprec R_{j_{t'}}$, we must have $t'=1$ or $k$ and for the same reason, $\sigma_{t'}=1$ or $k$, but this contradicts our assumption \eqref{eq:strictsequencedone1}. Therefore, we must have $R_t\prec R_{j_{t'}}$. \\

Now suppose that there is some $a$ with $M_{a,j_{t'}}(\bm\lambda)=0$ but $M_{a,t}(\bm\lambda)>0$. Because $R_t\prec R_{j_{t'}}$, we have $M_{j_{t'},t}(\bm\lambda)+M_{a,t}(\bm\lambda)\geq |R_t|+1$, so either $j_{t'}<a$ and the pair $(R_{j_{t'}},R_a)$ is strict, or $a<j_{t'}$ and the pair $(R_a,R_{j_{t'}})$ is strict. In either case, by Proposition \ref{prop:strict} we have $R_{j_{t'}}\nleftrightarrow R_a$, and by Corollary \ref{cor:mrirj}, Part 3, we have either \begin{equation}\label{eq:strictsequencejta} l(R_a)=r(R_{j_{t'}})+1=l(R_{j_{t'+1}})\text{ or }l(R_{j_{t'}})=r(R_a)+1.\end{equation}
In particular, if $t'=1$, then either $F_a=\emptyset$ or $\{2\}$, if $2\leq t'\leq k-1$, then either $F_a=\{t'-1\}$ or $\{t'+1\}$, and if $t'=k$, then either $F_a=\emptyset$ or $\{k-1\}$. Similarly, $F_a'$ is either empty, in which case $\sigma_{t'}=1$ or $k$, or $F_a'$ and $F_t'$ consist of consecutive singletons. Therefore, if $F_a=\emptyset$, then this contradicts \eqref{eq:strictsequencedone1}, and otherwise, taking $J=F_t\cup F_a$ we are done by our induction hypothesis on $k$. \\

Next, let us suppose that for some $b$ we have $R_{j_{t'}}\prec R_b$ but $R_t\nprec R_b$. If $2\leq t'\leq k-1$, then $M_{j_{t'-1},t}(\bm\lambda)=0$ and $M_{j_{t'+1},t}(\bm\lambda)=0$ by definition of $F_t$ and by Proposition \ref{prop:mrirj}, Parts 2 and 3, we would have \begin{equation}l(R_{j_{t'}})=r(R_{j_{t'-1}})+1\leq l(R_t)\leq r(R_t)\leq l(R_{j_{t'+1}})-1=r(R_{j_{t'}}).\end{equation}
Now if $R_{j_{t'}}\subseteq R_b$, we would have $R_t\subseteq R_{j_{t'}}\subseteq R_b$ and $R_t\prec R_b$ by Proposition \ref{prop:prec}, so we must have either $R_{j_{t'}}\subseteq R_b^+$ and $l(R_b)\leq l(R_{j_{t'}})-1=r(R_{j_{t'-1}})$, or $R_{j_{t'}}\subseteq R_b^-$ and $r(R_b)\geq r(R_{j_{t'}})+1=l(R_{j_{t'+1}})$. However, this means that either $\{t'-1,t'\}\subseteq F_b$ or $\{t',t'+1\}\subseteq F_b$, so by \eqref{eq:strictsequencedone2} we must have $F_b=\{1,\ldots,k\}$, so $M_{j_{t'-1},t}(\bm\lambda)>0$ and $M_{j_{t'+1},t}(\bm\lambda)>0$, and by Proposition \ref{prop:prec}, we have \begin{equation}l(R_b)\leq r(R_{j_{t'-1}})+1=l(R_{j_{t'}})\leq l(R_t)\leq r(R_t)\leq r(R_{j_{t'}})= l(R_{j_{t'+1}})-1\leq r(R_b)\end{equation}
and $R_t\prec R_b$ by Proposition \ref{prop:prec} after all. Therefore, if $R_{j_{t'}}\prec R_b$ but $R_t\nprec R_b$, we must have $t'=1$ or $k$ and for the same reason, $\sigma_{t'}=1$ or $k$, but this contradicts our assumption \eqref{eq:strictsequencedone1}. To summarize, we may assume that \begin{align}\label{eq:strictsequencedone3} &\text{ if }|F_t|\geq 2,\text{ then }E_t=F_t=\{1,\ldots,k\},\text{ and if }F_t=\{t'\},\text{ then }R_t\prec R_{j_{t'}},\\\nonumber &M_{a,t}(\bm\lambda)=0\text{ whenever }M_{a,j_{t'}}(\bm\lambda)=0,\text{ and }R_t\prec R_b\text{ whenever }R_{j_{t'}}\prec R_b.\end{align}

Let $t_0$ and $t_1$ be such that $t_0<t_1$ and $\{t_0,t_1\}=\{\sigma_1,\sigma_2\}$, and let $x=j_{t_0}$ and $y=j_{t_1}$. Note that we have $x<y$, $l(R_y)\geq r(R_x)+1$, and $S_{\varphi_x}\nleftrightarrow S_{\varphi_y}$. We use induction on $l(R_y)-r(R_x)$. If $l(R_y)-r(R_x)=1$, then $t_1=t_0+1$ and because there is no noncommuting path in $\bm\lambda$ from $R_x$ to $R_y$, we may use Lemma \ref{lem:aoncp} to assume that $y=x+1$. In this case, it now follows from Corollary \ref{cor:adjacentnoncommutingdone} that there exists a good substitute for $(\bm\lambda,\bm\mu)$. So we now assume that $l(R_y)-r(R_x)\geq 2$, so in particular $t_1\geq t_0+2$. Because there is no noncommuting path in $\bm\lambda$ from $R_{j_{t_1-1}}$ to $R_y$, we may use Lemma \ref{lem:aoncp} to assume that $y=j_{t_1-1}+1$.\\

Our plan now is to apply Lemma \ref{lem:localrotation} to the rows $R_{y-1}$ and $R_y$ to replace $\bm\lambda$ with a similar horizontal-strip for which $l(R_y)$ has decreased and $r(R_x)$ has not changed, so that we will be done by our induction hypothesis on $l(R_y)-r(R_x)$. It remains to check the conditions of Proposition \ref{prop:localrotationhypothesis}. 
\begin{enumerate}
\item If $M_{y-1,t}(\bm\lambda)>0$ and $M_{y,t}(\bm\lambda)>0$, then $|F_t|\geq 2$, so by \eqref{eq:strictsequencedone3} we have $E_t=\{1,\ldots,k\}$, so $R_{y-1}\prec R_t$ and $R_y\prec R_t$.\\

\item If $M_{y-1,t}(\bm\lambda)>0$ and $M_{y,t}(\bm\lambda)=0$, then $t_1-1\in F_t$ and $t_1\notin F_t$, so by \eqref{eq:strictsequencedone3} we must have $F_t=\{t_1-1\}$ and then $R_t\prec R_{y-1}$.\\

\item If $M_{y-1,t}(\bm\lambda)=0$ and $M_{y,t}(\bm\lambda)>0$, then $t_1-1\notin F_t$ and $t_1\in F_t$, so by \eqref{eq:strictsequencedone3} we must have $F_t=\{t_1\}$ and $R_t\prec R_y$.\\

\item If $R_t\prec R_{y-1}$, then $t_1-1\in F_t$. By Proposition \ref{prop:mrirj}, Part 1, and because $x<y$, we cannot have both $M_{x,t}(\bm\lambda)>0$ and $M_{y,t}(\bm\lambda)>0$ because then \begin{align}l(R_t)&\leq r(R_x)+\chi(x>t)\text{ and }r(R_t)\geq l(R_y)-\chi(t>y),\text{ so } \\\nonumber|R_t|&=r(R_t)-l(R_t)+1\geq l(R_y)-r(R_x)+1-\chi(t>y)-\chi(x>t)\\\nonumber&=r(R_{y-1})+1-l(R_{y-1})+1+1-\chi(t>y)-\chi(x>t)>|R_{y-1}|,
\end{align}
contradicting $R_t\prec R_{y-1}$ by Proposition \ref{prop:prec}. Therefore, either $t_0\notin F_t$ or $t_1\notin F_t$, so by \eqref{eq:strictsequencedone3} we must have $F_t=\{t_1-1\}$ and $M_{y,t}(\bm\lambda)=0$. Moreover, if $a\in A$, then in particular $M_{a,y}(\bm\lambda)=0$, so by \eqref{eq:strictsequencedone3} we have $M_{a,t}(\bm\lambda)=0$, and if $b\in B$, then in particular we have $R_{y-1}\prec R_b$, so by \eqref{eq:strictsequencedone3} we have $R_t\prec R_b$ as well.\\

\item If $R_t\prec R_y$, then $t_1\in F_t$. By Proposition \ref{prop:prec} and because $x<y$, we cannot have $M_{x,t}(\bm\lambda)>0$ because then \begin{align}l(R_t)&\leq r(R_x)+\chi(x>t)\leq l(R_{y-1})-1+\chi(x>t)\leq r(R_{y-1})-1+\chi(x>t)\\\nonumber&=l(R_{y-1})-1-1+\chi(x>t)\leq l(R_y)-1-\chi(t>y)<l(R_y)-\chi(t>y),\end{align} 
contradicting $R_t\prec R_y$ by Proposition \ref{prop:prec}. Therefore, $t_0\notin F_t$, so by \eqref{eq:strictsequencedone3} we must have $F_t=\{t_1\}$ and $M_{y-1,t}(\bm\lambda)=0$. Moreover, if $a\in A$, then in particular $M_{a,y-1}(\bm\lambda)=0$, so by \eqref{eq:strictsequencedone3} we have $M_{a,t}(\bm\lambda)=0$, and if $b\in B$, then in particular we have $R_y\prec R_b$, so by \eqref{eq:strictsequencedone3} we have $R_t\prec R_b$ as well.\\
\end{enumerate}

This concludes our verification of the conditions of Proposition \ref{prop:localrotationhypothesis}. Therefore, the result follows by Lemma \ref{lem:localrotation} and our induction hypothesis on $l(R_y)- r(R_x)$.
\end{proof}

We now generalize Corollary \ref{cor:adjacentstrictdone} to the case where $\bm\lambda$ has a pair of strict rows that are not necessarily adjacent. 

\begin{corollary} \label{cor:strictdone} Let $\bm\lambda=(R_1,\ldots,R_n)$ be a horizontal-strip that satisfies \eqref{eq:IH}. If $\bm\lambda$ has a pair of strict rows $(R_i,R_j)$, then $\bm\lambda$ is good. \end{corollary}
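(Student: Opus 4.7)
The plan is to reduce the non-adjacent case to Corollary \ref{cor:adjacentstrictdone} (adjacent strict pair) or to Lemma \ref{lem:strictsequencedone} (strict sequence), using the operations of Proposition \ref{prop:similar} to bring the strict pair into a convenient position. Since strictness is defined purely from the weighted graph data, cycling and rotating $\bm\lambda$ preserve the property that $(R_i,R_j)$ is strict, so I may assume $i<j$ and $l(R_i)<l(R_j)$. Proposition \ref{prop:strict} then guarantees $R_i\nleftrightarrow R_j$, and the same holds for the image pair $(S_{\varphi_i},S_{\varphi_j})$ in $\bm\mu$ after suitably cycling $\bm\mu$.

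Next I would apply Lemma \ref{lem:aoncp} to the pair $(R_i,R_j)$. In the easy branch, we can replace $\bm\lambda$ by a similar horizontal-strip in which $j=i+1$; the pair remains strict and Corollary \ref{cor:adjacentstrictdone} directly produces a good substitute. In the harder branch we obtain a minimal noncommuting path $(R_i=R_{i_1},\ldots,R_{i_k}=R_j)$ with $k\geq 3$, so we cannot simply commute past the intermediate rows. If along this path some consecutive pair $(R_{i_t},R_{i_{t+1}})$ is itself strict, I again invoke Corollary \ref{cor:adjacentstrictdone} after commuting that pair to adjacent positions (minimality of the path makes this possible via Lemma \ref{lem:aoncp} for the shorter subrange).

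The remaining case is when no consecutive pair along the minimal noncommuting path is strict. Here Proposition \ref{prop:mncpnostrict} forces exactly two outcomes: either one of the subpaths $(R_{i_1},\ldots,R_{i_{k-1}})$, $(R_{i_2},\ldots,R_{i_{k-1}})$, or $(R_{i_2},\ldots,R_{i_k})$ is a strict sequence, in which case Lemma \ref{lem:strictsequencedone} delivers the desired good substitute, or we are in the exceptional four-row configuration of Proposition \ref{prop:mncpnostrict}(2) with $R_1\precnsim R_3$, $R_4\precnsim R_2$ and $l(R_2)=l(R_4)=r(R_1)-1=r(R_3)-1$. The exceptional case is the main obstacle, since none of the general reductions above apply directly; however, the very rigid geometry (the middle rows $R_2,R_3$ have overlapping ranges differing only by a shift) makes Lemma \ref{lem:localrotation} applicable to the pair $(R_2,R_3)$, and I would check the hypotheses of Proposition \ref{prop:localrotationhypothesis} by hand, using that every row $R_t$ of this four-row strip must fall into the $A,B,C_{i-1},$ or $C_i$ classes relative to $R_2,R_3$.

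In summary, the reduction is: strictness is transferred to $\bm\mu$ via $\varphi$; commutation/cycling drives toward an adjacent strict pair; any obstruction in the form of a minimal noncommuting path is analysed by Proposition \ref{prop:mncpnostrict} and dispatched via either Corollary \ref{cor:adjacentstrictdone}, Lemma \ref{lem:strictsequencedone}, or, in the one rigid four-row configuration, Lemma \ref{lem:localrotation}. The principal difficulty is the book-keeping required to ensure that, in each branch, the corresponding reductions work simultaneously on $\bm\lambda$ and $\bm\mu$ so that the resulting pair $(\bm\lambda',\bm\mu')$ is a good substitute rather than just two independent horizontal-strips with adjacent strict pairs.
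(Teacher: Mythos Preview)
Your overall architecture matches the paper's: reduce to an adjacent strict pair (Corollary~\ref{cor:adjacentstrictdone}) or to a strict sequence (Lemma~\ref{lem:strictsequencedone}) via Lemma~\ref{lem:aoncp} and Proposition~\ref{prop:mncpnostrict}. However, two genuine gaps remain.

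First, your treatment of the exceptional four-row configuration of Proposition~\ref{prop:mncpnostrict}(2) is not correct. The four rows $R_{i_1},\ldots,R_{i_4}$ are merely a noncommuting path inside the $n$-row strip $\bm\lambda$; they are not the whole strip, so you cannot ``check the hypotheses of Proposition~\ref{prop:localrotationhypothesis} by hand'' on a four-row object. Moreover, Lemma~\ref{lem:localrotation} requires the two rows to be \emph{adjacent in $\bm\lambda$} with $l(R_i)=r(R_{i-1})+1$, which is not arranged here, and even if it were, it is unclear how a local rotation of $\bm\lambda$ alone yields a good substitute for the pair $(\bm\lambda,\bm\mu)$. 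The paper avoids this entirely by first choosing the strict pair $(R_i,R_j)$ with $j-i$ \emph{minimal}: in the exceptional configuration one has $M_{i,j}=0$ together with an index $x$ between $i$ and $j$ satisfying $l(R_x)=l(R_j)$ and $R_j\precnsim R_x$, and then the same witness $k$ that makes $(R_i,R_j)$ strict makes $(R_i,R_x)$ strict, contradicting minimality. So the exceptional case simply does not arise.

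Second, you invoke Lemma~\ref{lem:strictsequencedone} without verifying its key hypothesis that there is no noncommuting path in $\bm\lambda$ from $R_{j_t}$ to $R_{j_{t+1}}$ for any $t$. Minimality of the original noncommuting path says nothing about other rows of $\bm\lambda$ sitting between $R_{j_t}$ and $R_{j_{t+1}}$. The paper handles this with a second layer of minimality: among all strict sequences with no internal strict pairs, take one with $j_k-j_1$ minimal and then $k$ minimal, and use Proposition~\ref{prop:strictsequence2} to show that any noncommuting path between consecutive entries would produce either a shorter strict sequence, a strict sequence of smaller length, or a forbidden strict pair. Without these nested minimality choices (first on $j-i$ for the strict pair, then on $j_k-j_1$ and $k$ for the sequence), neither the exceptional case nor the hypotheses of Lemma~\ref{lem:strictsequencedone} are under control.
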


\begin{proof}
Because $\bm\lambda$ has a pair of strict rows, we may assume that $(R_i,R_j)$ is a strict pair with $j-i$ minimal, in other words the pairs $(R_{i'},R_{j'})$ are not strict for $i\leq i'<j'\leq j$ with $j'-i'<j-i$. Also note that if there is no minimal noncommuting path in $\bm\lambda$ from $R_i$ to $R_j$, then by Lemma \ref{lem:aoncp} we may replace $\bm\lambda$ with a similar horizontal-strip as necessary to assume that $j=i+1$, in which case we are done by Corollary \ref{cor:adjacentstrictdone}. Therefore, we may assume that there is a minimal noncommuting path in $\bm\lambda$ from $R_i$ to $R_j$. By Proposition \ref{prop:mncpnostrict}, there is either a strict sequence $(R_{j_1},\ldots,R_{j_k})$ in $\bm\lambda$ such that the pairs $(R_{i'},R_{j'})$ are not strict for $j_1\leq i'<j'\leq j_k$, or we have $M_{i,j}=0$ and there is $i<x<j$ with $l(R_x)=l(R_j)$ and $R_j\precnsim R_x$. However, in the latter case, because the pair $(R_i,R_j)$ is strict, then since $M_{i,j}=0$ we must have $M_{i,k}+M_{j,k}\geq |R_k|+1$ for some $k$, but now $M_{i,k}+M_{x,k}\geq M_{i,k}+M_{j,k}\geq |R_k|+1$ so the pair $(R_i,R_x)$ is strict, contradicting minimality of $j-i$. Therefore, we can exclude the second possibility.\\

Because $\bm\lambda$ has a strict sequence $(R_{j_1},\ldots,R_{j_k})$ such that the pairs $(R_{i'},R_{j'})$ are not strict for $j_1\leq i'<j'\leq j_k$, we may assume that $(R_{j_1},\ldots,R_{j_k})$ is such a strict sequence with $j_k-j_1$ minimal, and among such strict sequences, with $k$ minimal. Now if there is a minimal noncommuting path in $\bm\lambda$ from $R_{j_t}$ to $R_{j_{t+1}}$ for any $1\leq t\leq k-1$, then by Proposition \ref{prop:mncpnostrict} again, there is either a strict sequence between $R_{j_t}$ and $R_{j_{t+1}}$, contradicting minimality of $j_k-j_1$, or there is some $j_t<x<j_{t+1}$ with $l(R_x)=l(R_{j_{t+1}})$ and $R_{j_{t+1}}\precnsim R_x$, but in that case by Proposition \ref{prop:strictsequence2} there is either a strict sequence between nearer rows, contradicting minimality of $j_k-j_1$, there is a shorter strict sequence from $R_{j_1}$ to $R_{j_k}$, contradicting minimality of $k$, or there is a strict pair, contradicting our hypothesis. Therefore, there is no minimal noncommuting path in $\bm\lambda$ from $R_{j_t}$ to $R_{j_{t+1}}$ for any $1\leq t\leq k-1$. \\

Finally, let $\bm\mu=(S_1,\ldots,S_n)$ and $\varphi:\Pi(\bm\lambda)\xrightarrow\sim\Pi(\bm\mu)$. By cycling, we may assume without loss of generality that $\varphi_h>\varphi_{j_t}$ for every $1\leq t\leq k$. Because the $M_{j_t,j_{t+1}}(\bm\lambda)=0$ for $1\leq t\leq k-1$, the integers $l(S_{\varphi_{j_t}})$ for $1\leq t\leq k$ must be distinct, so let $\sigma:\{1,\ldots,k\}\to\{1,\ldots,k\}$ be the permutation that sorts them in increasing order, in other words \begin{equation}l(S_{\varphi_{j_{\sigma_1}}})<l(S_{\varphi_{j_{\sigma_2}}})<\cdots<l(S_{\varphi_{j_{\sigma_k}}}).\end{equation}
Now the sequence $(S_{\varphi_{j_{\sigma_1}}},\ldots,S_{\varphi_{j_{\sigma_k}}})$ is strict, so by Proposition \ref{prop:strict} we have $l(S_{\varphi_{j_{\sigma_{t+1}}}})=r(S_{\varphi_{j_{\sigma_t}}})+1$ for every $1\leq t\leq k-1$. Therefore, by Lemma \ref{lem:strictsequencedone} there exists a good substitute for $(\bm\lambda,\bm\mu)$. This completes the proof.
\end{proof}

\begin{corollary}\label{cor:strictsequencedone}
Let $\bm\lambda=(R_1,\ldots,R_n)$ be a horizontal-strip that satisfies \eqref{eq:IH}. If $\bm\lambda$ has a strict sequence $(R_{j_1},\ldots,R_{j_k})$, then $\bm\lambda$ is good. \end{corollary}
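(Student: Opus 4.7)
The plan is to mimic the argument already used in the proof of Corollary \ref{cor:strictdone}. First, if $\bm\lambda$ has any strict pair, then by Corollary \ref{cor:strictdone} we are immediately done, so we may assume that no pair of rows of $\bm\lambda$ is strict. Among all strict sequences of $\bm\lambda$, I would choose one $(R_{j_1},\ldots,R_{j_k})$ with $j_k-j_1$ minimal, and among such sequences, with $k$ minimal.

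The key step is to verify the hypothesis of Lemma \ref{lem:strictsequencedone}, namely that there is no noncommuting path in $\bm\lambda$ from $R_{j_t}$ to $R_{j_{t+1}}$ for any $1\leq t\leq k-1$. If such a noncommuting path exists, it contains a minimal one, and by Proposition \ref{prop:mncpnostrict} (applied using the assumption that no pair of rows of $\bm\lambda$ is strict) one of two cases must occur. Either a strict sequence appears between $R_{j_t}$ and $R_{j_{t+1}}$, which would contradict the minimality of $j_k-j_1$, or we are in Case 2 of that proposition, and there is some $j_t<x<j_{t+1}$ with $l(R_x)=l(R_{j_{t+1}})$ and $R_{j_{t+1}}\precnsim R_x$. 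In the latter case, Proposition \ref{prop:strictsequence2} applied to this $R_x$ produces either a shorter strict sequence (contradicting minimality of $j_k-j_1$), a strict sequence with fewer rows from $R_{j_1}$ to $R_{j_k}$ (contradicting minimality of $k$), or a strict pair (contradicting our initial assumption). All outcomes are impossible.

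Finally, for any $\bm\mu=(S_1,\ldots,S_n)$ and $\varphi:\Pi(\bm\lambda)\xrightarrow\sim\Pi(\bm\mu)$, the remark following Lemma \ref{lem:strictsequencedone} (based on Remark \ref{rem:strictsequencetransfer}) tells us that after cycling $\bm\mu$ if necessary, there is a permutation $\sigma$ of $\{1,\ldots,k\}$ so that $(S_{\varphi_{j_{\sigma_1}}},\ldots,S_{\varphi_{j_{\sigma_k}}})$ is a strict sequence in $\bm\mu$; by Proposition \ref{prop:strictsequence} this implies $l(S_{\varphi_{j_{\sigma_{t+1}}}})=r(S_{\varphi_{j_{\sigma_t}}})+1$ for all $1\leq t\leq k-1$, verifying the condition \eqref{eq:strictsequencehypothesis}. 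We may then invoke Lemma \ref{lem:strictsequencedone} to obtain a good substitute for $(\bm\lambda,\bm\mu)$, completing the proof that $\bm\lambda$ is good.

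The main obstacle I would expect is the verification that no noncommuting path occurs between consecutive rows of the chosen strict sequence; however, because Proposition \ref{prop:strictsequence2} was engineered precisely to handle the awkward Case 2 of Proposition \ref{prop:mncpnostrict}, the combinatorial bookkeeping collapses neatly into the three minimality contradictions described above, and the rest is routine.
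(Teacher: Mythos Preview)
Your proposal is correct and is essentially the same argument as the paper's. The paper's proof simply invokes Corollary~\ref{cor:strictdone} to dispose of strict pairs and then says ``the result follows by the argument above,'' where the ``argument above'' is exactly the minimality-of-$(j_k-j_1,k)$ reasoning, the Proposition~\ref{prop:mncpnostrict}/\ref{prop:strictsequence2} case analysis ruling out noncommuting paths, and the transfer to $\bm\mu$ via Remark~\ref{rem:strictsequencetransfer} and Proposition~\ref{prop:strictsequence} that you have spelled out.
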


\begin{proof}
By Corollary \ref{cor:strictdone}, we may assume that $\bm\lambda$ has no strict pairs, after which the result follows by the argument above. 
\end{proof}

By Corollary \ref{cor:strictdone} and Corollary \ref{cor:strictsequencedone}, we may assume in completing the proof of Lemma \ref{lem:key} that there are no strict pairs or strict sequences in $\bm\lambda$ or any similar horizontal-strip. It will be convenient to make the following definition.

\begin{definition} Let $\bm\lambda=(R_1,\ldots,R_n)$ be a horizontal-strip. We say that $\bm\lambda$ is \emph{nesting} if for every $1\leq i<j\leq n$ we have either $M_{i,j}=0$, $R_i\prec R_j$, or $R_j\prec R_i$, and if $M_{i,j}=0$, then $M_{i,k}+M_{j,k}\leq |R_k|$ for every $k$. \end{definition}

\begin{example}
An example of a nesting horizontal-strip is given in Figure \ref{fig:nesting}. Informally, every pair of rows is either disjoint or one is contained in the other. 
\begin{figure}
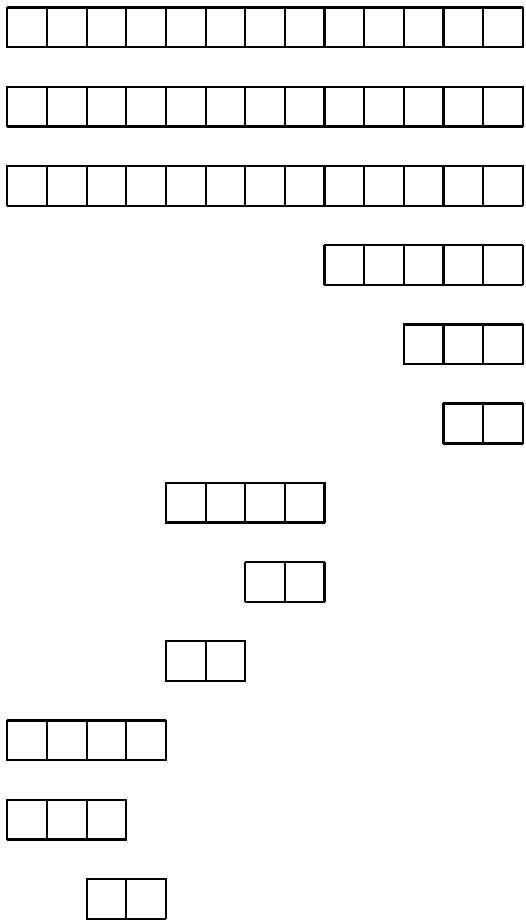
\caption{A nesting horizontal-strip} \label{fig:nesting}
$$\tableau{\ & \ & \ & \ & \ & \ & \ & \ & \ & \ & \ & \ & \ \\ \\ \ & \ & \ & \ & \ & \ & \ & \ & \ & \ & \ & \ & \ \\ \\ \ & \ & \ & \ & \ & \ & \ & \ & \ & \ & \  & \ & \ \\ \\ &&&&&&&& \ & \ & \ & \ & \ \\ \\ &&&&&&&&&& \ & \ & \ \\ \\ &&&&&&&&&&& \ & \ \\ \\ &&&& \ & \ & \ & \ \\ \\ &&&&&& \ & \ \\ \\ &&&& \ & \ \\ \\ \ & \ & \ & \ \\ \\ \ & \ & \ \\ \\ && \ & \  }$$
\end{figure}
\end{example}

\begin{corollary} \label{cor:notnestingdone}
Let $\bm\lambda=(R_1,\ldots,R_n)$ be a horizontal-strip that satisfies \eqref{eq:IH}. If $\bm\lambda$ is not nesting, then $\bm\lambda$ is good. \end{corollary}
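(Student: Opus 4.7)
The plan is to reduce to Corollary~\ref{cor:strictdone} by exhibiting a strict pair of rows in $\bm\lambda$ itself or in some similar horizontal-strip. Unpacking the definition of nesting, if $\bm\lambda$ is not nesting then there must exist $1 \le i < j \le n$ for which either (a) $0 < M_{i,j} < \min\{|R_i|, |R_j|\}$, or (b) $M_{i,j} = 0$ and $M_{i,k} + M_{j,k} \ge |R_k|+1$ for some $k$. If moreover $l(R_i) < l(R_j)$, then $(R_i, R_j)$ is already a strict pair by definition and Corollary~\ref{cor:strictdone} applies immediately.

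It thus suffices to treat the case $l(R_j) < l(R_i)$, and the first key step is the arithmetic observation that in both (a) and (b) one actually has $l(R_j) \le l(R_i) - 2$. In case (a), the explicit formula $M_{i,j} = r(R_j) - l(R_i) + 2$ from Proposition~\ref{prop:mrirj} Part~3 would force $M_{i,j} = |R_j|$ if $l(R_j) = l(R_i) - 1$, contradicting the strict inequality in (a). In case (b), Proposition~\ref{prop:mrirj} Part~1 with $M_{i,j} = 0$ and $l(R_j) < l(R_i)$ requires $R_j^+$ to be disjoint from $R_i$, i.e.\ $r(R_j) + 1 < l(R_i)$, and hence $l(R_j) \le l(R_i) - 2$.

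The second step is to cycle $\bm\lambda$ a total of $j-1$ times, producing
\[
\bm\lambda' = (R_j, R_{j+1}, \ldots, R_n, R_1^-, \ldots, R_{j-1}^-) \in \mathcal S(\bm\lambda)
\]
by Proposition~\ref{prop:similar}. In $\bm\lambda'$ the row $R_j$ sits at position $1$ while $R_i^-$ sits at the later position $n - j + i + 1$, and the bound $l(R_j) \le l(R_i) - 2 < l(R_i) - 1 = l(R_i^-)$ places them in the strict left-order required by the definition of a strict pair. Since cycling preserves $\Pi(\bm\lambda)$, and in particular all the edge-weight data appearing in (a) and (b), the pair $(R_j, R_i^-)$ in $\bm\lambda'$ is a genuine strict pair. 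Cycling also preserves $n(\bm\lambda)$ and $M(\bm\lambda)$, so $\bm\lambda'$ still satisfies the hypothesis~\eqref{eq:IH}; Corollary~\ref{cor:strictdone} then shows $\bm\lambda'$ is good, and since goodness depends only on the similarity class, $\bm\lambda$ is good as well.

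The main obstacle is precisely the arithmetic verification $l(R_j) \le l(R_i) - 2$: without this one-cell gap, cycling only produces $l(R_j) \le l(R_i^-)$, which does not suffice for strictness in $\bm\lambda'$. Once this bound is in hand, the corollary follows routinely from the machinery already built for Corollary~\ref{cor:strictdone}.
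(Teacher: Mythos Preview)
Your proof is correct and follows the same strategy as the paper: reduce to Corollary~\ref{cor:strictdone} by exhibiting a strict pair in a similar horizontal-strip. The paper does this in one line, writing ``by rotating, we may assume that $l(R_i)<l(R_j)$'' (in the spirit of the Remark following the definition of a strict pair, where both cycling and rotating are invoked), whereas you carry out the reduction explicitly via cycling alone and verify the arithmetic inequality $l(R_j)\le l(R_i)-2$ that guarantees the cycled pair $(R_j,R_i^-)$ is genuinely strict. One small citation quibble: in case~(b) the disjointness of $R_i$ and $R_j^+$ comes straight from the definition $M(R_i,R_j)=|R_i\cap R_j^+|$ rather than from Proposition~\ref{prop:mrirj} Part~1, but the conclusion $r(R_j)+1<l(R_i)$ is of course correct.
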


\begin{proof}
If $\bm\lambda$ is not nesting, then there is some $1\leq i<j\leq n$ with either $0<M_{i,j}<\min\{|R_i|,|R_j|\}$ or $M_{i,j}=0$ and $M_{i,k}+M_{j,k}\geq |R_k|+1$ for some $k$. By rotating, we may assume that $l(R_i)<l(R_j)$ so that the pair $(R_i,R_j)$ is strict, and then the result follows from Corollary \ref{cor:strictdone}. 
\end{proof}

We now explore some properties of a nesting horizontal-strip. 

\begin{proposition} \label{prop:nesting}
Let $\bm\lambda=(R_1,\ldots,R_n)$ be a nesting horizontal-strip. 
\begin{enumerate}
\item The pairs $(R_i,R_j)$ are not strict for $1\leq i<j\leq n$.\\

\item If $R_i\prec R_j$ and $R_j\prec R_k$, then $R_i\prec R_k$. In other words, the relation $\prec$ is transitive on the rows of $\bm\lambda$.\\

\item If $R_i\prec R_j$ and $M_{j,k}=0$, then $M_{i,k}=0$. \\

\item We cannot have $i<x<y<j$ with $l(R_x)=l(R_j)=r(R_i)-1=r(R_y)-1$, $R_j\precnsim R_x$, and $R_1\precnsim R_y$.
\end{enumerate}
\end{proposition}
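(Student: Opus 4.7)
The plan is to handle the four parts in order. Parts (1), (2), and (3) will follow formally from the nesting axioms together with the size bound \eqref{eq:mijbound}, while (4) will demand a geometric analysis of row positions.

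For (1), if $M_{i,j}>0$ then nesting forces $R_i\prec R_j$ or $R_j\prec R_i$, so $M_{i,j}=\min\{|R_i|,|R_j|\}$; in particular $0<M_{i,j}<\min\{|R_i|,|R_j|\}$ cannot hold. The second strict condition requires $M_{i,j}=0$ together with $M_{i,k}+M_{j,k}\geq|R_k|+1$ for some $k$, which directly contradicts the $M_{i,j}=0$ clause of the nesting definition.

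For (2), I will assume $R_i\prec R_j$ and $R_j\prec R_k$, so $M_{i,j}=|R_i|$ and $M_{j,k}=|R_j|$; then \eqref{eq:mijbound} gives $|R_i|\leq|R_j|\leq|R_k|$. By nesting, $M_{i,k}$ is either $0$ or $\min\{|R_i|,|R_k|\}=|R_i|$. The first case is ruled out by nesting condition 2 on $(R_i,R_k)$ with auxiliary index $j$: $|R_i|+|R_j|=M_{i,j}+M_{k,j}\leq|R_j|$ is absurd. Hence $M_{i,k}=|R_i|$, i.e. $R_i\prec R_k$. Part (3) is immediate in the same spirit: I apply nesting condition 2 to $(R_j,R_k)$ at the auxiliary index $i$, obtaining $|R_i|+M_{i,k}=M_{j,i}+M_{k,i}\leq|R_i|$, which forces $M_{i,k}=0$.

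For (4), I will first unpack the hypotheses into a precise geometric picture. The equality $l(R_x)=l(R_j)$ together with $R_j\precnsim R_x$ makes $R_j$ a proper initial segment of $R_x$, and symmetrically $r(R_i)=r(R_y)$ together with $R_i\prec R_y$ makes $R_i$ a proper terminal segment of $R_y$; the offset condition $l(R_j)=r(R_i)-1$ glues these two pictures so that the two consecutive contents $r(R_i)-1$ and $r(R_i)$ appear in all four rows. I will then transfer the relation $R_1\precnsim R_y$ to the other rows of the configuration via the transitivity provided by (2) and the $M=0$ propagation of (3), producing a list of $\prec$-relations between $R_1$ and each of $R_i,R_x,R_j$. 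The main obstacle I expect is pinpointing the correct witness pair for the contradiction: since the local geometry of $(R_i,R_j,R_x,R_y)$ alone is already consistent with nesting, it is the interaction with $R_1$ that must force a pair of rows to have either an edge weight strictly between $0$ and the minimum vertex weight (violating nesting condition 1) or a pair with $M=0$ whose auxiliary-index sum exceeds $|R_\ell|$ (violating nesting condition 2), and tracking exactly which $\prec$-relations the hypothesis $R_1\precnsim R_y$ forces on $R_1$ will require the most care.
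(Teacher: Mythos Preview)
Your arguments for (1), (2), and (3) are correct and essentially identical to the paper's; in fact your version of (2) is marginally cleaner, since by first noting $|R_i|\leq|R_j|\leq|R_k|$ you avoid the separate case $R_k\prec R_i$ that the paper treats.

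Part (4), however, has a genuine gap. You assert that ``the local geometry of $(R_i,R_j,R_x,R_y)$ alone is already consistent with nesting'' and therefore set out to propagate $\prec$-relations from $R_1$ using (2) and (3). This premise is wrong, and it leads you to outline a search for a witness pair rather than give a proof. The paper's argument is a two-line computation on the pair $(R_x,R_y)$ alone: from $R_j\precnsim R_x$ one gets $|R_x|\geq 2$, from $R_1\precnsim R_y$ one gets $|R_y|\geq 2$, and the position hypothesis gives $l(R_x)=r(R_y)+1$ with $x<y$, so by the definition of $M$ one has $M_{x,y}=|R_x\cap R_y^+|=1<\min\{|R_x|,|R_y|\}$, contradicting the first nesting condition immediately. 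No transitivity, no interaction with $R_1$ beyond the size bound, and no auxiliary row $R_k$ is needed.

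Part of your confusion may stem from a sign typo in the displayed hypothesis: the chain $l(R_x)=l(R_j)=r(R_i)-1=r(R_y)-1$ should read $r(R_i)+1=r(R_y)+1$, as is clear both from the paper's computation of $M_{x,y}=1$ here and from the derivation in Case~2c of Proposition~\ref{prop:mncpnostrict} (where this configuration is produced and one sees explicitly $l(R_2)=r(R_1)+1$). With the corrected sign the rows $R_x$ and $R_y$ are adjacent rather than overlapping in two cells, your ``glued'' picture collapses, and the direct contradiction at $(R_x,R_y)$ is immediate.
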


\begin{proof}\hspace{2pt}
\begin{enumerate}
\item This follows directly from the definitions of strictness and nesting.\\

\item Because $M_{i,j}+M_{j,k}=|R_i|+|R_j|\geq|R_j|+1$, then by definition of nesting we cannot have $M_{i,k}=0$, so we must have $R_i\prec R_k$ or $R_k\prec R_i$. If $R_k\prec R_i$, then by Proposition \ref{prop:prec} we have $|R_k|\leq |R_i|\leq |R_j|\leq |R_k|$, so $|R_i|=|R_k|$ and in fact $R_i\prec R_k$ as well.\\

\item Because $M_{j,k}=0$, by definition of nesting we must have $M_{i,j}+M_{i,k}=|R_i|+M_{i,k}\leq |R_i|$, so we must have $M_{i,k}=0$. \\

\item Because the conditions $R_j\precnsim R_x$ and $R_1\precnsim R_y$ imply that $|R_x|\geq 2$ and $|R_y|\geq 2$, then we would have $M_{x,y}=1<\min\{|R_x|,|R_y|\}$, contradicting that $\bm\lambda$ is nesting.
\end{enumerate}
\end{proof}

\begin{proposition}\label{prop:nestingmncp}
Let $\bm\lambda=(R_1,\ldots,R_n)$ be a nesting minimal noncommuting path with $l(R_1)<l(R_n)$, $M_{1,n}=0$, and $R_1\leftrightarrow R_n$. Then we have $l(R_{t+1})=r(R_t)+1$ for every $1\leq t\leq n-1$. 
\end{proposition}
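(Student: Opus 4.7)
The plan is to reduce the desired linking equality to strict monotonicity of the left endpoints, i.e., $l(R_1) < l(R_2) < \cdots < l(R_n)$. Since $\bm\lambda$ is nesting, Proposition \ref{prop:nesting}(1) rules out strict pairs, so Proposition \ref{prop:strict3}(1) tells us that whenever $l(R_{t+1}) > l(R_t)$ we automatically obtain $M_{t,t+1}=0$ and $l(R_{t+1}) = r(R_t)+1$; equality $l(R_{t+1}) = l(R_t)$ is excluded by $R_t \nleftrightarrow R_{t+1}$. I will establish strict monotonicity by strong induction on $n$, assuming the proposition for all shorter nesting minimal noncommuting paths satisfying the hypotheses.

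Suppose toward contradiction that $l(R_{t^*+1}) < l(R_{t^*})$ for some smallest index $t^*$. Then $R_1, \ldots, R_{t^*}$ link end-to-end, and for $t^* \geq 2$ the peak $l(R_{t^*-1}) < l(R_{t^*}) > l(R_{t^*+1})$ triggers Proposition \ref{prop:strict3}(3), giving $R_{t^*-1} \precnsim R_{t^*+1}$; Proposition \ref{prop:mncp}(2) extends this to $R_k \prec R_{t^*+1}$ for every $k \leq t^*-1$, with a possible exception at $k=1$ only when $t^*+1 = n$. In the cleanest case $t^* = n-1$, we get $R_k \prec R_n$ for $2 \leq k \leq n-2$, so $M_{n,k} = |R_k|$; the nesting condition $M_{1,k} + M_{n,k} \leq |R_k|$ arising from $M_{1,n}=0$ applied at $k=2$ then forces $M_{1,2}=0$ and hence $l(R_2) = r(R_1)+1$. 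However, $R_2 \prec R_n$ yields $l(R_n) \leq l(R_2) = r(R_1)+1$ via Proposition \ref{prop:prec}(1), contradicting the gap $l(R_n) \geq r(R_1)+2$ forced by the hypotheses $R_1 \leftrightarrow R_n$, $M_{1,n}=0$, and $l(R_1) < l(R_n)$.

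For the intermediate case $t^* \leq n-2$, I will apply the inductive hypothesis to the shorter sub-path $(R_{t^*+1}, \ldots, R_n)$, which is itself a nesting minimal noncommuting path. Before invoking induction, I verify $M_{t^*+1, n}=0$ by ruling out $R_n \prec R_{t^*+1}$: Proposition \ref{prop:mncp}(2) yields $R_1 \prec R_{t^*+1}$, which Proposition \ref{prop:prec}(3) upgrades to $R_1 \subseteq R_{t^*+1}$ using $R_1 \leftrightarrow R_{t^*+1}$, and $R_n \prec R_{t^*+1}$ would then force $R_n$ geometrically inside $R_{t^*+1}$, contradicting $l(R_n) \geq r(R_1)+2$ paired with $R_1 \subseteq R_{t^*+1}$. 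The resulting end-to-end linking of the sub-path yields $l(R_{t^*+2}) = r(R_{t^*+1})+1$, and combined with $R_1 \subseteq R_{t^*+1}$ and the adjacency analysis at $(R_{t^*}, R_{t^*+1})$ via Corollary \ref{cor:mrirj}(3), one produces a violation of either a commuting requirement (e.g., $(R_{t^*}, R_{t^*+2})$ must commute by minimality but would touch) or a nesting inequality. The boundary case $t^* = 1$ is treated analogously by examining the pair $(R_1, R_2)$ in its two nesting sub-cases $R_1 \prec R_2$ and $R_2 \prec R_1$, each of which permits applying induction to $(R_2, \ldots, R_n)$ and ultimately forces an overlap between $R_1$ and $R_3$ that violates the nesting condition on $(R_2, R_3)$ at $k=1$. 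The main obstacle is the careful subcase analysis in the intermediate peaks, especially the boundary $t^* = n-2$ where the sub-path has only two rows and the inductive hypothesis does not apply directly, requiring a separate geometric argument using Corollary \ref{cor:mrirj}(3) on the adjacent pair $(R_{n-1}, R_n)$.
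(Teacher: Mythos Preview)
Your overall plan (reduce to strict monotonicity of the $l(R_t)$, then invoke Proposition~\ref{prop:strict3}(1)) is sound, and your treatment of the case $t^* = n-1$ with $t^*\geq 2$ is correct. However, the intermediate case $2\leq t^*\leq n-2$ has a genuine gap.

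Your verification that $M_{t^*+1,n}=0$ is incomplete and partly incorrect. First, you only argue against $R_n\prec R_{t^*+1}$, not against $R_{t^*+1}\prec R_n$ (the latter is in fact easy: $R_{t^*+1}\prec R_n$ together with $R_{t^*+1}\leftrightarrow R_n$ gives $R_{t^*+1}\subseteq R_n$, hence $l(R_n)\leq l(R_{t^*+1})\leq l(R_1)$, contradicting $l(R_1)<l(R_n)$). Second, and more seriously, your stated contradiction for $R_n\prec R_{t^*+1}$ does not hold: having both $R_1\subseteq R_{t^*+1}$ and $R_n\subseteq R_{t^*+1}$ with $l(R_n)\geq r(R_1)+2$ simply says two disjoint rows sit inside a larger one, which is perfectly consistent. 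The nesting inequality $M_{1,t^*+1}+M_{n,t^*+1}\leq |R_{t^*+1}|$ becomes $|R_1|+|R_n|\leq |R_{t^*+1}|$, again not an immediate contradiction. Even granting $M_{t^*+1,n}=0$, your post-induction claim that ``one produces a violation of either a commuting requirement or a nesting inequality'' is not substantiated; for instance, the touching argument for $(R_{t^*},R_{t^*+2})$ breaks down when $|R_{t^*}|=1$ or $|R_{t^*+2}|=1$, and you do not explain which nesting inequality is violated. The boundary cases $t^*=1$ and $t^*=n-2$ are likewise only sketched.

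The paper's proof avoids these difficulties by a different induction. Rather than locating the first peak and inducting on a subpath that drops the endpoint $R_1$, it first shows directly that $l(R_n)=r(R_{n-1})+1$ (a short argument using Proposition~\ref{prop:strict3}(3) and Proposition~\ref{prop:mncp}) and then obtains $l(R_2)=r(R_1)+1$ by rotation symmetry. The inductive step is then applied to $(R_1,\ldots,R_{n-1})$ or to $(R_2,\ldots,R_n)$, each of which retains one original endpoint, so the hypotheses $l(R_1)<l(R_{n-1})$ or $l(R_2)<l(R_n)$ are easy to check. If neither holds, one is forced into $R_1\prec R_{n-1}$ and $R_n\prec R_2$, and a short computation shows this violates nesting at the pair $(R_2,R_{n-1})$. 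Keeping an endpoint in the inductive subpath is what makes the hypotheses tractable; your subpath $(R_{t^*+1},\ldots,R_n)$ loses that anchor.
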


\begin{example}
Informally, Proposition \ref{prop:nestingmncp} states that if $\bm\lambda$ is nesting, a minimal noncommuting path must look like the example below.

$$\begin{tikzpicture}
\draw (-0.25,-1.75) node (0) {$\bm \lambda=$};
\draw (7,-3.25) node (4) {$R_1$} (7,-0.25) node (7) {$R_n$};
\draw (0,-3.5) -- (2,-3.5) -- (2,-3) -- (0,-3) -- (0,-3.5) (0.5,-3.5) -- (0.5,-3) (1,-3.5) -- (1,-3) (1.5,-3.5) -- (1.5,-3) (2,-2.5) -- (3.5,-2.5) -- (3.5,-2) -- (2,-2) -- (2,-2.5) (2.5,-2.5) -- (2.5,-2) (3,-2.5) -- (3,-2) (3.5,-1.5) -- (5.5,-1.5) -- (5.5,-1) -- (3.5,-1) -- (3.5,-1.5) (4,-1.5) -- (4,-1) (4.5,-1.5) -- (4.5,-1) (5,-1.5) -- (5,-1) (5.5,-0.5) -- (6.5,-0.5) -- (6.5,0) -- (5.5,0) -- (5.5,-0.5) (6,-0.5) -- (6,0);
\end{tikzpicture}$$
\end{example}

\begin{proof} [Proof of Proposition \ref{prop:nestingmncp}. ]
If $l(R_{n-1})>l(R_n)$, then because $l(R_1)<l(R_n)$, we must have $l(R_{t-1})<l(R_t)$ for some maximal $2\leq t\leq n-1$. But then we must have $R_{t-1}\precnsim R_{t+1}$ by Proposition \ref{prop:strict3} and therefore $R_1\prec R_{t+1}$ by Proposition \ref{prop:mncp}, Part 1, which is impossible by Proposition \ref{prop:prec} because $l(R_1)<l(R_n)\leq l(R_{t+1})$. Therefore, we must have $l(R_{n-1})<l(R_n)$, and more specifically, because $M_{n-1,n}<\min\{|R_{n-1}|,|R_n|\}$ by Corollary \ref{cor:mrirj}, Part 3 and because $\bm\lambda$ is nesting, we must have $M_{n-1,n}=0$ and $r(R_{n-1})=l(R_n)-1>r(R_1)$. Similarly, by rotating, we must have $l(R_2)=r(R_1)+1<l(R_n)$. Note that if $n=3$, then this proves our claim, so we now use induction on $n\geq 4$.\\

If $l(R_1)<l(R_{n-1})$, then because $R_1\leftrightarrow R_{n-1}$ by minimality of the noncommuting path and therefore $M_{1,n-1}=0$ by Proposition \ref{prop:mrirj}, we have by our induction hypothesis that $l(R_{t+1})=r(R_t)+1$ for every $1\leq t\leq n-2$, which proves our claim. Similarly, by rotating, we are done if $r(R_2)<r(R_n)$. Therefore, the only remaining case to consider is when $l(R_{n-1})\leq l(R_1)$ and $r(R_2)\geq r(R_n)$. However, we would have $R_1\prec R_{n-1}$ and $R_n\prec R_2$ by Proposition \ref{prop:prec}, and \begin{equation}
l(R_{n-1})\leq l(R_1)\leq r(R_1)+1=l(R_2)\leq r(R_{n-1})=l(R_n)-1\leq r(R_n)\leq r(R_2),
\end{equation}
and therefore $M_{2,n-1}>0$ by Proposition \ref{prop:mrirj} Parts 2 and 3. Because $\bm\lambda$ is nesting, we must have either $R_2\prec R_{n-1}$ or $R_{n-1}\prec R_2$, but if $R_2\prec R_{n-1}$, we have $M_{2,n-1}+M_{2,n}=|R_2|+|R_n|\geq |R_2|+1$, and if $R_{n-1}\prec R_2$, we have $M_{1,n-1}+M_{2,n-1}=|R_1|+|R_{n-1}|\geq |R_{n-1}|+1$, a contradiction in either case. This completes the proof.
\end{proof}

We are at long last ready to prove Lemma \ref{lem:key}, which implies Theorem \ref{thm:main}. The strategy is as follows. We will use commuting and cycling to rewrite $\bm\lambda$ so that the bottom two rows are in the desired form. If the corresponding two rows $S_i$ and $S_j$ of $\bm\mu$ do not commute, then we are done by Corollary \ref{cor:adjacentnoncommutingdone}, and if there is no minimal noncommuting path between them, then we can again use commuting and cycling to bring them closer together until they do not commute. If there is a minimal noncommuting path $(S_i=S_{i_1},\ldots,S_{i_k}=S_j),$ then because we may assume that $\bm\mu$ is nesting, Proposition \ref{prop:nestingmncp} specifies the structure of these rows. In this case, we will use an argument similar to that of the proof of Lemma \ref{lem:strictsequencedone} to locally rotate pairs of rows $(S_{i_{t-1}},S_{i_t})$ of $\bm\mu$ to again bring $S_j$ toward $S_i$ until they do not commute. 

\begin{proof}[Proof of Lemma \ref{lem:key}. ]
We first note that by Corollary \ref{cor:strictsequencedone} and Corollary \ref{cor:notnestingdone}, we may assume that $\bm\lambda$ is nesting and has no strict pairs or strict sequences. We will first replace $\bm\lambda$ by a similar horizontal-strip $\bm\lambda'=(R'_1,\ldots,R'_n)\in\mathcal S(\bm\lambda)$ with $l(R'_1)<l(R'_2)$ and $R'_1\nleftrightarrow R'_2$. Because $n(\bm\lambda)-M(\bm\lambda)\geq 1$, by \eqref{eq:Mn} we have $M_{i,j}(\bm\lambda)<\min\{|R_i|,|R_j|\}$ for some $1\leq i<j\leq n$, and because $\bm\lambda$ is nesting, we must in fact have $M_{i,j}(\bm\lambda)=0$. By rotating and cycling, we may assume without loss of generality that $i=1$ and $l(R_1)<l(R_j)$, and then $l(R_j)\geq r(R_1)+1$ by Proposition \ref{prop:mrirj}, Parts 2 and 3. \\

Suppose that $l(R_j)-r(R_1)=1$. If there is a minimal noncommuting path in $\bm\lambda$ from $R_1$ to $R_j$, then by Proposition \ref{prop:mncpnostrict}, either $\bm\lambda$ has a strict sequence, contradicting our assumption, or there is $1<x<y<j$ with $l(R_x)=l(R_j)=r(R_i)-1=r(R_y)-1$, $R_j\precnsim R_x$, and $R_1\precnsim R_y$, contradicting that $\bm\lambda$ is nesting by Proposition \ref{prop:nesting}, Part 4. Therefore, there is no minimal noncommuting path in $\bm\lambda$ from $R_1$ to $R_j$, so by Lemma \ref{lem:aoncp} we can find our desired horizontal-strip $\bm\lambda'$. We now suppose that $l(R_j)-r(R_1)\geq 2$, which means that $R_1\leftrightarrow R_j$ by Proposition \ref{prop:mrirj}, Part 1, and we use induction on $l(R_j)-r(R_i)$. \\

If there is a minimal noncommuting path $(R_1=R_{i_1},\ldots,R_{i_k}=R_j)$ in $\bm\lambda$ from $R_1$ to $R_j$, then because $R_1\leftrightarrow R_j$ we have by Proposition \ref{prop:nestingmncp} that $l(R_{i_2})-r(R_1)=1$ and we can repeat the above argument with the rows $R_1$ and $R_{i_2}$. If there is no minimal noncommuting path in $\bm\lambda$ from $R_1$ to $R_j$, then by commuting and cycling we have $(R_1,\ldots,R_{j-1},R_{j+1},\ldots,R_n,R_j^-)\in\mathcal S(\bm\lambda)$ and $l(R_j^-)-r(R_1)<l(R_j)-r(R_1)$, so we are done by our induction hypothesis on $l(R_j)-r(R_1)$. Therefore, there is indeed a horizontal-strip $\bm\lambda'=(R'_1,\ldots,R'_n)\in\mathcal S(\bm\lambda)$ with $l(R'_1)<l(R'_2)$ and $R'_1\nleftrightarrow R'_2$. \\

We will now strengthen the conditions on our choice of $\bm\lambda'$. Consider the set 
\begin{equation}\mathcal S^*(\bm\lambda)=\{\bm\lambda'=(R'_1,\ldots,R'_n)\in\mathcal S(\bm\lambda): \ l(R'_1)<l(R'_2), \ R'_1\nleftrightarrow R'_2\}\end{equation}
and for $\bm\lambda'=(R'_1,\ldots,R'_n)\in\mathcal S^*(\bm\lambda)$, define the integer \begin{equation}h(\bm\lambda')=|\{3\leq t\leq n: \ R'_1\prec R'_t, \ R'_2\prec R'_t\}|.\end{equation}
Because we have shown that the set $\mathcal S^*(\bm\lambda)$ is nonempty and because we have a uniform bound $h(\bm\lambda')\leq n-2$, we may let $\bm\lambda'=(R'_1,\ldots,R'_n)\in\mathcal S^*(\bm\lambda)$ be such that $h(\bm\lambda')$ is maximal, and among those, with $|R'_2|$ maximal. Let $\bm\mu=(S_1,\ldots,S_n)$ and $\varphi:\Pi(\bm\lambda')\xrightarrow\sim\Pi(\bm\mu)$, and note that again by Corollary \ref{cor:strictsequencedone} and Corollary \ref{cor:notnestingdone} we may assume that $\bm\mu$ is nesting and has no strict pairs or strict sequences. \\

Let $i=\varphi_1$ and $j=\varphi_2$, and note that by cycling and rotating we may assume that $i<j$ and $l(S_i)<l(S_j)$, and then because $M_{i,j}(\bm\mu)=0$, we have $l(S_j)\geq r(S_i)+1$ by Proposition \ref{prop:mrirj}, Parts 2 and 3. If $l(S_j)-r(S_i)=1$, then $S_i\nleftrightarrow S_j$ by Proposition \ref{prop:mrirj}, Part 3, so there exists a good substitute for $(\bm\lambda,\bm\mu)$ by Corollary \ref{cor:adjacentnoncommutingdone} and we would be done. We now suppose that $l(S_j)-r(S_i)\geq 2$, which means that $S_i\leftrightarrow S_j$ by Proposition \ref{prop:mrirj}, Part 1, and we use induction on $l(S_j)-r(S_i)$. If there is no minimal noncommuting path in $\bm\mu$ from $S_i$ to $S_j$, then by commuting and cycling we have $(S_i,\ldots,S_{j-1},S_{j+1},\ldots,S_n,S_1^-,\ldots,S_j^-)\in\mathcal S(\bm\mu)$ and $l(S_j^-)-r(S_i)<l(S_j)-r(S_i)$, so we are done by our induction hypothesis on $l(S_j)-r(S_i)$. Therefore, we may assume that there is a minimal noncommuting path $(S_i=S_{i_1},\ldots,S_{i_k}=S_j)$ in $\bm\mu$ from $S_i$ to $S_j$. Because $\bm\mu$ is nesting and $S_i\leftrightarrow S_j$, we have that $l(S_{i_{t+1}})=r(S_{i_t})+1$ for every $1\leq t\leq k-1$ by Proposition \ref{prop:nestingmncp}. Also note that if there is a minimal noncommuting path in $\bm\mu$ from $S_{i_t}$ to $S_{i_{t+1}}$ for any $1\leq t\leq k-1$, then by Proposition \ref{prop:mncpnostrict} either $\bm\mu$ has a strict sequence, contradicting our assumption, or there is $1<x<y<j$ with $l(S_x)=l(S_j)=r(S_1)-1=r(S_y)-1$, $S_j\precnsim S_x$, and $S_1\precnsim S_y$, contradicting that $\bm\mu$ is nesting by Proposition \ref{prop:nesting}, Part 4. Therefore, there is no minimal noncommuting path in $\bm\mu$ from $S_{i_t}$ to $S_{i_{t+1}}$ for any $1\leq t\leq k-1$,\\

We now make the following useful observation. For every row $R'_t$ of $\bm\lambda$ with $R'_1\prec R'_t$ and $R'_2\prec R'_t$, we have $S_i\prec S_{\varphi_t}$ and $S_j\prec S_{\varphi_t}$ and therefore by Proposition \ref{prop:prec} we have 
\begin{equation}l(S_{\varphi_t})\leq l(S_1)+1\leq r(S_1)+1=l(S_{i_2})\leq r(S_{i_{k-1}})=l(S_j)-1\leq r(S_j)-1\leq r(S_{\varphi_t})\end{equation}
and therefore $S_{i_{t'}}\prec S_{\varphi_t}$ for every $1\leq t'\leq k$. \\

Now suppose that there is some $t$ with $S_{i_{k-1}}\precnsim S_t$ and $M_{j,t}(\bm\mu)=0$. By Proposition \ref{prop:prec}, we must have $r(S_t)\leq r(S_{i_{k-1}})$ and $l(S_t)\leq l(S_{i_{k-1}})-1=r(S_{i_{k-2}})$, so $M_{i_{k-2},t}(\bm\mu)>0$. Because $\bm\lambda$ is nesting, this means that either $S_{i_{k-2}}\prec S_t$ or $S_t\prec S_{i_{k-2}}$, but if $S_t\prec S_{i_{k-2}}$ we would have $M_{i_{k-2},t}(\bm\mu)+M_{i_{k-1},t}(\bm\mu)=|S_t|+|S_{i_{k-1}}|\geq|S_t|+1$ and the pair $(S_{i_{k-2}},S_{i_{k-1}})$ would be strict, a contradiction by Proposition \ref{prop:nesting}, Part 1. Therefore, we must have $S_{i_{k-2}}\prec S_t$. Because there is no minimal noncommuting path in $\bm\mu$ from $S_{i_{k-2}}$ to $S_{i_{k-1}}$, we may cycle and use Lemma \ref{lem:aoncp} to replace $\bm\mu$ with a similar horizontal-strip to assume that $i_{k-2}=1$ and $i_{k-1}=2$, but now we would have $h(\bm\mu)>h(\bm\lambda')$ because $S_t$ is counted only by $h(\bm\mu)$, contradicting maximality of $h(\bm\lambda')$. Therefore, we may assume that \begin{equation}\label{eq:keydone1} \text{ there is no }t \text{ with }S_{i_{k-1}}\precnsim S_t\text{ and } M_{j,t}(\bm\mu)=0.\end{equation}
Similarly, now suppose that there is some $t$ with $S_j\precnsim S_t$ and $M_{i_{k-1},t}(\bm\mu)=0$. Because there is no minimal noncommuting path in $\bm\mu$ from $S_{i_{k-1}}$ to $S_j$, we may cycle and use Lemma \ref{lem:aoncp} to replace $\bm\mu$ with a similar horizontal-strip to assume that $i_{k-1}=1$ and $j=2$. Because $S_j\precnsim S_t$ and $M_{i_{k-1},t}(\bm\mu)=0$, by Proposition \ref{prop:prec}, we have $l(S_t)=l(S_j)$ and $|S_t|>|S_j|$. Now if there is a minimal noncommuting path in $\bm\mu$ from $S_1$ to $S_t$, then as before, by Proposition \ref{prop:mncpnostrict} either $\bm\mu$ has a strict sequence, contradicting our assumption, or we contradict that $\bm\mu$ is nesting. Therefore, there is no minimal noncommuting path in $\bm\mu$ from $S_1$ to $S_t$, so by Lemma \ref{lem:aoncp} we may replace $\bm\mu$ by a similar horizontal-strip to instead assume that $t=2$. We also note that for every row $S_{t'}$ of $\bm\mu$ with $S_1\prec S_{t'}$ and $S_j\prec S_{t'}$, we must have $M_{t,t'}(\bm\mu)>0$ and therefore either $S_t\prec S_{t'}$ or $S_{t'}\prec S_t$. However, if $S_{t'}\prec S_t$, then we would have $M_{1,t'}(\bm\mu)+M_{t,t'}(\bm\mu)=|S_1|+|S_{t'}|\geq|S_{t'}|+1$ and the pair $(S_1,S_t)$ would be strict, a contradiction, and therefore $S_t\prec S_{t'}$. However, we now have $h(\bm\mu)\geq h(\bm\lambda')$ and $|S_2|>|S_j|=|R'_2|$, contradicting either the maximality of $h(\bm\lambda')$ or the maximality of $|R'_2|$. Therefore, we may assume that \begin{equation}\label{eq:keydone2} \text{ there is no }t\text{ with }S_j\precnsim S_t\text{ and }M_{i_{k-1},t}(\bm\mu)=0.\end{equation}

Because there is no minimal noncommuting path in $\bm\mu$ from $S_{i_{k-1}}$ to $S_j$, we may use Lemma \ref{lem:aoncp} to replace $\bm\mu$ by a similar horizontal-strip to assume that $j=i_{k-1}+1$. Our plan is now to apply Lemma \ref{lem:localrotation} to the rows $S_{j-1}$ and $S_j$ to replace $\bm\mu$ by a similar horizontal-strip for which $l(S_j)$ has decreased and $r(S_i)$ has not changed, so that we will be done by our induction hypothesis on $l(S_j)-r(R_i)$. It remains to check the conditions of Proposition \ref{prop:localrotationhypothesis}.
\begin{enumerate}
\item If $M_{j-1,t}(\bm\mu)>0$ and $M_{j,t}(\bm\mu)>0$, then because $\bm\mu$ is nesting we must have $S_{j-1}\prec S_t$ or $S_t\prec S_{j-1}$. If $S_t\prec S_{j-1}$, then $M_{j-1,t}(\bm\mu)+M_{j,t}(\bm\mu)=|S_t|+M_{j,t}(\bm\mu)\geq |S_t|+1$ and the pair $(S_{j-1},S_j)$ would be strict, contradicting that $\bm\mu$ is nesting. Therefore, we must have $S_{j-1}\prec S_t$ and similarly we must have $S_j\prec S_t$. \\

\item If $M_{j-1,t}(\bm\mu)>0$ and $M_{j,t}(\bm\mu)=0$, then because $\bm\mu$ is nesting we must have $S_{j-1}\prec S_t$ or $S_t\prec S_{j-1}$, but by \eqref{eq:keydone1} we cannot have $S_{j-1}\precnsim S_t$, so we must have $S_t\prec S_{j-1}$. \\

\item If $M_{j-1,t}(\bm\mu)=0$ and $M_{j,t}(\bm\mu)>0$, then because $\bm\mu$ is nesting we must have $S_j\prec S_t$ or $S_t\prec S_j$, but by \eqref{eq:keydone2} we cannot have $S_j\precnsim S_t$, so we must have $S_t\prec S_j$.\\

\item If $S_t\prec S_{j-1}$, then by Proposition \ref{prop:nesting}, Part 3, we must have $M_{j,t}(\bm\mu)=0$ and $M_{a,t}(\bm\mu)=0$ for every $a\in A$. Moreover, if $b\in B$, then $S_{j-1}\prec S_b$ and therefore $S_t\prec S_b$ because by Proposition \ref{prop:nesting}, Part 2, the relation $\prec$ is transitive.\\

\item If $S_t\prec S_j$, then by Proposition \ref{prop:nesting}, Part 3, we must have $M_{j-1,t}(\bm\mu)=0$ and $M_{a,t}(\bm\mu)=0$ for every $a\in A$. Moreover, if $b\in B$, then $S_j\prec S_b$ and therefore $S_t\prec S_b$ because by Proposition \ref{prop:nesting}, Part 2, the relation $\prec$ is transitive. \\
\end{enumerate}

This concludes our verification of the conditions of Proposition \ref{prop:localrotationhypothesis}. Therefore, the result follows by Lemma \ref{lem:localrotation} and our induction hypothesis on $l(S_j)-r(S_i)$. This completes the proof of Lemma \ref{lem:key}, and therefore our proof of Theorem \ref{thm:main}. 
\end{proof}

\section{Acknowledgements}
The author would like to thank Mark Haiman for his helpful comments.

\end{document}